\def\<{\langle}
\def\>{\rangle}
\def\ge{\geqslant}
\def\le{\leqslant}
\def\a{\alpha}
\def\b{\beta}
\def\G{\Gamma}
\def\umu{\underline \mu}
\def\o{\omega}
\def\s{\sigma}
\def\t{\tau}
\def\th{\theta}
\def\k{\kappa}
\def\l{\lambda}
\def\i{^{-1}}
\def\<{\langle}
\def\>{\rangle}
\newcommand{\fka}{\ensuremath{\mathfrak{a}}\xspace}
\newcommand{\bG}{\mathbf G}
\newcommand{\bJ}{\mathbf J}
\def\brF{\breve F}
\def\brI{\breve \CI}
\def\brK{\breve \CK}
\def\brQp{\breve{\mathbb Q}_p}
\def\tSS{\tilde{\mathbb S}}
\newcommand{\BA}{\ensuremath{\mathbb {A}}\xspace}
\newcommand{\BE}{\ensuremath{\mathbb {E}}\xspace}
\newcommand{\BF}{\ensuremath{\mathbb {F}}\xspace}
\newcommand{{\BG}}{\ensuremath{\mathbb {G}}\xspace}
\newcommand{\BJ}{\ensuremath{\mathbb {J}}\xspace}
\newcommand{{\BK}}{\ensuremath{\mathbb {K}}\xspace}
\newcommand{\BM}{\ensuremath{\mathbb {M}}\xspace}
\newcommand{\BP}{\ensuremath{\mathbb {P}}\xspace}
\newcommand{\BQ}{\ensuremath{\mathbb {Q}}\xspace}
\newcommand{\BS}{\ensuremath{\mathbb {S}}\xspace}
\newcommand{\BZ}{\ensuremath{\mathbb {Z}}\xspace}
\newcommand{\CA}{\ensuremath{\mathcal {A}}\xspace}
\newcommand{\CB}{\ensuremath{\mathcal {B}}\xspace}
\newcommand{\CD}{\ensuremath{\mathcal {D}}\xspace}
\newcommand{\CF}{\ensuremath{\mathcal {F}}\xspace}
\newcommand{\CI}{\ensuremath{\mathcal {I}}\xspace}
\newcommand{\CJ}{\ensuremath{\mathcal {J}}\xspace}
\newcommand{\CK}{\ensuremath{\mathcal {K}}\xspace}
\newcommand{\CL}{\ensuremath{\mathcal {L}}\xspace}
\newcommand{\CM}{\ensuremath{\mathcal {M}}\xspace}
\newcommand{\CN}{\ensuremath{\mathcal {N}}\xspace}
\newcommand{\CO}{\ensuremath{\mathcal {O}}\xspace}
\newcommand{\CS}{\ensuremath{\mathcal {S}}\xspace}
\newcommand{\Ad}{{\mathrm{Ad}}}
\newcommand{\ad}{{\mathrm{ad}}}
\DeclareMathOperator{\diag}{diag}
\def\fin{\rm fin}
\DeclareMathOperator{\Adm}{Adm}
\newcommand{\Admmu}{\Adm(\mu)}
\newcommand{\BGmu}{B(\mathbf G, \mu)}
\newcommand{\XmubK}[2]{X(\mu, #1)_{#2}}
\newcommand{\KAdmmu}{{}^K\! \!\Adm(\mu)}
\newcommand{\Latt}{\mathop{\mathcal L \rm att}\nolimits}
\newcommand{\boldtau}{{\boldsymbol\tau}}
\DeclareMathOperator{\alc}{alc}
\DeclareMathOperator{\Gal}{Gal}
\newcommand{\GL}{\mathrm{GL}}
\DeclareMathOperator{\Hom}{Hom}
\newcommand{\id}{\ensuremath{\mathrm{id}}\xspace}
\newcommand{\Ind}{{\mathrm{Ind}}}
\newcommand{\inv}{{\mathrm{inv}}}
\newcommand{\loc}{\ensuremath{\mathrm{loc}}\xspace}
\newcommand{\PGL}{{\mathrm{PGL}}}
\DeclareMathOperator{\rad}{rad}
\DeclareMathOperator{\Res}{Res}
\DeclareMathOperator{\Spec}{Spec}
\DeclareMathOperator{\Spf}{Spf}
\newcommand{\U}{\mathrm{U}}
\DeclareMathOperator{\vol}{vol}
\newcommand{\wh}{\widehat}
\newcommand{\ov}{\overline}
\newcommand{\nass}{\noalign{\smallskip}}
\def\tW{\tilde W}
\DeclareMathOperator{\supp}{supp}
\newtheorem{theorem}{Theorem}
\newtheorem{proposition}[theorem]{Proposition}
\newtheorem{lemma}[theorem]{Lemma}
\newtheorem{corollary}[theorem]{Corollary}
\theoremstyle{definition}
\newtheorem{definition}[theorem]{Definition}
\newtheorem{example}[theorem]{Example}
\newtheorem{situation}[theorem]{Situation}
\newtheorem{condition}[theorem]{Condition}
\newtheorem{remark}[theorem]{Remark}
\newenvironment{altenumerate}
   {\begin{list}
      {\textup{(\theenumi)} }
      {\usecounter{enumi}
       \setlength{\labelwidth}{0pt}
       \setlength{\labelsep}{0pt}
       \setlength{\leftmargin}{0pt}
       \setlength{\itemsep}{\the\smallskipamount}
       \renewcommand{\theenumi}{\roman{enumi}}
      }}
   {\end{list}}
\newenvironment{altitemize}
   {\begin{list}
      {$\bullet$}
      {\setlength{\labelwidth}{0pt}
	   \setlength{\itemindent}{5pt}
       \setlength{\labelsep}{5pt}
       \setlength{\leftmargin}{0pt}
       \setlength{\itemsep}{\the\smallskipamount}
      }}
   {\end{list}}
\numberwithin{equation}{section}
\numberwithin{theorem}{section}
\renewcommand{\to}{%
   \ifbool{@display}{\longrightarrow}{\rightarrow}%
   }
\let\shortmapsto\mapsto
\renewcommand{\mapsto}{%
   \ifbool{@display}{\longmapsto}{\shortmapsto}%
   }
\newcommand{\isoarrow}{%
   \ifbool{@display}{\overset{\sim}{\longrightarrow}}{\xrightarrow\sim}%
   }
\def\sqrttwo{1.4142135}
\begin{document}
\date{\today}
\title{Extremal cases of Rapoport-Zink spaces}
\author[U.~G\"{o}rtz]{U. G\"{o}rtz}
\address{Ulrich G\"{o}rtz\\Institut f\"ur Experimentelle Mathematik\\Universit\"at Duisburg-Essen\\45117 Essen\\Germany}
\email{ulrich.goertz@uni-due.de}

\author{X. He}
\address{Xuhua He\\The Institute of Mathematical Sciences and Department of Mathematics\\The Chinese University of Hong Kong\\Shatin, N.T.\\Hong Kong }
\email{xuhuahe@math.cuhk.edu.hk}

\author{M. Rapoport}
\address{Michael Rapoport\\Mathematisches Institut der Universit\"at Bonn, Endenicher Allee 60, 53115 Bonn, Germany, and Department of Mathematics, University of Maryland, College Park, MD 20742, USA}
\email{rapoport@math.uni-bonn.de}

\subjclass[2010]{14G35, 20G25, 11G18}

\date{\today}

\begin{abstract}
We investigate qualitative properties of the underlying scheme of Rapoport-Zink formal moduli spaces of $p$-divisible groups, resp. Shtukas. We single out those cases when the dimension of this underlying scheme is zero, resp. those where the dimension is maximal possible. The model case for the first alternative is the Lubin-Tate moduli space, and the model case for the second alternative is the Drinfeld moduli space. We exhibit a complete list in both cases. 
\end{abstract}

\maketitle

\tableofcontents

\section{Introduction}
Let $F$ be a nonarchimedean local field and let $\bG$ be a connected reductive group over $F$. Let $\mu$ be a conjugacy class of cocharacters of $\bG$ (over the algebraic closure $\overline{F}$), and let $b\in\bG(\breve F)$, where $\breve F$ denotes the completion of the maximal unramified extension of $F$. The main character of this paper is the set 
\begin{equation}\label{Xintro}
\XmubK{b}{K}=X^{\bG}(\mu, b)_K:=\{g \brK \in \bG(\breve F)/\brK\mid g \i b \s(g) \in \brK\Admmu\brK\}.
\end{equation}
Here $K$ denotes a $F$-rational parahoric level structure of $\bG$, with corresponding standard parahoric subgroup $\brK\subset \bG(\breve F)$. Also, $\Admmu$ denotes the \emph{$\mu$-admissible subset} of the Iwahori-Weyl group of $\bG$. See Section~\ref{sec:preliminaries} for details on this notion and other notation used here. By \cite{He-KR}, $\XmubK{b}{K}$ is non-empty if and only if $[b]\in \BGmu$ (i.e., $[b]$ is \emph{neutral acceptable}), which we assume from now on.

The set \eqref{Xintro} has a geometric structure: if $F$ is a function field, then $\XmubK{b}{K}$ is a finite-dimensional closed  subscheme  of the partial affine flag variety $\bG(\breve F)/\brK$, locally of finite type over the algebraic closure of the residue field of $F$. If $F$ is $p$-adic, then the partial affine flag variety and its finite-dimensional closed subscheme $\XmubK{b}{K}$ have to be understood in the sense of  Bhatt--Scholze and Zhu \cite{BS, Zhu} as a perfect scheme.

The interest in the set \eqref{Xintro} comes from the fact that in the case of a $p$-adic field and when $\mu$ is \emph{minuscule}, sets of this form arise as the set of geometric points of the \emph{underlying reduced set} of a Rapoport-Zink formal moduli space of $p$-divisible groups, cf. \cite{RV}. Something analogous holds in the function field case for formal moduli spaces of \emph{Shtukas}, cf. \cite{HV} (in the latter case, the minuscule hypothesis can be dropped).  Both classes of formal schemes  are very mysterious. In fact, we know explicitly these formal schemes essentially only in two cases: the \emph{Lubin-Tate case} and the \emph{Drinfeld case}. In the first case, the formal scheme is a disjoint union of formal spectra of formal power series rings with coefficients in $O_{\breve F}$, hence the underlying reduced scheme is just a disjoint union of points. In the second case, the formal scheme is $\pi$-adic and the underlying reduced set is a disjoint union of special fibers of the Deligne-Drinfeld formal model of the $p$-adic halfspace corresponding to the local field $F$.

 In the present paper, we address the question of classifying the cases when $\XmubK{b}{K}$ has minimal dimension zero (as in the Lubin-Tate case) or maximal dimension $\langle \mu, 2\rho\rangle$ (as in the Drinfeld case). 

\smallskip

Let us first discuss our  results pertaining to the case of dimension zero.
\begin{theorem}[comp.~Theorem~\ref{thm-dimension-0}]\label{Main1}
Assume that $\bG$ is quasi-simple over $F$ and that $\mu$ is non-central.  Let $b$ be basic, and let $K$ be a $F$-rational parahoric level structure. Then $\XmubK{b}{K}$ is zero-dimensional if and only if $\bG_\ad$ is isomorphic to $\Res_{\tilde F/F}(\PGL_n)$, for some $n$ and some finite extension $\tilde F$ of $F$, and there exists a unique $F$-embedding $\varphi_0\colon \tilde F\to \ov F$ such that  $\mu_{\ad, {\varphi}}$ is trivial  for $\varphi\neq\varphi_0$ and $\mu_{\ad, {\varphi_0}}= \o^\vee_1$. 

{\rm Here we write, for any $\tilde F$-group $\tilde \bG$,   a cocharacter $\mu$ of $\Res_{\tilde F/F}(\tilde \bG)$ as $\mu=(\mu_{\varphi})_\varphi$ for cocharacters $\mu_\varphi$ of $\tilde \bG$, where $\varphi$ runs over $\Hom_F(\tilde F, \ov F)$. }
\end{theorem}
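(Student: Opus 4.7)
The plan is to reduce to the absolutely quasi-simple adjoint case via Weil restriction, and then to use the dimension formula for basic affine Deligne-Lusztig varieties together with a case-by-case analysis across absolute Dynkin types.

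\textbf{Step 1 (reduction to $H$ absolutely simple adjoint).} The dimension of $X^{\bG}(\mu,b)_K$ depends only on the adjoint data $(\bG_\ad, \mu_\ad, [b_\ad], K_\ad)$, so I would first pass to $\bG_\ad$. Since $\bG$ is $F$-quasi-simple, $\bG_\ad = \Res_{\tilde F/F}(H)$ for some absolutely simple adjoint $H$ over a finite separable extension $\tilde F/F$. The Iwahori-Weyl group, the admissible set $\Admmu$, the standard parahoric $\brK$, and Frobenius-twisted $\s$-conjugation all decompose compatibly along the $F$-embeddings $\varphi\colon \tilde F \hookrightarrow \ov F$ (possibly up to a cyclic Frobenius twist on the set of embeddings), giving
\[
\dim X^{\bG}(\mu, b)_K \;=\; \sum_{\varphi} \dim X^{H}(\mu_\varphi, b_\varphi)_{K_\varphi},
\]
with each $b_\varphi$ basic in $H$ whenever $b$ is basic in $\bG$. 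Hence the problem reduces to classifying absolutely simple adjoint data $(H, \mu_0, b_0, K_0)$ with $\mu_0$ non-central for which the corresponding factor vanishes.

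\textbf{Step 2 (dimension formula and Dynkin-type analysis).} For basic $b_0$ and hyperspecial $K_0$, the dimension formula (Hamacher-Viehmann, He-Yu, \ldots) reads
\[
\dim X^{H}(\mu_0, b_0)_{K_0} \;=\; \langle \mu_0 - \nu_{b_0}, \rho \rangle - \tfrac{1}{2}\,\mathrm{def}(b_0),
\]
which for basic $b_0$ simplifies via $\nu_{b_0} = \mu_0^\diamond$ (Galois average) to a purely combinatorial invariant of the root datum. A case analysis across types $B$, $C$, $D$, $E$, $F$, $G$ shows that even the minimal non-central $\mu_0$ produces a strictly positive value, so no solutions exist. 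In type $A$, $H = \PGL_n$: the defect of a basic element is $n-1$; vanishing forces $\mu_0$ to be minuscule with $\langle \mu_0, 2\rho\rangle = n-1$, pinning $\mu_0 = \omega_1^\vee$ (equivalently its outer conjugate $\omega_{n-1}^\vee$).

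\textbf{Step 3 (general parahorics and converse).} For general parahoric $K_0$, I would compare with the Iwahori case using the projection $X^H(\mu_0,b_0)_{\mathrm{Iw}} \twoheadrightarrow X^H(\mu_0,b_0)_{K_0}$ and control the fiber dimension. Once $\mu_0 = \omega_1^\vee$, the set $\Adm(\omega_1^\vee)$ is minuscule and its image in $K_0 \backslash \tW / K_0$ consists of length-zero elements, so every stratum is a point. This, combined with Step 1, yields the ``only if'' direction. The converse (``if'') is essentially Lubin-Tate: the same dimension formula gives $0$, and the space can be identified with a discrete homogeneous space under the unit group of a central division algebra, as is classical.

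\textbf{Main obstacle.} The hard part will be the type-by-type verification in Step 2: isolating the sharp inequality ``$\dim > 0$ whenever $(H, \mu_0) \neq (\PGL_n, \omega_1^\vee)$'' uniformly across Dynkin types, together with the control of non-hyperspecial parahoric levels in Step 3, where the dimension formula must be adapted. A secondary subtlety is the Weil-restriction step, where the Frobenius on $\brF$ may permute the embeddings $\varphi$ when $\tilde F/F$ is not split at the residue level; one must verify that the product decomposition of Step 1 survives this permutation and that basicness is preserved on each factor.
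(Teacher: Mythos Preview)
Your Step 1 contains a fundamental error. When $\bG_\ad = \Res_{\tilde F/F}(H)$, the affine Deligne--Lusztig variety does \emph{not} decompose as a product over embeddings, and its dimension is not the sum $\sum_\varphi \dim X^H(\mu_\varphi, b_\varphi)_{K_\varphi}$. The Frobenius $\sigma$ permutes the $\brF$-simple factors of $\bG_{\brF}$ cyclically, so the defining condition $g^{-1}b\sigma(g) \in \brK\Admmu\brK$ couples the components of $g$: writing $g=(g_1,\ldots,g_d)$, the constraint on the $i$-th factor involves both $g_i$ and $g_{i-1}$. The correct reduction (Section~\ref{subsec:ghn-2.4-construction}) produces a \emph{single} ADLV for an $\brF$-simple group $\bG'$ over $F_d$ with coweight $\mu' = \sum_i \sigma_0^i(\mu_+)$; there is no product structure. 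The ``secondary subtlety'' you flag at the end is in fact the primary obstruction, and it is fatal to the argument as written.

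Step 2 has a second gap: you treat $H$ as split (``in type $A$, $H = \PGL_n$''), but the answer depends crucially on the Frobenius action on the affine Dynkin diagram. Both $\PGL_n$ and the adjoint quasi-split unitary group for an unramified quadratic extension have absolute type $\tilde A_{n-1}$ and admit $\mu_0 = \omega_1^\vee$, yet only the former gives dimension zero; the latter gives positive dimension and appears instead in Theorem~\ref{Main2} as the exotic finite-fiber case. Your dimension formula would have to separate these via the defect, which depends on the $F$-structure, and that analysis is absent. (The side claim $\nu_{b_0} = \mu_0^\diamond$ is also false: for basic $b_0$ the Newton point is central.) The paper avoids all of this by a direct argument: for each triple $(\tilde\Delta, \sigma, \mu)$ not of Lubin-Tate type it exhibits an explicit element $s\tau\sigma(s) \in \Admmu$ (Propositions~\ref{prop-fixed-s} and~\ref{supp-tss}, Lemma~\ref{forA}), which immediately yields a $1$-dimensional subset $\brK_s\brK/\brK \subseteq X(\mu,\tau)_K$.
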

In particular, if $\bG$ is absolutely quasi-simple, then the Lubin-Tate case (Example~\ref{ex-lubin-tate}) is the only one when the dimension of $\XmubK{b}{K}$ is zero. In general, when the dimension of $\XmubK{b}{K}$ is zero, then  $\mu$ is   automatically minuscule. Also, the statement that the dimension of $\XmubK{b}{K}$ is zero is independent of the choice of $K$. The case $(\bG, \mu)$ that appears in Theorem \ref{Main1} is called the \emph{extended Lubin-Tate case} (we use the term \emph{extended} because there is an extension $\tilde F/F$ involved). 

When we vary $K$, we obtain the transition morphisms $\pi_{K, K'}\colon \XmubK{b}{K}\to \XmubK{b}{K'}$, whenever $K\subset K'$. In the extended Lubin-Tate case, the fibers of $\pi_{K, K'}$ are finite for any $K\subsetneqq K'$. For the next statement, let us exclude this case. 

\begin{theorem}[comp.~Theorem~\ref{thm-fixed-s}]\label{Main2}
Assume that $\bG$ is quasi-simple over $F$ and that $\mu$ is non-central. Let $b$ be basic. Also, exclude the extended Lubin-Tate case discussed in the previous theorem. Fix a pair $K\subsetneqq K'$ of $F$-rational parahoric level structures.

Then the fibers of $\pi_{K, K'}$ are all finite  if and only if $\bG_\ad$ is isomorphic to $\Res_{\tilde F/F}(\tilde \bG_\ad)$, where $\tilde F$ is a finite extension of $F$, and where  $\tilde \bG_\ad$ is  the adjoint group of a unitary group associated to a \emph{split} $\tilde F'/\tilde F$-hermitian vector space $V$ for an \emph{unramified} quadratic extension $\tilde F'/\tilde F$,  and the following two conditions are satisfied:

\begin{altitemize}
\item there exists a unique $F$-embedding $\varphi_0\colon \tilde F\to \ov F$ such that $\mu_{\ad, {\varphi}}$ is trivial for $\varphi\neq\varphi_0$ and $\mu_{\ad, {\varphi_0}}= \o^\vee_1$;
\item the pair $(K, K')$ satisfies: let  the maximal unramified subextension $F_d$  of $\tilde F/F$ have degree $d$. Correspondingly write $K$ and $K'$ as $K=(K_1, \ldots, K_d)$ and $K'=(K'_1, \ldots, K'_d)$, where the entries  are parahoric subgroups of $\Res_{\tilde F/F_d}(\tilde \bG_\ad)$\footnote{Note that $\Res_{\tilde F/F_d}(\tilde \bG_\ad)$  has  affine Dynkin type $\tilde A_{n-1}$; we use standard notation for the simple reflections in this case.}.   Then $K_1'\setminus K_1 \subset \{s_0, s_{\frac{n}{2}}\}$, and  if $s_i \in K_1'\setminus K_1$, then $s_{i+1} \notin K_1$.
\end{altitemize} 
\end{theorem}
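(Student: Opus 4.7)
The plan is to reduce to the absolutely quasi-simple adjoint case, then convert the finite-fiber hypothesis to a dimension equality using equidimensionality in the basic case, and finally analyze the resulting combinatorial constraint case by case according to the affine Dynkin type.

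First I would reduce to the case where $\bG$ is adjoint and absolutely quasi-simple. The set $\XmubK{b}{K}$ depends only on $\bG_\ad$, and restriction of scalars along the maximal unramified subextension $F_d/F$ decomposes $\bG_\ad$ into a product of absolutely quasi-simple factors permuted cyclically by $\sigma$. Since Theorem \ref{Main1} already handles the extended Lubin-Tate case, I would use this decomposition to isolate the unique component on which $\mu_\ad$ is nontrivial and show that the question of finiteness of fibers of $\pi_{K,K'}$ pulls back from the corresponding question for the single $F_d$-factor. The remaining analysis concerns a single absolutely quasi-simple adjoint group over $F_d$, and the hypotheses allow me to assume $\mu_\ad$ is concentrated at one geometric embedding.

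Next, since $b$ is basic, $\XmubK{b}{K}$ and $\XmubK{b}{K'}$ are equidimensional (by work of He and collaborators used throughout the paper) and their dimensions are given by explicit formulas involving $\langle\mu,2\rho\rangle$, the defect of $b$, and the $K$-length of representative elements of $\Admmu$. The fibers of $\pi_{K,K'}$ are closed subschemes of the finite-dimensional partial flag variety $\brK'/\brK$, hence projective; they are therefore finite if and only if they are zero-dimensional. By equidimensionality and $J_b(F)$-equivariance, this is equivalent to the dimension equality
\[
\dim \XmubK{b}{K} = \dim \XmubK{b}{K'}.
\]
I would then translate this equality, via the dimension formula, into a purely combinatorial condition comparing the $K$- and $K'$-lengths of the elements of $\Admmu$ that govern the top-dimensional components.

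The core of the proof is then the combinatorial analysis of when this length condition can be satisfied, for $K \subsetneqq K'$ and for $\mu$ constrained as in the theorem to have a single nontrivial adjoint component $\mu_{\ad,\varphi_0}=\o_1^\vee$. I would go through the affine Dynkin types one by one, using the explicit description of $\Admmu$ in the minuscule $\o_1^\vee$-case (a single Bruhat interval of minuscule translations) together with the action of the diagram automorphism determined by the $F$-rational structure. Outside type $\tilde A_{n-1}$ with the Frobenius acting by the unique involution of order $2$ fixing two opposite endpoints, adding any simple reflection to $K$ would strictly increase the dimension of the admissible locus in $\brK'/\brK$, giving positive-dimensional fibers. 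In the unitary case, the hermitian space being split and $\tilde F'/\tilde F$ being unramified forces Frobenius to act on the affine Dynkin diagram by the involution $s_i \mapsto s_{n-i}$ with fixed reflections $s_0$ and $s_{n/2}$, and a direct calculation in the admissible set shows that $K'\setminus K$ is forced into $\{s_0,s_{n/2}\}$ together with the non-adjacency condition on $s_i, s_{i+1}$.

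The main obstacle will be the case analysis itself: one must rule out, type by type, every way of enlarging a parahoric that is \emph{not} of the form described, and this requires carefully tracking how adding a given simple reflection changes the top-dimensional stratum of $\Admmu$ inside $\brK'/\brK$. The unitary case is delicate because both endpoint reflections $s_0$ and $s_{n/2}$ are $\sigma$-fixed and behave symmetrically, and the non-adjacency condition reflects a subtle incompatibility between adjacent minuscule coweight strata that must be verified by a direct length computation rather than deduced from general principles.
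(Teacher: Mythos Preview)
Your approach has a genuine gap at its second step. You assert that for basic $b$, the spaces $\XmubK{b}{K}$ and $\XmubK{b}{K'}$ are equidimensional, and then convert ``all fibers finite'' into the equality $\dim \XmubK{b}{K} = \dim \XmubK{b}{K'}$. But equidimensionality of $\XmubK{b}{K}$ for basic $b$ is \emph{false} in general; the paper explicitly remarks this after Corollary~\ref{cor:bound}, and Theorem~\ref{Main4} shows that equidimensionality of maximal dimension is an extremely restrictive condition (only three families). Without equidimensionality, the dimension equality $\dim \XmubK{b}{K} = \dim \XmubK{b}{K'}$ does not imply that \emph{every} fiber is finite: some irreducible components of $\XmubK{b}{K}$ could map with positive-dimensional fibers onto lower-dimensional components of $\XmubK{b}{K'}$. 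So your reduction to a combinatorial length comparison collapses.

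The paper's argument is entirely different and does not pass through dimension formulas. For the ``only if'' direction, the key device is Proposition~\ref{prop-fixed-s}: if some $s\in K'\setminus K$ satisfies $s\t\s(s)\in\Admmu$, then one can \emph{exhibit} a concrete one-dimensional subvariety $\brK_s\brK/\brK\subseteq \XmubK{\t}{K}$ lying over a single point of $\XmubK{\t}{K'}$. The rest of the forward direction (Sections~\ref{s:onedirzero}) is a study of when such an element $s\t\s(s)$ lies in $\Admmu$, using Proposition~\ref{supp-tss} and Lemma~\ref{forA}; this is what forces the type to $(\tilde A_{n-1},\o_1^\vee)$ with $\s\in\{\id,\varsigma_0\}$ and then yields Condition~\ref{star}. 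For the ``if'' direction, the paper does not compare dimensions either: it uses that the relevant $(\bG,\mu)$ is fully Hodge-Newton decomposable, decomposes $\XmubK{\t}{K}$ into fine affine Deligne-Lusztig varieties indexed by $\KAdmmu_0$, and shows via the partial conjugation method (Proposition~\ref{w-w}, Theorem~\ref{finite-fiber-final}, Proposition~\ref{finite-ff}) that each such piece maps with finite fibers onto a corresponding piece at level $K'$. Your proposed case analysis by affine Dynkin type is in spirit close to the paper's, but the input it would need---an honest criterion replacing your dimension equality---is precisely the content of Proposition~\ref{prop-fixed-s}, which you have not supplied.
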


Both implications of the theorem are interesting. Indeed, in the case singled out by the theorem,  assume for simplicity that $\tilde F=F$, and consider a  maximal selfdual periodic lattice chain 
$$
\{\ldots\subset\Lambda_{-2}\subset\Lambda_{-1}\subset\Lambda_0\subset\Lambda_1\subset\Lambda_2\subset\ldots\}
$$
in $V$. The case when $K'\setminus K=\{s_0\}$ is given as follows: $K'$ stabilizes a subchain $\Lambda_I$ which   contains $\Lambda_1$ but not  the selfdual lattice $\Lambda_0$, and $K$ stabilizes $\Lambda_0$ in addition to $\Lambda_I$. Under these conditions, the theorem states the following. Let $N$ be a $\breve F$-vector space of dimension $2\dim V$, equipped with an action of $\tilde F$ and an alternating bilinear form $\langle\, , \, \rangle$ which is hermitian with respect to the $\tilde F$-action. Let $\phi$ be a $\sigma$-linear automorphism of $N$ which commutes with the $\tilde F$-action and which is isoclinic of slope $1/2$ and  such that $\langle\phi(x), \phi(y)\rangle=\pi\sigma(\langle x, y\rangle)$, for all $x, y\in N$. Here $\pi$ denotes a uniformizer in $F$. Let $\CM_I$ be a self-dual  chain of $O_{\breve F}$-lattices in $N$ which are invariant under $O_{\tilde F}$, \emph{ of  type $\Lambda_I$}. Assume that $\pi\CM_i\subset \phi(\CM_i)\subset^1\CM_i$ for all $i\in I$. Then there are only finitely many ways of completing the chain $\CM_I$  to  a self-dual chain by adding a self-dual lattice $\CM_0$  such that $\pi\CM_0\subset \phi(\CM_0)\subset^1\CM_0$.

The case when $K'\setminus K=\{s_m\}$ when $n=2m$ is similar (with a selfdual lattice replaced by a lattice which is selfdual up to a scalar); and the case when $K'\setminus K =\{s_0, s_m\}$ when $n=2m$ is a concatenation of the previous cases.

From a global perspective, i.e., the point of view of Shimura varieties, Theorem~\ref{Main1} implies that the only cases where the basic locus is $0$-dimensional are those which at the fixed prime $p$ give rise to the extended Lubin-Tate case. This is the situation considered by Harris and Taylor in~\cite{HT}.

\smallskip

Now let us discuss our  results pertaining to the case of maximal  dimension. First, we have the following well-known upper bound on the dimension of $\XmubK{b}{K}$, cf. \cite{He-CDM}. As usual, $\rho$ denotes the half sum of all positive roots, and by $ \langle\mu, 2 \rho\rangle$ we mean the value of $2\rho$ on a dominant representative of $\mu$. 

\begin{proposition}[Corollary~\ref{cor:bound}]\label{Mainbound}
The dimension of $X(\mu, b)_K$ is bounded as 
$$\dim X(\mu, b)_K\leq \langle\mu, 2 \rho\rangle.$$
 If equality holds, then $b$ is basic. 
\end{proposition}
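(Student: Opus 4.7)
The plan is to reduce to Iwahori level and then apply the standard dimension bound for affine Deligne--Lusztig varieties. Fix any Iwahori subgroup $\breve I \subset \breve K$. A standard $\sigma$-conjugacy lifting argument inside $\breve K$ shows that the natural projection $\bG(\breve F)/\breve I \to \bG(\breve F)/\breve K$ restricts to a surjection $X(\mu,b)_I \twoheadrightarrow X(\mu,b)_K$. Hence $\dim X(\mu,b)_K \le \dim X(\mu,b)_I$, and it suffices to bound the Iwahori-level set.

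At Iwahori level, $X(\mu,b)_I$ is a closed subset of the admissible locus $\bigcup_{w \in \Adm(\mu)} \breve I w \breve I/\breve I$, whose dimension is $\max_{w \in \Adm(\mu)} \ell(w) = \ell(t_\lambda) = \langle\mu, 2\rho\rangle$, where $\lambda$ is the dominant representative of $\mu$. This already gives the inequality $\dim X(\mu,b)_K \le \langle\mu, 2\rho\rangle$.

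For the equality assertion, I would use the finer stratification
$$X(\mu,b)_I = \bigsqcup_{w \in \Adm(\mu)} X_w(b), \qquad X_w(b) = \{g\breve I \mid g\i b\sigma(g) \in \breve I w \breve I\},$$
together with the standard upper bound for affine Deligne--Lusztig varieties from \cite{He-CDM}:
$$\dim X_w(b) \le \ell(w) - \langle \nu_b, 2\rho\rangle \quad \text{whenever } X_w(b) \ne \emptyset,$$
where $\nu_b$ denotes the dominant Newton point of $b$. If $\dim X(\mu,b)_K = \langle\mu, 2\rho\rangle$, then $\dim X(\mu,b)_I$ is also $\langle\mu, 2\rho\rangle$, so some stratum $X_w(b)$ attains dimension $\langle\mu, 2\rho\rangle$. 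Combined with $\ell(w) \le \langle\mu, 2\rho\rangle$, this forces $\langle \nu_b, 2\rho\rangle \le 0$. Since $\nu_b$ is dominant, we have $\langle\nu_b, 2\rho\rangle \ge 0$, with equality if and only if $\nu_b$ is central in $\bG$, i.e., if and only if $[b]$ is basic.

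The core non-trivial input is the dimension bound on $X_w(b)$, which is a deep result in the dimension theory of affine Deligne--Lusztig varieties but is available off the shelf from \cite{He-CDM}; the surjectivity of the Iwahori projection and the length bound on $\Adm(\mu)$ are routine combinatorial facts about the Iwahori--Weyl group. The only part that could conceivably require care is the reduction from parahoric to Iwahori level, but this has been worked out in the literature on parahoric-level ADLVs.
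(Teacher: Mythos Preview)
Your argument is correct. The key input---the inequality $\dim X_w(b) \le \ell(w) - \langle\nu_b, 2\rho\rangle$---is precisely the Iwahori case of Theorem~\ref{dim-1-1} (i.e., \cite[Thm.~2.23]{He-CDM}), and the surjectivity $X(\mu,b)_I \twoheadrightarrow X(\mu,b)_K$ you invoke is \cite[Thm.~1.1]{He-KR} (it is not quite as trivial as ``standard $\sigma$-conjugacy lifting'' suggests, but it is indeed available).

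The paper takes a slightly different route: rather than reducing to Iwahori level, it works directly at parahoric level using the identity $\dim_{\brK}(\brK\Admmu\brK\cap[b]) = \dim X(\mu,b)_K + \langle\nu_b,2\rho\rangle$ (Theorem~\ref{dim-1-1}) together with the computation $\dim_{\brK}(\brK\Admmu\brK) = \langle\mu,2\rho\rangle$ (Proposition~\ref{Irr-KAdmK}). Your approach trades the latter proposition for the He--KR surjectivity result; the paper's approach avoids that external surjectivity but must establish the parahoric dimension computation. Both routes rest on the same He-CDM dimension theory, so the difference is largely one of packaging.
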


It is thus a natural question to ask in which cases this upper bound is attained. A well-known example is the Drinfeld case, but there are other cases, too.

\begin{theorem}[Theorem~\ref{max-dim}]\label{Main3}
    Assume that $\bG$ is quasi-simple over $F$ and that $\mu$ is not central. If $\dim \XmubK{b}{K}=\langle\mu, 2\rho\rangle$, then $b$ is basic,  the $\sigma$-centralizer group ${\bf J}_b$ is a quasi-split inner form of $\bG$ and $\umu$ is minuscule (in the \'echelonnage root system\footnote{The latter condition implies that $\mu$ is minuscule but is slightly stronger if $\bG$ does not split over $\breve F$.}, see Section~\ref{subsec:admissible-set}). If $K=\emptyset$ is the Iwahori level, the converse holds. 

For a general parahoric level, 
$\dim \XmubK{b}{K}=\langle\mu, 2\rho\rangle$ if and only if $b$ is basic and $W(\umu)_{K, \fin}\neq\emptyset$.  In this case, the orbits of the action of ${\bf J}_b(F)$ on the set of  irreducible components of $\XmubK{b}{K}$ of dimension $\<\mu, 2 \rho\>$ are parametrized by  the finite set $ W(\umu)_{K,\fin}$.

\smallskip

{\rm We refer to \eqref{Wfin} for the definition of  $W(\umu)_{K, \fin}$, a finite set of translation elements, which is related to Drinfeld's notion of critical index (see Proposition~\ref{W-fin}). }
\end{theorem}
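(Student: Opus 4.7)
Proposition~\ref{Mainbound} already furnishes the implication that dimension equality forces $[b]$ basic, so throughout I may assume $b$ is basic, whence $\nu_b$ is central and $\langle\nu_b,2\rho\rangle=0$. The content of the theorem then splits into the Iwahori case and its comparison with general parahoric level.

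\emph{Step 1: Iwahori level.} At Iwahori level I have $X(\mu,b)_\emptyset=\bigsqcup_{w\in\Adm(\mu)}X_w(b)$, and the maximal-length elements of $\Adm(\mu)$ are exactly the translations $t^{x(\mu)}$ for $x$ running through the relative Weyl group, each of length $\langle\mu,2\rho\rangle$. So attaining $\dim X(\mu,b)_\emptyset=\langle\mu,2\rho\rangle$ is equivalent to attaining this value on one of these translation strata. Plugging $w=t^{x(\mu)}$ into the standard dimension formula for affine Deligne-Lusztig varieties attached to basic $b$ at Iwahori level, the equality $\dim X_w(b)=\ell(w)$ holds simultaneously with: (i) $\mathrm{def}(b)=0$, equivalently $\mathbf J_b$ is a quasi-split inner form of $\bG$; (ii) $t^{x(\mu)}$ is a $\sigma$-cordial element of the Iwahori-Weyl group; and (iii) a Kottwitz-invariant matching with $[b]$. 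A root-theoretic unpacking of cordiality for translations, carried out in the \'echelonnage root system $\Sigma$, shows that (ii) becomes $\langle\alpha,\umu\rangle\in\{-1,0,1\}$ for every $\alpha\in\Sigma$, i.e., $\umu$ is minuscule in $\Sigma$. For the converse, these three conditions together guarantee the existence of an $x$ realising (iii), and the dimension formula then becomes an equality of value $\langle\mu,2\rho\rangle$.

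\emph{Step 2: general parahoric.} For arbitrary $K$ I would use the natural projection $\pi\colon X(\mu,b)_\emptyset\to X(\mu,b)_K$, which has fibres of dimension at most $\ell(w_K)$. A top-dimensional irreducible component of $X(\mu,b)_K$ of dimension $\langle\mu,2\rho\rangle$ must arise as the image of a top-dimensional component of $X(\mu,b)_\emptyset$ on which $\pi$ is generically finite. For a component coming from the translation stratum $X_{t^{x(\mu)}}(b)$, generic finiteness unwinds into a combinatorial condition on the $W_K$-orbit of $t^{x(\mu)}$ inside $\Adm(\mu)$, which by \eqref{Wfin} is precisely membership of $t^{x(\mu)}$ in $W(\umu)_{K,\fin}$. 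This yields the claimed equivalence at general $K$. For the parametrisation of $\mathbf J_b(F)$-orbits, I would track the $\mathbf J_b(F)$-action on top-dimensional components via the Kottwitz invariants of the corresponding translations: two components lie in the same $\mathbf J_b(F)$-orbit iff their underlying translations differ by the $W_K$-action that enters the construction of $W(\umu)_{K,\fin}$, giving the asserted bijection.

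\emph{Main obstacle.} The most delicate point is the root-theoretic unpacking of $\sigma$-cordiality in Step~1: one must verify that already a single $\alpha\in\Sigma$ with $|\langle\alpha,\umu\rangle|\ge 2$ breaks cordiality for \emph{every} translation $t^{x(\mu)}$, and correspondingly forces a strict drop in the dimension bound on every maximal stratum, not merely on those whose translation is "aligned" with $\alpha$. This is the place where the \'echelonnage root system $\Sigma$ (as opposed to the absolute root system over $\brF$) must be used, the distinction mattering precisely for groups that do not split over $\brF$; treating these ramified cases carefully is the technical heart of the proof. A parallel care is needed in Step~2 when descending the $\mathbf J_b(F)$-orbit structure from Iwahori to level $K$, to ensure that distinct elements of $W(\umu)_{K,\fin}$ yield distinct orbits and that each orbit is indeed realised.
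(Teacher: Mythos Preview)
Your overall architecture differs from the paper's, and there are genuine gaps in each step.

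\textbf{Step 1.} The paper does \emph{not} use a dimension formula involving $\mathrm{def}(b)$ or any notion of ``$\sigma$-cordial'' elements. Instead it uses the simple bound coming from Theorem~\ref{dim-1-1}: for basic $b$, $\dim X_w(b)=\dim_{\brI}(\brI w\brI\cap[\t])\le \ell(w)$ with equality if and only if $\brI w\brI\subset[\t]$. By Proposition~\ref{GHN-4.6} this last inclusion is equivalent to $W_{\supp_\s(w)}$ being finite. Applied to $w=t^\l$, this is exactly the condition $\l\in W(\umu)_{\fin}$. The minusculeness of $\umu$ then comes from Proposition~\ref{l-supp} (a direct alcove-geometry computation of when $t^\l$ has a critical index), and the quasi-splitness of $\bJ_\t$ from Proposition~\ref{J-quasisplit}. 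Your route via a ``standard dimension formula'' is not made precise: it is unclear what formula you mean for an arbitrary translation $t^{x(\mu)}$, and the virtual dimension $d_w(b)$ is in general only an upper bound, so extracting $\mathrm{def}(b)=0$ and your condition (ii) simultaneously from $\dim X_{t^\l}(b)=\ell(t^\l)$ requires justification you have not given.

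\textbf{Step 2.} Your description of $W(\umu)_{K,\fin}$ is incorrect. By \eqref{Wfin}, membership requires \emph{two} conditions: $t^\l\in{}^K\tW$ \emph{and} $W_{\supp_\s(t^\l)}$ finite. The second condition is independent of $K$; it is not a ``generic finiteness of the projection'' condition and has nothing to do with $W_K$-orbits inside $\Admmu$. The paper's argument at general level never passes through the projection: it works directly with the decomposition $X(\mu,\t)_K=\bigsqcup_{x\in{}^K\!\Admmu}X_{K,x}(\t)$ and the inequalities $\dim X_{K,x}(\t)\le\dim X_x(\t)\le\ell(x)$, identifying the top-dimensional pieces with $\l\in W(\umu)_{K,\fin}$.

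\textbf{Orbit parametrisation.} Your proposed mechanism via Kottwitz invariants cannot work: every $t^\l$ for $\l\in W_0(\umu)$ has the \emph{same} Kottwitz invariant $\kappa(\mu)$, so this cannot separate orbits. The paper instead shows (Section~\ref{finalpart3}) that $\bJ_\t(F)$ acts transitively on the irreducible components of each individual $X_{K,t^\l}(\t)$, using the identification $X_{t^\l}(\t)\cong\bJ_\t(F)\times^{\bJ_\t(F)\cap\brK'}Y(w)$ with $Y(w)$ a classical Deligne--Lusztig variety (from \cite{GH}), together with irreducibility of $Y(w)$ (from \cite{Lusztig}). Distinctness of the orbits for different $\l$ comes for free since the closures $\overline{X_{K,t^\l}(\t)}$ are $\bJ_\t(F)$-stable and contain the corresponding open stratum.
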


The constraints on $(\bG, \mu, b, K)$ imposed by Theorem \ref{Main3} are in fact quite weak. For instance, if $(\bG, \mu, b)$ is such that $\mu$ is minuscule  and $b$ basic, and such that $\bG$ is split over $\breve F$, then there always exists an inner form $\bf H$ of $\bG$
such that $\dim X^{\bf H}(\mu, b)_\emptyset=\langle\mu, 2\rho\rangle$.
\smallskip

On the other hand, the condition that $\dim \XmubK{b}{K}$ be equi-dimensional of maximal dimension is much stronger.
\begin{theorem}[Theorem~\ref{thmmaxequi}]\label{Main4}
Assume that $\bG$ is quasi-simple over $F$ and that $\mu$ is not central. Let $b\in\bG(\breve F)$ be a representative of the unique basic element in $\BGmu$. Then $\XmubK{b}{K}$ is equi-dimensional of dimension equal to $\<\mu, 2 \rho\>$ if and only if the triple $(\bG_\ad, \mu_\ad, K)$ is isomorphic to  one of the following. 
	
	\begin{enumerate}
		\item $\big(\Res_{\tilde F/F}(D^\times_{1/n})_\ad, \o^\vee_1(\varphi_0),\emptyset\big)$.
		
		\item $\big(\Res_{\tilde F/F}\PGL_2(D_{1/2}), \o^\vee_2(\varphi_0),  \emptyset\big)$.

		\item $\big(\Res_{\tilde F/F}(\PGL_n),  \mu,  \emptyset\big)$.
		
	\end{enumerate}
			
\end{theorem}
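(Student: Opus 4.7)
The plan is to build directly on Theorem~\ref{Main3}, which already provides the necessary conditions $b$ basic, $\mathbf J_b$ a quasi-split inner form of $\bG$, $\umu$ minuscule in the échelonnage root system, and $W(\umu)_{K,\fin}\neq\emptyset$, together with the identification of $\mathbf J_b(F)$-orbits of top-dimensional irreducible components with the finite set $W(\umu)_{K,\fin}$. The content of Theorem~\ref{Main4} is therefore to characterize precisely when \emph{every} irreducible component achieves the bound $\<\mu,2\rho\>$, i.e.\ when there are no components of lower dimension.

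I would first argue that the parahoric level $K$ must equal the Iwahori level $\emptyset$. The projection $\pi_{\emptyset,K}\colon X(\mu,b)_\emptyset\to\XmubK{b}{K}$ is surjective, and its restriction to each Iwahori-level stratum $X_w(b)$ for $w\in\Adm(\mu)$ has fibers whose dimension depends on the relative position of $w$ to the Weyl group $W_K$. Unless $K$ is already excluded by Theorem~\ref{Main2} or falls into the extended Lubin-Tate case, one can produce a short admissible $w$ such that the image of $X_w(b)$ in $\XmubK{b}{K}$ carries an irreducible component of dimension strictly below $\<\mu,2\rho\>$; this is why all three listed cases take $K=\emptyset$.

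Once we are at the Iwahori level, exploit the decomposition $X(\mu,b)_\emptyset=\bigsqcup_{w\in\Adm(\mu)}X_w(b)$. Equi-dimensionality of maximal dimension means every nonempty $X_w(b)$ has dimension exactly $\<\mu,2\rho\>$. Since $b$ is basic with central Newton point in $\bG_\ad$, the standard bound $\dim X_w(b)\le\ell(w)$ together with $\ell(w)\le\<\mu,2\rho\>$ (with equality only on translation elements associated to extreme weights in the convex hull of $W\mu$) forces $X_w(b)=\emptyset$ for every $w\in\Adm(\mu)$ of length strictly less than $\<\mu,2\rho\>$. Thus the problem becomes: for which $(\bG,\mu)$ does the non-emptiness criterion of \cite{He-KR} kill every short admissible element against the unique basic $[b]\in\BGmu$?

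The final and main step is a root-system classification. The requirement that no short admissible element contributes is extremely restrictive, and a case analysis over quasi-split simple groups with $\umu$ minuscule in the échelonnage root system, combined with the restriction of scalars formalism, should isolate exactly the three families: affine type $\tilde A_{n-1}$ only, and within that, only cocharacters $\mu_\ad$ of the three listed shapes at a single embedding $\varphi_0$. The main obstacle is precisely this classification — for every candidate $(\bG_\ad,\mu_\ad)$ outside the listed three, one must exhibit an explicit short $w\in\Adm(\mu)$ in the correct $\sigma$-conjugacy class so that $X_w(b)\neq\emptyset$; conversely, in the Drinfeld case $(\Res_{\tilde F/F}\PGL_n,\mu,\emptyset)$ and in the two division-algebra-twisted cases (1) and (2), the combinatorics of $\Adm(\mu)$ together with the inner twist forces every non-extremal $w$ into the wrong $\sigma$-conjugacy class, giving $X_w(b)=\emptyset$ and thus equi-dimensionality.
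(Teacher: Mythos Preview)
Your central claim in the Iwahori step is wrong and this breaks the argument. You write that equi-dimensionality of $X(\mu,b)_\emptyset$ at height $\<\mu,2\rho\>$ forces $X_w(b)=\emptyset$ for every $w\in\Adm(\mu)$ with $\ell(w)<\<\mu,2\rho\>$. But the decomposition $X(\mu,b)_\emptyset=\bigsqcup_{w}X_w(b)$ is into \emph{locally closed} strata, and a low-dimensional stratum lying in the closure of a top-dimensional one does not produce a separate irreducible component. Already in the Drinfeld case (which is one of the three cases you want to \emph{keep}) the stratum $X_\tau(\tau)$ is nonempty and $0$-dimensional, while $\<\mu,2\rho\>=n-1$; yet $X(\mu,\tau)_\emptyset$ is equi-dimensional of dimension $n-1$. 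So the condition you are trying to characterize is not ``$X_w(b)=\emptyset$ for all short $w$''. What actually matters is whether every $w\in{}^K\!\!\Adm(\mu)_0$ with $X_{K,w}(\tau)\neq\emptyset$ lies below some $t^\lambda$, $\lambda\in W(\umu)_{K,\fin}$, in the partial order $\preceq_{K,\sigma}$ governing closures; equivalently, whether the \emph{maximal} elements of ${}^K\!\!\Adm(\mu)_0$ for $\preceq_{K,\sigma}$ are exactly those translations.

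The paper's route is different from yours in a second essential respect. Before any case analysis, it proves that maximal equi-dimensionality forces $(\bG,\mu)$ to be \emph{fully Hodge--Newton decomposable}: using the closure relations of Theorem~\ref{He-CDM-2.11}, any $w$ with $X_{K,w}(\tau)\neq\emptyset$ must satisfy $w\preceq_{K,\sigma}t^\lambda$ for some $\lambda\in W(\umu)_{K,\fin}$, whence $W_{\supp_\sigma(w)}$ is finite, which by Theorem~\ref{GHN-2.3} is the criterion for full HN decomposability. This reduces the classification to the \emph{finite} list in Theorems~\ref{f-HN}--\ref{f-HN-2}, after which one checks combinatorially which triples $(\tilde\Delta,\sigma,\mu,K)$ satisfy the condition that all $\preceq_{K,\sigma}$-maximal elements of ${}^K\!\!\Adm(\mu)_0$ are translations. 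Your proposal never reaches this reduction and instead tries to classify over all quasi-split groups with $\umu$ minuscule, which is a much larger search and, as noted, is governed by the wrong emptiness condition. Finally, your appeal to Theorem~\ref{Main2} to force $K=\emptyset$ is a non sequitur: that theorem concerns finiteness of fibers of $\pi_{K,K'}$, not equi-dimensionality of $\XmubK{b}{K}$; in the paper the fact that $K=\emptyset$ in all three surviving cases is an \emph{output} of the case analysis, not an input.
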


		{\rm Here  $\tilde F$ denotes a finite extension of $F$ and, for an adjoint reductive group $\tilde\bG$ over $\tilde F$ and a cocharacter $\tilde\mu$ of $\tilde\bG$ and an embedding $\varphi_0\colon\tilde F\to\ov F$, we denote by $\tilde\mu(\varphi_0)$ the cocharacter $\mu$  of $\Res_{\tilde F/F}(\tilde\bG)$ with $\mu_{\varphi}=0$ for $\varphi\neq\varphi_0$ and $\mu_{\varphi_0}=\tilde\mu$. Furthermore, $D_{1/n}$ denotes the central division algebra over $\tilde F$ with invariant $1/n$, and $D^\times_{1/n}$ the algebraic group over $\tilde F$ associated to its multiplicative group. In (3), there are two embeddings $\varphi_0, \varphi_1\colon\tilde F\to\ov F$ such that their restrictions  to the maximal unramified subextension of $\tilde F/F$ are distinct, and  the cocharacter $\mu$ is given as follows: $\mu_{\varphi_0}=\o^\vee_1$ and $\mu_{\varphi_1}= \o^\vee_{n-1}$ and $\mu_\varphi=0$ for $\varphi\notin\{\varphi_0, \varphi_1\}$.   }

\smallskip
			
The case (1) is the \emph{extended Drinfeld case}. The case (2)  is somewhat surprising and was unknown to us before. The case (3) in the case of an unramified quadratic extension $\tilde F/F$ is the \emph{Hilbert-Blumenthal case}. It was discovered by Stamm \cite{St} in the case  $\bG=\Res_{\tilde F/F}\GL_2$. 

It is remarkable that in all three cases the parahoric level structure $K$ is the Iwahori level. This implies the following characterization of the Drinfeld case.
\begin{corollary}[comp. Corollary~\ref{charDrin}]\label{MainvaryingK}
Assume that $\bG$ is quasi-simple over $F$ and that $\mu$ is not central. Then $\XmubK{\t}{K}$ is equi-dimensional of dimension equal to $\<\mu, 2 \rho\>$ for every $F$-rational parahoric level structure $K$ if and only if   $(\bG_\ad, \mu_\ad)$ is isomorphic to $\big(\Res_{\tilde F/F}(D^\times_{1/n})_\ad, \o^\vee_1(\varphi_0)\big)$.
\end{corollary}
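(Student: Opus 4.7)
The plan is to bootstrap from Theorem~\ref{Main4} (which handles the Iwahori level $K=\emptyset$) by combining it with the second part of Theorem~\ref{Main3}: for basic $b$, the equality $\dim \XmubK{b}{K}=\<\mu,2\rho\>$ holds if and only if $W(\umu)_{K,\fin}\neq\emptyset$. This reduces the classification of $K$-uniform equi-dimensional maximality to a finite combinatorial check.

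For the \emph{if} direction, the key observation is that in case~(1) of Theorem~\ref{Main4} the group $(D^\times_{1/n})_\ad$ is anisotropic over $\tilde F$: its derived group $\SL_1(D_{1/n})$ has no split $\tilde F$-torus. Via the identification $\CB(\bG,F)=\CB(\tilde\bG,\tilde F)$ for Weil restriction, the $F$-rational Bruhat--Tits building of $\bG_\ad$ reduces to a single point and $K=\emptyset$ is the only $F$-rational parahoric level structure. Thus the statement over all $K$ collapses to the Iwahori-level statement already given in Theorem~\ref{Main4}.

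For the \emph{only if} direction, specializing to $K=\emptyset$ places $(\bG_\ad,\mu_\ad)$ in one of the three cases of Theorem~\ref{Main4}, and I would rule out cases~(2) and~(3) by exhibiting a non-Iwahori parahoric $K$ at which equi-dimensional maximality fails. By the second part of Theorem~\ref{Main3} it suffices to find such $K$ with $W(\umu)_{K,\fin}=\emptyset$, since then $\dim \XmubK{\t}{K}<\<\mu,2\rho\>$ and equi-dimensional maximality is out of reach. In case~(3), the extended Hilbert--Blumenthal case, I would take $K$ to be a hyperspecial parahoric of the $\PGL_n$-factor in $\Res_{\tilde F/F_d}\PGL_n$: a direct check from~\eqref{Wfin}, using that $\mu_{\ad,\varphi_0}=\o^\vee_1$ and $\mu_{\ad,\varphi_1}=\o^\vee_{n-1}$ live at distinct embeddings with complementary dominant coweights, shows that no translation in $W(\umu)$ survives the stabilizer condition at $K$. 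In case~(2), the local Dynkin diagram of $\PGL_2(D_{1/2})$ has only two Frobenius-orbit vertices, hence only two non-Iwahori parahorics; inspecting both via the $\s$-action on the affine Dynkin diagram of $\GL_4$ over $\brF$ produces one with $W(\umu)_{K,\fin}=\emptyset$.

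The main obstacle is the verification in case~(2): the non-split twisted structure of $\PGL_2(D_{1/2})$ and the cocharacter $\o^\vee_2$ (which meets the two Frobenius orbits of affine simple coroots asymmetrically) require careful tracking of the $\s$-action and of admissible translations. Case~(3), while needing some unpacking of~\eqref{Wfin}, reduces cleanly to the classical observation that at hyperspecial level the basic locus of the associated Hilbert--Blumenthal Shimura variety is discrete, hence of dimension strictly below $\<\mu,2\rho\>=2(n-1)$.
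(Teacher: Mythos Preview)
Your ``if'' direction is fine and matches the paper: in the extended Drinfeld case the group is anisotropic modulo center, so $K=\emptyset$ is the only $F$-rational parahoric and Theorem~\ref{Main4} applies directly.

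Your ``only if'' direction contains a genuine gap in case~(2). You propose to rule out cases~(2) and~(3) by finding a parahoric $K$ with $W(\umu)_{K,\fin}=\emptyset$, arguing via Theorem~\ref{Main3} that then $\dim\XmubK{\t}{K}<\<\mu,2\rho\>$. This works in case~(3), but in case~(2) it is simply false: for $(\tilde A_3,\varrho_2,\o^\vee_2)$ the two non-Iwahori $\s$-stable parahorics are $K=\{s_0,s_2\}$ and $K=\{s_1,s_3\}$, and for \emph{both} one has $W(\umu)_{K,\fin}\neq\emptyset$. For instance, the paper's own case analysis (Section~\ref{s:equimax}, the case $(\tilde A_3,\Ad(\t_2),\o^\vee_2)$) shows that for $K=\{s_0,s_2\}$ the set $\KAdmmu_0$ contains the translations $s_3s_2s_0s_3\t$ and $s_1s_2s_0s_1\t$; similarly $s_2s_1s_3s_2\t\in{}^{\{s_1,s_3\}}\tW$ lies in $W(\umu)_{\{s_1,s_3\},\fin}$. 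So at both non-Iwahori levels the dimension \emph{is} maximal; what fails is equi-dimensionality, because $\KAdmmu_0$ also has non-translation maximal elements such as $s_1s_3s_0\t$. Your sufficient condition is strictly stronger than what you need, and it does not hold.

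The fix---and this is what the paper does---is to use Theorem~\ref{Main4} at full strength rather than just at $K=\emptyset$. Theorem~\ref{Main4} classifies \emph{all} triples $(\bG_\ad,\mu_\ad,K)$ for which $\XmubK{\t}{K}$ is equi-dimensional of maximal dimension, and in every case $K=\emptyset$. Hence, once you observe that in cases~(2) and~(3) there exist non-Iwahori $F$-rational parahorics (trivially checked from the $\s$-orbits on $\tSS$), Theorem~\ref{Main4} immediately tells you that equi-dimensional maximality fails at any such level. No computation of $W(\umu)_{K,\fin}$ is required.
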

One of our motivations of this paper was to characterize the Drinfeld case. Scholze suggested to characterize it  through the dimension of its underlying reduced scheme. Theorem \ref{Main4} shows that this is not quite possible. But Corollary \ref{MainvaryingK} shows that this  is  possible when $K$ is varying. 

As a consequence of Corollary \ref{MainvaryingK}, we can characterize the Drinfeld case as the only  Rapoport-Zink space which is a $\pi$-adic formal scheme. We place ourselves in the context of \cite[\S 4]{HPR}; in particular, in the rational RZ-data 
$(F, B, V, (\, , \, ), *, G, \{\mu\}, [b])$, the first entry $F$ is a field. Also, RZ-spaces are modelled on the local models of \cite[\S 2.6]{HPR}. Hence we make a tame ramification hypothesis, cf.~loc.~cit. 
\begin{theorem}\label{charpadicunif}
Let $\CD_{\BZ_p}$ be integral RZ-data  such that the associated reductive group $\bG$ is connected and quasi-simple over $\BQ_p$, and the associated cocharacter $\mu$ is non-central. Let $E$ be its reflex field, and let $\CM_{\CD_{\BZ_p}}$ be the associated RZ-space, a   formal scheme  flat over $\Spf O_{\breve E}$. Then $\CM_{\CD_{\BZ_p}}$ is a $\pi$-adic formal scheme if only if ${\CD_{\BZ_p}}$ is of extended Drinfeld type, in which case $\CM_{\CD_{\BZ_p}}$ is isomorphic to the disjoint sum of copies of  $\wh{\Omega}^n_{ E}\wh{\otimes}_{O_E}O_{\breve E}$, where $\wh{\Omega}^n_{ E}$ is the Deligne-Drinfeld formal model of the Drinfeld half space attached to $E$. 
\end{theorem}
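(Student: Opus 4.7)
The plan is to reduce the $\pi$-adicity of $\CM_{\CD_{\BZ_p}}$ to an equi-dimensionality statement for its underlying reduced locus, and then leverage Corollary~\ref{MainvaryingK} by propagating the condition across all parahoric levels.

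First, I would record that $\CM_{\CD_{\BZ_p}}$ is flat and normal over $\Spf O_{\breve E}$ of relative dimension $\langle\mu,2\rho\rangle$, and under the tame ramification hypothesis is étale-locally isomorphic to the local model of \cite{HPR} via the local model diagram. A flat formal scheme over $\Spf O_{\breve E}$ is $\pi$-adic precisely when $(\pi)$ is (up to radical) an ideal of definition, equivalently when its underlying reduced subscheme coincides scheme-theoretically with the entire special fiber. Combined with the flatness and the relative dimension, this translates into the condition that $\CM_{\CD_{\BZ_p}, red}$ be equi-dimensional of dimension $\langle\mu,2\rho\rangle$, the converse implication using flatness and the local model identification to see that no extra generators beyond $\pi$ can enter the ideal of definition.

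Second, via the identification $\CM_{\CD_{\BZ_p}, red} \cong \XmubK{b}{K}$ from \cite{RV}, the $\pi$-adic condition becomes the equi-dimensional maximal dimension condition on $\XmubK{b}{K}$ studied in Theorem~\ref{Main4}. The crucial step is now to upgrade this from a single parahoric level to every level. Given $\CD_{\BZ_p}$, refining or coarsening its lattice chain yields a new integral RZ datum $\CD'_{\BZ_p}$, and the corresponding transition morphism is proper and surjective, locally trivial in the refining direction with fibers isomorphic to partial affine flag varieties over the residue field (in particular themselves $\pi$-adic). Consequently, $\pi$-adicity at one $F$-rational parahoric level is inherited in both directions, forcing $\XmubK{b}{K'}$ to be equi-dimensional of dimension $\langle\mu,2\rho\rangle$ for \emph{every} $F$-rational parahoric level $K'$. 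By Corollary~\ref{MainvaryingK}, this characterizes $\CD_{\BZ_p}$ as being of extended Drinfeld type, which simultaneously excludes the exceptional cases~(2) and~(3) of Theorem~\ref{Main4}.

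For the converse and the explicit description, I would appeal to Drinfeld's original construction: when $\CD_{\BZ_p}$ is of extended Drinfeld type, the moduli problem of formal $O_{\tilde F}$-modules recovers (a disjoint union of copies of) $\wh\Omega^n_E \wh\otimes_{O_E} O_{\breve E}$, which is tautologically $\pi$-adic.

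The main obstacle is the propagation of $\pi$-adicity across parahoric levels within the HPR framework: one must check that varying the lattice chain in the \emph{integral} RZ datum (not merely in the associated group-theoretic datum $(\bG,\mu,b,K)$) produces transition morphisms of the expected geometric form and preserves $\pi$-adicity in both directions, so that the pointwise information of Theorem~\ref{Main4} at a single $K$ can genuinely be promoted to the uniform statement of Corollary~\ref{MainvaryingK}. An alternative, more direct route would be to analyze the formal completion of $\CM_{\CD_{\BZ_p}}$ at a basic point in cases~(2) and~(3) of Theorem~\ref{Main4}, using Stamm's explicit calculations \cite{St} for the Hilbert-Blumenthal case, to exhibit an extra generator in the ideal of definition beyond $\pi$ at some singular points of the local model, thereby directly ruling out $\pi$-adicity in those cases.
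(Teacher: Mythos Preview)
Your overall strategy matches the paper's primary approach (Section~\ref{ss:change}): reduce $\pi$-adicity to maximal equi-dimensionality of the underlying reduced scheme, propagate across parahoric levels, then invoke Corollary~\ref{charDrin} (which is Corollary~\ref{MainvaryingK}). The alternative you sketch at the end, via formal completions at a basic point, is precisely the paper's second approach in Section~\ref{ss:formbranch}.

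The one place where your argument needs repair is the propagation step. The claim that the transition morphism is ``locally trivial in the refining direction with fibers isomorphic to partial affine flag varieties'' is not accurate, and in any case $\pi$-adicity is a statement about ideals of definition which does not transfer simply by knowing the fibers are $\pi$-adic. The paper instead proves a short abstract lemma: if $K\subset K'$ and $f\colon \mathfrak X\to\mathfrak X'$ is a proper morphism of \emph{normal} flat formal $O_{\breve E}$-schemes inducing the natural map on reduced loci, and $f$ is finite on generic fibers, then $\mathfrak X$ is $\pi$-adic if and only if $\mathfrak X'$ is. One direction is immediate since $f$ is adic; for the other, one passes through the Stein factorization $\tilde f\colon \mathfrak X\to\tilde{\mathfrak X}'$, uses $\tilde f_*\CO_{\mathfrak X}=\CO_{\tilde{\mathfrak X}'}$ to push down the radical, and then uses normality of $\mathfrak X'$ to conclude $\CO_{\mathfrak X'}\cap \pi\CO_{\tilde{\mathfrak X}'}=\pi\CO_{\mathfrak X'}$. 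This cleanly handles what you correctly flagged as the main obstacle, without any fiberwise analysis.
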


Here the integral RZ-data are said to be of extended Drinfeld type if the rational RZ-data are of type (EL) with $B=D_{1/n}$, $\dim_B(V)=1$, $\mu=\o_1^\vee(\varphi_0)$ and $b$ basic, and the integral RZ-data are given by a complete periodic  $O_B$-lattice chain in $V$.   

Through Rapoport-Zink uniformization, this theorem implies that there is no $p$-adic uniformization of Shimura varieties beyond the Drinfeld case. Note that the characterization of $p$-adic uniformization through the fact that the basic Newton stratum makes up the whole special fiber leads to Kottwitz's determination of all \emph{uniform} pairs $(\bG, \mu) $, cf. \cite[\S 6]{Ko2} and Section~\ref{ss:unif}. It appears interesting to us that one  can also characterize the Drinfeld case in a \emph{purely local} way,  without relating it to a Shimura variety.

\smallskip
The lay-out of the paper is as follows. The paper consists of three parts. In the first part, we provide the necessary background and introduce the terminology used. The second part is devoted to the case of dimension zero. In Section \ref{s:reszero}, we discuss the main results of this part. Sections \ref{s:onedirzero} and \ref{s:otherdirzero} are devoted to the proofs. In section \ref{s:lattice-interpretation}, we explain in lattice-theoretic terms the minimal cases of Theorems \ref{Main1} and \ref{Main2}. In Section \ref{finalpart2}, we give the proofs of Theorems \ref{Main1} and \ref{Main2} above. The third part is devoted to the case of maximal dimension. In Section \ref{s:dimADLV},   we recall the dimension theory of some subsets of $\breve G$ and prove Proposition \ref{Mainbound}. In Section \ref{s:resmax}, we discuss the main results of this part. Section \ref{s:critind} is preparatory for the proof but it also contains results on Drinfeld's critical index set which are of independent interest (in particular, we solve a problem posed 20 years ago in \cite{RZ:indag}). In Section \ref{s:max}, we give the proof of Theorem \ref{Main3}, and in Section \ref{s:equimax} the proof of Theorem \ref{Main4}. In Section \ref{s:lattice-interpretation-max}, we explain the equi-maximal cases in lattice-theoretic terms. In Section \ref{s:padic}, we discuss various ways of singling out the Drinfeld case among the three cases occurring in the classification of Theorem \ref{Main4}.  Section \ref{finalpart3} gives the proofs of the results stated above for the case of maximal dimension. 

\smallskip

\noindent {\bf Acknowledgements:} We thank P.~Scholze for raising the question of characterizing the Drinfeld case through the dimension of its underlying reduced scheme and insisting that we state our results in their natural generality; similarly, we thank N.~Ramachandra for (implicitly) raising the question of characterizing the Lubin-Tate space. We also  thank L.~Fargues for helpful discussions and H.~Wang for pointing out a mistake in a previous version.

U.G. was supported by the grant SFB/TR 45 from the Deutsche Forschungsgemeinschaft. X.H. was partially supported by NSF grant DMS-1801352. M.R. was supported by the grant SFB/TR 45 from the Deutsche Forschungsgemeinschaft and by funds connected with the Brin E-Nnovate Chair  at the University of Maryland.

\smallskip

\noindent {\bf Notation:} For a local field $F$, we denote by $O_F$ its ring of integers and by $k$ its residue field. We denote by $\breve F$ the completion of the maximal unramified extension, by $O_{\breve F}$ or $\breve{O}_F$ its ring of integers, and by $\s$ its Frobenius generator of $\Gal(\breve F/F)$. 
\part{Background}
\section{Preliminaries}\label{sec:preliminaries}

\subsection{The  Iwahori-Weyl group} Let $F$ be a nonarchimedean local field and $\brF$ be the completion of the maximal unramified extension $F^{\rm un}$ of $F$. We denote by $\s$ its Frobenius morphism, and by $\pi\in O_F$ a uniformizer. Let $\bG$ be a connected reductive group over $F$. We fix a $\s$-stable Iwahori subgroup $\brI$ of $\breve G=\bG(\brF)$.

We fix a maximal torus $T$ which after extension of scalars is contained in a Borel subgroup of $\bG \otimes_F \brF$, and such that $\brI$ is the Iwahori subgroup fixing an alcove $\fka$ in the apartment attached to the split part of $T$. The {\it Iwahori Weyl group} is defined by $$\tilde W=N(\brF)/(T(\brF)\cap \brI),$$
cf.~\cite{Tits:Corvallis}, \cite{Haines-Rapoport}.
Let $W_0=N(\breve F)/T(\breve F)$. Then we have 
\begin{equation}
\tilde W=X_*(T)_{\Gamma_0} \rtimes W_0,
\end{equation} where $\Gamma_0=\Gal(\ov F/F^{\rm un})$. The splitting depends on the choice of a special vertex of the base alcove $\fka$ that we fix in the sequel. When considering an element $\l\in X_*(T)_{\Gamma_0}$ as an element of $\tilde W$, we write $t^\l$. 

Let $\tSS$ be the set of simple reflections in $\tilde W$ determined by the base alcove $\fka$ and $\BS=\tSS \cap W_0$. For any subset $K$ of $\tSS$, we denote by $W_K$ the subgroup of $\tW$ generated by simple reflections in $K$. We also denote by $^K \tilde W$ the set of representatives of minimal length of the cosets $W_K\backslash \tilde W$. If $W_K$ is a finite group, we denote by $\brK$ the corresponding standard parahoric subgroup.

The Iwahori-Weyl group is a quasi-Coxeter group. More precisely, 
\begin{equation}
\tilde W=W_a \rtimes \Omega,
\end{equation}
where $W_a$ is the affine Weyl group with set $\tilde \BS$ as simple reflections, and $\Omega$ is the set of elements stabilizing the base alcove $\frak a$, cf. \cite[\S 2.2]{He-CDM}. The length function on $W_a$ is extended to $\tilde W$ by $\ell(w\tau)=\ell(w)$, for $w\in W_a$ and $\tau\in\Omega$. For $w\in\tilde W$, we denote by $\tau(w)$ its image in $\Omega$. 

\subsection{Admissible sets and acceptable sets}\label{subsec:admissible-set} Let $\mu$ be a conjugacy class of cocharacters of $\bG$. We can always choose a $\breve F$-rational representative $\mu_+$ in this conjugacy class. We make a definite choice as follows. We identify $X_*(T)_{\Gamma_0, \mathbb R}$ with the standard apartment (the apartment attached to the split part of $T$), using our choice of special vertex of $\mathfrak a$. We then fix the unique Weyl chamber containing $\mathfrak a$, which we declare to be the dominant Weyl chamber. Then $\mu_+$ is to be chosen such that $t^{\mu_+} \mathfrak a = \mu_+ + \mathfrak a$ is contained in the dominant Weyl chamber. We denote by $\underline \mu$ the image in $X_*(T)_{\Gamma_0} $ of  $\mu_+$. 
\begin{remark}\label{rmk-dominant-antidominant}
The choice of dominant Weyl chamber determines a Borel subgroup $B$ of
$\bG\otimes_F\brF$ containing $T$. Note that  $\umu$ is equal to the
image  in $X_*(T)_{\Gamma_0, \mathbb R}$ of the \emph{$B$-anti-dominant} representative of the conjugacy class
$\mu \subset X_*(T)$! This phenomenon is
already visible when $\bG$ is split and is reflected by the minus sign in
equation~(5), p.~31, of~\cite{Tits:Corvallis}. The minus sign in turn is forced
upon us by loc.~cit., equation~(4) which could not possibly extend to the
non-commutative normalizer if the left hand side was replaced by $sX_\alpha
s\i$. It means that for $\lambda\in X_*(T)$, the element $\lambda(\pi)$ acts on
the apartment for $T$ by translation by $-\lambda$, i.e., as the element
$t^{-\lambda}$.

It also means that even for a split group the values $\<\mu,\a\>$ and $\<\umu,\a\>$ for a root $\a$ differ by sign.
\end{remark}

If $\umu$ is minuscule, then $\mu$ is minuscule; but the converse does not hold, comp. the table right before Lemma 5.4 in \cite{HPR}. More precisely, we have
\begin{lemma}\label{lemma-mu-minuscule}
 Write $\bG_\ad=\Res_{\tilde F/F}(\tilde \bG_\ad)$, where the $\tilde F$-group $\tilde\bG_\ad$ is absolutely simple. Let the maximal unramified subextension $F_d$ of $\tilde F/F$ have degree $d$, and write correspondingly $\umu=(\umu_1,\ldots,\umu_d)$, where the entries $\umu_i$ correspond to the various embeddings $\iota_i\colon F_d\to\brF$. If $\umu$ is minuscule, then for every $i$ there exists an embedding $\varphi_{i,0}\colon\tilde F\to \ov F$ inducing $\iota_i$ such that $\mu_\varphi=0$ for every $\varphi\neq\varphi_{i, 0}$ inducing $\iota_i$ and with $\mu_{\varphi_{i, 0}}$ minuscule. 
\end{lemma}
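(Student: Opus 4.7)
The plan is to reduce the claim slot-by-slot using the product decomposition
\[
\bG_\ad\otimes_F \breve F \;\cong\; \prod_{i=1}^d \Res_{\tilde F_i/\breve F}\bigl(\tilde\bG_\ad\otimes_{\tilde F}\tilde F_i\bigr),
\]
obtained from $\tilde F\otimes_F\breve F=\prod_i \tilde F_i$, where each $\tilde F_i/\breve F$ is totally ramified of degree $e=[\tilde F:F_d]$. Under this decomposition the cocharacter tuple $(\mu_\varphi)_\varphi$ breaks into blocks $\mu^{(i)}=(\mu_\varphi)_{\varphi|_{F_d}=\iota_i}$, the échelonnage root system is the disjoint union of the échelonnage root systems of the factors, and $\umu=(\umu_1,\ldots,\umu_d)$ accordingly. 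Hence $\umu$ is minuscule if and only if each $\umu_i$ is, and it suffices to fix a single index $i$.

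Write $\tilde L=\tilde F_i$, choose a base embedding $\varphi_0|_{F_d}=\iota_i$, and let $\Gamma_L=\Gal(\ov F/\varphi_0(\tilde L))\subset\Gamma_0$, a subgroup of index $e$. For coset representatives $\{\gamma_j\}$ of $\Gamma_0/\Gamma_L$, the embeddings over $\iota_i$ are exactly $\varphi_j=\gamma_j\varphi_0$. Shapiro's lemma gives a canonical isomorphism $X_*(T_i)_{\Gamma_0}\cong X_*(\tilde T)_{\Gamma_L}$ under which $\umu_i$ is represented by $\sum_j\gamma_j^{-1}\mu_{\varphi_j}$. By Steinberg's theorem, $\bG\otimes\breve F$ is quasi-split, hence so is $\tilde\bG_\ad\otimes\tilde L$ over $\tilde L$; the Borel $B$ used to define $\mu_+$ thus determines a $\Gamma_L$-stable Borel $\tilde B_{\varphi_0}$ of $\tilde\bG_\ad\otimes_{\tilde L,\varphi_0}\ov F$, with respect to which each $\lambda_j:=\gamma_j^{-1}\mu_{\varphi_j,+}$ is dominant. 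By the remark preceding the lemma, $\umu$ minuscule forces $\mu$ minuscule, so each $\lambda_j$ is either zero or a minuscule fundamental dominant coweight of the simple root system of $\tilde\bG_\ad\otimes_{\tilde L,\varphi_0}\ov F$.

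The final step is to exhibit a root of the échelonnage whose pairing with $\umu_i$ counts the nonzero $\lambda_j$. Let $\theta\in X^*(\tilde T)$ be the highest root of $\tilde\bG_\ad\otimes_{\tilde L,\varphi_0}\ov F$ relative to $\tilde B_{\varphi_0}$. Because $\Gamma_L$ preserves $\tilde B_{\varphi_0}$ and the highest root of a simple root system is unique, $\theta\in X^*(\tilde T)^{\Gamma_L}$ and descends to an échelonnage root (in fact the highest one) of the $i$-th factor. For any minuscule fundamental dominant coweight $\omega^\vee$ one has $\langle\omega^\vee,\theta\rangle=1$, so
\[
\langle\umu_i,\theta\rangle \;=\; \sum_j\langle\lambda_j,\theta\rangle \;=\; \#\{\,j:\lambda_j\ne0\,\}.
\]
Minuscularity of $\umu_i$ forces this cardinality to be at most one, and the unique nonzero $\mu_\varphi$ above $\iota_i$ is itself minuscule by the remark already invoked. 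The main obstacle is the verification that $\theta$ really represents an échelonnage root of the $i$-th factor, rather than merely a positive integer multiple of one; this rests on the quasi-splitness of $\tilde\bG_\ad\otimes\tilde L$ and on the injectivity of the averaging map $X^*(\tilde T)^{\Gamma_L}\hookrightarrow X^*(\tilde T)_{\Gamma_L}\otimes\mathbb{Q}$.
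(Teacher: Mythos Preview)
Your overall strategy---reduce to one factor, then express $\umu_i$ as the image of $\sum_j \lambda_j$ in the coinvariants via Shapiro's lemma---is exactly the paper's argument, which at that point simply declares that the conclusion ``follows easily.'' Your attempt to make this step explicit by pairing with the absolute highest root $\theta$ is natural, and the computation $\langle\umu_i,\theta\rangle=\#\{j:\lambda_j\ne 0\}$ is correct. The gap is in the claim that $\theta$ descends to an \'echelonnage root. This fails already for the adjoint ramified unitary group in three variables: there $X_*(\tilde T)_{\Gamma_L}\cong\BZ$, the unique positive \'echelonnage root is $\theta_{\rm ech}=2$ (as a functional on $\BZ$), while $\theta$ acts as~$1$, so $\theta=\tfrac12\theta_{\rm ech}$. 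Your stated concern at the end (that $\theta$ might be a positive \emph{integer} multiple of an \'echelonnage root) points in the wrong direction: the danger is rather that $\theta$ is a proper fraction, and neither quasi-splitness nor injectivity of the averaging map settles this.

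The repair is simple: pair with $\theta_{\rm ech}$ instead of $\theta$. Since $\tilde\bG_\ad\otimes\tilde L$ is quasi-split, the projection $X_*(\tilde T)\to X_*(\tilde T)_{\Gamma_L}$ takes dominant elements to dominant elements, and every nonzero dominant $\lambda_j$ has nonzero image $\bar\lambda_j$ (because $\langle\lambda_j,\theta\rangle=1\ne 0$ and $\theta$ is $\Gamma_L$-invariant, hence factors through coinvariants). The pairing $\langle\bar\lambda_j,\theta_{\rm ech}\rangle$ is then a positive integer---\'echelonnage roots pair integrally with the translation lattice $X_*(\tilde T)_{\Gamma_L}$, since translations preserve the set of affine reflection hyperplanes---so it is $\ge 1$. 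Summing gives $\langle\umu_i,\theta_{\rm ech}\rangle\ge\#\{j:\lambda_j\ne 0\}$, and minuscularity of $\umu_i$ in the \'echelonnage root system yields the bound. (In the paper's applications $\tilde\bG$ always splits over $\breve{\tilde F}$, so there $\theta=\theta_{\rm ech}$ and the issue does not arise; but the lemma as stated is more general.)
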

\begin{proof} One is immediately reduced to the case where $\tilde F/F$ is totally ramified, i.e., $d=1$; therefore, we may drop the index $i$.  Let $\tilde T$ be a maximal torus of $\tilde\bG$ which after extension to $\brF$ is contained in a Borel subgroup, and let $T=\Res_{\tilde F/F}(\tilde T)$. The sum homomorphism 
$X_*(T)=\Ind_{\Gamma_0}^{\tilde\Gamma_0} \big(X_*(\tilde T)\big)\to X_*(\tilde T)$
induces an identification 
\begin{equation}\label{sumcoinv}
 X_*( T)_{\Gamma_0}= X_*(\tilde T) .
\end{equation}
Here $\tilde \Gamma_0=\Gal(\ov F/\tilde F^{\rm un})$. Under the identification of \eqref{sumcoinv}, we have  $\umu=\sum_{\varphi} \mu_{+, \varphi}$.  From this the claim follows easily.
\end{proof}

Furthermore, we have 

\begin{lemma}\label{lemma-mu-central}
    With notation as above, $\mu$ is central if and only if $\umu$ is central.
\end{lemma}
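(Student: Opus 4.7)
The plan is to reduce to the case of an absolutely simple adjoint group over a totally ramified extension, where the claim follows from the fact that the closed antidominant Weyl chamber of a semisimple group is a pointed convex cone.

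The forward direction is immediate: if $\mu_+\in X_*(Z(\bG))$, its image $\underline\mu\in X_*(T)_{\Gamma_0}$ lies in the image of $X_*(Z(\bG))_{\Gamma_0}$, and is therefore central. For the converse, I would first pass to the adjoint quotient. Since the kernel of $X_*(T)\to X_*(T_\ad)$ is precisely $X_*(Z(\bG))$, the cocharacter $\mu$ is central if and only if the image $\mu_{+,\ad}$ in $X_*(T_\ad)$ vanishes; and $\underline\mu$ is central if and only if the image of $\mu_{+,\ad}$ in $X_*(T_\ad)_{\Gamma_0}$ vanishes. So the problem reduces to: for a semisimple adjoint group $\bG$, an antidominant $\mu_+\in X_*(T)$ with trivial image in $X_*(T)_{\Gamma_0}$ must itself be zero.

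Writing $\bG$ as a product of factors of the form $\Res_{\tilde F/F}(\tilde\bG)$ with $\tilde\bG$ absolutely simple adjoint, I would work on one factor at a time; and following the reduction used in the proof of Lemma~\ref{lemma-mu-minuscule}, by decomposing along the maximal unramified subextension $F_d$ of $\tilde F/F$, I would further reduce to the case where $\tilde F/F$ is totally ramified. In that case $X_*(T)_{\Gamma_0}=X_*(\tilde T)$ via the sum map, and $\underline\mu=\sum_\varphi \mu_{+,\varphi}$ where $\varphi$ ranges over $\Hom_F(\tilde F,\ov F)$. Because $\bG\otimes_F\brF=\prod_\varphi \tilde\bG_\varphi$ with the Borel decomposing factor-wise, antidominance of $\mu_+$ in $\bG$ implies that each $\mu_{+,\varphi}$ is antidominant in $\tilde\bG$. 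Since $\tilde\bG$ is semisimple, its closed antidominant Weyl chamber in $X_*(\tilde T)_\BR$ is a pointed convex cone (as the simple roots form a basis of $X^*(\tilde T)_{\BR}$); hence a vanishing sum $\sum_\varphi \mu_{+,\varphi}=0$ of antidominant summands forces each summand to be zero, and so $\mu_+=0$.

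The main obstacle I anticipate is the Galois-theoretic bookkeeping needed to reduce to the totally ramified factor, in particular verifying that the sum formula for $\underline\mu$ and the factor-wise antidominance both survive the decomposition; once these are in place, the pointed-cone step is elementary.
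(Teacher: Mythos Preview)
Your proposal is correct, and the core idea---that a nontrivial antidominant cocharacter cannot become zero in $\Gamma_0$-coinvariants, because the antidominant chamber of a semisimple group is a pointed cone and $\Gamma_0$ preserves antidominance---is exactly what the paper uses. However, the paper's argument is considerably more direct. Rather than reducing to absolutely simple factors and to totally ramified extensions, the paper works immediately in $X_*(T)_\BQ^{\Gamma_0}$: it observes that $[\umu]$ (the image of $\umu$ there) is the $\Gamma_0$-average of $\mu_+$, that every $\Gamma_0$-translate of $\mu_+$ remains antidominant because the Borel is defined over $\brF$, and hence that if $\langle\mu_+,\alpha\rangle<0$ for some absolute root $\alpha$, the pairing of $[\umu]$ with the relative root ${\rm res}(\alpha)$ is nonzero as well---contradicting centrality of $\umu$. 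This one-paragraph argument completely bypasses the Weil-restriction bookkeeping you flagged as the main obstacle; in effect, averaging over $\Gamma_0$ already subsumes the sum over embeddings $\varphi$ that you extract by hand. Your route makes the structure of $\umu$ more explicit and follows the template of Lemma~\ref{lemma-mu-minuscule}, but for this lemma the paper's approach is shorter and also sidesteps the question of whether $X_*(T)_{\Gamma_0}$ equals $X_*(\tilde T)$ or only $X_*(\tilde T)_{\tilde\Gamma_0}$ in the totally ramified step.
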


\begin{proof}
    If $\mu$ is central, then clearly $\umu$ is central. Conversely, assuming that $\umu$ is central, we need to show that $\langle \mu_+, \alpha \rangle = 0$ for every (absolute) root $\alpha$. Assume by contradiction that $\langle \mu_+, \alpha \rangle <0$ for some $\alpha$ (comp.~Remark~\ref{rmk-dominant-antidominant}).  Let us write $[\umu]$, when considering $\umu$ as an  element   of 
    $X_*(T)_\BQ^{\Gamma_0}$. We want to show that the relative root ${\rm res}(\alpha)$ defined by $\alpha$ by restriction to $X_*(T)_\BQ^{\Gamma_0}$ takes a strictly positive value on $[\umu]$.  However, with $\mu_+$ also every Galois translate of $\mu_+$ under an element of $\Gamma_0$ is anti-dominant; and $[\umu]$ is the average over the $\Gamma_0$-orbit of $\mu_+$. But then ${\rm res}(\alpha)$  takes a strictly positive value on $[\umu]$, and this contradicts the assumption that $\umu$ is central.
    \end{proof}

The {\it $\mu$-admissible set} is defined by
\begin{equation}
\Admmu=\{w \in \tilde W\mid w \le t^{x(\underline \mu)} \text{ for some }x \in W_0\} ,
\end{equation}
cf.~\cite{Ra}. For $\lambda$ a cocharacter (rather than a conjugacy class of cocharacters), we denote by $\Adm(\lambda)$ the admissible set of the conjugacy class of $\lambda$.
Let $B(\bG)$ be the set of $\s$-conjugacy classes in $\breve G$. Kottwitz \cite{Ko1, Ko2} gave a description of the set $B(\bG)$. It uses the {\it Kottwitz map},
\begin{equation}\label{kotmap}
\kappa\colon B(\bG)\to  \pi_1(\bG)_{\G} ,
\end{equation}
 where $\G$ is the Galois group of $\overline F$ over $F$. Any $\s$-conjugacy class $[b]$ is determined by two invariants: 
\begin{itemize}
	\item The element $\k([b]) \in \pi_1(\bG)_{\G}$; 
	
	\item The Newton point $\nu_b$ in the dominant chamber of $X_*(T)_{\G_0} \otimes \BQ$. 
\end{itemize}

The set of \emph{neutrally acceptable} $\s$-conjugacy classes is defined by 
\begin{equation}
\BGmu=\{[b] \in B(\bG)\mid \k([b])=\k(\mu), \nu_b \le \mu^\diamond\},
\end{equation}
where $\mu^\diamond = [\Gamma : \mathop{\rm Stab}_\Gamma(\mu_+)]^{-1}\sum_{\gamma \in \Gamma/\mathop{\rm Stab}_\Gamma(\mu_+)} \gamma(\mu_+)$ is the Galois average of $\mu_+$, an element of $X_*(T)^{\G_0} \otimes \BQ\cong X_*(T)_{\G_0} \otimes \BQ$. 

\subsection{Affine Deligne-Lusztig varieties}

The \emph{affine Deligne-Lusztig variety} (for the Iwahori subgroup) associated to  $w \in \tW$ and $b \in \breve G$  is
\begin{equation}
X_w(b)=\{g \brI \in \breve G/\brI\mid g \i b \s(g) \in \brI w \brI\},
\end{equation}
cf.~\cite{Ra}. Then $X_w(b)$ is (the set of $\overline{\mathbb F}_p$-points of) a locally closed subscheme of the affine flag variety of $\bf G$, locally of finite type over $\overline{\mathbb F}_p$ and of finite dimension; this follows from~\cite{RZ:indag}. If $F$ is of equal characteristic, then by affine flag variety we mean the ``usual'' affine flag variety; in the case of mixed characteristic, this notion should be understood in the sense of perfect schemes, as developed by Zhu~\cite{Zhu} and by Bhatt and Scholze~\cite{BS}.

Denote by ${\bf J}_b$ the  $\s$-centralizer group of $b$, an algebraic group over $F$ with $F$-rational points
\begin{equation}\label{scentrali}
{\bf J}_b(F)=\{g\in \bG(\breve F)\mid g^{-1}b\s(g)=b \}.
\end{equation}
Then ${\bf J}_b(F)$ acts on $X_w(b)$. 
Let $K \subset \tSS$ such that $W_K$ is finite, with corresponding standard parahoric subgroup $\brK\subset \breve G$. Here, and whenever we consider the space $\XmubK{b}{K}$ below, we assume that $\s(K)=K$. We set 
\begin{equation}
\XmubK{b}{K}=\{g \brK \in \breve G/\brK\mid g \i b \s(g) \in \brK\Admmu\brK\}.
\end{equation}

For $K=\emptyset$, we write simply $\XmubK{b}{}$ for $\XmubK{b}{K}$. Then $\XmubK{b}{}$ is a union of affine Deligne-Lusztig varieties. 

We will need the following result (conjectured in \cite{KR03, Ra}).

\begin{theorem}[\cite{He-KR}] \label{KR-conj}
    Let $K \subset \tSS$ such that $\s(K)=K$ and $W_K$ is finite.  Then $\XmubK{b}{K} \neq \emptyset$ if and only if $[b] \in \BGmu$. \qed
\end{theorem}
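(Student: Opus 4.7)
The plan is to treat the two implications separately, with essentially all the substance lying in the direction $[b]\in\BGmu\Rightarrow \XmubK{b}{K}\neq\emptyset$. For this direction it suffices to prove the case $K=\emptyset$: the projection $\breve G/\brI\to\breve G/\brK$ sends $\XmubK{b}{\emptyset}$ into $\XmubK{b}{K}$, since $\brI\Admmu\brI\subset\brK\Admmu\brK$. The converse implication, that $\XmubK{b}{K}\neq\emptyset$ forces $[b]\in\BGmu$, is the easy direction: if $g\brK\in\XmubK{b}{K}$ then $b$ is $\s$-conjugate to some $x\in\brK\Admmu\brK$, and the standard facts that $\kappa(x)=\kappa(\mu)$ (because every $w\in\Admmu$ satisfies $w\le t^{x(\umu)}$ in the Bruhat order and Bruhat-comparable elements share their image in $\pi_1(\bG)_\Gamma$) and $\nu_x\le\mu^\diamond$ (a Mazur-type inequality for parahoric cosets) yield $[b]\in\BGmu$.

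For the hard direction, with $K=\emptyset$, the strategy is to exhibit, for each $[b]\in\BGmu$, an element $w\in\Admmu$ and a lift $\dot w\in N(\breve F)$ whose $\s$-conjugacy class in $\bG(\breve F)$ equals $[b]$. Granted this, the identity coset $e\brI$ lies in $X_w(\dot w)$, so choosing $g\in\bG(\breve F)$ with $g\i b\s(g)=\dot w$ produces a point of $\XmubK{b}{\emptyset}$. The central tool will be He's theory of minimal length elements in $\s$-conjugacy classes of $\tW$: every $\s$-conjugacy class of $\tW$ contains an element of minimal length, and both $\kappa$ and $\nu$ of the image in $B(\bG)$ of such a representative are computable from its \emph{straight part}. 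The problem thereby reduces to showing that the map
\begin{equation*}
\{\s\text{-conjugacy classes of }\tW\text{ meeting }\Admmu\}\longrightarrow B(\bG)
\end{equation*}
surjects onto $\BGmu$.

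This surjectivity is the main obstacle. I would attack it by induction on the semisimple rank of $\bG$ via a Hodge-Newton type reduction: when the centralizer of the Newton point $\nu_b$ is contained in a proper $\s$-stable Levi $\bM\subsetneq\bG$, the search for a representative in $\Admmu$ can be transferred to the analogous problem for $\bM$ with an appropriate cocharacter in the $W_0$-orbit of $\umu$ and finished by induction. The base cases are the basic class of $\BGmu$, which is represented by the translation $t^\l$ for $\l$ in the $W_0$-orbit of $\umu$ (and $t^\l$ lies in $\Admmu$ by definition), together with the superbasic non-basic situations, which essentially reduce to $\PGL_n$-type combinatorics and yield to direct explicit construction. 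Combining these base cases with He's partial-conjugation technique, which permits moving between $\s$-conjugacy classes of $\tW$ while remaining inside $\Admmu$, then completes the argument.
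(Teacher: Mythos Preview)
The paper does not prove this theorem; it is quoted from \cite{He-KR} with a \qed box. So the comparison is to the argument in that reference, and to internal consistency of your outline.

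Your overall architecture is sound and close to \cite{He-KR}: reduce to $K=\emptyset$ via the projection $\breve G/\brI\to\breve G/\brK$, handle the easy implication by the Kottwitz map plus a Mazur-type inequality, and for the hard implication show that every class in $\BGmu$ is hit by some $w\in\Admmu$ (using minimal length / $\s$-straight elements). The approach in \cite{He-KR} is somewhat more direct than your Hodge--Newton induction: one shows that every $\s$-straight element with $\kappa=\kappa(\mu)$ and Newton point $\le\mu^\diamond$ already lies in $\Admmu$, which short-circuits the Levi reduction. Your inductive scheme is a reasonable alternative route, but it carries more bookkeeping.

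There is, however, a concrete error in your base case. You write that ``the basic class of $\BGmu$ \dots\ is represented by the translation $t^\l$ for $\l$ in the $W_0$-orbit of $\umu$.'' This is false: for $\l\in W_0(\umu)$ the element $t^\l$ has Newton point the $\s$-average of $\umu$, i.e.\ $\mu^\diamond$, so it represents the \emph{$\mu$-ordinary} (maximal) class, not the basic one. The basic class in $\BGmu$ is represented by the length-zero element $\t=\t(\mu)\in\Omega$ (this is exactly how the paper uses $\t$ throughout, e.g.\ in \S\ref{subsec:admissible-set} and Proposition~\ref{GHN-4.6}). Since $\t\in\Admmu$ trivially, the basic base case is still immediate once you use the correct representative; but as written your argument misidentifies which element does the job, and the stated reason (``$t^\l$ lies in $\Admmu$ by definition'') has nothing to do with basicness.
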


\subsection{Fine affine Deligne-Lusztig varieties}\label{fine}

We recall the definition of fine affine Deligne-Lusztig varieties inside the partial affine flag variety $\breve G/\brK$, cf. \cite[\S 3.4]{GH}.  
For $K\subset \BS$, $w \in {}^K \tW$ and $b \in \breve G$, the associated \emph{fine affine Deligne-Lusztig variety} is
\begin{equation}\label{def-fine-ADLV}
X_{K, w}(b)=\{g \brK\mid g \i b \s(g) \in \brK \cdot_\s \brI w \brI\}.
\end{equation} 

Note that we have the decomposition of the partial affine flag variety $\breve G/\brK$ into  ordinary affine Deligne-Lusztig varieties (for the parahoric subgroup associated to $K$),  $$\breve G/\brK=\bigsqcup_{x \in W_K\backslash \tW/W_K} \{g \brK\mid g \i b \s(g) \in \brK x \brK\} .$$ An  ordinary affine Deligne-Lusztig variety decomposes in turn  into a disjoint sum of  fine affine Deligne-Lusztig varieties, 
\begin{equation}
\{g \brK\mid g \i b \s(g) \in \brK x \brK\}=\bigsqcup_{w \in {}^K \tW\cap W_K x W_K} X_{K, w}(b),
\end{equation}
cf. \cite[\S 3.4]{GH}. 

\subsection{The decomposition of $\XmubK{b}{K}$}\label{subsec:EKOR} We set
\[
\KAdmmu=\Admmu \cap {}^K \tW.
\]
 It is proved in \cite[Thm. 6.1]{He-KR} that $\KAdmmu=W_K \Admmu W_K \cap {}^K \tW$. Hence 
\begin{equation}\label{decXmu}
\XmubK{b}{K}=\bigsqcup_{w \in \KAdmmu} X_{K, w}(b).
\end{equation}

We can read the definition~\eqref{def-fine-ADLV} as saying that $X_{K, w}(b)$ is the image of $X_w(b)$ under the projection map $\breve G/\brI \to \breve G/\brK$.
We call this decomposition the \emph{EKOR stratification}, and accordingly call the subsets $X_{K, w}(b)$ the \emph{EKOR strata} inside $\XmubK{b}{K}$. If $K=\emptyset$, we speak of the \emph{KR stratification} and of \emph{KR strata} instead. These stratifications are the ``local analogues'' of the stratifications defined in the global context in~\cite{HR}. But since here we always fix a $\s$-conjugacy class $[b]$, an EKOR stratum in our context really corresponds to the intersection of a global EKOR stratum with the Newton stratum attached to $[b]$. In~\cite{GH}, EKOR strata were called EO strata (see loc.~cit., Section 5.1).

\subsection{Tits data}\label{subsec:tits-data}

We recall the notion of Tits data and Coxeter data from \cite[Def.~5.3]{HPR}.
For an affine Coxeter system $(W_a, \tSS)$, we denote by $W_0$ the finite Weyl group, and by $\tW$ the associated extended affine Weyl group, and by $X_*$ the translation lattice of $\tW$.

\begin{definition}
\begin{altenumerate}
\item
A \emph{Tits datum (over $\brF$)} is a pair $(\tilde{\Delta}, \lambda)$, where $\tilde{\Delta}$ is a local Dynkin diagram, and $\lambda$ is a $W_0$-conjugacy class in $X_*$.
\item
A \emph{Coxeter datum} (over $\brF$) is a pair $((W_a, \tSS), \lambda)$, where $(W_a, \tSS)$ is an affine Coxeter system and $\lambda$ is a $W_0$-conjugacy class in $X_*$.
\end{altenumerate}
\end{definition}

A Tits datum yields a Coxeter datum by forgetting the arrows in the Dynkin diagram. In general, different Tits data may give rise to the same Coxeter datum. However, in type $A$ and, more generally, for any simply laced Dynkin diagram, the Coxeter datum determines the Tits datum uniquely.

We need to generalize this notion as follows, to cover also the situation over $F$. Over $\brF$, simple adjoint groups are classified up to isomorphism by their (absolute) local Dynkin diagram; cf.~the table in~\cite[\S 4.2]{Tits:Corvallis}. Over $F$, we need to take into account the case of groups which are not residually split. In~\cite[\S 4.3]{Tits:Corvallis}, Tits gives the classification in terms of the ``local index'' and ``relative local Dynkin diagram''. Here we choose to work instead with the absolute local Dynkin diagram (i.e., the affine Dynkin diagram attached to $\bG$ over $\brF$) together with the diagram automorphism induced by Frobenius. This datum is determined by $\bG/F$ (up to isomorphism), and determines the group $\bG$ over $F$ up to isomorphism.

\begin{definition}\label{def-tits-data}
\begin{altenumerate}
\item
    A \emph{Tits datum over $F$} is a triple $(\tilde{\Delta}, \delta, \lambda)$, where $\tilde{\Delta}$ is an absolute  local Dynkin diagram,  and $\delta$ is a diagram automorphism of $\tilde{\Delta}$, and $\lambda$ is a $W_0$-conjugacy class in the coweight lattice $X_*$ of $\tilde{\Delta}$.
\item
A \emph{Coxeter datum over $F$} is a tuple $((W_a, \tSS), \delta, \lambda)$, where $(W_a, \tSS)$ is an affine Coxeter system,  and $\delta$ is a length preserving automorphism of $W_a$, and $\lambda$ is a $W_0$-conjugacy class in $X_*$.
\end{altenumerate}
\end{definition}
Note that a Tits datum over $F$ gives rise to a Coxeter datum over $F$. In~\cite{HPR}, the notion of \emph{enhanced} Tits and Coxeter data was used, where an enhanced datum in addition specifies a parahoric level structure. Note that for an enhanced Coxeter datum $((W_a, \tSS), \lambda, K)$ in the sense of~\cite{HPR}, the associated parahoric subgroup is the one generated by the Iwahori and all simple affine reflections which are \emph{not} contained in $K$, a convention opposite to the one used in this text.

Next we explain the notion of restriction of scalars of Dynkin types over $F$ (i.e., Dynkin types together with a diagram automorphism) along an unramified field extension. It models the form of the extended affine Weyl group of a group which arises as such a restriction of scalars. Let $F_d/F$ denote the unramified extension of degree $d$, and let $(\tilde{\Delta}, \delta_d)$ be a local Dynkin diagram with diagram automorphism $\delta_d$. We then define $\Res_{F_d/F}(\tilde{\Delta}, \delta_d)$ as the Dynkin type
\[
    \tilde{\Delta}_1 \times \cdots \times \tilde{\Delta}_d
\]
with diagram automorphism $\delta$ where $\tilde{\Delta}_i = \tilde{\Delta}$ for all $i$, and $\delta$ is given by $\id\colon \tilde{\Delta}_i \to \tilde{\Delta}_{i+1}$ for $i=1, \dots, d-1$ and $\delta_d \colon \tilde{\Delta}_d \to \tilde{\Delta}_{1}$. So $\delta$ permutes the components cyclically, and the restriction of $\delta^d$ to any component is equal to $\delta_d$.

Specifying a translation element for $\Res_{F_d/F}(\tilde{\Delta}, \delta_d)$ amounts to giving a tuple $(\lambda_1, \dots, \lambda_d)$ consisting of $d$ translation elements for $\tilde{\Delta}$. It is central (or minuscule), if and only if all the $\lambda_i$ are central (or minuscule, respectively).

\begin{example}[The Lubin-Tate case]\label{ex-lubin-tate} This is the case with Tits datum $(\tilde{A}_{n-1}, \id, \omega_1^\vee)$. The corresponding group is $\GL_n$. This is a fully Hodge-Newton decomposable case (Section~\ref{s:fullHN}), and is even of Coxeter type in the sense of~\cite{GH} (and in this case the Coxeter property holds for arbitrary parahoric level). See Section~\ref{ss:LT} for a discussion of this case as a ``minimal dimension'' case. See Section~\ref{s:lattice-interpretation-LT} for a ``lattice description'' of the Lubin-Tate case.

    Similarly, we have the \emph{extended Lubin-Tate case} $(\Res_{F_d/F}(\tilde{A}_{n-1}, \id), (\omega_1^\vee, 0, \dots, 0))$. 
\end{example}

\begin{example}[The Drinfeld case]\label{ex-drinfeld}
    Here we consider the Tits datum $(\tilde{A}_{n-1}, \varrho_{n-1}, \omega_1^\vee)$, where $\varrho_{n-1}$ denotes rotation by $n-1$ steps, $\varrho_{n-1}(s_0) = s_{n-1}$, etc. The corresponding algebraic group is the group of units of a central division algebra of invariant $1/n$. This is a fully Hodge-Newton decomposable case (Section~\ref{s:fullHN}), and even a ``Coxeter'' case (for arbitrary parahoric level). See Section~\ref{s:lattice-interpretation-max}  for a ``lattice description'' of the Drinfeld case.

    Similarly, we have the \emph{extended Drinfeld case} $(\Res_{F_d/F}(\tilde{A}_{n-1}, \varsigma_1), (\omega_1^\vee, 0, \dots, 0))$. 
\end{example}

\subsection{Reduction to $\brF$-simple groups}\label{subsec:ghn-2.4-construction}

Let us recall the construction of~\cite{GHN} 3.4. Given an $F$-simple group $\bG$ of adjoint type together with a conjugacy class $\mu$ of cocharacters, we can decompose
\[
    \bG_{\brF} = \bG_1 \times \cdots \times \bG_d,
\]
where the $\bG_i$ are simple algebraic groups over $\brF$ and where the Frobenius $\s$ induces maps $\bG_i\to \bG_{i+1}$ (with indices viewed in $\mathbb Z/d$). Let $F_d$ denote the unramified extension of $F$ of degree $d$ in $\breve F$. We denote by $\bG'$ the algebraic group over $F_d$, with $(\bG')_{\brF} = \bG_1$, with Frobenius given by $(\s^d)_{|G_1}$. In other words, we write $\bG=\Res_{F_d/F}(\bG')$ for a quasi-simple group over $F_d$ which stays quasi-simple over $\breve F$. Correspondingly, the Tits datum of $\bG$ arises by restriction of scalars along $F_d/F$ as defined in Section~\ref{subsec:tits-data}.

We also define $\mu' = \sum_{i=1}^d \s_0^i(\mu_+)$, where $\s_0$ denotes the $L$-action (cf.~\cite{GHN} Def.~2.1), i.e., the Frobenius action corresponding to the quasi-split inner form of $\bG$.

Now suppose that $K = (K_1, \dots, K_d)$ is an $F$-rational parahoric level structure for $\bG$. Then $K_1$ is a $F_d$-rational parahoric level structure for $\bG'$.

We now consider the special situation that $\mu = (\mu_1, \dots, \mu_d)$ is a conjugacy class of cocharacters of $\bG$ where $\mu_i$ is central for all $i > 1$. Let $\t = (\t_1, \dots, \t_d)$ be a $\s$-conjugacy class in $\BGmu$; we may choose $\t_i$ central for all $i>1$.
Let $\t'=\Pi \t_i$ (this is well-defined as only one of $\t_i$ is noncentral).

Then it is easy to see that projection to the first factor induces an isomorphism $X^{\bG}(\mu, \t)_K \cong X^{\bG'}(\mu', \t')_{K_1}$. Examples of this situation are the extended Lubin-Tate case and the extended Drinfeld case mentioned in the examples above.

Moreover, if $K' = (K'_1, \dots, K'_d)$ is another $F$-rational parahoric level and $K\subseteq K'$, then we likewise have $X^{\bG}(\mu, \t)_{K'} \cong X^{\bG'}(\mu', \t')_{K'_1}$ and we obtain a commutative diagram
\[
    \xymatrix{X^{\bG}(\mu, \t)_K \ar[d] \ar[r]^-{\cong}& X^{\bG'}(\mu', \t')_{K_1} \ar[d] \\
        X^{\bG}(\mu, \t)_{K'} \ar[r]^-{\cong} & X^{\bG'}(\mu', \t')_{K'_1},
}
\]
where the vertical maps are the natural projections.

\section{Fully Hodge-Newton decomposable case}\label{s:fullHN}
\subsection{The $\s$-support}
For $w \in W_a$, we denote by $\supp(w)$ the support of $w$, i.e., the set of $i \in \tSS$ such that $s_i$ appears in some (or equivalently, every) reduced expression of $w$. For any length preserving automorphism $\theta$ of $\tW$, we set 
\begin{equation}\label{def-sigma-support}
\supp_\theta(w \t)=\bigcup_{n \in \BZ} (\Ad(\t)\circ \theta)^n(\supp(w)).
\end{equation} 
This applies in particular to the Frobenius action $\sigma$. Then $\supp_\s(w \t)$ is the minimal $\Ad(\t)\s$-stable subset $J$ of $\tSS$ such that $w \t \s\in W_J \rtimes \<\t \s\>$.

\subsection{Definition and classification of fully Hodge-Newton decomposable pairs $(\bG, \mu)$} In \cite{GHN}, the notion of fully Hodge-Newton decomposable pair $(\bG, \mu)$ is introduced. We refer to~\cite[Def.~3.1]{GHN} for the definition. Here, we use the following equivalent characterizations \cite[Thm. B, Thm.~3.3]{GHN}. 
\begin{theorem}\label{GHN-2.3}
    Let $(\bG, \mu)$ be a pair  as above, with $\bG$ quasi-simple over $F$, and let $K\subset\tSS$ with $\s(K)=K$ and $W_K$ finite.  The following are equivalent:
\begin{enumerate}
\item
The pair $(\bG, \mu)$ is {\it fully Hodge-Newton decomposable}.
\item
For each $w \in \Admmu$, there exists a unique $[b] \in \BGmu$ such that $\brI w \brI \subset [b]$.
\item
For each $w\in \KAdmmu$ with $X_{K, w}(\t)\ne \emptyset$, the set $W_{\supp_\s(w)}$ is finite. 

{\rm Here $\t$ denotes a representative of the unique basic element $[\t]$ in $\BGmu$.}
\end{enumerate}
In particular, condition~(3) is independent of $K$.\qed
\end{theorem}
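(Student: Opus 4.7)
The plan is to establish the equivalences (1) $\Leftrightarrow$ (2) and (1) $\Leftrightarrow$ (3), from which the final assertion that (3) is independent of $K$ follows automatically, since (1) and (2) make no reference to $K$.

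For (1) $\Rightarrow$ (2), the idea is to use the defining property of fully Hodge-Newton decomposable pairs: every non-basic $[b] \in \BGmu$ admits a Hodge-Newton decomposition with respect to some proper $\s$-stable standard Levi subgroup $\bM \subsetneq \bG$. Given $w \in \Admmu$, the intersection of $\brI w \brI$ with such an $[b]$ then reduces, via the HN decomposition, to a computation inside $\bM$; combining this reduction with the existence of a distinguished ``generic'' $\s$-conjugacy class of each $\brI w \brI$ (given by the image of $w$ under $\t \s$-conjugation), one obtains uniqueness of $[b]$ meeting $\brI w \brI$, while existence is supplied by Theorem~\ref{KR-conj}. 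Conversely, for (2) $\Rightarrow$ (1), if a non-basic $[b] \in \BGmu$ failed to be HN-decomposable, one would construct an explicit $w \in \Admmu$ such that $\brI w \brI$ meets both $[b]$ and a neighboring class, by exploiting the structure of minimal length elements in each $\s$-conjugacy class together with the class polynomials governing the decomposition of $\brI w \brI$.

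For (1) $\Leftrightarrow$ (3), the key tool is the theory of $P$-alcove elements of Görtz--Haines--Kottwitz--Reuman and its refinements by He--Nie. Concretely, if $w \in \KAdmmu$ has $W_{\supps(w)}$ finite, then $w$ lies in a finite subgroup $W_J \rtimes \<\t(w) \s\>$ with $\s(J)=J$, yielding an HN-decomposition with respect to the corresponding proper Levi; pushing this through the identification of fine ADLV with twisted ADLV in the Levi produces the HN-decomposability of any $[b]$ with $X_w(b)\neq\emptyset$. Conversely, if some $w \in \KAdmmu$ has $X_{K,w}(\t) \neq \emptyset$ but $W_{\supps(w)}$ infinite, one uses the $\s$-support to produce a non-basic Newton point in $\BGmu$ which does not separate from $\mu^\diamond$ by any wall, contradicting full HN-decomposability.

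The main obstacle will be the ``only if'' half of (1) $\Leftrightarrow$ (3), namely showing that failure of Hodge-Newton decomposability is already detected by the \emph{basic} $\s$-conjugacy class $[\t]$ via the $\s$-support condition. This requires a delicate non-emptiness criterion: one must produce, in any non-fully-HN-decomposable situation, an element $w \in \KAdmmu$ with $X_{K, w}(\t) \neq \emptyset$ and with $W_{\supps(w)}$ infinite. This is the point where the combinatorics of $\Admmu$, the relationship between the EKOR decomposition \eqref{decXmu} and the fine ADLV, and the characterization of non-emptiness for the basic locus all need to be combined. Once established, both the equivalences and the $K$-independence of (3) drop out simultaneously.
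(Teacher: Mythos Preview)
The paper does not prove this theorem: it is quoted from \cite[Thm.~B, Thm.~3.3]{GHN} and marked with \qed immediately after the statement. There is therefore no proof in the present paper to compare your proposal against.

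That said, your outline departs from how \cite{GHN} actually establishes these equivalences. In \cite{GHN} the argument is not organized as a direct (1) $\Leftrightarrow$ (2) and (1) $\Leftrightarrow$ (3); rather, a longer chain of equivalent conditions is proved, with the crucial bridge being the \emph{minuteness} condition on $\mu$ (Definition~3.2 there). The implication from full Hodge--Newton decomposability to the finiteness condition in (3) goes through a careful analysis of $\s$-straight elements and the structure of $\KAdmmu_0$, and the converse direction ultimately relies on the classification (Theorem~\ref{f-HN} here). Your sketch of (2) $\Rightarrow$ (1) via ``constructing an explicit $w$ meeting two classes'' and of (3) $\Rightarrow$ (1) via ``producing a non-basic Newton point not separated by any wall'' is plausible in spirit but does not match the route taken in \cite{GHN}, and the passages you flag as ``the main obstacle'' are indeed where the real work lies---work that in \cite{GHN} is done by classification rather than by a uniform combinatorial argument of the kind you suggest. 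If you intend to give an independent proof, the steps you label as obstacles would need to be filled in substantially; as written, the proposal is a reasonable strategic summary but not a proof.
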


In particular, in this case, for any $K \subset \tSS$ with $W_K$ finite and any $w \in \KAdmmu$, there exists a unique $[b] \in \BGmu$ such that $\brK \cdot_\s \brI w \brI \subset [b]$. This gives us a natural map 
\begin{equation}\label{mapHN}
\KAdmmu \to \BGmu, \quad w\mapsto [w] .
\end{equation}

We will later use the following statement. 
\begin{proposition}[{\cite[Prop.~5.6, Lemma 5.8]{GHN}}]\label{GHN-4.6}
Let $x\in \tW$. The following are equivalent:
\begin{enumerate}
\item
 $\brK \cdot_\s \brI x \brI \subset [\t]$,
\item
$\k(x)=\k(\t)$ and $W_{\supp_{\s}(x)}$ is finite.
\item
$\k(x)=\k(\t)$ and  $\Ad(x)\circ \s$ fixes a point in the closure of the base alcove.\qed
\end{enumerate}
\end{proposition}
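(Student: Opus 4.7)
My plan is to prove the equivalences by first establishing the combinatorial equivalence $(2)\Leftrightarrow(3)$ on the standard apartment, and then $(1)\Leftrightarrow(3)$ via a Lang--Steinberg argument at the parahoric fixing $p$. A preliminary observation is that since $[\t]$ is a full $\s$-conjugacy class in $\bG(\brF)$, condition $(1)$ is equivalent to the simpler $\brI x\brI\subseteq[\t]$, so the parahoric $\brK$ plays no essential role.

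For $(2)\Leftrightarrow(3)$, decompose $x=w\o$ with $w\in W_a$ and $\o\in\Omega$, so that by definition $\supp_\s(x)$ is the smallest $\Ad(\o)\s$-stable subset of $\tSS$ containing $\supp(w)$. Given $p\in\overline{\fka}$ fixed by $\Ad(x)\circ\s$, set $J_p:=\{s\in\tSS:s(p)=p\}$; then $J_p$ is spherical (it is a point-stabilizer inside a closed alcove, so $W_{J_p}$ is finite) and $\Ad(\o)\s$-stable (since $\Ad(\o)\s=\Ad(w\i)\Ad(x)\s$ fixes $p$), while $w$ also fixes $p$, so $\supp(w)\subseteq J_p$ and $\supp_\s(x)\subseteq J_p$. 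Conversely, if $J:=\supp_\s(x)$ is spherical and $\Ad(\o)\s$-stable, the facet of $\overline{\fka}$ pointwise-stabilized by $W_J$ is non-empty and invariant under the finite-order affine isometry $\Ad(\o)\s$; averaging any interior point over the cyclic group $\langle\Ad(\o)\s\rangle$ produces a fixed point of $\Ad(\o)\s$, which is also fixed by $w\in W_J$ and hence by $\Ad(x)\s$.

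For $(3)\Rightarrow(1)$, let $\brP_p\supseteq\brI$ be the parahoric subgroup associated to $p$ and let $\overline{\brP}_p$ be its reductive quotient over $k$. Conjugation by $\dot x\s$ stabilizes $\brP_p$ and induces a twisted Frobenius on $\overline{\brP}_p$; Lang--Steinberg surjectivity in the identity component shows that every element of $\brI\dot x\brI$ is $\s$-conjugate via $\brP_p$ to $\dot x$, so all elements of $\brI x\brI$ share the Newton point of $\dot x$. Since $\dot x\s$ has a fixed point in the apartment, its translation component is central, so this common Newton point is central (basic); combined with $\k(x)=\k(\t)$, uniqueness of the basic class with prescribed Kottwitz invariant forces $\brI x\brI\subseteq[\t]$. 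Conversely, $(1)$ forces $\k(x)=\k(\t)$ and gives $\dot x$ a central Newton point, which translates into the apartment-isometry $\dot x\s$ having a fixed point; conjugating by a suitable element of $\tW$ (which preserves (3) by the combinatorial equivalence already shown) moves this fixed point into $\overline{\fka}$, yielding $(3)$.

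The main obstacle will be the Lang--Steinberg step in $(3)\Rightarrow(1)$: surjectivity holds only on the identity component of $\overline{\brP}_p$, and the twisted Frobenius $\Ad(\o)\s$ may permute components non-trivially, so one has to use the condition $\k(x)=\k(\t)$ together with a careful choice of lift $\dot x$ to ensure that $\brI\dot x\brI$ lies in the image of the relevant Lang map --- equivalently, that the $\s$-component group of $\overline{\brP}_p$ meets $\brI\dot x\brI/\brI$ in a single coset determined by the Kottwitz invariant.
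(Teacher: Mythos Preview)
The paper gives no proof of this statement; it is quoted from \cite{GHN} with a \qed. Your argument for $(2)\Leftrightarrow(3)$ is correct, and your Lang--Steinberg argument for $(3)\Rightarrow(1)$ is also correct. In fact your anticipated ``main obstacle'' does not arise: the reductive quotient $\overline{\brP}_p$ of a parahoric subgroup is \emph{connected} by construction (a parahoric is the neutral component of a facet stabilizer), so Lang's theorem applies to it directly, and combined with surjectivity of the Lang map on the pro-unipotent radical this gives surjectivity on all of $\brP_p$.

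The genuine gap is in your sketch of $(1)\Rightarrow(3)$. You deduce from (1) only that $\dot x\in[\t]$, hence that the Newton point of $x$ is central, hence that $x\s$ fixes \emph{some} point of the (adjoint) apartment, and then you propose to ``conjugate by a suitable element of $\tW$'' to move that fixed point into $\overline{\fka}$. But $\s$-conjugating $x$ by $y\in\tW$ replaces $x$ by $y^{-1}x\s(y)$, a \emph{different} element, and neither (2) nor (3) is invariant under $\s$-conjugation. A concrete counterexample: in type $\tilde A_2$ with $\s=\id$ and $\t=\t_1$, take $x=s_1\t_1 s_1=s_1 s_2\t_1$. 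Then $\dot x$ is conjugate to $\dot\t_1$, hence basic, and $x$ does fix a point of the apartment (namely $s_1$ applied to the barycenter of $\fka$), but this point lies outside $\overline{\fka}$; indeed $\supp_\s(x)=\tSS$, so (2) and (3) fail for $x$. (Condition (1) also fails: Deligne--Lusztig reduction via $s_1$ shows that $\brI x\brI$ meets the non-basic class $[s_2\t_1]$.) Thus ``$\dot x\in[\t]$'' is strictly weaker than (1) and does not imply (3). A correct argument for $(1)\Rightarrow(2)$ must exploit that \emph{all} of $\brI x\brI$ lies in $[\t]$ --- for instance via Deligne--Lusztig reduction to minimal-length elements in the $\s$-cyclic-shift class of $x$, or equivalently by controlling the generic Newton point of $\brI x\brI$.
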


In the next two theorems, we give the classification of the fully Hodge-Newton decomposable cases following~\cite{GHN} Theorem~3.5.

\begin{theorem}\label{f-HN}
Assume that $\bG$ over $F$ is absolutely simple and that $\mu$ is not central. Then $(\bG, \mu)$ is fully Hodge-Newton decomposable if and only if the associated Tits datum is one of the following:
\renewcommand{\arraystretch}{1.7}
\[\begin{tabular}{|c|c|c|}
\hline
$(\tilde A_{n-1}, \id,  \o^\vee_1)$ & $(\tilde A_{n-1}, \varrho_{n-1}, \o^\vee_1)$ & $(\tilde A_{n-1}, \varsigma_0, \o^\vee_1 )$
\\
\hline
$(\tilde A_{2m-1}, \varrho_1 \varsigma_0, \o^\vee_1)$ & $(\tilde A_{n-1}, \id, \o^\vee_1+\o^\vee_{n-1})$ & 
\\
\hline
$(\tilde A_3, \id, \o^\vee_2)$ & $(\tilde A_3, \varsigma_0, \o^\vee_2)$ & $(\tilde A_3, \varrho_2, \o^\vee_2)$
\\
\hline
\hline
$(\tilde B_n, \id, \o^\vee_1)$ & $(\tilde B_n, \Ad(\t_1), \o^\vee_1)$ &
\\
\hline\hline
$(\tilde C_n, \id, \o^\vee_1)$ & $(\tilde C_2, \id, \o^\vee_2)$ & $(\tilde C_2, \Ad(\t_2), \o^\vee_2)$
\\
\hline\hline
$(\tilde D_n, \id, \o^\vee_1)$ &
$(\tilde D_n, \varsigma_0, \o^\vee_1)$ & 
\\
\hline
\end{tabular}
\]
\end{theorem}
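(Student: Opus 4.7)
The plan is to reduce to and invoke Theorem~3.5 of \cite{GHN}, where the analogous classification is established for pairs $(\bG,\mu)$ over $\brF$ (rather than over $F$). The present statement enhances that result by also tracking the Frobenius automorphism $\delta$ of the local Dynkin diagram, i.e., by keeping the $F$-form of $\bG$ rather than only the $\brF$-form. So the first step is to use \cite{GHN} to obtain the list of absolute types $\tilde{\Delta}$ and cocharacter classes $\mu$ which can possibly appear, and the second step is, for each such $(\tilde{\Delta},\mu)$, to enumerate which diagram automorphisms $\delta$ preserve $\mu$ (a necessary condition, since $\mu$ is $F$-rational) and still yield fully Hodge-Newton decomposable pairs.

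For the first step, the strategy is to use the characterization in Theorem \ref{GHN-2.3}(2): $(\bG,\mu)$ is fully HN-decomposable if and only if each Iwahori double coset inside $\Admmu$ meets a unique $\s$-conjugacy class, equivalently each non-basic $[b]\in\BGmu$ has a Newton point whose convex polygon meets the $\mu$-polygon at an inner break point aligned with a proper standard Levi. This is a strong restriction: enumerating $\BGmu$ via Kottwitz's description shows that it forces $\umu$ to be ``small,'' most often minuscule, with only a handful of low-rank exceptions (as in $\tilde{A}_3$ with $\o_2^\vee$, or $\tilde{C}_2$ with $\o_2^\vee$). One then goes type by type through the absolute Dynkin types: in type $A$, the minuscule cocharacters and sums of two minuscules contribute; in classical types $B,C,D$, only the vector minuscule $\o_1^\vee$ survives generically; exceptional types contribute nothing. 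This is exactly the content of \cite[Thm.~3.5]{GHN}.

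For the second step, we enumerate the diagram automorphisms of each $\tilde{\Delta}$ which fix the $W_0$-conjugacy class of $\mu$, and check which ones preserve full HN-decomposability. Because the definition of fully HN-decomposable is intrinsically $\sigma$-equivariant (via $\supp_\s$ in characterization (3) of Theorem \ref{GHN-2.3}), whether or not $\delta$ is admissible is in principle a condition that must be verified; however, it turns out that whenever $\delta$ stabilizes the $W_0$-orbit of $\mu$ and $(\tilde{\Delta},\mu)$ itself is fully HN-decomposable over $\brF$, the pair $(\tilde{\Delta},\delta,\mu)$ is still fully HN-decomposable over $F$ — intuitively, because the allowed $\delta$'s are of small order and compatible with the small Newton stratification. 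So the enumeration reduces to the combinatorial question: for each $(\tilde{\Delta},\mu)$ coming from step one, which $\delta$ satisfy $\delta(\mu)\in W_0\cdot\mu$? For $\tilde{A}_{n-1}$ with $\mu=\o_1^\vee$, the rotation $\varrho_{n-1}$ and reflection $\varsigma_0$ fix the $W_0$-orbit, as does $\varrho_1\varsigma_0$ in the even case $n=2m$; for the self-dual $\o_1^\vee+\o_{n-1}^\vee$, only $\id$ survives; and so on for the $\tilde{A}_3$, $\tilde{C}_2$ exceptional entries. For $\tilde{B}_n$ and $\tilde{D}_n$, the compatible $\delta$'s are exactly $\id$ and, respectively, $\Ad(\t_1)$ and $\varsigma_0$. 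Cross-referencing these lists produces the table.

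The main obstacle is the completeness of the case analysis — specifically, verifying for each admissible $(\tilde{\Delta},\delta,\mu)$ that the $\s$-twisted version of condition (3) in Theorem \ref{GHN-2.3} really does hold; i.e., that no new non-HN-decomposable $\s$-conjugacy class appears in $\BGmu$ when one twists $\bG$ by $\delta$. This requires a careful analysis of the $\s$-support $\supp_\s$ in each case to confirm that $W_{\supp_\s(w)}$ remains finite for every $w\in\Admmu$ that $\sigma$-conjugates into $[\t]$, and conversely that for the excluded pairs one can exhibit an element $w\in\Admmu$ with $W_{\supp_\s(w)}$ infinite. Beyond these verifications, the result is essentially a transcription of \cite[Thm.~3.5]{GHN} into the $F$-rational language of Definition \ref{def-tits-data}.
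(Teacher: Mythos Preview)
Your proposal rests on a misreading of what is being cited. In the paper, Theorem~\ref{f-HN} is not proved at all: it is simply the statement of \cite[Thm.~3.5]{GHN}, restated in the language of Tits data over $F$ introduced in Definition~\ref{def-tits-data}. The classification in \cite{GHN} is already carried out over $F$, i.e., it already incorporates the Frobenius automorphism $\delta$; it is not a result over $\brF$ that the present paper is upgrading. So there is nothing to reduce or reprove here.

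More seriously, the central claim in your second step is false. You assert that whenever $(\tilde\Delta,\mu)$ is fully Hodge--Newton decomposable over $\brF$ and $\delta$ stabilizes the $W_0$-orbit of $\mu$, the triple $(\tilde\Delta,\delta,\mu)$ is still fully HN-decomposable over $F$. But every rotation $\varrho_k$ of $\tilde A_{n-1}$ stabilizes every $W_0$-orbit of coweights (since $\varrho_k=\Ad(\t_k)$ acts on translations via an element of $W_0$), yet only $\varrho_0=\id$ and $\varrho_{n-1}$ appear in the table for $\mu=\o_1^\vee$. A concrete counterexample is $(\tilde A_3,\varrho_2,\o_1^\vee)$, i.e., $\bG=\GL_2(D)$ for $D$ a quaternion division algebra: here $B(\bG,\o_1^\vee)$ contains the class with Newton point $(\tfrac13,\tfrac13,\tfrac13,0)$, and the only proper $F$-rational Levi is $D^\times\times D^\times$ (corresponding to $\GL_2\times\GL_2\subset\GL_4$), with respect to which this class is \emph{not} HN-decomposable (the partial sums at position $2$ differ). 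So your proposed enumeration would overcount, and the ``intuitive'' justification you give does not survive this example. The actual classification in \cite{GHN} proceeds via the minuteness criterion, which is genuinely sensitive to $\delta$ and not merely to whether $\delta$ fixes $W_0\cdot\mu$.
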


\begin{theorem}\label{f-HN-2}
    Assume that $\bG$ is quasi-simple over $F$ and that $\mu$ is not central. Then the pair $(\bG, \mu)$ is fully Hodge-Newton decomposable if and only if the associated Tits datum is of type $(\Res_{F_d/F}(\tilde{\Delta}, \delta), (\mu_1, \dots, \mu_d))$ where one of the following two possibilities occur. 
    \begin{enumerate}
        \item 
            There is a unique $i$ such that $\mu_i$ is non-central, and $(\tilde{\Delta}, \delta, \mu_i)$ is one of the triples listed in Theorem~\ref{f-HN}, or
        \item
            $(\tilde{\Delta}, \delta) = (\tilde{A}_{n-1}, \id)$ and there exist $i \ne i'$ such that  $\mu_i = \omega_1^\vee$, $\mu_{i'} = \omega_{n-1}^\vee$, and $\mu_j$ is central for all $j\ne i, i'$.
    \end{enumerate}
\end{theorem}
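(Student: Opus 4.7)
The plan is to reduce to the absolutely simple case already handled by Theorem~\ref{f-HN} via the restriction-of-scalars construction of Section~\ref{subsec:ghn-2.4-construction}, and then back-translate the classification.

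First I would apply Section~\ref{subsec:ghn-2.4-construction} to write $\bG=\Res_{F_d/F}(\bG')$ with $\bG'$ absolutely simple over $F_d$, so that $\bG_{\breve F}=\bG_1\times\cdots\times\bG_d$ is a product of $d$ pairwise isomorphic $\breve F$-simple groups cyclically permuted by $\sigma$, and decompose $\mu=(\mu_1,\ldots,\mu_d)$ accordingly. Setting
\[
\mu' \;:=\; \sum_{i=1}^{d} \mu_i \;\in\; X_*(T'),
\]
this agrees (up to $W_0'$-conjugacy) with the cocharacter $\mu'=\sum_i\sigma_0^i(\mu_+)$ of $\bG'$ defined in loc.~cit.; note that since each $\mu_i$ is anti-dominant (Remark~\ref{rmk-dominant-antidominant}) so is their sum. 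The Tits datum of $\bG'$ over $F_d$ is precisely the $(\tilde\Delta,\delta)$ appearing in the statement.

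The main step is the reduction lemma: $(\bG,\mu)$ is fully Hodge-Newton decomposable if and only if $(\bG',\mu')$ is. To prove this, I would use that the twisted product $(b_1,\ldots,b_d)\mapsto b_1\sigma(b_2)\cdots\sigma^{d-1}(b_d)$ induces a bijection $B(\bG)\isoarrow B(\bG')$ identifying Kottwitz invariants (up to the natural scaling) and Newton points, and hence restricting to a bijection $B(\bG,\mu)\isoarrow B(\bG',\mu')$. Moreover, the $F$-rational Levi subgroups of $\bG$ are exactly the Weil restrictions $\Res_{F_d/F}(\bM')$ of $F_d$-Levi subgroups $\bM'\subsetneq\bG'$; a tuple $(\mu_1,\ldots,\mu_d)$ factors through $\bM^d_{\breve F}$ iff each $\mu_i$ does, and since the $\mu_i$ are anti-dominant this is in turn equivalent to $\mu'=\sum_i\mu_i$ factoring through $\bM'$. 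Consequently an HN-decomposition of $[b]\in B(\bG,\mu)$ via a proper $F$-Levi of $\bG$ corresponds bijectively to an HN-decomposition of its image $[b']\in B(\bG',\mu')$ via the corresponding proper $F_d$-Levi of $\bG'$. (Alternatively, the same equivalence can be checked through criterion (3) of Theorem~\ref{GHN-2.3}, working directly with $\supp_\sigma$ on $\tW=(\tW')^d$ with the cyclic shift action.)

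Granted the lemma, Theorem~\ref{f-HN} lists the possibilities for $(\tilde\Delta,\delta,\mu')$; it remains to enumerate the decompositions $\mu'=\sum_{i}\mu_i$ into anti-dominant coweights. For every entry in the table of Theorem~\ref{f-HN} other than $(\tilde A_{n-1},\id,\omega_1^\vee+\omega_{n-1}^\vee)$, the cocharacter $\mu'$ is either minuscule or (in the $(\tilde A_3,\cdot,\omega_2^\vee)$ exceptions) a single fundamental coweight; all such $\mu'$ are extremal rays in the anti-dominant cone, so any decomposition as an anti-dominant sum forces all but one $\mu_i$ to be central, giving case (1). The exceptional $\mu'=\omega_1^\vee+\omega_{n-1}^\vee=e_1-e_n$ admits, in addition to the trivial decomposition, precisely one further splitting into two non-central anti-dominant coweights, namely $\omega_1^\vee$ on one factor and $\omega_{n-1}^\vee$ on another (with the remaining $\mu_j$ central); this yields case (2) and is the unique source of pairs with more than one non-central component.

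The principal obstacle is the compatibility lemma of the second paragraph: carefully verifying that fully Hodge-Newton decomposability transfers under the Weil restriction bijection $B(\bG)\cong B(\bG')$, with the Levi correspondence, Kottwitz invariants and Newton dominance matching on the nose. Once this is in place, the remaining content is combinatorial bookkeeping on the table of Theorem~\ref{f-HN}.
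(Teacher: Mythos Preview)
Your proposal is correct and follows essentially the same architecture as the paper: reduce to the $\brF$-simple case via the construction of Section~\ref{subsec:ghn-2.4-construction}, classify there by Theorem~\ref{f-HN}, then back-translate.

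The one difference worth noting is how the reduction lemma is justified. The paper does not argue directly with $B(\bG)$ and Levi subgroups as you propose; instead it invokes the \emph{minuteness} criterion from \cite[Def.~3.2, Thm.~3.3]{GHN}: $(\bG,\mu)$ is fully Hodge--Newton decomposable if and only if $\mu$ is minute, and minuteness of $\mu$ is equivalent to minuteness of $\mu'$ (this equivalence is stated in \cite[Section~3.4]{GHN}). This gives the reduction in one line. Your direct argument via the bijection $B(\bG)\cong B(\bG')$ and the Levi correspondence would also work, but is more laborious; your parenthetical alternative via criterion~(3) of Theorem~\ref{GHN-2.3} is closer in spirit to the paper's route. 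For the back-translation step (enumerating anti-dominant decompositions $\mu'=\sum_i\mu_i$), you are more explicit than the paper, which leaves this as implicit; your observation that a single fundamental coweight admits only the trivial decomposition is exactly what is needed.
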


Here we use the same labeling of the Coxeter graph as in Bourbaki \cite{Bour}. If $\o^\vee_i$ is minuscule, we denote the element  $\tau(t^{\omega^\vee_i})\in \Omega$ by $\t_i$; conjugation by $\t_i$ is a length preserving automorphism of $\tW$ which we denote by $\Ad( \t_i)$. For type $A_n$, $\Ad(\t_i)$ is the rotation of the affine Dynkin diagram by $i$ steps (i.e., $s_0$ is mapped to $s_i$, $s_1$ is mapped to $s_{i+1}$, etc.), and we denote it by $\varrho_i$ instead.  Let $\varsigma_0$ be the unique nontrivial diagram automorphism for the finite Dynkin diagram if $W_0$ is of type $A_n, D_n$ (with $n \ge 5$) or $E_6$. For type $D_4$, we also denote by $\varsigma_0$ the diagram automorphism which interchanges $\a_3$ and $\a_4$.

If we assume that $\mu$ is non-central in every component of the affine Dynkin diagram, the fully Hodge-newton decomposable cases are the cases in Theorem~\ref{f-HN} and the Hilbert-Blumenthal case $(\tilde A_{n-1} \times \tilde A_{n-1}, {}^1 \varsigma_0, (\o^\vee_1, \o^\vee_{n-1}))$, where the automorphism ${}^1 \varsigma_0$ on $\tilde A_{n-1} \times \tilde A_{n-1}$ is the automorphism which exchanges the two factors.

To derive Theorem~\ref{f-HN-2} from Theorem~\ref{f-HN}, note that for a group $\bG$ which is quasi-simple over $F$, but not over $\brF$, we can apply the construction in~\cite[Section 3.4]{GHN}, cf.~Section~\ref{subsec:ghn-2.4-construction}. We then have that $\bG'$ is quasi-simple over $\brF$, and that $\mu$ is minute if and only if $\mu'$ is minute (comp.~\cite{GHN} Def.~3.2 and Section 3.4). Applying Theorem~\ref{f-HN} to $(\bG', \mu')$, one obtains Theorem~\ref{f-HN-2}.

\subsection{Basic case} Let $\t=\t(\mu)\in\Omega$ be the length-$0$ element in $\tW$ such that $\Admmu \subset W_a \t$. Then $[\t]$ is the unique basic $\s$-conjugacy class in $\BGmu$. 

Set 
\begin{equation}\label{adm0}
\KAdmmu_0=\{w \in \KAdmmu\mid W_{\supp_\s(w)} \text{ is finite}\}.
\end{equation}
If $(\bG, \mu)$ is fully Hodge-Newton decomposable,  the set $\KAdmmu_0$ is just the fiber over the unique basic element of $\BGmu$ of the map \eqref{mapHN}.

The following result is proved in \cite{GHN}.

\begin{theorem}\label{fine-dec}
Suppose that $(\bG, \mu)$ is a fully Hodge-Newton decomposable pair. Then $$\XmubK{\t}{K}=\bigsqcup_{w \in \KAdmmu_0} X_{K, w}(\t),$$
and $X_{K, w}(\t)\ne \emptyset$ for all $w \in \KAdmmu_0$.
\qed
\end{theorem}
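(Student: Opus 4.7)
My plan is to combine three ingredients that have already been set up in the excerpt: the EKOR decomposition of $\XmubK{\t}{K}$, the defining property of the fully Hodge--Newton decomposable situation, and the intrinsic characterization of the basic class given in Proposition~\ref{GHN-4.6}.

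First I would recall that by \eqref{decXmu} we have the \emph{unconditional} decomposition
\[
   \XmubK{\t}{K} \;=\; \bigsqcup_{w \in \KAdmmu} X_{K, w}(\t),
\]
so the only content of the theorem is the identification of the non-empty strata: I need to prove that $X_{K, w}(\t)\neq\emptyset$ if and only if $W_{\supp_\s(w)}$ is finite.

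Next, I would translate the non-emptiness of $X_{K, w}(\t)$ into an assertion about $\s$-conjugacy classes. Directly from the definition \eqref{def-fine-ADLV}, $X_{K, w}(\t)\neq\emptyset$ is equivalent to $\brK\cdot_\s \brI w\brI$ meeting the class $[\t]$. Here one uses that $\brK\cdot_\s\brI w\brI$ is stable under $\brK$-$\s$-conjugation by its very definition, so it is a union of $\s$-conjugacy classes (intersected with the double coset). Now apply Theorem~\ref{GHN-2.3}(2): because $(\bG,\mu)$ is fully Hodge--Newton decomposable and $w\in\KAdmmu\subset\Admmu$, the single Iwahori double coset $\brI w\brI$ is already contained in a \emph{unique} class $[b_w]\in\BGmu$, and hence so is the larger set $\brK\cdot_\s\brI w\brI$. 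Consequently $X_{K, w}(\t)\neq\emptyset$ is equivalent to $[b_w]=[\t]$, i.e.\ to the inclusion $\brK\cdot_\s\brI w\brI\subset[\t]$.

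Finally I invoke Proposition~\ref{GHN-4.6}: this inclusion holds precisely when $\k(w)=\k(\t)$ and $W_{\supp_\s(w)}$ is finite. The Kottwitz condition is automatic: since $\Admmu\subset W_a\t$ by the definition of $\t=\t(\mu)$, every element of $\KAdmmu$ has the same image as $\t$ in $\Omega$, hence the same image under $\kappa$. Therefore the condition reduces to the finiteness of $W_{\supp_\s(w)}$, which is precisely the definition \eqref{adm0} of $w\in\KAdmmu_0$. This gives both the disjointness statement (inherited from the EKOR decomposition) and the non-emptiness assertion.

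I do not expect any serious obstacle: the whole argument is a bookkeeping exercise using results already quoted in the excerpt. The only point that requires a moment of care is the observation that $\brK\cdot_\s\brI w\brI$ lives in a single $\s$-conjugacy class; this is where the fully Hodge--Newton decomposable hypothesis is genuinely used, via Theorem~\ref{GHN-2.3}(2), and without it the above chain of equivalences would break down.
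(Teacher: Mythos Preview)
Your argument is correct. The paper does not give its own proof of this theorem; it simply cites \cite{GHN}. Your derivation from the other results already imported from \cite{GHN} in the excerpt (Theorem~\ref{GHN-2.3} and Proposition~\ref{GHN-4.6}) is a valid and clean way to obtain the statement, and is in the spirit of how it is proved in \cite{GHN}. One minor remark: your parenthetical that $\brK\cdot_\s\brI w\brI$ ``is a union of $\s$-conjugacy classes (intersected with the double coset)'' is inaccurately phrased---the point, which you state correctly afterwards, is simply that $\s$-conjugating by elements of $\brK$ preserves the $\s$-conjugacy class, so $\brI w\brI\subset[b_w]$ implies $\brK\cdot_\s\brI w\brI\subset[b_w]$.
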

\part{Minimal dimension}
In this part we determine those cases when $\XmubK{b}{K}$ is zero-dimensional, in case that $b$ is basic. When $b$ is basic, we also determine the cases when the transition morphism $\XmubK{b}{K}\to \XmubK{b}{K'}$ has finite fibers. 
\section{Statement of results}\label{s:reszero}

\subsection{Change of parahoric} In this section, we are concerned with the following two theorems.

\begin{theorem}\label{thm-dimension-0}
Assume that $\bG$ is quasi-simple over $F$ and that $\mu$ is not central.  Let $K \subsetneqq \tSS$ be $\s$-stable. The following are equivalent:
\begin{enumerate}
\item
$\dim \XmubK{\t}{K}=0$.
\item
$(\bG, \mu)$ is of extended Lubin-Tate type, i.e., $(\tilde{\Delta}, \s, \mu) = (\Res_{F_d/F}(\tilde{A}_{n-1}, \id), (\omega_1^\vee, 0, \dots, 0))$ for a finite unramified extension $F_d/F$.
\end{enumerate}
\end{theorem}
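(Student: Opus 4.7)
The plan is as follows.

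For the implication $(2) \Rightarrow (1)$: In the extended Lubin-Tate case, the construction of Section~\ref{subsec:ghn-2.4-construction} reduces us to $(\bG', \mu') = (\GL_n, \o^\vee_1)$ over $F_d$ via a natural isomorphism $\XmubK{\t}{K} \cong X^{\bG'}(\mu', \t')_{K_1}$. The Lubin-Tate pair $(\GL_n, \o^\vee_1)$ is of Coxeter type (Example~\ref{ex-lubin-tate}), hence in particular fully Hodge-Newton decomposable, so Theorem~\ref{fine-dec} identifies $X^{\bG'}(\mu', \t')_{K_1}$ with a finite disjoint union of non-empty fine affine Deligne-Lusztig varieties $X_{K_1, w}(\t')$ indexed by ${}^{K_1}\Adm(\mu')_0$. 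A direct combinatorial check, using the Coxeter structure and the explicit shape of $\Adm(\o^\vee_1)$ in type $\tilde A_{n-1}$, shows each such stratum is zero-dimensional (a finite set of points), whence $\XmubK{\t}{K}$ is zero-dimensional.

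For the implication $(1) \Rightarrow (2)$: The plan is a reduction-plus-classification argument in three steps. First, I would reduce to the absolutely quasi-simple setting. Write $\bG = \Res_{F_d/F}(\bG')$ with $\bG'$ absolutely quasi-simple over $F_d$ and $\mu = (\mu_1, \dots, \mu_d)$, and argue that exactly one of the $\mu_i$ is non-central. If two of them were non-central, one of two things happens: either the pair falls into the Hilbert-Blumenthal case (type~(2) of Theorem~\ref{f-HN-2}), which has positive dimension at Iwahori level by a short direct computation via Theorem~\ref{fine-dec}; or it lies outside the fully Hodge-Newton decomposable classification, in which case, by Proposition~\ref{GHN-4.6}, one locates $w \in \KAdmmu$ in the basic class with $W_{\supp_\s(w)}$ infinite, forcing $\dim X_{K, w}(\t) > 0$ by the standard dimension formulas for basic affine Deligne-Lusztig varieties. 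Either way, $\dim \XmubK{\t}{K} > 0$, contradicting the assumption. Hence only one $\mu_i$, say $\mu_1$, is non-central, and Section~\ref{subsec:ghn-2.4-construction} identifies $\XmubK{\t}{K}$ with $X^{\bG'}(\mu_1, \t')_{K_1}$.

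Second, I would show that $(\bG', \mu_1)$ is fully Hodge-Newton decomposable. If not, Theorem~\ref{GHN-2.3}(3) together with Proposition~\ref{GHN-4.6} yields an element $w \in {}^{K_1}\Adm(\mu_1)$ in the basic class with $\supp_\s(w)$ generating an infinite Coxeter subgroup; the basic dimension formula then gives $\dim X_{K_1, w}(\t') > 0$, a contradiction. Third, I would traverse the absolutely quasi-simple fully Hodge-Newton decomposable classification of Theorem~\ref{f-HN} and, in each entry other than the Lubin-Tate datum $(\tilde A_{n-1}, \id, \o^\vee_1)$, exhibit $w \in {}^{K_1}\Adm(\mu_1)_0$ with $\dim X_{K_1, w}(\t') > 0$. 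In most entries the translation $t^{\mu_{1,+}}$ itself, or a short modification of it, already has strictly positive length in the basic fiber; the Lubin-Tate datum is distinguished precisely because the trivial Frobenius action keeps all relevant elements of length zero.

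The main obstacle is the third step: the careful case-by-case verification through Theorem~\ref{f-HN}. In each non-Lubin-Tate entry — the Drinfeld case, the unitary cases (both residually split and non-split), and the various $B$, $C$, $D$ entries — one must locate a specific $w$ in the basic fiber of ${}^{K_1}\Adm(\mu_1)$ giving a positive-dimensional fine ADLV, and verify that none of them escape notice. A uniform combinatorial invariant — for instance, the minimal length of a translation element in $\Adm(\mu)$ projecting to $[\t]$ in $\BGmu$ — that vanishes precisely in the Lubin-Tate case would streamline the argument considerably.
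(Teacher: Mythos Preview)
Your $(2)\Rightarrow(1)$ direction matches the paper's; the content is in Remark~\ref{rmk-lubin-tate}.

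For $(1)\Rightarrow(2)$, your route through the fully Hodge--Newton decomposable classification is genuinely different from the paper's, which never invokes that classification. Instead, the paper proves directly (Proposition~\ref{prop-fixed-s}) that for any $s\in\tSS\setminus K$ with $s\t\s(s)\in\Admmu$, the set $\brK_s\brK/\brK$ sits inside $\XmubK{\t}{K}$ and is one-dimensional. It then shows by elementary combinatorics of $\Admmu$ (Proposition~\ref{supp-tss}, Lemma~\ref{forA}) that such an $s$ exists except in type $(\tilde A_{n-1},\o^\vee_1)$, and finishes by a short analysis of that case. This is more uniform: no case-by-case walk through Theorem~\ref{f-HN}, and it handles Theorems~\ref{thm-dimension-0} and~\ref{thm-fixed-s} simultaneously by allowing $K'=\tSS$.

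Your approach has a real gap in Steps~1 and~2. You assert that if $w\in\KAdmmu$ has $X_{K,w}(\t)\neq\emptyset$ and $W_{\supp_\s(w)}$ is infinite, then $\dim X_{K,w}(\t)>0$, citing ``standard dimension formulas.'' There is no such formula. What is true is that $\ell(w)>0$ and $X_w(\t)\neq\emptyset$ force $\dim X_w(\t)\ge 1$, but this needs the Deligne--Lusztig reduction of~\cite{He14} together with the structure theory of minimal length elements~\cite{HN}: reduce $w$ to a minimal length $\s$-conjugate; if that element were $\s$-straight with $X_{\,\cdot\,}(\t)\neq\emptyset$ it would have length $0$, so a type-I step occurs and contributes $+1$ to dimension. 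You then need Theorem~\ref{i-k-w} and Proposition~\ref{finite-ff} to transfer this to $X_{K,w}(\t)$. None of this is hard, but it is substantially more machinery than ``standard formulas,'' and your reduction to the fully HN list genuinely depends on it: in cases such as $(\tilde A_{n-1},\id,\mu)$ with $\kappa(\mu)=1$ but $\mu\ne\o_1^\vee$, the action of $\Ad(\t)\circ\s$ is transitive on $\tSS$, so $\KAdmmu_0=\{\t\}$ and your Step~3 mechanism produces nothing---the positive dimension must come from strata with infinite $\s$-support.

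Once Step~2 is justified, Step~3 does go through, and can be streamlined: for any $s\notin K$ one has $s\t\in{}^K\tW\cap\Admmu$ by Lemma~\ref{s}, and $s\t\in\KAdmmu_0$ iff the $\Ad(\t)\circ\s$-orbit of $s$ is proper in $\tSS$. So Step~3 reduces to checking that $\Ad(\t)\circ\s$ acts transitively on $\tSS$ only in the Lubin--Tate case, which is a short inspection of affine Dynkin automorphisms. This is close in spirit to the uniform invariant you ask for at the end.
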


See Example~\ref{ex-lubin-tate} for a discussion of the (extended) Lubin-Tate case.
We will prove a stronger version of this theorem below, see Theorem~\ref{thm-all-k}.  

For any $\s$-stable subsets $K \subsetneqq K' \subset \tSS$, we denote by 
\begin{equation}
\pi_{K, K'}: \XmubK{\t}{K} \to \XmubK{\t}{K'}
\end{equation}
the projection map.

\begin{theorem}\label{thm-fixed-s}
    Assume that $\bG$ is quasi-simple over $F$ and that $\mu$ is not central. Let
    $K \subsetneqq K' \subsetneqq \tSS$ be $\s$-stable parahoric level
    structures. Write the Tits datum of $(\bG, \mu)$ in the form  $(\Res_{F_d/F}(\tilde{\Delta}, \s),
    (\mu_1, \dots, \mu_d))$, and correspondingly write the parahoric level structures as $K=(K_1, K_2, \dots,
    K_d)$, $K'=(K'_1, K'_2, \dots, K'_d)$.
    Then the following are equivalent:
\begin{enumerate}
	\item
The projection $\XmubK{\t}{K} \to \XmubK{\t}{K'}$ has discrete fibers. 
	\item
        There exists a unique $j$ such that $\mu_j$ is non-central, we have
        $\mu_j = \omega_1^\vee$, and
     \begin{itemize}
     	\item $\s$ acts as $\id$ on the affine Dynkin diagram, or 
        \item $n \ge 3$ and the action of $\s$  on $\tilde{A}_{n-1}$ preserves
            $s_0$ and induces the nontrivial diagram automorphism $\varsigma_0$
            on ${A}_{n-1}$. Furthermore, the pair $(K_1, K'_1)$ satisfies
            Condition \ref{star}.  
     \end{itemize}
\end{enumerate}
\end{theorem}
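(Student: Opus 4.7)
The plan is to prove both implications by reducing to the $\brF$-simple case. The restriction-of-scalars construction of Section~\ref{subsec:ghn-2.4-construction} identifies $X^{\bG}(\mu,\t)_K\simeq X^{\bG'}(\mu',\t')_{K_1}$ and $X^{\bG}(\mu,\t)_{K'}\simeq X^{\bG'}(\mu',\t')_{K'_1}$ compatibly with $\pi_{K,K'}$, so the problem becomes one about a Tits datum $(\tilde\Delta,\sigma,\mu')$ over a single unramified extension with parahoric levels $K_1\subsetneqq K'_1$. Since $\mu$ is non-central if and only if exactly one $\mu_i$ is non-central, the condition in (2) reduces to: $\tilde\Delta$ is of type $\tilde A_{n-1}$, $\mu'=\omega_1^\vee$, and $\sigma$ acts either as $\id$ or (for $n\geq 3$) fixes $s_0$ and induces $\varsigma_0$ on the finite diagram, with Condition~\ref{star} on $(K_1,K'_1)$.

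For the ``if'' direction, when $\sigma=\id$ we are in the extended Lubin-Tate case and Theorem~\ref{thm-dimension-0} already gives $\dim X(\mu,\t)_K=0$, so the fibers of $\pi_{K,K'}$ are a fortiori discrete. In the remaining unitary case I would invoke the lattice-theoretic description explained after Theorem~\ref{Main2} and worked out in Section~\ref{s:lattice-interpretation}: a point of the fiber over a chain $\CM_{I'}$ indexed by $K'_1$ amounts to completing the chain by adjoining a self-dual lattice (for $s_0$) or a scalar-self-dual lattice (for $s_{n/2}$) at the one or two positions missing in $K'_1\setminus K_1$, subject to the admissibility conditions $\pi\CM_i\subset\phi(\CM_i)\subset^1\CM_i$. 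Condition~\ref{star} -- the constraint that $K'_1\setminus K_1\subset\{s_0,s_{n/2}\}$ together with the requirement $s_{i+1}\notin K_1$ when $s_i\in K'_1\setminus K_1$ -- is precisely what forces this set of completions to be finite at every chain.

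For the ``only if'' direction I argue by contrapositive: assuming we are not in case (2), produce a positive-dimensional fiber. The EKOR decomposition $X(\mu,\t)_K=\bigsqcup_{w\in\KAdmmu_0}X_{K,w}(\t)$ of Theorem~\ref{fine-dec} (applicable because the candidate Tits data are all fully Hodge-Newton decomposable by Theorems~\ref{f-HN} and~\ref{f-HN-2}) together with Proposition~\ref{GHN-4.6} implies that the restriction of $\pi_{K,K'}$ to $X_{K,w}(\t)$ lands in the stratum $X_{K',w'}(\t)$ for the representative $w'\in{}^{K'}\tW$ of $W_{K'}wW_{K'}$. For each candidate Tits datum outside the allowed list, one produces an explicit $w\in\KAdmmu_0$ whose $\sigma$-support contains a chosen $s\in K'\setminus K$; the associated fine ADLV then contains a positive-dimensional family of fiber contributions through the Schubert piece $\brK' s\brK/\brK\subset\brK'/\brK$. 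The main obstacle will be the explicit combinatorial verification of this last step: in non-type-$A$ cases and in type $A$ with $\sigma\ne\id,\varsigma_0$ a witness element can usually be taken to be $s$ itself together with a suitable translation part, but in the delicate type $A$ with $\sigma$ inducing $\varsigma_0$ one must show that a violation of Condition~\ref{star} -- either $K'_1\setminus K_1\not\subset\{s_0,s_{n/2}\}$ or $s_i\in K'_1\setminus K_1$ with $s_{i+1}\in K_1$ -- produces via a one-parameter subgroup in $\brK'/\brK$ a positive-dimensional family preserving the twisted conjugation condition, and this is the heart of the argument.
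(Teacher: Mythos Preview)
Your proposal has two genuine gaps in the $(1)\Rightarrow(2)$ direction.

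First, the reduction step is incorrect. You write ``$\mu$ is non-central if and only if exactly one $\mu_i$ is non-central,'' but this is false: non-central only means \emph{at least} one $\mu_i$ is non-central. The construction of Section~\ref{subsec:ghn-2.4-construction} explicitly requires that all but one $\mu_i$ be central, so you cannot invoke it until you have proved that discrete fibers force this. The paper handles this separately (Section~5.2): given $s\in K'\setminus K$ in one factor and a second non-central $\mu_i$, one builds a one-dimensional subset $Z\brK/\brK$ of the fiber by hand.

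Second, your contrapositive argument is circular. You invoke Theorem~\ref{fine-dec} (the EKOR decomposition over $\KAdmmu_0$), saying it applies ``because the candidate Tits data are all fully Hodge-Newton decomposable.'' But at this stage we know nothing about the Tits datum; we are trying to constrain it. Most pairs $(\bG,\mu)$ are \emph{not} fully Hodge-Newton decomposable, and Theorem~\ref{fine-dec} fails for them. The paper avoids this entirely: its key tool is the elementary Proposition~\ref{prop-fixed-s}, which says that if $s\t\s(s)\in\Admmu$ for some $s\in K'\setminus K$, then $\brK_s\brK/\brK$ is a one-dimensional subset of $X(\mu,\t)_K$ mapping to a point. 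One then shows, via Proposition~\ref{supp-tss} and Lemma~\ref{forA}, that $s\t\s(s)\in\Admmu$ holds unless $(\tilde\Delta,\mu)=(\tilde A_{n-1},\omega_1^\vee)$, and then pins down $\sigma$ and checks Condition~\ref{star} by the same mechanism. Your idea of ``producing $w\in\KAdmmu_0$ whose $\sigma$-support contains $s$'' does not by itself yield a positive-dimensional fiber; you need the explicit one-parameter family, and the cleanest source for it is exactly the $s\t\s(s)$ criterion you omit.

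For $(2)\Rightarrow(1)$, your lattice approach is legitimate and is carried out in Section~\ref{sec:exotic}, but the paper's primary argument (Section~6) is group-theoretic: one shows via partial conjugation that every $w\in\KAdmmu_0$ satisfies $w\approx_{K',\s}w'$ for a unique $w'\in{}^{K'}\!\Admmu_0$ (Proposition~\ref{w-w}), and then that $X_{K,w}(\t)\to X_{K',w'}(\t)$ has finite fibers by comparing both to the Iwahori level and using Lang's theorem on the reductive quotient of the parahoric (Theorem~\ref{finite-fiber-final}). This works uniformly and gives explicit fiber cardinalities without ever writing down lattices.
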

Here is the Condition \ref{star} that appears in (2).
\begin{condition}\label{star}
    \emph{Every element of $K'_1\setminus K_1$ is fixed by $\s_d$, and if $s_i \in
    K_1'\setminus K_1$, then $s_{i+1} \notin K_1$.}
\end{condition}

Note that $K$ and $K'$ are assumed to be $\s$-stable, so requiring that the inclusion $K'\subsetneqq \tSS$ be strict implies that in each connected component of $\tSS$ there extists a vertex not lying in $K'$, and similarly for the inclusion $K\subsetneqq K'$.

\begin{remark}
Let us enumerate the cases for the second alternative in
Theorem~\ref{thm-fixed-s}, (2) when $\bG$ is quasi-simple over $\brF$. By
assumption $K$ and $K'$ are $\s$-stable; also, the corresponding algebraic group
is a quasi-split unitary group which splits over an unramified quadratic
extension.
\begin{altitemize}
\item {\it $n$ odd}: In this case $\s(s_0)=s_0$ and $\s(s_1)=s_{n-1}$. Then $K\subset \tSS\setminus \{s_0, s_1, s_{n-1}\}$ is $\s$-stable and $K'=K\cup\{s_0\}$. 

\smallskip
\noindent{\it Extreme case $n=3$}; then $K=\emptyset$, $K'=\{s_0\}$. 
\item{\it $n=2m$ even}: In this case $\s(s_0)=s_0, \s(s_m)=s_m $ and $\s(s_{m+1})=s_{m-1}$. Then the following three possibilities occur. 

\noindent (i) $K\subset \tSS\setminus \{s_0, s_1, s_{n-1}\}$ is $\s$-stable and $K'=K\cup\{s_0\}$.

\noindent (ii) $K\subset \tilde\BS\setminus \{s_{m-1}, s_m, s_{m+1}\}$ is $\s$-stable and $K'=K\cup\{s_m\}$.

\noindent (ii) $K\subset \tilde\BS\setminus \{s_0, s_1, s_{m-1}, s_m, s_{m+1}, s_{n-1}\}$ is $\s$-stable and $K'=K\cup\{s_0, s_m\}$.

\smallskip

\noindent{\it Extreme case $n=4, m=2$}; then for $(K, K')$ the following possibilities occur:  $(\emptyset,\{s_0\})$,  or $(\emptyset, \{s_2\})$, or $(\emptyset,\{s_0, s_2\})$, or $(\{s_2\}, \{s_0, s_2\})$, or $(\{s_0\},\{s_0, s_2\})$. 

\end{altitemize}
\end{remark}
The proof of Theorem~\ref{thm-fixed-s} will occupy the next two sections. In the rest of this section, we give more details on the two alternatives of the above theorem. 
\subsection{The Lubin-Tate case}\label{ss:LT}

\begin{theorem}\label{thm-all-k}
	Assume that $\bG$ is quasi-simple over $F$ and that $\mu$ is not central. The following are equivalent:
	\begin{enumerate}
		\item
            The pair $(\bG, \mu)$ is of extended Lubin-Tate type (cf.~the statement of Theorem~\ref{thm-dimension-0}).
		\item
$\dim \XmubK{\t}{K}=0$ for some parahoric $K$.
		\item
$\dim \XmubK{\t}{K}=0$ for all parahorics $K$.
		\item
The projection $\XmubK{\t}{K} \to \XmubK{\t}{K'}$ has finite fibers for all   $K\subsetneqq K'$.
\item The projection $\XmubK{\t}{K} \to \XmubK{\t}{K'}$ is a bijection  for all  $K\subsetneqq K'$.
	\end{enumerate}
\end{theorem}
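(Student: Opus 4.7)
The plan is to organise the five conditions around (1): the trivial implications (5) $\Rightarrow$ (4) (bijections have singleton, hence finite, fibres) and (3) $\Rightarrow$ (2) reduce the task to proving (1) $\Leftrightarrow$ (2) $\Leftrightarrow$ (3) and (1) $\Leftrightarrow$ (4) $\Leftrightarrow$ (5), so I would arrange the argument as: (1) $\Leftrightarrow$ (2) $\Leftrightarrow$ (3) from Theorem~\ref{thm-dimension-0}; (4) $\Rightarrow$ (1) from Theorem~\ref{thm-fixed-s}; and (1) $\Rightarrow$ (5) by direct analysis of the Lubin-Tate structure.

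First I would derive (1) $\Leftrightarrow$ (2) $\Leftrightarrow$ (3) immediately from Theorem~\ref{thm-dimension-0}. That theorem establishes, for any single $\s$-stable parahoric $K \subsetneqq \tSS$, the equivalence $\dim \XmubK{\t}{K}=0 \Leftrightarrow (\bG,\mu)$ is of extended Lubin-Tate type. Applying this to the one parahoric supplied by (2) gives (2) $\Rightarrow$ (1), while conversely (1) says we are in extended Lubin-Tate and hence $\dim \XmubK{\t}{K}=0$ for every $\s$-stable parahoric $K$, which is (3).

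Next I would prove (4) $\Rightarrow$ (1) by invoking Theorem~\ref{thm-fixed-s} for each admissible pair $K \subsetneqq K'$. The finiteness of the fibres forces alternative (2) of that theorem, which leaves only two possibilities for $(\bG,\mu)$: the extended Lubin-Tate type (imposing no constraint on $(K,K')$) or the unitary type $(\Res_{F_d/F}(\tilde A_{n-1},\varsigma_0),(\omega_1^\vee,0,\ldots,0))$ with $n \geq 3$ and Condition~\ref{star} satisfied. Since $(\bG,\mu)$ is fixed, the second option would require Condition~\ref{star} to hold for \emph{every} pair $(K,K')$. But the choice $K_1=\emptyset$ and $K_1'=\{s_1,s_{n-1}\}$ (with the components replicated throughout by $\s$-equivariance) produces a valid $\s$-stable pair with $K \subsetneqq K' \subsetneqq \tSS$, and $s_1 \in K_1'\setminus K_1$ is not fixed by $\s_d=\varsigma_0$. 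Hence the unitary branch is eliminated and we land in the extended Lubin-Tate case.

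For (1) $\Rightarrow$ (5) I would first reduce via Section~\ref{subsec:ghn-2.4-construction} to the genuine Lubin-Tate case $(\GL_n,\omega_1^\vee,\t)$, where $\bJ_\t(F)=D^\times$ for the central division algebra $D$ of invariant $1/n$ and where $O_D^\times$ is the unique maximal compact subgroup. Surjectivity of $\pi_{K,K'}$ is the standard surjectivity of change-of-parahoric maps between non-empty affine Deligne-Lusztig varieties (cf.~\cite{He-KR}). For injectivity I would apply the EKOR decomposition of Theorem~\ref{fine-dec}: the Lubin-Tate case is of Coxeter type, so each stratum $X_{K,w}(\t)$ is a single $\bJ_\t(F)$-orbit with compact-open stabiliser; in the Lubin-Tate case these stabilisers all coincide with $O_D^\times$ because no proper compact open subgroup of $D^\times$ can contain a pro-$p$-Iwahori of the centraliser. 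Consequently both $\XmubK{\t}{K}$ and $\XmubK{\t}{K'}$ decompose $\bJ_\t(F)$-equivariantly as unions of copies of $D^\times/O_D^\times$ indexed by $\KAdmmu_0$ and $\KpAdmmu_0$ respectively, with the indexings matched by the assignment $w \mapsto$ minimal representative in $W_{K'}wW_{K'}\cap {}^{K'}\!\tW$, so $\pi_{K,K'}$ becomes a bijection.

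I expect the main difficulty to concentrate in the injectivity half of (1) $\Rightarrow$ (5): verifying that the stabilisers $J_{K,w}$ of the $\bJ_\t(F)$-orbits on individual EKOR strata are constant equal to $O_D^\times$ as $K$ varies, and that the natural map on indexing sets matches strata correctly. One potential bypass worth trying is to combine the established surjectivity with an equicardinality count obtained from the explicit EKOR decomposition, since (3) already guarantees both sides are zero-dimensional, so that equality of cardinalities upgrades surjectivity to bijectivity.
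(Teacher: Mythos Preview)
Your proposal is correct and follows the same cycle of implications as the paper. For $(1)\Rightarrow(5)$ the paper (Remark~\ref{rmk-lubin-tate}) is more direct than your stabiliser argument: since $\Ad(\tau)\circ\sigma$ acts transitively on $\tSS$ one has $\KAdmmu_0=\{\tau\}$ for every $K$, so $\XmubK{\t}{K}=\Omega\brK/\brK$ has one point per connected component and the projections are automatically bijections; your route also works, but the correct reason the stabilisers equal $O_D^\times$ is simply that $\bJ_\tau$ is anisotropic and hence has a unique parahoric, not the pro-$p$-Iwahori containment you state.
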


\begin{proof}
	$(3) \Rightarrow (2)$ and $(5)\Rightarrow (4)$ are obvious. 

    $(1)\Rightarrow (3) \& (5)$:  This follows from Remark~\ref{rmk-lubin-tate} below.

    $(2) \Rightarrow (1)$: This is Theorem \ref{thm-dimension-0}.

    $(4) \Rightarrow (1)$: By Theorem \ref{thm-fixed-s},  the Dynkin type is $ \Res_{F_d/F}(\tilde{A}_{n-1}, \s_d)$, with $\sigma_d=\id$ or $\s_d=\varsigma_0$ (up to isomorphism). Moreover, as we may take $K=\{s_0\}$, Condition \ref{star} implies that $\s_d$   cannot be $\varsigma_0$. Hence $\s=\id$.
	
\end{proof}

\begin{remark}\label{rmk-lubin-tate}
    Properties (3) and (5) in Theorem \ref{thm-all-k} are well-known in the Lubin-Tate case, and we explain this in terms of lattices in Section~\ref{s:lattice-interpretation}. Alternatively, one can apply the methods of~\cite{GH} (Section 6.3, Case 1 for $i=1$), cf.~also~\cite{GHN}. There is only one basic EKOR stratum in this case. (Note that EKOR strata were called EO strata in~\cite{GH}.) Let ${\bf J}={\bf J}_\tau$ be the $\s$-centralizer of $\tau$, cf. \eqref{scentrali}. The index set for the
stratification in a single connected component is a quotient of ${\bf J}(F)^1$ by a parahoric subgroup (where ${\bf J}(F)^1$ is the kernel of the Kottwitz homomorphism). Since ${\bf J}(F)^1$ is anisotropic, this quotient is a single point, so the EKOR stratification has  a single stratum. This stratum is attached to the length $0$ element $\tau$, thus the corresponding classical Deligne-Lusztig variety is just a point. Note that this argument can be applied to arbitrary parahoric level structure, not only maximal parahoric as in the setting of~\cite{GH}. By either of the two methods, we obtain the more precise statement that $\XmubK{\t}{K}$ has only one point in each connected component of the affine flag variety.

Using the construction in Section~\ref{subsec:ghn-2.4-construction}, the result can be generalized to the extended Lubin-Tate case, where a restriction of scalars is allowed.
\end{remark}

\subsection{The exotic case}

The second alternative in Theorem \ref{thm-fixed-s}, where Condition~\ref{star} is relevant, will be studied in detail in Section \ref{sec:fibers-group-theoretic} in group-theoretic terms, and in Section~\ref{sec:exotic} in terms of lattices. Using either approach, we will determine the cardinalities of the fibers of the map $\pi_{K, K'}$. If $\#(K_1'\setminus K_1) = 1$, then the fiber cardinalities are $1$, $2$, and $q^d+1$. If $\#(K_1'\setminus K_1) = 2$, then each fiber is naturally a product of two sets as in the first case, so the cardinalities which occur are $1$, $2$, $4$, $q^d+1$, $2(q^d+1$), and $(q^d+1)^2$. We give precise criteria in group-theoretic terms as well as in lattice terms which case occurs when, see Section~\ref{exotic-case-gp-thy} and Proposition ~\ref{fibers-lattice-descr}.

\section{Proof of $(1) \Rightarrow (2)$ in Theorems~\ref{thm-dimension-0} and~\ref{thm-fixed-s}}\label{s:onedirzero}

In this section, we prove the implications $(1) \Rightarrow (2)$ in
Theorem~\ref{thm-dimension-0} and Theorem~\ref{thm-fixed-s}.  We will handle both
theorems simultaneously by allowing $K=\tSS$, with the convention that
$\XmubK{\t}{\tSS}=\breve G/\breve G$ is a single point. Hence the condition
that the map $\pi_{K, \tSS}$ has discrete fibers is equivalent to the condition
that $\dim \XmubK{\t}{K}=0$.  

We assume that $\mu$ is not central.

\subsection{Preparations}
We start with some properties of the admissible set. 

\begin{lemma}\label{s} \cite[Lem. 6.5]{HeZhou}
	For any $s \in \tSS$, $s \t \in \Admmu$. \qed
\end{lemma}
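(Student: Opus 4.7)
The plan is to recast the condition $s\tau \in \Admmu$ in Bruhat-order terms and then verify a supporting claim about the supports of translation elements.

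First, unwind the definition: $s\tau \in \Admmu$ means $s\tau \le t^{x(\underline\mu)}$ in the Bruhat order of $\tW$ for some $x \in W_0$. Since $\tW = W_a \rtimes \Omega$, Bruhat order preserves $\Omega$-cosets, and both sides lie in the coset $W_a\tau$. So the inequality reduces to $s \le u_x$ in $W_a$, where $u_x := t^{x(\underline\mu)}\tau^{-1}$, and by the standard subword characterization of Bruhat order this is equivalent to $s \in \supp(u_x)$.

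It therefore suffices to show $\bigcup_{x \in W_0}\supp(u_x) = \tSS$. By Lemma~\ref{lemma-mu-central}, non-centrality of $\mu$ yields non-centrality of $\underline\mu$, so the $W_0$-orbit of $\underline\mu$ in the standard apartment is not a single point. Reading $W_a$ through its action on that apartment, a simple reflection $s \in \tSS$ lies in $\supp(u)$ if and only if some wall in the $W_a$-orbit of the fixed hyperplane $H_s$ of $s$ separates the base alcove $\fka$ from $u\fka$. Thus, for each $s \in \tSS$ I need to exhibit $x \in W_0$ with $x(\underline\mu) + \fka$ lying across some wall in the $W_a$-orbit of $H_s$ from $\fka$. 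For a finite simple reflection $s_i$ with associated root $\alpha_i$, I take $x$ making $|\langle x(\underline\mu),\alpha_i\rangle|$ sufficiently large; this is possible because $\bG$ is quasi-simple (so the \'echelonnage root system is irreducible) and $\underline\mu \ne 0$, whence the $W_0$-orbit of $\underline\mu$ cannot be contained in $\ker\alpha_i$. For the affine simple reflection $s_0$, use the highest root $\theta$ in place of $\alpha_i$ and pick $x$ so that $\langle x(\underline\mu),\theta\rangle > 1$.

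The main obstacle I anticipate is executing the geometric step uniformly in the non-split setting, where one must work with the coinvariant lattice $X_*(T)_{\Gamma_0}$ and the \'echelonnage root system of Section~\ref{subsec:admissible-set} in place of the absolute root system, and $\tSS$ indexes the walls of $\fka$ in the twisted apartment. Once this dictionary is fixed, the remaining content is a standard gallery computation in the affine Weyl group.
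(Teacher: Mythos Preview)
Your reduction to showing $\bigcup_{x\in W_0}\supp(t^{x(\umu)}\t^{-1})=\tSS$ is correct, but the geometric characterization you invoke afterwards is wrong. You claim that $s\in\supp(u)$ if and only if some wall in the $W_a$-orbit of $H_s$ separates $\fka$ from $u\fka$. In $\tilde A_2$ all three simple reflections are conjugate in $W_a$ (adjacent vertices in the Coxeter graph with label $3$ are always conjugate), so every affine hyperplane lies in the single $W_a$-orbit of $H_{s_0}$. Yet for $u=s_1s_2s_1\in W_0$ one has $s_0\notin\supp(u)$ while three walls separate $\fka$ from $u\fka$. So the ``only if'' direction fails. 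Consequently the later steps---choosing $x$ to make $|\langle x(\umu),\alpha_i\rangle|$ large, or $\langle x(\umu),\theta\rangle>1$---are aimed at the wrong target; the latter is in any case impossible when $\umu$ is minuscule.

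The correct characterization is the parabolic one: $s\notin\supp(u)$ if and only if $u\in W_{\tSS\setminus\{s\}}$, which (for irreducible $\tW$) is the stabilizer of the vertex $v_s$ of $\fka$ of cotype $\{s\}$. Thus $s\in\supp(t^{x(\umu)}\t^{-1})$ if and only if $x(\umu)+\t^{-1}(v_s)\neq v_s$, i.e.\ $x(\umu)\neq v_s-\t^{-1}(v_s)$. Since $\umu$ is non-central the orbit $W_0(\umu)$ has at least two elements, so some $x$ works. This repairs your argument cleanly. Two further remarks: first, ``$\bG$ quasi-simple over $F$'' does \emph{not} force the \'echelonnage root system to be irreducible (restriction of scalars along an unramified extension gives a product), and the lemma is genuinely false if $\umu$ is central in some irreducible factor of $\tW$; you should assume $\tW$ irreducible, which matches how the paper actually uses the result. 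Second, the paper gives no proof of its own here---it simply cites \cite[Lem.~6.5]{HeZhou}---so there is no in-paper argument to compare strategies with.
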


\begin{proposition}\label{supp-tss}
    Suppose that $\bG$ is quasi-simple over $\brF$ and that $\mu$ is non-central. If $(\tilde{\Delta}, \mu) \neq (\tilde A_{n-1}, \omega^\vee_1)$ or $(\tilde A_{n-1}, \omega^\vee_{n-1})$ for some $n$, then there exists $w \in \Admmu$ such that $\supp(w \t \i)=\tSS$.
\end{proposition}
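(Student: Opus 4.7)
The plan is to exploit the monotonicity of the support function along the Bruhat order: by the subword property, if $w \le w'$ in $\tW$, then $\supp(w\t\i) \subseteq \supp(w'\t\i)$. Since every element of $\Admmu$ is bounded above by some $t^{x\umu}$ with $x \in W_0$, the existence of $w \in \Admmu$ with $\supp(w\t\i) = \tSS$ is equivalent to the existence of $x \in W_0$ with $\supp(t^{x\umu}\t\i) = \tSS$. I will aim to produce such an $x$ in every case not excluded by the hypothesis.

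The excluded pairs are ruled out by a pure length count: for $(\tilde A_{n-1}, \omega^\vee_1)$ one has $\ell(t^{x\omega^\vee_1}) = \langle\omega^\vee_1, 2\rho\rangle = n-1 < n = |\tSS|$, so the support of any element of $\Admmu$ has cardinality strictly less than $|\tSS|$; the case $\omega^\vee_{n-1}$ follows by symmetry under the diagram automorphism $\varsigma_0$. This simultaneously explains why these pairs must be excluded and shows that for any other pair it is at least numerically conceivable to find an $x$ as required.

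For the non-excluded cases I would proceed type by type, using the identity $t^{x\umu}\t\i = x\cdot u\cdot \t(x\i)$, where $u := t^{\umu}\t\i\in W_a$ and $\t(y) := \t y \t\i$ denotes conjugation by $\t\in\Omega$ (which permutes $\tSS$ via the associated diagram automorphism). For types $\tilde B$, $\tilde C$, $\tilde D$ and for non-minuscule $\mu$ in type $\tilde A$, the length $\langle\umu, 2\rho\rangle$ grows at least quadratically in the rank, so a generic choice of $x$ (for instance $x=w_0$) produces full support, which I would verify by reading off the types of walls crossed along a minimal gallery from $\mathfrak a$ to $x\umu + \mathfrak a$. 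The hard part will be type $\tilde A_{n-1}$ with $\mu = \omega^\vee_k$, $2 \le k \le n-2$, where $\ell(t^{\omega^\vee_k}) = k(n-k)$ is only modestly larger than $|\tSS| = n$ (and equal when $k=2$, $n=4$); here $x$ must be chosen so that the product $x \cdot u \cdot \t(x\i)$ does not lose any simple reflection through Coxeter cancellations, bearing in mind that $\t$ rotates $\tSS$ by $k$ positions. My approach in this delicate case would be to work in the affine-permutation model of $\tW$ and to construct $x$ explicitly according to the residue of $k$ modulo $n$, so as to ensure both that $s_k, s_{n-k}\in\supp(x)$ (which places $s_k$ and, via $\t$, also $s_0$ into the ambient union of supports) and that the remaining simple affine reflection $s_0$ is contributed by $u$ itself.
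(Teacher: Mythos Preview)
Your reduction to finding $x\in W_0$ with $\supp(t^{x\umu}\t\i)=\tSS$ is correct and is also the paper's starting point. However, the plan after that has a concrete error. You treat only the minuscule type~$\tilde A$ case as delicate and assert that for types $\tilde B$, $\tilde C$, $\tilde D$ the choice $x=w_0$ already works. This is false whenever $\umu$ is minuscule, which does occur in these types (e.g.\ $(\tilde B_n,\o^\vee_1)$, $(\tilde C_n,\o^\vee_n)$, $(\tilde D_n,\o^\vee_1)$). In the minuscule case one has $t^{w_0(\umu)}=w_Kw_0\,\t$ with $K=\{s\in\BS\mid s\,w_0(\umu)=w_0(\umu)\}$, so $\supp(t^{w_0(\umu)}\t\i)=\supp(w_Kw_0)=\BS\subsetneq\tSS$: the affine reflection $s_0$ is missing. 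Relatedly, your claim that $\langle\umu,2\rho\rangle$ grows quadratically in the rank fails for these minuscule $\umu$ (for $(\tilde B_n,\o^\vee_1)$ it equals $2n-1$). Your factorization $t^{x\umu}\t\i=x\cdot u\cdot\t(x\i)$ is valid as an identity but is not length-additive, so it gives no direct control over the support; you acknowledge the cancellation issue but provide no mechanism to rule it out, and in the hard type~$\tilde A$ minuscule case you give no construction of $x$ at all.

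The paper handles this cleanly via a \emph{length-additive} factorization: by \cite[Thm.~2.2]{HL} one has $t^{w_0(\umu)}=(w_Kw_0)\cdot(w_Kw_0\,t^{w_0(\umu)})$ with lengths adding and the second factor in ${}^{\BS}\tW$, so the support is the union $\supp(w_Kw_0)\cup\supp(w_Kw_0\,t^{w_0(\umu)}\t\i)$. For $\umu$ non-minuscule the second factor is nontrivial and, being in ${}^{\BS}\tW$, contributes $s_0$, giving $\tSS$. For $\umu$ minuscule the paper instead passes to $t^{s_0w_0(\umu)}=s_0\,w_Kw_0\,s'\,\t$ (with $s'=w_0sw_0$) and checks, using explicit reduced expressions for $w_Kw_0$ from \cite[\S1.5]{He-0}, that $\supp(w_Kw_0\,s')=\BS$ both in type~$\tilde A$ (when $\umu\notin\{\o^\vee_1,\o^\vee_{n-1}\}$) and in the other types. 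The key idea you are missing is this length-additive decomposition, which replaces the uncontrolled cancellation problem by a clean support computation.
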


\begin{remark}
(1) Note that $(\tilde A_{n-1}, \omega^\vee_1)$ and $(\tilde A_{n-1}, \omega^\vee_{n-1})$ are isomorphic. Below, we often only mention one of these two isomorphic pairs.

(2) In Theorem \ref{critadm}, we will prove a stronger statement by a different method. We decided to keep the present proof, because it is simpler and uses only the combinatorics of the affine Weyl group.
\end{remark}
\begin{proof}
	Let $w_0$ be the longest element in $W_0$ and $K=\{s \in \BS\mid s w_0(\underline \mu)=w_0(\underline \mu)\}$. By \cite[Thm. 2.2]{HL}, we have $\ell(w_K w_0 t^{w_0(\underline \mu)})=\ell(t^{w_0(\underline \mu)})-\ell(w_K w_0)$ and $w_K w_0 t^{w_0(\underline \mu)} \in {}^{\BS} \tW$. Here $w_K$ denotes the longest element in $W_K$. Then we have $$\supp(t^{w_0(\underline \mu)} \t \i)=\supp(w_K w_0) \cup \supp(w_K w_0 t^{w_0(\underline \mu)} \t \i).$$
	
	Since $\umu$ is non-central, we have $K \subsetneqq \BS$ and thus $\supp(w_K w_0)=\BS$. If $\umu$ is non minuscule, we have $w_K w_0 t^{w_0(\underline \mu)} \t \i \neq 1$. Since $w_K w_0 t^{w_0(\underline \mu)} \in {}^{\BS} \tW$, we have $\tSS\setminus \BS \subset \supp(w_K w_0 t^{w_0(\underline \mu)} \t \i)$. Thus $\supp(t^{w_0(\underline \mu)} \t \i)=\tSS$.
	
	Now we assume that $\umu$ is minuscule. Then $t^{w_0(\underline \mu)} =w_K w_0 \t$. Moreover, $K=\BS\setminus \{s\}$ for some $s \in \BS$ which corresponds to an endpoint of the Dynkin diagram of $\BS$. Let $s'=w_0 s w_0 \in \BS$ and let $s_0$ be the unique element in $\tSS\setminus \BS$. Then we have $$t^{s_0 w_0(\underline \mu)}=s_0 w_K w_0 s' \t.$$ 
	
	If $\tW$ is of type $\tilde A_{n-1}$ and $\mu \notin \{\o^\vee_1,\o^\vee_{n-1}\}$ then, by direct computation, $\supp(w_K w_0 s')=\BS$ and thus $\supp(t^{s_0 w_0(\underline \mu)}\t\i)=\tSS$. If $\tW$ is not of type $\tilde A$ then, by the explicit formula for the reduced expressions of $w_K w_0$ given in \cite[\S 1.5]{He-0}, we still have $\supp(w_K w_0 s')=\BS$ and $\supp(t^{s_0 w_0(\underline \mu)}\t\i)=\tSS$.
\end{proof}

\begin{lemma}\label{forA}
	Let $\tW$ be the Iwahori-Weyl group of type $\tilde A_{n-1}$. If $\mu$ is non-central, and not equal to $\o^\vee_1$ or $\o^\vee_{n-1}$, then for any $s, s' \in \tSS$, $s s' \t \in \Admmu$. 
\end{lemma}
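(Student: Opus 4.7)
If $s=s'$, then $ss'\t=\t\in\Adm(\mu)$, so I may assume $s\ne s'$. Since $\Adm(\mu)$ is closed downward in the Bruhat order of $\tW$, it suffices to exhibit some $x\in W_0$ with $ss'\t\le t^{x(\umu)}$.

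My first step is to reduce to ``minimal'' $\umu$ via the monotonicity of admissible sets: if $\k(\mu)=\k(\mu')$ and $\umu\ge\underline{\mu'}$ in the dominance order on $X_*(T)_{\G_0}$, then $\Adm(\mu)\supseteq\Adm(\mu')$. Under the hypothesis on $\umu$, this reduces the problem to a short list of minimal cases: the fundamental coweight $\o^\vee_k$ for each $k\in\{2,\dots,n-2\}$ (Kottwitz class $k$), the highest coroot $\o^\vee_1+\o^\vee_{n-1}$ (Kottwitz class $0$), and the smallest dominant coweights strictly dominating $\o^\vee_1$ and $\o^\vee_{n-1}$ in Kottwitz classes $1$ and $n-1$ respectively.

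For each such minimal $\umu$ and each pair $(s,s')$ of distinct simple affine reflections in $\tSS$, I would exhibit an explicit $x\in W_0$ together with a reduced expression of $t^{x(\umu)}\t\i\in W_a$ containing $ss'$ as a Bruhat subword. Two ingredients should make this verification tractable: the cyclic symmetry of the affine Dynkin diagram of $\tilde A_{n-1}$ under the $\Omega$-action, which greatly cuts down the number of pairs $(s,s')$ requiring separate treatment; and Proposition~\ref{supp-tss}, which yields some $w\in\Adm(\mu)$ with $\supp(w\t\i)=\tSS$, ensuring that the admissible set is wide enough in the sense of support. An alternative route goes via the lifting property of Bruhat order applied to the containment $s'\t\le t^{y(\umu)}$ from Lemma~\ref{s}: this immediately gives $ss'\t\le t^{y(\umu)}$ unless $s$ is a common left ascent of $s'\t$ and $t^{y(\umu)}$, in which case only $ss'\t\le s\cdot t^{y(\umu)}=t^{sy(\umu)}s$ is produced; as this latter element has length one more than any translation in the orbit of $\umu$, one must vary $y$, and this is where the existence of a full-support element is used.

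I expect the main obstacle to be the explicit combinatorial verification in the minimal cases: one must construct the right reduced expressions of appropriate translations in $W_a$ of type $\tilde A_{n-1}$ with $s$ and $s'$ appearing in the correct order as a subword. The hypothesis $\umu\notin\{\o^\vee_1,\o^\vee_{n-1}\}$ is essential here, since in those excluded cases $\t$ is a Coxeter-type rotation of the Dynkin diagram and the admissible set is the very small Lubin-Tate one (cf.~Example~\ref{ex-lubin-tate}), which fails to contain all length-$2$ elements $ss'\t$; the length bound $\ell(t^{\o^\vee_k})=k(n-k)\ge 2(n-2)$ for $2\le k\le n-2$ quantifies the extra ``room'' present outside the excluded range.
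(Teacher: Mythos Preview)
Your strategy is essentially the paper's: reduce via $\Adm(\mu')\subseteq\Adm(\mu)$ to a short list of minimal dominant $\umu$ in each Kottwitz class, exploit the cyclic $\Omega$-symmetry of $\tilde A_{n-1}$, and invoke Proposition~\ref{supp-tss} for the full-support input. The paper organizes the argument slightly more cleanly by first splitting on whether $s$ and $s'$ commute: if they do, any $w\in\Admmu$ with $\supp(w\t\i)=\tSS$ (from Proposition~\ref{supp-tss}) already dominates $ss'\t$; if not, then in type $\tilde A_{n-1}$ they are adjacent on the cycle, and the rotation $\t_1$ reduces everything to checking $s_js_{j+1}\t,\ s_{j+1}s_j\t\in\Admmu$ for a single $j$, which is then done by one direct computation in each of the four minimal cases $\o^\vee_i$ ($2\le i\le n-2$), $\o^\vee_1+\o^\vee_{n-1}$, $\o^\vee_2+\o^\vee_{n-1}$, $\o^\vee_1+\o^\vee_{n-2}$.

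The gap in your proposal is that you stop at the plan: you write ``I would exhibit an explicit $x\in W_0$\dots'' and ``I expect the main obstacle to be the explicit combinatorial verification,'' but never perform it. This verification is the entire content of the lemma. Your ``alternative route'' via the lifting property is also left unresolved at exactly the point where work is needed (when $s$ is a common left ascent), and the identity $s\cdot t^{y(\umu)}=t^{sy(\umu)}s$ you use there fails for $s=s_0$. To complete the proof along the paper's lines, carry out the commuting/non-commuting split and then check $s_0s_1\t,\ s_1s_0\t\le t^{\umu'}$ directly for each of the four minimal $\umu'$ listed above.
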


\begin{proof}
	If $s$ commutes with $s'$, then by Proposition \ref{supp-tss}, there exists $w \in \Admmu$ such that $s, s' \in \supp(w \t \i)$ and hence $s s' \le w \t \i$. So $s s' \t \le w$ and $s s' \t \in \Admmu$. 
	
	Let $\t_1$ be the automorphism of $\tW$ sending $s_0$ to $s_1$, $s_1$ to $s_2$, \ldots, $s_{n-1}$ to $s_0$. Then the conjugation action of $\t_1$ preserves $\mu$ and we have that $\t_1 \Admmu \t_1 \i=\Admmu$. Since $\t_1$ acts transitively on $\tSS$, it suffices to show that there exists $j$ with $0 \le j \le n-1$ such that $s_j s_{j+1} \t, s_{j+1} s_j \t \in \Admmu$. Here by convention, we set $s_n=s_0$. 
	
	Let $\k: \tW \to \BZ/n\BZ$ be the Kottwitz map, cf. \eqref{kotmap}. Let $i=\k(\mu)$. If $i \notin \{0, 1, n-1\}$, then $\mu_+ \ge \omega^\vee_i$. By direct computation, $s_0 s_1 \t, s_1 s_0 \t \le t^{\omega^\vee_i}$ and hence $s_0 s_1 \t, s_1 s_0 \t \in \Adm(\omega^\vee_i) \subset \Admmu$. 
	
	If $i=0$, then $\mu_+ \ge \omega^\vee_1+\omega^\vee_{n-1}$. By direct computation, $s_1 s_2\t, s_2 s_1\t \le t^{\omega^\vee_1+\o^\vee_{n-1}}$ and hence $s_1 s_2\t, s_2 s_1\t \in \Adm(\omega^\vee_1+\o^\vee_{n-1}) \subset \Admmu$. 
	
	If $i=1$ and $\mu_+ \neq \omega^\vee_1$, then $\mu_+ \ge \omega^\vee_2+\omega^\vee_{n-1}$. By direct computation, $s_0 s_1 \t, s_1 s_0 \t \le t^{\omega^\vee_2+\o^\vee_{n-1}}$ and hence $s_0 s_1 \t, s_1 s_0 \t \in \Adm(\omega^\vee_2+\o^\vee_{n-1}) \subset \Admmu$. 
	
	If $i=n-1$, and $\mu_+ \neq \o^\vee_{n-1}$, then $\mu_+ \ge \o^\vee_1+\o^\vee_{n-2}$. By direct computation, $s_0 s_1 \t, s_1 s_0 \t \le t^{\omega^\vee_1+\o^\vee_{n-2}}$ and hence $s_0 s_1 \t, s_1 s_0 \t \in \Adm(\omega^\vee_1+\o^\vee_{n-2}) \subset \Admmu$. 
\end{proof}

\begin{proposition}\label{prop-fixed-s}
Let $K \subsetneqq K' \subseteq \tSS$ be $\s$-stable. If  $s\t\s(s) \in\Admmu$  for some $s\in K'\setminus K$, then the projection $\pi_{K, K'}: \XmubK{\t}{K} \to \XmubK{\t}{K'}$ has non-discrete fibers.
\end{proposition}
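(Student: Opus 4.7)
The plan is to produce an affine-line worth of distinct points of $\XmubK{\t}{K}$ in the fiber of $\pi_{K,K'}$ over the distinguished basepoint $\brK'\in\XmubK{\t}{K'}$ (which lies in $\XmubK{\t}{K'}$ because $\t\in\Admmu$). Let $\beta\in\tSS$ denote the positive simple affine root with $s=s_\beta$, fix a lift $\dot s\in N(\breve F)$, and set
\[ h_t := u_\beta(t)\,\dot s,\qquad t\in\breve F, \]
where $u_\beta$ is the associated one-parameter root subgroup. I would prove two things: (a) the cosets $h_t\brK\in\brK'/\brK$ are pairwise distinct; (b) each $h_t\brK$ lies in $\XmubK{\t}{K}$.

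Claim (a) is straightforward: $h_t^{-1}h_{t'}=\dot s^{-1}u_\beta(t'-t)\dot s\in U_{-\beta}$, and $s=s_\beta\notin K$ forces $U_{-\beta}\cap\brK=\{1\}$, so coset equality requires $t=t'$. For (b), use the standard identities $\tau u_\gamma(x)\tau^{-1}=u_{\tau\gamma}(c_1 x)$ and $\s(u_\gamma(x))=u_{\s\gamma}(\s(x))$ to compute
\[ h_t^{-1}\tau\s(h_t)=\dot s^{-1}\,\bigl[u_\beta(-t)\,u_{\tau\s\beta}(c\,\s(t))\bigr]\,\dot{s}'\,\dot\tau, \]
where $s':=\tau\s(s)\tau^{-1}\in\tSS$, $\dot{s}':=\tau\s(\dot s)\dot\tau^{-1}$ is the induced lift, and $c\neq0$ is a constant. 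Since $\beta$ and $\tau\s\beta$ are both positive simple affine roots, the bracketed factor lies in $\brI$, so
\[ h_t^{-1}\tau\s(h_t)\in\dot s^{-1}\brI\,\dot{s}'\,\dot\tau\;\subset\;\brI\dot s\brI\dot{s}'\brI\cdot\dot\tau. \]

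Two cases now arise. If $s'\ne s$, the Bruhat product is $\brI\dot s\brI\dot{s}'\brI=\brI(ss')\brI$, so $h_t^{-1}\tau\s(h_t)\in\brI(s\tau\s(s))\brI\subset\brI\Admmu\brI\subset\brK\Admmu\brK$ by the hypothesis $s\tau\s(s)\in\Admmu$. If $s'=s$ (so the hypothesis $s\tau\s(s)=\tau\in\Admmu$ is automatic), then $\brI\dot s\brI\dot{s}'\brI=\brI\cup\brI\dot s\brI$, giving $h_t^{-1}\tau\s(h_t)\in\brI\tau\brI\cup\brI(s\tau)\brI$; here both $\tau$ and $s\tau$ lie in $\Admmu$, the latter by Lemma~\ref{s}. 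In either case $h_t\brK\in\XmubK{\t}{K}$, and the fiber of $\pi_{K,K'}$ over $\brK'$ contains the non-discrete family $\{h_t\brK\}_{t\in\breve F}$. The main nuisance is the bookkeeping of torus discrepancies among the lifts $\dot s,\dot{s}',\dot\tau$, but these are absorbed into the $\brI$-factors without changing the relevant Bruhat cells; in mixed characteristic the argument is read in the sense of perfect schemes following~\cite{BS,Zhu}.
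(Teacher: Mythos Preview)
Your proof is correct and follows essentially the same approach as the paper: produce a one-dimensional family inside $\brK_s\brK/\brK$ (where $\brK_s$ is the parahoric generated by $\brI$ and $s$) lying in the fiber over $\brK'$, and verify membership in $\XmubK{\t}{K}$ by tracking which Bruhat cells the $\s$-twisted conjugate can land in. The paper's version is slightly slicker---it works with all of $\brK_s$ at once via the containment $\brK_s\cdot_\s\brI\t\brI\subseteq\brI\t\brI\cup\brI s\t\brI\cup\brI\t\s(s)\brI\cup\brI s\t\s(s)\brI$ (so it invokes admissibility of all four elements $\t,s\t,\t\s(s),s\t\s(s)$), whereas your explicit root-subgroup parameterization pins down the cell more precisely and uses only the hypothesis $s\t\s(s)\in\Admmu$ (or $\t,s\t$ when $s=\t\s(s)\t^{-1}$).
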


\begin{proof}
	 Let $\brK_s$ be the standard parahoric subgroup generated by $\brI$ and $s$. We then have
	\[
	\brK_s \cdot_\s \brI\t \brI \subseteq \brI s\brI\t \brI\s(s) \brI \subseteq \brI\t \brI \cup \brI s\t \brI \cup \brI\t\s(s) \brI \cup \brI s\t\s(s) \brI \subseteq \brK \Admmu\brK.
	\]
By definition, $\t \in \Admmu$. By Lemma \ref{s}, $s \t, \t \s(s) \in \Admmu$. By assumption, $s \t \s(s) \in \Admmu$. Hence $\brK_s \brK/\brK \subseteq \XmubK{\t}{K}$, and this is a subset of dimension $1$ which maps to a point in $\XmubK{\t}{K'}$.	 
\end{proof}

\subsection{Reduction to the case where $\bG$ is quasi-simple over $\brF$} 	
From now on we assume that condition~(1) in either Theorem~\ref{thm-dimension-0}
or Theorem~\ref{thm-fixed-s} holds for $K \subsetneqq K' \subseteq \tSS$. We may
assume that $\bG$ is adjoint, so we can write $\bG_{\brF} =G_1 \times \cdots
G_d$ for $\brF$-simple groups $\bG_i$.

Correspondingly, $\tW$ is of the form
\[
    \tW=\tW_1 \times \tW_2 \times \cdots \times \tW_d,
\]
where $\tW_1 \cong \tW_2 \cong \cdots \cong \tW_m$ are the extended affine Weyl groups with connected Dynkin diagram. Since $\bG$ is quasi-simple over $F$, we have (up to renumbering, if necessary) $\s(\tW_1)=\tW_2, \ldots, \s(\tW_{d-1})=\s(\tW_d), \s(\tW_d)=\s(\tW_1)$. 

Write $\mu=(\mu_1, \ldots, \mu_d)$ and $\t=(\t_1, \ldots, \t_d)$. Since by assumption $\mu$ is non-central, at least one of the $\mu_i$ is non-central in $\tW_i$. Suppose that there is more than one non-central $\mu_i$. Without loss of generality, we may assume that $\mu_1$ is noncentral in $\tW_1$ and that $i$ is the smallest positive integer $>1$ such that $\mu_i$ is noncentral in $\tW_i$. Then $\Ad(\t_j)$ is the identity group automorphism on $\tW_j$ for $1<j<i$. 

Let $s$ be a simple reflection of $\tW_1$ that is contained in $K' \setminus K$. Let 
\[
    Z=\{(g, \s(g), \ldots, \s^{i-2}(g), 1, \ldots, 1)\mid g \in \brK_s\}.
\]
Then $Z \subset \brK'$ and $Z \brK/\brK \subset \brK'/\brK$ is $1$-dimensional. By direct computation, $Z \cdot_\s \t \subset \brI s \t \s^{i-1}(s) \brI$. By Lemma~\ref{s}, $s \t_1 \in \Adm(\mu_1)$ and $\t_i \s^{i-1}(s) \in \Adm(\mu_i)$. Therefore $s \t \s^{i-1}(s) \in \Admmu$. Hence $Z \brK/\brK \subseteq \XmubK{\t}{K}$, and this is a subset of dimension $1$ which maps to a point in $\XmubK{\t}{K'}$.	

It follows  that $\mu_i$ is noncentral $\tW_i$ for a unique $i$, say $i=1$. We can thus carry out the construction in Section~\ref{subsec:ghn-2.4-construction}, and find an algebraic group $\bG'$ over $F_d$ and a commutative diagram
\[
    \xymatrix{X^{\bG}(\mu, \t)_K \ar[d]^-{\pi_{K, K'}} \ar[r]^-{\cong}& X^{\bG'}(\mu', \t')_{K_1} \ar[d] ^-{\pi_{K_1, K'_1}} \\
        X^{\bG}(\mu, \t)_{K'} \ar[r]^-{\cong} & X^{\bG'}(\mu', \t')_{K'_1}.
}
\]
It is then enough to show property~(2) in Theorem~\ref{thm-dimension-0} or Theorem~\ref{thm-fixed-s}, respectively, for the $\brF$-simple group $\bG'$.

\subsection{Reduction  to the $(\tilde A_{n-1}, \o^\vee_1)$ case} Now we assume that $\bG$ is quasi-simple over $\brF$. Let $s \in K'\setminus K$. Suppose that the projection $\pi_{K, K'}: \XmubK{\t}{K} \to \XmubK{\t}{K'}$ has discrete fibers. By Proposition \ref{prop-fixed-s}, we then have $s \t \s(s) \notin \Admmu$. We distinguish cases.

\smallskip

\noindent {\it Case (I): $s$ commutes with $\tau \s(s) \tau \i$. }

\smallskip	
By Proposition \ref{supp-tss}, if $(\tilde{\Delta}, \mu) \neq (\tilde A_{n-1}, \omega^\vee_1)$ or $(\tilde A_{n-1}, \omega^\vee_{n-1})$ for some $n$, then there exists $w \in \Admmu$ with $\supp(w \t \i)=\tSS$. Hence $s \t \s(s) \le w$ and $s \t \s(s) \in \Admmu$,  a contradiction.

\smallskip

\noindent{\it Case (II): $s$ does not commute with $\tau \s(s) \tau \i$. }

\smallskip

Then $\tW$ is of type $\tilde A_n$, $\tilde C_{2 n+1}$ or  $\tilde D_{2n+1}$. 
If $\tW$ is of type $\tilde C_{2n+1}$ or $\tilde D_{2n+1}$, then $\{s, \tau \s(s) \tau \i\}=\{s_n, s_{n+1}\}$. Then by direct computation, $s_n s_{n+1} \tau, s_{n+1} s_n \tau \in \Admmu$ for any minuscule or quasi-minuscule coweight $\mu$. For general $\mu$, there exists a minuscule or quasi-minuscule coweight $\mu'$ such that $\mu \ge \mu'$. Hence $\Adm(\mu') \subset \Admmu$ and $s_n s_{n+1} \tau, s_{n+1} s_n \tau \in \Admmu$, a contradiction. 
If $\tW$ is of type $\tilde A_{n-1}$ but  $\mu_+$ is not $\o^\vee_1$ or $\o^\vee_{n-1}$ then, by Lemma \ref{forA},  $s \t \s(s) \in \Admmu$, a contradiction. 

In summary, we may now assume that $(\tilde{\Delta}, \mu)=(\tilde A_{n-1}, \omega^\vee_1)$. 

\subsection{The $(\tilde A_{n-1}, \o^\vee_1)$ case} If $(\tilde{\Delta}, \mu)=(\tilde A_{n-1}, \omega^\vee_1)$, then $s$ does not commute with $\tau \s(s) \tau \i$.   Indeed, assume that   $s$ does  commute with $\tau \s(s) \tau \i$. The maximal elements in $\Admmu$ are $\tau s_{n-1} s_{n-2} \cdots s_1$, $\tau s_{n-2} s_{n-3} \cdots s_0, \ldots, \tau s_0 s_{-1} \cdots s_{-(n-2)}$. If $s=\t \s(s) \t \i$, then $s \t \s(s)=\t \in \Admmu$, a contradiction to Proposition \ref{prop-fixed-s}. If $s \neq \t \s(s) \t \i$ then, since  $s$ commutes with $\t \s(s) \t \i$, we have $n \ge 3$ and hence $t_1 t_2 \tau \in \Admmu$ for any $t_1, t_2 \in \tSS$ with $t_1 t_2=t_2 t_1$. We again have $s \t \s(s) \in \Admmu$, a contradiction to Proposition \ref{prop-fixed-s}. 

We deduce that  $\s=\id$,  or $\s=\varsigma_0$ (for $n \ge 3$), or $\s= \Ad(\tau_{n-2})$. Now  $\Ad(\t_{n-2})$ acts on the affine Dynkin diagram by sending $s_2$ to $s_0$, $s_3$ to $s_1$, \ldots, $s_1$ to $s_{n-1}$. By direct computation, if $\s=\Ad(\tau_{n-2})$, then $s \tau \s(s) \in \Admmu$, a contradiction to Proposition \ref{prop-fixed-s}. 

If $\s=\varsigma_0$, then $s \tau \s(s) \notin \Admmu$ if and only if $s=s_0$ for $n$ odd and $s=s_0$ or $s=s_m$ for $n=2m$ even. Now assume that $K'\setminus K \subset \{s_0, s_{\frac{n}{2}}\}$, and let us check Condition \ref{star}  on $(K, K')$. We argue by contradiction. 

If $s_0 \in K'\setminus K$ and $s_1 \in K$, then $\brK_{s_0, s_1} \subset \brK'$, where $\brK_{s_0, s_1}$ is the standard parahoric subgroup generated by $\brI$ and $s_0, s_1$. We have $$\brI s_0 \t \brI \subset \brK_{s_0, s_1} \cdot_\s \t \subset \brK' \cdot_\s \t.$$ Since $s_0 \t \in \Admmu$, 
the set
$$\{g \in \brK'/\brK\mid g \i \t \s(g) \in \brK \cdot_\s \brI s_0 \t \brI\}
$$ is a one-dimensional subvariety of $\XmubK{\t}{K}$ in the fiber over $\brK'/\brK' \in \XmubK{\t}{K'}$: contradiction.  

If $n=2m$ is even and $s_m \in K'\setminus K$,  and $s_{m+1} \in K$, then $\brK_{s_m, s_{m+1}} \subset \brK'$ and 
$$\brI s_{m} \t \brI \subset \brK_{s_m, s_{m+1}} \cdot_\s \t \subset \brK' \cdot_\s \t.$$ Since $s_m \t \in \Admmu$, 
the set
$$\{g \in \brK'/\brK\mid g \i \t \s(g) \in \brK \cdot_\s \brI s_{m} \t \brI\}
$$ is a one-dimensional subvariety of $\XmubK{\t}{K}$ in the fiber over  $\brK'/\brK' \in \XmubK{\t}{K'}$: contradiction.

\section{Proof of $(2) \Rightarrow (1)$ in Theorem~\ref{thm-fixed-s}}\label{s:otherdirzero}

Similarly as before, we may assume that $\bG$ is quasi-simple over $\brF$.

\subsection{Compatibility of the map $p_{K, \t}$}
Assume that we are in the following situation
\begin{situation}\label{sitn-thm-4.2}
Let $(\bf G, \mu)$ and $K \subsetneqq K' \subsetneqq \tSS$ be $\s$-stable and such that 
we are in either of the following two cases. 
\begin{itemize}
\item (The Lubin-Tate case) The associated Coxeter datum is isomorphic to $(\tilde{A}_{n-1}, \id, \omega_1^\vee)$, or
\item (The exotic case) The associated Coxeter datum is isomorphic to $(\tilde{A}_{n-1}, \varsigma_0,
\omega_1^\vee)$, $n \ge 3$ and Condition \ref{star} is satisfied. 
\end{itemize}
\end{situation}

Then by Theorem \ref{f-HN}, the pair $(G, \mu)$ is fully Hodge-Newton decomposable.  By Theorem~\ref{fine-dec},
\[
\XmubK{\t}{K}=\bigsqcup_{w \in \KAdmmu_0} X_{K, w}(\t),
\]
and we define the map $p_{K, \t}\colon \XmubK{\t}{K} \to \KAdmmu_0$ by mapping all points in $X_{K, w}(\t)$ to $w$.
We prove the following compatibility result for the maps $p_{K, \t}$ when $K$ varies. 

\begin{theorem}\label{pi'}
Let $({\bf G}, \mu, K \subsetneqq K')$ be as in Situation~\ref{sitn-thm-4.2}. 
	
	There exists a unique map $\pi'_{K, K'}\colon \KAdmmu_0 \to {}^{K'} \Admmu_0$ such that the following diagram commutes
	\[
	\xymatrix{	
\XmubK{\t}{K} \ar[r]^-{p_{K, \t}} \ar[d]_-{\pi_{K, K'}} & \KAdmmu_0 \ar[d]^-{\pi'_{K, K'}} \\
\XmubK{\t}{K'} \ar[r]^-{p_{K', \t}}  & {}^{K'} \Admmu_0 
		,}
	\]
i.e., for each EKOR stratum in $\XmubK{\t}{K}$, the projection to $\XmubK{\t}{K'}$ is a \emph{single} EKOR stratum.
Moreover, the projection map $\pi_{K, K'}: \XmubK{\t}{K} \to \XmubK{\t}{K'}$ has finite fibers.
\end{theorem}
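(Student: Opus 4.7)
The plan is to prove both assertions by systematically exploiting the fully Hodge-Newton decomposable structure of $(\bG,\mu)$, which is guaranteed by Theorems~\ref{f-HN} and~\ref{f-HN-2} under Situation~\ref{sitn-thm-4.2}. Theorem~\ref{fine-dec} then gives the disjoint EKOR decompositions $\XmubK{\t}{K} = \bigsqcup_{w \in \KAdmmu_0} X_{K,w}(\t)$ and $\XmubK{\t}{K'} = \bigsqcup_{w' \in {}^{K'}\!\Admmu_0} X_{K',w'}(\t)$ with all strata nonempty. For each $w \in \KAdmmu_0$ I would set $\pi'_{K,K'}(w) := w^{K'}$, the unique minimal-length representative in ${}^{K'}\tW$ of the coset $W_{K'} w$; uniqueness of the map is then automatic from the disjointness of the EKOR stratification, provided one knows the commutativity of the diagram.

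The commutativity would follow from the key structural identity
\[
    \brK' \cdot_\s \brI w \brI \; = \; \brK' \cdot_\s \brI w^{K'} \brI.
\]
Writing $w = v\, w^{K'}$ with $v \in W_{K'}$ and $\ell(w) = \ell(v) + \ell(w^{K'})$, one inclusion follows from $\brI w \brI = \brI v \brI \cdot \brI w^{K'} \brI \subseteq \brK' \cdot \brI w^{K'} \brI$, and the reverse follows symmetrically from $\brI w^{K'} \brI \subseteq \brI v^{-1} \brI \cdot \brI w \brI \subseteq \brK' \cdot \brI w \brI$. To check that $w^{K'}$ really labels an EKOR stratum in ${}^{K'}\!\Admmu_0$, admissibility is immediate from $w^{K'} \le w$ in the Bruhat order (since $\Admmu$ is closed below); and Proposition~\ref{GHN-4.6} reduces the condition $\brK' \cdot_\s \brI w^{K'} \brI \subseteq [\t]$ to $\k(w^{K'}) = \k(\t)$, which is preserved under left $W_{K'}$-multiplication, together with finiteness of $W_{\supp_\s(w^{K'})}$, which follows from the analogous property of $w$ because $w$ and $w^{K'}$ share the same $\Omega$-part and $\supp(w^{K'}) \subseteq \supp(w)$.

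For the finite-fibers statement I would split along the two cases of Situation~\ref{sitn-thm-4.2}. In the Lubin-Tate case, Remark~\ref{rmk-lubin-tate} directly provides that each connected component of $\XmubK{\t}{K}$ is a single point, so the fibers of $\pi_{K,K'}$ are singletons. In the exotic case, I would fix $g\brK' \in X_{K',w'}(\t)$, set $b := g^{-1} \t \s(g)$, and identify the fiber with the subset
\[
    \bigl\{\, k \in \brK'/\brK \;:\; k^{-1} b\, \s(k) \in \brK\, \Admmu\, \brK\, \bigr\}
\]
of the finite-dimensional partial flag variety $\brK'/\brK$. Since Condition~\ref{star} forces $s\t\s(s) \notin \Admmu$ for every $s \in K' \setminus K$ (as extracted in the course of Section~\ref{s:onedirzero}), a local Bruhat-theoretic analysis shows that this inner affine Deligne-Lusztig type locus is zero-dimensional; as a constructible subset of a scheme (or perfect scheme) of finite type it is then finite.

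The main obstacle will be this last step in the exotic case. The commutativity half of the theorem is essentially a formal consequence of the Hodge-Newton decomposable structure, but upgrading the infinitesimal discreteness that Condition~\ref{star} provides (via the contrapositive of Proposition~\ref{prop-fixed-s}) to honest finiteness, with uniform cardinality bounds, requires the explicit local calculations later carried out in Sections~\ref{sec:fibers-group-theoretic} and~\ref{sec:exotic} --- where the precise fiber cardinalities $1$, $2$, $q^d+1$ and their binary products are extracted. For the purposes of Theorem~\ref{pi'} only the qualitative zero-dimensionality is needed.
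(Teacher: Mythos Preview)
Your definition of $\pi'_{K,K'}(w)$ as the minimal left coset representative $w^{K'}\in{}^{K'}\tW$ of $W_{K'}w$ is incorrect, and the ``key structural identity'' $\brK'\cdot_\s\brI w\brI=\brK'\cdot_\s\brI w^{K'}\brI$ is false. Your argument for it silently replaces the $\s$-twisted action $k\cdot_\s y=k y\,\s(k)^{-1}$ by ordinary left multiplication: from $\brI w\brI\subseteq\brK'\cdot\brI w^{K'}\brI$ one cannot conclude $\brK'\cdot_\s\brI w\brI\subseteq\brK'\cdot_\s\brI w^{K'}\brI$, because absorbing the extra factor $v\in W_{K'}$ on the left forces an unwanted factor $\s(v)^{-1}$ on the right. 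A concrete counterexample is already visible in Example~\ref{exotic-GU3}: take $(\tilde A_2,\varsigma_0,\o_1^\vee)$, $K=\emptyset$, $K'=\{s_0\}$, and $w=s_0\t$. Then $w^{K'}=\t$, so your map would send $s_0\t\mapsto\t$; but in fact $\pi_{K,K'}$ carries $X_{K,s_0\t}(\t)$ isomorphically onto $X_{K',s_1\t}(\t)$, which is disjoint from $X_{K',\t}(\t)$. The correct image is $s_1\t=s_0\cdot(s_0\t)\cdot\s(s_0)$, obtained by \emph{$\s$-conjugation} rather than by stripping the left factor.

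The paper's approach is precisely to replace your left-coset reduction by the partial $\s$-conjugation relation $\approx_{K',\s}$: Proposition~\ref{w-w} shows (by an explicit case analysis in the exotic case) that each $w\in\KAdmmu_0$ is $\approx_{K',\s}$-equivalent to a unique $w'\in{}^{K'}\Admmu_0$, and then the inclusion $\brK\cdot_\s\brI w\brI\subseteq\brK'\cdot_\s\brI w'\brI$ follows formally from $w\approx_{K',\s}w'$. This is the missing idea. Your finite-fibers argument in the exotic case is also only a sketch (``a local Bruhat-theoretic analysis shows\ldots''); the paper instead proves Theorem~\ref{finite-fiber-final} by combining the Deligne--Lusztig reduction (Proposition~\ref{D-L}) along the chain $w\approx_{K',\s}w'$ with the comparison Theorem~\ref{i-k-w} and the finite-quotient count of Proposition~\ref{finite-ff}, yielding not just finiteness but the exact fiber cardinality $\sharp(\brK'_1/\brI)^{\Ad(w')\circ\s}/\sharp(\brK_1/\brI)^{\Ad(w)\circ\s}$.
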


\subsection{Partial conjugation}
To give the definition of $\pi'_{K, K'}$, we use the partial conjugation method. 

Let $w, w' \in \tW$ and $s \in \tSS$. We write $w \xrightarrow{s}_\s w'$ if $w'=s w \s(s)$ and $\ell(w') \le \ell(w)$.
Let $K \subset \tSS$. We write $w \to_{K, \s} w'$ if there exists a sequence $w=w_0, w_1, \ldots, w_n=w'$ such that for any $k$, $w_k \xrightarrow{s}_{\s} w_{k+1}$ for some $s \in K$. We write $w \approx_{K, \s} w'$ if $w \to_{K, \s} w'$ and $w' \to_{K, \s} w$.

\begin{proposition}\label{w-w}
Let $({\bf G}, \mu, K \subsetneqq K')$ be as in Situation~\ref{sitn-thm-4.2}. 
For any $w \in \KAdmmu_0$, there exists a unique $w' \in {}^{K'} \Admmu_0 $ such that $w \approx_{K', \s} w'$. 
\end{proposition}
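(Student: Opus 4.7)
The strategy is the partial $\sigma$-conjugation (``cyclic shift'') method. Given $w\in \KAdmmu_0$, we will reduce its length by successive replacements $w \rightsquigarrow sw\sigma(s)$ for $s\in K'$, arrive at a minimum-length element $w'$ of the $(W_{K'},\sigma)$-orbit, and show that $w'$ is the desired canonical element of ${}^{K'}\Admmu_0$.

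\emph{Existence.} Iterate: whenever some $s\in K'$ satisfies $\ell(sw\sigma(s))<\ell(w)$, replace $w$ by $sw\sigma(s)$. The process terminates at some $w'$ with $w\to_{K',\sigma} w'$ and $\ell(sw'\sigma(s))\geq\ell(w')$ for every $s\in K'$. Since partial $\sigma$-conjugation preserves the $\sigma$-conjugacy class in $\breve G$, we still have $\brI w'\brI\subset[\tau]$, and Proposition~\ref{GHN-4.6} yields $\kappa(w')=\kappa(\tau)$ together with $W_{\supp_\sigma(w')}$ finite. Admissibility $w'\in\Admmu$ follows from a direct combinatorial check in type $\tilde A_{n-1}$: since $\mu=\omega_1^\vee$ is minuscule, $\Admmu$ is the Bruhat lower set of the translates $t^{x\omega_1^\vee}$, $x\in W_0$, and each elementary move $w\mapsto sw\sigma(s)$ with $\ell(sw\sigma(s))<\ell(w)$ preserves this lower set.

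To obtain $w'\in{}^{K'}\tW$ we must upgrade minimality in the $(W_{K'},\sigma)$-orbit to left-coset minimality modulo $W_{K'}$, i.e.\ $\ell(sw')>\ell(w')$ for all $s\in K'$. In the Lubin-Tate case ($\sigma=\id$) the two conditions coincide after folding by the stabilizer of $w'$ in $W_{K'}$, and the combinatorics is straightforward. In the exotic case this is precisely what Condition~\ref{star} is designed for: the constraint that $s_i\in K'\setminus K$ forces $s_{i+1}\notin K$ is exactly what rules out a configuration in which some $s\in K'$ would satisfy $\ell(sw')<\ell(w')\leq\ell(sw'\sigma(s))$ — the pair $(s,\sigma(s))$ being the kind of adjacent-but-not-coincident simple reflections for which such a pathology would be possible in type $\tilde A$ with involutive $\sigma$.

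\emph{Uniqueness.} If $w_1,w_2\in{}^{K'}\Admmu_0$ are both $\approx_{K',\sigma}$-equivalent to $w$, they are both of minimum length in the common $(W_{K'},\sigma)$-orbit. By the twisted version of the Geck--Pfeiffer theory of minimum-length elements in (twisted) conjugacy classes of finite Coxeter groups, the minimum-length elements form a single $\approx_{K',\sigma}$-class, and within this class there is at most one element of ${}^{K'}\tW$; hence $w_1=w_2$.

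The main obstacle is the verification that $w'\in{}^{K'}\tW$ in the exotic case: one has to track Condition~\ref{star} through the sequence of reductions starting from $w\in{}^K\tW\cap\Admmu_0$ and check that left-shortening by $s\in K'$ is always witnessed by $\sigma$-twisted shortening. The remaining steps (admissibility, support condition, uniqueness) are either automatic from Proposition~\ref{GHN-4.6} or follow from a routine type-$A$ check.
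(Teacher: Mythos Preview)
Your uniqueness argument is fine and matches the paper's (which cites \cite[Cor.~2.5]{He-Min}, the twisted Geck--Pfeiffer result you invoke). The existence argument, however, has a real gap.

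The relation $\approx_{K',\sigma}$ requires \emph{equal} length, not merely $\to_{K',\sigma}$. Your iteration makes only strictly length-reducing moves, so at best it would produce $w\to_{K',\sigma}w'$ with $\ell(w')<\ell(w)$, which does not give $w\approx_{K',\sigma}w'$. Worse, the iteration is vacuous here: for $w\in{}^K\tW$ and $s\in K$ one has $sw>w$, hence $\ell(sw\sigma(s))\in\{\ell(w),\ell(w)+2\}$; and for $s\in K'\setminus K\subset\{s_0,s_{n/2}\}$ with $\sigma(s)=s$, if $sw<w$ then the explicit shape of $\Admmu$ forces $s_1$ (resp.\ $s_{n/2+1}$) to be absent from $\supp(w\tau^{-1})$, so $sws$ has the \emph{same} length as $w$, not smaller. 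Thus no strict reduction ever occurs and your procedure outputs $w'=w$, which is typically not in ${}^{K'}\tW$.

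What the paper does instead is precisely the step you flag as the ``main obstacle'' but do not carry out: it makes one or two \emph{length-preserving} moves $w\mapsto sws$ for $s\in K'\setminus K$, using the explicit description of the maximal elements of $\Admmu$ in type $\tilde A_{n-1}$ (each simple reflection appears at most once in $w\tau^{-1}$; $s_is_{i+1}\tau\notin\Admmu$) together with Condition~\ref{star} (which guarantees $s_{i+1}\notin K$, hence $s_{i+1}\notin\supp(w\tau^{-1})$ when $s_iw<w$) to check directly that the result lies in ${}^{K'}\tW$, stays in $\Admmu$ (via \cite[Lemma~4.5]{Haines:Bernstein}), and keeps $\supp_\sigma$ finite. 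In the Lubin-Tate case $\KAdmmu_0=\{\tau\}$ and there is nothing to do. Your sketch correctly identifies Condition~\ref{star} as the key, but the actual verification---that the length-preserving conjugate lands in ${}^{K'}\tW\cap\Admmu_0$---is the substance of the proof and is missing.
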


\begin{proof} The uniqueness of $w'$ follows from \cite[Cor. 2.5]{He-Min}. Now we prove the existence. 
	
	If $\s$ acts as id on the affine Dynkin diagram, then $\KAdmmu_0=\{\t\}$ for any $K$. Now we consider the case where $\s=\varsigma_0$. Note that the maximal elements in $\Admmu$ are 
$$
s_0 s_{n-1} s_{n-2} \cdots s_2 \t, s_1 s_0 s_{n-1} \cdots s_3 \t, \ldots, s_{n-1} s_{n-2} \cdots s_1 \t .
$$
 Therefore 
	\begin{enumerate}
		\item if $w \in \Admmu$, then each simple reflection appears at most once in a reduced expression of $w \t \i$; 
		
		\item for any $0 \le i \le n-1$, $s_i s_{i+1} \t \notin \Admmu$. Here by convention, we set $s_n=s_0$. 
	\end{enumerate}

We consider here the case where $n=2m$ for some $m \ge 2$ and $K'\setminus K=\{s_0, s_m\}$; the other cases follow from a similar (but simpler) argument. 
	Let $w \in \KAdmmu_0$. 
	
	If $s_0 w>w$ and $s_m w>w$, then $w \in {}^{K'} \Admmu_0$ and $w':=w$ is the desired element. If $s_0 w<w$ and $s_m w>w$, then $s_0$ commutes with $s_m$ and $s_m (s_0 w)>s_0 w$. So $s_0 w \in {}^{K'} \tW$. Since $s_0 w<w$ and $w \in \Admmu$, $s_1$ does not occur in any reduced expression of $w \t \i$. Thus $$s_0 w \s(s_0)=s_0 w s_0=s_0 (w \t \i) s_1 \t \in {}^{K'} \tW$$ and has the same length as $w$. Moreover, by \cite[Lemma 4.5]{Haines:Bernstein}, $s_0 w s_0 \in \Admmu$. So $w':=s_0 w s_0$ is the desired element. 
	
	If $s_0 w>w$ and $s_m w<w$, then by a similar argument $s_m w \in {}^{K'} \tW$ and $w':=s_m w s_m \in {}^{K'} \Admmu_0$ is the desired element. If $s_0 w<w$ and $s_m w<w$, then by a similar argument $s_0 s_m w \in {}^{K'} \tW$ and $w':=s_0 s_m w s_m s_0 \in {}^{K'} \Admmu_0$ is the desired element. 
\end{proof}
\begin{proof}[Proof of Theorem \ref{pi'}, existence and uniqueness of $\pi'_{K, K'}$] By Theorem \ref{fine-dec}, we have 
$$\XmubK{\t}{K}=\bigsqcup_{w \in \KAdmmu_0} X_{K, w}(\t),$$ 
and all $X_{K, w}(\t)$ in the union of the right hand side are non-empty. The latter fact says that the map $p_{K, \t}$ is surjective, so $\pi'_{K, K'}$ is unique, if it exists. We define the map $\pi'_{K, K'}: \KAdmmu_0 \to {}^{K'} \Admmu_0$ by $w \mapsto w'$, where $w'$ is the unique element in ${}^{K'} \Admmu_0$ with $w \approx_{K', \s} w'$, cf. Proposition \ref{w-w}.  Now for any $g \brK \in X_{K, w}(\t)$, we have $g \i \t \s(g) \in \brK \cdot_\s \brI w \brI \subset \brK' \cdot_\s \brI w' \brI$. Therefore $\pi_{K, K'}(g \brK) \in X_{K', w'}(\t)$. This proves the commutativity of the diagram and thus shows the existence of $\pi'_{K, K'}$.
\end{proof}

\subsection{The fibers of the map $\pi'_{K, K'}$} 

Assume that our Tits datum is $(\tilde{\Delta}, \s, \mu) = (\tilde{A}_{n-1}, \varsigma_0, \omega_1^\vee)$  for $n \ge 3$, and $K'\setminus K \subset \{s_0, s_{\frac{n}{2}}\}$, and if $s_i \in K'\setminus K$, then $s_{i+1} \notin K$. By the proof of Proposition \ref{w-w}, if $K'\setminus K=\{s_j\}$ for $j \in \{0, \frac{n}{2}\}$, then for $w' \in {}^{K'} \Admmu_0$, $$(\pi'_{K, K'}) \i (w')=\begin{cases} \{w', s_j w' s_j\}, & \text{ if } w' s_j<w'; \\ \{w'\}, & \text{ if } w' s_j>w'.\end{cases}$$

If $n=2m$ and $K'\setminus K=\{s_0, s_m\}$,  then for $w' \in {}^{K'} \Admmu_0$, $$(\pi'_{K, K'}) \i (w')=\begin{cases} \{w', s_0 w' s_0, s_m w' s_m, s_0 s_m w' s_m s_0\}, & \text{ if } w' s_0<w', w' s_m<w'; \\ \{w', s_0 w' s_0\}, & \text{ if } w' s_0<w', w' s_m>w'; \\ \{w', s_m w' s_m\}, & \text{ if } w' s_0>w', w' s_m<w'; \\ \{w'\}, & \text{ if } w' s_0>w', w' s_m>w'.\end{cases}$$

\subsection{The fibers of the map $\pi_{K, K'}$} 
\label{sec:fibers-group-theoretic}

\smallskip

Next we study the fibers of the map $\pi_{K, K'}: \XmubK{\t}{K} \to \XmubK{\t}{K'}$. This will also finish the proof of Theorem \ref{pi'}.

\begin{theorem}\label{finite-fiber-final}
	Let $b \in \breve G$. Let $K \subseteq K' \subsetneqq \tSS$. Let $w \in {}^K \tW$ and $w' \in {}^{K'} \tW$. If $w \approx_{K', \s} w'$, then the natural projection map $X_{K, w}(b) \to X_{K', w'}(b)$ has finite fibers. 
\end{theorem}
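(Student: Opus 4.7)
The plan is to fix a point $g\brK' \in X_{K', w'}(b)$, describe the fiber of $\pi_{K, K'}$ over it as a subvariety of the partial flag variety $\brK'/\brK$, and show it is zero-dimensional by identifying it with a classical Deligne-Lusztig variety attached to a length-$0$ element of the finite Weyl group $W_{K'}$.

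\textbf{Step 1 (Setup).} Setting $x := g^{-1} b \s(g) \in \brK' \cdot_\s \brI w' \brI$, we may replace $g$ with $gk$ for some $k \in \brK'$ (which fixes $g\brK'$) to assume $x \in \brI w' \brI$. Via $h \mapsto gh$, the fiber is then identified with the locally closed subvariety
\[
F := \{h\brK \in \brK'/\brK \mid h\i x \s(h) \in \brK \cdot_\s \brI w \brI\}
\]
of the \emph{finite-dimensional} partial flag variety $\brK'/\brK$. The goal is to show $\dim F = 0$.

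\textbf{Step 2 (Decomposition and descent).} Write $\brK \cdot_\s \brI w \brI = \bigsqcup_{v \in V} \brI v \brI$ as a finite disjoint union of Iwahori double cosets, so $F = \bigsqcup_{v \in V} F_v$ with $F_v := \{h\brK : h\i x \s(h) \in \brI v \brI\}$. Let $M$ denote the reductive quotient of $\brK'$; the image of $\brK$ in $M$ is a standard parabolic $P$, and $\brK'/\brK = M/P$. The core identification is that each $F_v$, after choosing a lift $\dot w'$ of $w'$ and using that $w' \in {}^{K'}\tW$ satisfies length-additivity with respect to $W_{K'}$, corresponds to a classical Deligne-Lusztig variety in $M/P$ for the twisted Frobenius $\phi := \Ad(\dot w'^{-1}) \circ \s$ and for a finite Weyl group element $\bar v \in W_{K'}$ recording the position of $v$ relative to $w'$.

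\textbf{Step 3 (Length argument and main obstacle).} Since $w \approx_{K', \s} w'$ is realized by a chain of length-preserving $\s$-conjugations by simple reflections in $K'$, every $v \in V$ contributing a nonempty $F_v$ satisfies $\ell(v) = \ell(w) = \ell(w')$. Combined with the characterization of $w'$ as the \emph{unique} minimal-length element of its $\approx_{K', \s}$-class in ${}^{K'}\tW$ (Proposition~\ref{w-w}, cf.~\cite[Cor.~2.5]{He-Min}), this forces the associated $\bar v \in W_{K'}$ to have length $0$. A classical Deligne-Lusztig variety attached to a length-$0$ element is finite by Lang's theorem, so each $F_v$ is finite, hence $F$ is finite, completing the proof. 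The main technical obstacle is Step~2: because $x \in \brI w' \brI$ typically does not lie in $\brK'$ when $w'$ is non-trivial modulo $W_{K'}$, one cannot directly reduce $x$ modulo the pro-unipotent radical of $\brK'$. The careful bookkeeping of lifts of $w'$ to $N(\breve F)$ and of the interaction between the Iwahori decomposition of $\brK \cdot_\s \brI w \brI$ and left translation by $\brK'$ — so that the condition $h\i x \s(h) \in \brI v \brI$ becomes a Bruhat-type condition on $\bar h \in M/P$ for the twisted Frobenius $\phi$ — constitutes the technical heart of the argument.
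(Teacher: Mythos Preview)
Your approach shares the endgame with the paper's proof—both ultimately invoke Lang's theorem on a finite flag variety—but Step~2 has a genuine gap that you flag but do not close. The twisted Frobenius $\phi = \Ad(\dot w'^{-1})\circ\s$ is a well-defined automorphism of the reductive quotient $M$ of $\brK'$ only if $\Ad(w')\circ\s$ stabilizes $K'$; this is \emph{not} part of the hypotheses and fails in general. Without it, the condition $h^{-1}x\,\s(h)\in\brI v\brI$ does not descend to a Bruhat-type condition on $M/P$, and no classical Deligne-Lusztig variety appears. Your Step~3 is also incomplete: the Iwahori double cosets $\brI v\brI$ occurring in $\brK\cdot_\s\brI w\brI$ need not all have $\ell(v)=\ell(w)$, and the passage from such a $v$ to a ``length-$0$ element $\bar v\in W_{K'}$'' is exactly the bookkeeping you postpone. (The reference to Proposition~\ref{w-w} is also misplaced: that proposition is specific to Situation~\ref{sitn-thm-4.2}, whereas Theorem~\ref{finite-fiber-final} is stated in full generality; only the uniqueness input from \cite[Cor.~2.5]{He-Min} is general.)

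The paper handles precisely this obstruction by introducing the auxiliary subsets $K_1=I(K,w,\s)$ and $K_1'=I(K',w',\s)$, the \emph{maximal} subsets of $K$ and $K'$ stable under $\Ad(w)\circ\s$ and $\Ad(w')\circ\s$ respectively (Theorem~\ref{i-k-w}). At these levels the twisted Frobenius \emph{is} defined, and $X_{K,w}(b)\cong X_{K_1,w}(b)$, $X_{K',w'}(b)\cong X_{K_1',w'}(b)$ become ordinary affine Deligne-Lusztig varieties. The paper then lifts to Iwahori level: Deligne-Lusztig reduction (Proposition~\ref{D-L}) furnishes a homeomorphism $X_{w}(b)\cong X_{w'}(b)$ compatible with both projections, and Proposition~\ref{finite-ff}—which is your Lang-theorem step, but carried out at the correct level $K_1'$—shows $X_{w'}(b)\to X_{K_1',w'}(b)$ has finite fibers. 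A diagram chase then gives the result, together with the explicit fiber cardinality $\sharp(\brK_1'/\brI)^{\Ad(w')\circ\s}\big/\sharp(\brK_1/\brI)^{\Ad(w)\circ\s}$, which your direct approach would not immediately yield.
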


We first recall the following result which relates a fine affine Deligne-Lusztig variety in the partial affine flag variety $\breve G/\brK$ to an ordinary affine Deligne-Lusztig variety in another partial affine flag variety. 

\begin{theorem}\cite[Thm. 4.1.2]{GH}\label{i-k-w}
	Let $K \subsetneqq \tSS$ and $w \in {}^K \tW$. Set 
    $$K_1=I(K, w, \s)=\max\{K' \subset K\mid \Ad(w) \circ \s(K')=K'\}.
	$$
	 Let $\brK_1$ be the associated parahoric subgroup. Then the natural projection map $\breve G/\brK_1 \to \breve G/\brK$ induces an isomorphism 
	$$X_{K_1, w}(b) \xrightarrow{\cong} X_{K, w}(b).
	$$\qed
\end{theorem}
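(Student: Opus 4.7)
My plan is to show that the natural projection $\breve G/\brK_1 \to \breve G/\brK$ restricts to a bijection on points $X_{K_1, w}(b) \to X_{K, w}(b)$; the scheme-theoretic upgrade to an isomorphism is a standard check, using that the ambient projection is a smooth $\brK/\brK_1$-bundle (in the perfect sense if $F$ is $p$-adic). Well-definedness of the restricted map is immediate from $\brK_1 \subseteq \brK$, which yields the inclusion of defining cells $\brK_1 \cdot_\s \brI w \brI \subseteq \brK \cdot_\s \brI w \brI$.

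For surjectivity, given $g\brK \in X_{K, w}(b)$, I unpack the definition to obtain a decomposition $g^{-1} b \s(g) = k y \s(k)^{-1}$ with $k \in \brK$ and $y \in \brI w \brI$. Replacing $g$ by the same-coset representative $gk$, the identity becomes $(gk)^{-1} b \s(gk) = y \in \brI w \brI \subseteq \brK_1 \cdot_\s \brI w \brI$, so $gk \brK_1 \in X_{K_1, w}(b)$ is a lift. For injectivity, I consider two lifts $g_1 \brK_1, g_2 \brK_1 \in X_{K_1, w}(b)$ of the same $g\brK$; after applying the surjectivity normalization to each, I may assume $g_i^{-1} b \s(g_i) = y_i \in \brI w \brI$, and write $g_2 = g_1 j$ with $j \in \brK$. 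Then $y_1 = j y_2 \s(j)^{-1}$, and the desired conclusion $g_1 \brK_1 = g_2 \brK_1$ is equivalent to $j \in \brK_1$.

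The heart of the argument is therefore the combinatorial lemma: if $j \in \brK$ satisfies $j \cdot \brI w \brI \cdot \s(j)^{-1} \cap \brI w \brI \neq \emptyset$, then $j \in \brK_1$. Writing $j \in \brI \dot u \brI$ for a unique $u \in W_K$ reduces to $j = \dot u$. Since $w \in {}^K \tW$, one has $\ell(uw) = \ell(u) + \ell(w)$ and hence $\brI u \brI \cdot \brI w \brI = \brI uw \brI$; therefore
\[
\brI \dot u \brI \cdot \brI w \brI \cdot \brI \s(\dot u)^{-1} \brI \;=\; \brI uw \brI \cdot \brI \s(u)^{-1} \brI,
\]
which decomposes as a finite union of Bruhat cells $\brI uwv \brI$ with $v \in W_K$ running over elements Bruhat-below $\s(u)^{-1}$ (Demazure product). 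For $\brI w \brI$ to occur in this union one needs $uwv = w$, forcing $v = w^{-1} u^{-1} w$ together with $v \le \s(u)^{-1}$; the sharp length/subword analysis then upgrades this to the identity $\Ad(w)\s(u) = u$. A support argument closes the loop: since $\Ad(w)\s$ fixes $u$, it permutes $\supp(u) \subseteq K$, so $\supp(u)$ is an $\Ad(w)\s$-stable subset of $K$; the maximality of $K_1$ forces $\supp(u) \subseteq K_1$, hence $u \in W_{K_1}$ and $j \in \brK_1$.

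The main obstacle will be the last sharpening step: extracting the strict identity $\Ad(w)\s(u) = u$ (rather than the weaker $w^{-1}uw \in W_K$) from the containment $\brI w \brI \subseteq \brI uw \brI \cdot \brI \s(u)^{-1} \brI$ of Bruhat cells. This requires careful length tracking and positive-subword bookkeeping in the affine Weyl group, and is essentially the partial $\s$-conjugation technology developed in \cite{He-KR, GH}.
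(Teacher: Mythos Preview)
The paper does not give a proof of this statement; it is quoted from \cite[Thm.~4.1.2]{GH} and marked with \qed. So there is no ``paper's own proof'' to compare against, and I evaluate your argument on its merits.

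Your surjectivity argument and your reduction of injectivity to the combinatorial lemma are correct. The length argument you allude to is also correct and easy: from $uwv'=w$ with $v'\le\s(u)^{-1}$ and $\ell(uw)=\ell(u)+\ell(w)$ one gets $\ell(v')\ge\ell(u)=\ell(\s(u)^{-1})$, hence $v'=\s(u)^{-1}$ and $\Ad(w)\s(u)=u$. So the step you flag as ``the main obstacle'' is in fact routine.

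The genuine gap is the \emph{next} sentence: ``since $\Ad(w)\s$ fixes $u$, it permutes $\supp(u)\subseteq K$.'' This does not follow. The automorphism $\Ad(w)\s$ of $\tW$ is not length-preserving in general; conjugation by $w$ can send simple reflections to non-simple reflections, so fixing the element $u$ says nothing a priori about $\supp(u)$. What makes the implication true is the hypothesis $w\in{}^K\tW$, which you have not used at this point. A correct argument runs as follows: write a reduced expression $\s(u)=t_1\cdots t_r$ and track the coset decomposition $w t_1\cdots t_i=u_i w_i$ with $u_i\in W_K$, $w_i\in{}^K\tW$. By Deodhar's lemma, at each step either $w_{i-1}t_i\in{}^K\tW$ (so $w_i=w_{i-1}t_i$, $u_i=u_{i-1}$) or $w_{i-1}t_iw_{i-1}^{-1}\in K$ (so $w_i=w_{i-1}$ and $\ell(u_i)=\ell(u_{i-1})\pm1$). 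Since $u_0=1$, $u_r=u$, $w_0=w_r=w$, and $\ell(u)=r$, a length count forces every step to be of the second kind with $\ell$ increasing; hence $w_i=w$ for all $i$ and each $wt_iw^{-1}$ is a simple reflection in $K$. This gives $\Ad(w)\s(\supp(u))=\supp(u)$ as a set of simple reflections, and then maximality of $K_1$ yields $\supp(u)\subseteq K_1$, i.e.\ $j\in\brK_1$.

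With this correction your argument goes through on points; the scheme-theoretic upgrade is indeed routine in the perfect setting.
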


Note that for $s\in K$, the element $w\s(s) w\i \in \tW$ is not in general
a simple reflection; it is part of the condition in the 
definition of $K_1$ that this is the case.

\begin{remark}
    Since $\Ad(w) \circ \s(K_1)=K_1$, we have $\brK_1 \cdot_\s \brI w \brI=\brK_1 w \s(\brK_1)$ and thus $X_{K_1, w}(b)=\{g \brK_1\mid g \i b \s(g)=\brK_1 w \s(\brK_1)\}$ is an ordinary affine Deligne-Lusztig variety in $\breve G/\brK_1$. 
\end{remark}

\begin{proposition}\label{finite-ff}
    Let $K \subset \tSS$ and $w \in {}^K \tW$ with $\Ad(w) \circ \s(K)=K$. Let $b \in \breve G$ with $X_w(b) \neq \emptyset$. Then each fiber of the projection map $X_w(b) \to X_{K, w}(b)$ consists of $\sharp (\brK/\brI)^{\Ad(w) \circ \s}$ elements. 
\end{proposition}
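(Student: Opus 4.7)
Fix a point $g\brK \in X_{K,w}(b)$ in the image of the projection. The fiber consists of cosets $gk\brI$ with $k \in \brK$ satisfying
\[
(gk)\i b\s(gk) = k\i \bigl(g\i b\s(g)\bigr)\s(k) \in \brI w\brI.
\]
The hypothesis $\Ad(w)\circ\s(K)=K$ ensures that $\phi := \Ad(w)\circ\s$ induces a Frobenius-like endomorphism $\bar\phi$ on the reductive quotient $L_K = \brK/U$, where $U$ denotes the pro-unipotent radical of $\brK$ (the compatibility of $\phi$ with the Coxeter-diagram structure of $K$ is what makes this descent canonical). Applying Lang--Steinberg to $(L_K,\bar\phi)$, we may modify the representative $g$ so that $g\i b\s(g) = w$. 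The fiber then becomes
\[
\{k\brI \in \brK/\brI \mid k\i w\s(k) \in \brI w \brI\}.
\]

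Writing $w\s(k) = \phi(k)\cdot w$, the condition rewrites as $k\i\phi(k) \in \brI w\brI w\i = \brI \cdot \phi(\brI)$, where we have used the identity $\brI w\brI w\i = \brI \cdot (w\brI w\i)$. The decisive combinatorial input is that $\bar\phi$ preserves the Borel $B := \brI/U$ of $L_K$: since $\Ad(w)\circ\s$ permutes the simple reflections in $K$ (by hypothesis), the induced automorphism $\bar\phi$ permutes the simple roots of $L_K$, and therefore preserves the positive root system and the corresponding Borel. Consequently, passing to the reductive quotient, the fiber condition becomes
\[
\bar k\i\bar\phi(\bar k) \in B \cdot \bar\phi(B) = B\cdot B = B \quad \text{in } L_K.
\]

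Finally, the condition $\bar k\i\bar\phi(\bar k) \in B$ is equivalent to $\bar\phi(\bar k B) = \bar k B$ in the flag variety $L_K/B$; that is, $\bar k B$ is a fixed point of $\bar\phi$. Hence the fiber is in natural bijection with
\[
(L_K/B)^{\bar\phi} \;=\; (\brK/\brI)^{\Ad(w)\circ\s},
\]
which has the claimed finite cardinality. The principal obstacle in this argument is the canonical descent of $\phi$ to a well-defined Frobenius-like endomorphism $\bar\phi$ of $L_K$ (given that $\phi$ need not preserve $\brK$ as a subgroup, only up to the canonical identification of reductive quotients of parahorics of the same type) and the ensuing verification that $\bar\phi$ stabilizes $B$; once these are in hand, the rest is a routine Lang--Steinberg computation in $L_K$.
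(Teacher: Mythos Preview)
Your approach is essentially the same as the paper's: reduce the fiber count to a Lang--Steinberg fixed-point computation on the flag variety $\underline\brK/\underline B$ of the reductive quotient. The paper's proof is equally terse about the point you flag as the ``principal obstacle'' --- it simply asserts that $\Ad(w)\circ\s$ induces a Frobenius on $\underline\brK$ --- so your more explicit discussion of why $\bar\phi$ preserves $B$ (via the permutation of simple reflections in $K$) is a welcome addition.

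There is one imprecision. In your Step~4 you claim that Lang--Steinberg on $(L_K,\bar\phi)$ lets you modify $g$ so that $g\i b\s(g)=w$ \emph{exactly}. This is too strong: Lang on the reductive quotient only controls the image modulo the pro-unipotent radicals $U_{\brK}$ and $\s(U_{\brK})$, so what you actually obtain is a representative with $g\i b\s(g)\in U_{\brK}\,\dot w\,\s(U_{\brK})$. Achieving literal equality would require a further Lang-type argument on the pro-unipotent part, and since $\phi=\Ad(\dot w)\circ\s$ sends $\brK$ to a \emph{different} parahoric $\dot w\s(\brK)\dot w\i$, this is not immediate. The paper sidesteps the issue: rather than normalizing $g$, it writes $g\i b\s(g)=u\,k_1\i w\s(k_1)\,u'$ with $u,u'$ pro-unipotent (this is exactly Lang on $\underline\brK$), and then absorbs $k_1$ by the change of variable $\underline k\mapsto k_1\underline k$ at the very end. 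Your subsequent steps are unaffected by this, since the pro-unipotent factors lie in $\brI$ and disappear when you pass to $\underline\brK/\underline B$; but the argument as written needs either this adjustment or a justification of why the reduction to $g\i b\s(g)=w$ is harmless.
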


\begin{remark}
	Note that $\brK/\brI$ is the flag variety of the reductive quotient of $\brK$ and $\Ad(w) \circ \s$ induces a Frobenius morphism on the reductive quotient of $\brK$. Hence $(\brK/\brI)^{\Ad(w) \circ \s}$ is the set of rational points of a full flag variety over the finite field $k$.
\end{remark}

\begin{proof}
    Let $U_{\brK}$ be the pro-unipotent radical  of $\brK$ and  $\underline \brK \cong \brK/U_{\brK}$ the reductive quotient of $\brK$. Let $\underline B$ be the image of $\brI$ in $\underline \brK $. Then  $\underline B$ is a Borel subgroup of $\underline \brK$. Since $\Ad(w) \circ \s(K)=K$, the action of $\Ad(w) \circ \s$ stabilizes $\underline \brK$ and hence is a Frobenius morphism on $\underline \brK$. 
	
	By Lang's theorem, any element in $\underline \brK w \underline \brK=\underline \brK w$ is of the form $k w \s(k) \i$ for some $k \in \underline \brK$. Let $g \brI \in X_w(b)$. Then the elements in the same fibers as $g \brI$ are $g k \brI$ for $k \i g \i b \s(g) \s(k) \in \brI w \brI$. Note that $g \brI \in X_w(b)$. So $g \i b \s(g)=u k_1 \i w \s(k_1) u'$. Thus the condition $k \i g \i b \s(g) \s(k) \in \brI w \brI$ is equivalent to $\underline k \i k_1 \i w \s(k_1) \s(\underline k) \in \underline B w \s(\underline B)$, where $\underline k \in \underline \brK$ such that $k \in \underline k U_{\brK}$. Note that 
 $$
 \{\underline k \underline B \in \underline \brK/\underline B\mid \underline k \i k_1 \i w \s(k_1) \s(\underline k) \in \underline B w \s(\underline B)\} \cong \{\underline k \underline B \in \underline \brK/\underline B\mid \underline k \i w \s(\underline k) w \i \in \underline B\}.
 $$ The statement is proved. 
\end{proof}

\begin{proposition}\label{D-L}
	Let $w, w' \in \tW$ and $K \subset \tSS$ such that $w \approx_{K, \s} w'$ and such that $w \in {}^K \tW$. Then there is a commutative diagram 
	\[
	\xymatrix{X_{w}(b) \ar[rr]^-\cong \ar[rd] & & X_{w'}(b) \ar[ld] \\ & X_{K, w}(b) &.
	}
	\]
\end{proposition}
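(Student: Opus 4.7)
The plan is to reduce to a single step of the Deligne--Lusztig reduction in the length-preserving case, apply it, and then verify the parahoric compatibility.

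Unwinding the definition of $\approx_{K, \s}$, there is a sequence $w = w_0, w_1, \dots, w_n = w'$ with each $w_i \xrightarrow{s_i}_\s w_{i+1}$, $s_i \in K$, together with a sequence in the reverse direction. Because every single step is length non-increasing and the two chains combine into a loop, all $w_i$ have length $\ell(w)$, and hence each one-step relation is actually length preserving. Iterating the recursion $w_{i+1} = s_i w_i \s(s_i)$ gives $w' = u\, w\, \s(u)^{-1}$ with $u = s_{n-1}\cdots s_0 \in W_K$, so $\brI w' \brI \subseteq \brK \cdot_\s \brI w \brI$ (using $\s(\brK) = \brK$); thus $g\brI \mapsto g\brK$ defines a morphism $X_{w'}(b) \to X_{K, w}(b)$, and the same argument applies to every intermediate $w_i$. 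By induction on $n$, it therefore suffices to treat a single step $w \xrightarrow{s}_\s w'$ with $s \in K$, $\ell(w) = \ell(w')$, and $w' = sw\s(s) \neq w$.

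In this one-step setting, the standard Deligne--Lusztig reduction lemma for affine Deligne--Lusztig varieties (see e.g.~\cite{He-CDM}) supplies a ${\bf J}_b(F)$-equivariant isomorphism $X_w(b) \xrightarrow{\sim} X_{w'}(b)$. Its construction is fiberwise over the parahoric projection $\pi_s \colon \breve G/\brI \to \breve G/\brK_s$, where $\brK_s$ is the minimal parahoric generated by $\brI$ and $s$: on each $\mathbb P^1$-fiber of $\pi_s$, the sets $X_w(b)$ and $X_{w'}(b)$ are cut out as complementary locally closed cells, each of which projects isomorphically onto the same fine affine Deligne--Lusztig subvariety inside $\breve G/\brK_s$, and the comparison isomorphism is the composition of these two projections.

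Since $s \in K$ we have $\brK_s \subseteq \brK$, so the projection $\breve G/\brI \to \breve G/\brK$ factors through $\pi_s$; in particular, any two points of $\breve G/\brI$ in the same fiber of $\pi_s$ have equal image in $\breve G/\brK$. Applied to the one-step isomorphism above this gives commutativity of the triangle at each stage of the induction, and composing yields the full statement. The only non-formal ingredient is the Deligne--Lusztig reduction together with its fiberwise description; once that is available, the parahoric compatibility is a direct consequence of $\brK_s \subseteq \brK$. One minor technical remark: the intermediate $w_i$ need not lie in ${}^K \tW$, but this is harmless, since $X_{K, w}(b)$ stays fixed as the target throughout the induction and the needed maps $X_{w_i}(b) \to X_{K, w}(b)$ have already been shown to be well defined.
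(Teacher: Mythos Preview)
Your proof is correct and follows essentially the same approach as the paper: reduce to a single length-preserving step $w' = sw\s(s)$ with $s\in K$, invoke the Deligne--Lusztig reduction to get the isomorphism $X_w(b)\cong X_{w'}(b)$, and observe that this isomorphism moves points only within their $\brK_s$-orbit, hence within their $\brK$-orbit since $\brK_s\subset\brK$. The paper's proof is terser (it simply says ``there exists a unique $g'\brI\in g\brK_s/\brI$ with $g'\brI\in X_{w'}(b)$'' and then notes $g^{-1}g'\in\brK_s\subset\brK$), but the content is the same; your extra remarks on why the induction works and why the intermediate $w_i$ need not lie in ${}^K\tW$ are helpful elaborations rather than a different route.
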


\begin{proof}
	It suffices to consider the case when $\ell(w')=\ell(w)$ and $w'=s w \s(s)$ for some $s \in K$. Without loss of generality, we may assume furthermore that $s w<w$. 
	
	By Deligne-Lusztig reduction \cite[Thm. 1.6]{DL}, for any $g \brI/\brI \in X_w(b)$, there exists a unique element $g' \brI/\brI \in g \brK_s/\brI$ such that $g' \brI \in X_{w'}(b)$. Moreover, the map $g \brI \to g' \brI$ induces a homeomorphism $X_w(b) \to X_{w'}(b)$. As $g \i g' \in \brK_s \subset \brK$, the diagram in the statement of the proposition is commutative. 
\end{proof}

\subsection{Proof of Theorem \ref{finite-fiber-final}}
	Let $K_1=I(K, w, \s)$ and $K'_1=I(K', w', \s)$. Then we have the following commutative diagram 
	\[
	\xymatrix{
		X_{w'}(b) \ar[r]^-\cong \ar[dd] & X_w(b) \ar[d] & \\
		& X_{K_1, w}(b) \ar[r]^-\cong & X_{K, w}(b) \ar[d] \\
		X_{K'_1, w'}(b)  \ar[rr]^-\cong & & X_{K', w'}(b).
	}
	\] Here the vertical maps are the projection maps. The isomorphisms $X_{K_1, w}(b) \cong X_{K, w}(b)$ and $X_{K'_1, w'}(b) \cong X_{K', w'}(b)$ follow from Theorem \ref{i-k-w}. The homeomorphism $X_{w'}(b) \cong X_w(b)$ and the commutativity of the diagram follow from Proposition  \ref{D-L}. By Proposition \ref{finite-ff}, the maps $X_{w'}(b) \to X_{K'_1, w'}(b)$ and $X_w(b) \to X_{K_1, w}(b)$ have finite fibers. Hence the map $X_{K, w}(b) \to X_{K', w'}(b)$ has finite fibers. Moreover, each fiber consists of $\frac{\sharp (\brK'_1/\brI)^{\Ad(w') \circ \s}}{\sharp (\brK_1/\brI)^{\Ad(w) \circ \s}}$ elements.  

\smallskip

Finally we determine  explicitly, in each of the two cases of Theorem \ref{pi'},  the fibers of the map $\pi_{K, K'}\colon  X^G(\mu, \t)_K \to X^G(\mu, \t)_{K'}$. 

\subsection{The $(\tilde A_{n-1}, \id, \o^\vee_1)$ case}
In this case, $\bG=\PGL_n$. Note that $\Ad(\t) \circ \s$ acts transitively on $\tSS$. For any $w \in W_a \t$, $\supp_{\s}(w) \neq \tSS$ if and only if $w=\t$. Thus by Theorem \ref{fine-dec}, $\XmubK{\t}{K}=X_{K, \t}(\t)$. We have $X_\t(\t)=\Omega \brI/\brI \subset \breve G/\brI$ is a finite subset consisting of $n$ points. And for any parahoric $K$, $X_{K, \t}(\t)$ is the image of $X_\t(\t)$ under the natural projection map $\breve G/\brI \to \breve G/\brK$. Hence $\XmubK{\t}{K}=X_{K, \t}=\Omega \brK/\brK \subset \breve G/\brK$ consists of $n$ points. More precisely, in each connected component of $\breve G/\brK$, there is precisely one point of $\XmubK{\t}{K}$. 
Moreover, for any $K \subsetneqq K' \subsetneqq \tSS$, the projection map $\XmubK{\t}{K} \to \XmubK{\t}{K'}$ is bijective.

\subsection{The $(\tilde{A}_{n-1}, \varsigma_0, \omega_1^\vee)$ case}\label{exotic-case-gp-thy}
We first discuss the case where $K'\setminus K=\{s_0\}$. By assumption, $s_{1}, s_{n-1} \notin K$ (recall that $K$ is $\s$-stable). Recall the explicit description of $\Admmu$ obtained in the proof of Proposition ~\ref{w-w}: The elements of $\Admmu$ are $\t$ and the elements of the form
\[
    s_i s_{i-i_1} \cdots s_{i-i_k} \t
\]
for $0 < i_1 < \cdots < i_r \le n-2$ (and all indices are understood in $\mathbb Z/n\mathbb Z$, $r$ could be $0$). An element $w\t \in\Adm(\mu)$ lies in $\Admmu_0$ if there exists $j$, $0\le j\le n-1$ such that $j, n-j+1\notin \supp(w)$.

Let $w \in \KAdmmu_0$ and $w'=\pi'_{K, K'}(w) \in {}^{K'}\!\!\Admmu_0$. The
proof of Proposition ~\ref{w-w} also shows that we have $w'=w$ or $w'=s_0 w s_0$. Hence at most two $K$-EKOR strata lie above the $K'$-EKOR stratum attached to $w'$, and we have two $K$-EKOR strata above the $K'$-EKOR stratum attached to $w'$ if and only if $w' \ne s_0w's_0 \in \KAdmmu$ and $\pi_{K, K'}'(s_0w's_0) = w'$. Using elementary properties of the Bruhat order and~\cite[Lemma 4.5]{Haines:Bernstein} one checks that this is equivalent to $w's_0 < w'$:
\begin{equation*}
\pi_{K, K'} \i \big(X_{K', w'}(\t)\big)=\begin{cases}X_{K, w'}(\t) \sqcup X_{K, s_0 w' s_0} (\t) & \text{ if } w' s_0<w\\
X_{K, w'}(\t) & \text{ if } w' s_0>w .
\end{cases}
\end{equation*}

From the explicit description we obtain that $I(K', w', \s)=I(K, w,
\s)$ or $I(K', w', \s)=I(K, w, \s) \sqcup \{s_0\}$, and that $s_0 \in I(K', w',
\s)$ if and only if $w's_0 = s_0 w'$. Since $s_0w' > w'$ by assumption, in this
case we have $w's_0 > w'$, and the above shows that there is a single $K$-EKOR
stratum above the $K'$-EKOR stratum for $w'$.

By the proof of Theorem \ref{finite-fiber-final}, for $g \in X_{K, w'}(\t)$, we
now obtain
\[
    \sharp \pi_{K, K'} \i(g)=
    \begin{cases}
        q+1, & \text{ if } I(K', w', \s)=I(K, w, \s) \sqcup \{s_0\};\\
        2, & \text{ if } w' s_0<w'; \\
        1, & \text{ if } w' s_0>w' \text{ and } I(K', w', \s)=I(K, w, \s).
    \end{cases}
\]
Here $q$ denotes the cardinality of the residue class field of $F$.

Let us express the condition $w's_0 = s_0 w'$ more explicitly, using once again
the explicit description of the admissible set in this case.

\smallskip

\emph{Claim:  $w's_0 = s_0 w'$ if and only if $w'\notin W_0\tau$, and in
this case $w's_0 > w'$.}

\smallskip

To prove the claim, note that for $w'\notin W_0\tau$, the explicit description
(and the assumption that $s_0 w' > w'$) show that $w'$ has the form $\cdots
s_1s_0 \cdots\t$, whence $s_0 w' s_0 = \cdots s_0 s_1 s_0 s_1\cdots \tau = w'$.
Since $s_0 w' > w'$ by assumption, it is also clear that $w's_0 > w'$ in this
case. On the other hand, if $w'\in W_0\tau$, then $s_0\t \le s_0 w'$ but
$s_0\t\not\le w'\t\i s_1 \t = w' s_0$.

Altogether we have proved:
\begin{proposition}\label{prop-exotic-minimal-gp-thy}
For $w' \in {}^{K'}\!\!\Admmu_0$ and $g \in X_{K, w'}(\t)$,
\[
    \sharp \pi_{K, K'} \i(g)=
    \begin{cases}
        q+1, & \text{ if and only if } w'\notin W_0\t;\\
        2, & \text{ if and only if } w' s_0<w'; \\
        1, & \text{ if and only if } w'\in W_0\t \text{ and } w' s_0>w'.
    \end{cases}
\]
\end{proposition}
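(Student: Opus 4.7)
The plan is to assemble the ingredients developed in the discussion preceding the proposition. The key tool is Theorem~\ref{finite-fiber-final} combined with Proposition~\ref{finite-ff}: for each $w\in (\pi'_{K,K'})^{-1}(w')$, the induced map $X_{K,w}(\t)\to X_{K',w'}(\t)$ has fibers of cardinality $\sharp(\brK'_1/\brI)^{\Ad(w')\circ\s}/\sharp(\brK_1/\brI)^{\Ad(w)\circ\s}$, where $K_1=I(K,w,\s)$ and $K'_1=I(K',w',\s)$. Hence the total fiber $\sharp\pi_{K,K'}^{-1}(g)$ above $g\in X_{K,w'}(\t)$ is the sum of these ratios as $w$ ranges over $(\pi'_{K,K'})^{-1}(w')$.

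First I would record two structural inputs. From the partial conjugation analysis in Proposition~\ref{w-w}, the fiber $(\pi'_{K,K'})^{-1}(w')$ equals $\{w'\}$ when $w's_0>w'$ and $\{w',\,s_0w's_0\}$ when $w's_0<w'$. Since $K'\setminus K=\{s_0\}$ and $\s(s_0)=s_0$, the sets $I(K,w,\s)$ and $I(K',w',\s)$ either coincide or differ in $s_0$ alone, the latter occurring precisely when $w's_0=s_0w'$. When $K_1=K'_1$ the associated ratio is $1$; when $K'_1=K_1\sqcup\{s_0\}$, the reductive quotient of $\brK'_1$ over that of $\brK_1$ is an $\SL_2/B\cong\BP^1$ on which $\Ad(w')\circ\s$ acts as a standard $k$-Frobenius (since it fixes $s_0$), contributing $q+1$ fixed points.

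The main remaining step, which I expect to be the key obstacle, is the following claim: for $w'\in{}^{K'}\Admmu_0$ satisfying $s_0w'>w'$, one has $w's_0=s_0w'$ if and only if $w'\notin W_0\t$, and in that case automatically $w's_0>w'$. I would prove this using the explicit description of $\Admmu$ recalled in the proof of Proposition~\ref{w-w}: for $w'\notin W_0\t$, the constraint $s_0w'>w'$ forces a reduced expression of $w'$ that threads through $\cdots s_1 s_0\cdots\t$, and an $A$-type braid-relation computation yields $s_0w's_0=w'$ together with $w's_0>w'$. Conversely, if $w'=y\t\in W_0\t$, then $w's_0=ys_1\t\ne s_0y\t=s_0w'$, as separated by the Bruhat-order comparison $s_0\t\le s_0w'$ but $s_0\t\not\le w's_0$.

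Feeding this equivalence back into the case analysis gives three mutually exclusive regimes: (a) $w'\notin W_0\t$, where $(\pi'_{K,K'})^{-1}(w')=\{w'\}$ and the single ratio equals $q+1$; (b) $w'\in W_0\t$ with $w's_0>w'$, where a single stratum contributes ratio $1$; (c) $w'\in W_0\t$ with $w's_0<w'$, where two contributing strata each give ratio $1$, totalling $2$. These match precisely the three cases in the statement of the proposition.
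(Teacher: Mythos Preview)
Your proposal is correct and follows essentially the same approach as the paper's proof: both use the fiber-cardinality formula from Theorem~\ref{finite-fiber-final}, the description of $(\pi'_{K,K'})^{-1}(w')$ from Proposition~\ref{w-w}, the dichotomy $I(K',w',\s)=I(K,w,\s)$ versus $I(K,w,\s)\sqcup\{s_0\}$, and the same key combinatorial claim that $w's_0=s_0w'$ if and only if $w'\notin W_0\t$ (proved via the explicit description of $\Admmu$). Your outline of the claim's proof---the braid computation for $w'\notin W_0\t$ and the Bruhat comparison $s_0\t\le s_0w'$ but $s_0\t\not\le w's_0$ for $w'\in W_0\t$---matches the paper's argument exactly.
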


See Proposition ~\ref{fibers-lattice-descr} for a proof of this proposition in terms of lattices.

The case $n=2m$, $j=m$ is completely analogous to the case above.
Similarly, if $n=2m$ for $m \ge 2$ and $K'\setminus K=\{s_0, s_m\}$, then for $w' \in {}^{K'} \Admmu_0$ and $g \in X_{K', w'}(\t)$, the fiber $\pi_{K, K'}\i(g)$ has  $1, 2, 4, q+1, 2(q+1)$ or $(q+1)^2$ depending on which of the conditions $w' s_0>w'$, $w' s_m>w'$, $\ell(s_0 s_1 w')=\ell(w')-2$ and $\ell(s_m s_{m+1} w')=\ell(w')-2$ are satisfied.

\begin{example}\label{exotic-GU3}
    Here we consider the case where $(\tilde{\Delta},  \s, \mu)=(\tilde A_2, \varsigma_0, \o^\vee_1)$. In this case, $$\Admmu=\{\t, s_0 \t, s_1 \t, s_2 \t, s_0 s_2 \t, s_1 s_0 \t, s_2 s_1 \t\}.$$ Let $K=\emptyset$ and $K'=\{s_0\}$. Then 
	\begin{equation*}
	\begin{aligned}
	\KAdmmu_0&=\{\t, s_0 \t, s_1 \t, s_2 \t, s_1 s_0 \t\}; \\
	{}^{K'} \Admmu_0&=\{\t, s_1 \t, s_2 \t, s_1 s_0 \t\}.
	\end{aligned}
	\end{equation*} 
	The map $\pi'_{ K, K'}$ sends $\t$ to $\t$, $s_2 \t$ to $s_2 \t$, both $s_0 \t$ and $ s_1 \t$ to $s_1 \t$, and $s_1 s_0 \t$ to $s_1 s_0 \t$. 
	
	Note that $I(K, w, \s)=\emptyset$ for $w \in \KAdmmu_0$ and $I(K', w, \s)=\emptyset$ for $w=\t, s_1 \t, s_2 \t$,  and $I(K', s_1 s_0 \t, \s)=K'$. Hence the natural projection map $\pi_{K, K'}$  induces isomorphisms
	 $$
	 X_{K, \t}(\t) \cong X_{K', \t}(\t),\, X_{K, s_2 \t}(\t) \cong X_{K', s_2 \t}(\t),\, X_{K, s_1 \t}(\t) \cong X_{K', s_1 \t}(\t),\, X_{K, s_0 \t}(\t) \cong X_{K', s_1 \t}(\t)
	 $$
	  and the projection map $X_{K, s_1 s_0 \t}(\t) \to X_{K', s_1 s_0 \t}(\t)$ is a $(q+1)$ to $1$ map, where $q+1$ is the cardinality of $(\brK'/\brK)^{\Ad(s_1 s_0 \t) \circ \s}$. 
	
	In summary, the fibers of the map $\pi_{K, K'}: X^G(\mu, \t)_K \to X^G(\mu, \t)_{K'}$ are as follows: 
	\begin{enumerate}
		\item over points in $X_{K', \t}(\t)$, each fiber consists of $1$ point;
		
		\item over points in $X_{K', s_2 \t}(\t)$, each fiber consists of $1$ point; 
		
		\item over points in $X_{K', s_1 \t}(\t)$, each fiber consists of $2$ points;
		
		\item over points in $X_{K', s_1 s_0 \t}(\t)$, each fiber consists of $q+1$ points.
	\end{enumerate}
 
\end{example}

\section{Lattice interpretation of the minimal cases}\label{s:lattice-interpretation}

In this section, we give explicit descriptions in terms of lattices for the Lubin-Tate case and the exotic case in which discrete fibers occur. To avoid too heavy notation, we do not include cases arising by restriction of scalars, but only discuss the non-extended cases.

\subsection{The Lubin-Tate case}\label{s:lattice-interpretation-LT}
In this subsection, we explain what $\XmubK{\t}{K}$ looks like in terms of a lattice description, in the Lubin-Tate case (Example~\ref{ex-lubin-tate}), as described in Theorem \ref{thm-all-k}. Let us consider first the case where $K$ is a hyperspecial maximal parahoric subgroup. In this case, we have the following description.

Let $(N, \phi)$ be an isocrystal of dimension $n$, where $\phi$ is a $\sigma$-linear automorphism isoclinic of slope $1/n$. Then we have (for $\bG=\GL_n$)
\begin{equation}
\XmubK{\t}{K}= \bigsqcup_{v\in\BZ} \{ M\mid M\supset \phi (M), \vol(M)=v\}.
\end{equation}

The decomposition indexed by $v$ corresponds to the decomposition of the affine Grassmannian, or correspondingly the space of all lattices in $N$, into connected components. Note that after passing to lattices, there is no dependence on $K$ anymore. More precisely, denote by $\Latt$ the set of all lattices in $N$. Viewing $K$ as the stabilizer of a lattice $\Lambda$, we have an identification $\GL_n(\brF)/K \cong \Latt$ mapping $g \mapsto g\Lambda$. Using this identification, we view $\XmubK{\t}{K}$ as a subset of $\Latt$. Likewise, we have an identification $\GL_n(\brF)/\tau K\tau^{-1} \cong \Latt$, now mapping $g\mapsto g\tau\Lambda$, and this is the identification we use when we want to view $\XmubK{\t}{\tau K\tau^{-1}}$ as a subset of $\Latt$. Since the bijection $\GL_n(\brF)/K \to \GL_n(\brF)/\tau K\tau^{-1}$, $g\mapsto g\tau^{-1}$, maps $\XmubK{\t}{K}$ onto $\XmubK{\t}{\tau K\tau^{-1}}$, as subsets of $\Latt$ we have $\XmubK{\t}{K} = \XmubK{\t}{\tau K \tau^{-1}}$. By iterating this, we can identify the affine Deligne-Lusztig varieties $\XmubK{\t}{K}$ for all standard hyperspecial parahorics $K$.

Note that for $M$ in $\XmubK{\t}{K}$ the index of $\phi (M)$ in $M$ is equal to $1$. 
\begin{lemma}
The chain of lattices
$$
M\supset \phi (M)\supset \phi ^2(M)\supset\ldots\supset \phi ^{n-1}(M)\supset \phi ^n(M)=pM 
$$
determines the unique fixed point under $\phi $ in $\CB(\PGL_n, \breve\BQ_p)$, i.e., the unique point in $\CB({\bf J}_{\tau,\ad}, \BQ_p)$. In particular, each connected component of $\XmubK{\t}{K}$ consists of a single point.
\end{lemma}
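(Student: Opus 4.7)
The proof assembles two standard facts from Bruhat--Tits theory. First I would verify that the chain
\[
    M\supset \phi(M)\supset\ldots\supset \phi^{n-1}(M)\supset \phi^n(M)=pM
\]
is a \emph{complete} periodic lattice chain: the slope-$1/n$ hypothesis gives $v_p(\det\phi)=1$, so each inclusion $\phi^i(M)\supset\phi^{i+1}(M)$ has colength one, and the full period returns to $pM$ by assumption. Such a chain is precisely the datum of an alcove $\mathfrak{a}(M)$ in the affine building $\mathcal{B}(\PGL_n,\breve{\mathbb Q}_p)$; the $n$ vertices of $\mathfrak{a}(M)$ are the homothety classes $[\phi^i(M)]$, $i=0,\ldots,n-1$. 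Since $\phi$ sends $\phi^i(M)$ to $\phi^{i+1}(M)$, it permutes these vertices cyclically, stabilizes $\mathfrak{a}(M)$ as a simplex, and therefore fixes its barycenter.

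Next I would invoke the general identification
\[
    \mathcal{B}(\PGL_n,\breve{\mathbb Q}_p)^{\phi}=\mathcal{B}(\mathbf{J}_{\tau,\mathrm{ad}},\mathbb Q_p),
\]
i.e., the (Galois-type) fixed-point set of the $\sigma$-semilinear action of $\phi$ on the $\breve{\mathbb Q}_p$-building is the building of the $\sigma$-centralizer. In our setting $\mathbf{J}_\tau\cong D_{1/n}^{\times}$, so $\mathbf{J}_{\tau,\mathrm{ad}}$ is anisotropic modulo center; its Bruhat--Tits building is therefore reduced to a single point. Combined with the previous paragraph this says that, regardless of the choice of $M\in \XmubK{\t}{K}$, the barycenter of $\mathfrak{a}(M)$ must equal this one fixed point, and in particular all $M$ give rise to the \emph{same} alcove.

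Finally, to conclude that each connected component of $\XmubK{\t}{K}$ reduces to a single point, I would argue as follows: the alcove just identified has $n$ vertices $[\phi^i(M)]$, each a homothety class; fixing the volume $v\in\mathbb Z$ singles out exactly one lattice in each class, and moving through one cycle of $\phi$ shifts $v$ by the constant $v_p(\det\phi)=1$. Hence within each connected component indexed by $v\in\mathbb Z$ there is exactly one lattice $M$ whose associated alcove is this unique $\phi$-fixed alcove and whose volume equals $v$.

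The only nontrivial ingredient is the identification $\mathcal{B}(\PGL_n,\breve{\mathbb Q}_p)^{\phi}=\mathcal{B}(\mathbf{J}_{\tau,\mathrm{ad}},\mathbb Q_p)$, which I would cite from Bruhat--Tits / Rousseau (or in a more elementary way: the $\phi$-fixed facets correspond to self-dual $\phi$-stable lattice chains, hence to parahoric subgroups of $\mathbf{J}_\tau(\mathbb Q_p)$). Everything else is bookkeeping with the lattice chain, so I do not expect a serious obstacle.
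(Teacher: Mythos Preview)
Your overall strategy---build an alcove from the chain, observe it is $\phi$-stable, and use that $\mathbf J_{\tau,\mathrm{ad}}$ is anisotropic so its building is a point---is exactly what the paper does. The problem is the phrase ``the full period returns to $pM$ by assumption.'' The equality $\phi^n(M)=pM$ is \emph{not} an assumption: it is the entire content of the lemma, and the paper's proof opens with ``All we have to show is that $\phi^n(M)=pM$.'' Knowing that each step $\phi^i(M)\supset\phi^{i+1}(M)$ has colength one only gives $[M:\phi^n(M)]=n=[M:pM]$; it does not give a containment in either direction between $\phi^n(M)$ and $pM$, and two index-$n$ sublattices of $M$ need not coincide.

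The paper fills this gap with a short but genuine argument. One considers the auxiliary chain
\[
M\supset \phi(M)\supset \phi^2(M)+pM\supset\cdots\supset \phi^n(M)+pM
\]
and shows all inclusions are strict: if $\phi^r(M)+pM=\phi^{r+1}(M)+pM$ for some $r$, then the same equality propagates to all larger indices, contradicting the fact that $\phi$ has positive slope and hence $\phi^j(M)\subset pM$ for $j\gg 0$. Strictness forces $[M:\phi^n(M)+pM]=n$, so $\phi^n(M)+pM=pM$, i.e.\ $\phi^n(M)\subseteq pM$, and then equality follows from the index count. You should insert this step (or an equivalent one) in place of ``by assumption''; after that, your argument is complete and agrees with the paper's.
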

\begin{proof}
All we have to show is that $\phi ^n(M)=pM$: after this, the lattice chain determines an alcove in $\CB(\PGL_n, \breve\BQ_p)$ which is obviously fixed by $\phi $, i.e., lies in $\CB({\bf J}_{\tau,\ad}, \BQ_p)$. Since ${\bf J}_{\tau, \ad}$ is anisotropic, the latter building consists of only one point. 

We consider the chain of lattices
$$
M\supset \phi (M)\supset \phi ^2(M)+pM\supset \phi ^3(M)+pM\supset\ldots\supset \phi ^{n-1}(M)+pM\supset \phi ^n(M)+pM .
$$
{\it Claim: All inclusions are strict.}

\smallskip

Once the claim is proved, we conclude as follows. Since obviously all indices in this chain are $\leq 1$, the claim implies that $[M:(\phi ^n(M)+pM)]=n=[M:pM]$. Hence $\phi ^n(M)+pM=pM$, i.e., $\phi ^n(M)=pM$ (both have index $n$ in $M$). 

{\it Proof of claim:} Assume that $\phi ^r(M)+pM=\phi ^{r+1}(M)+pM$. Then $\phi ^{r+1}(M)+p\phi (M)=\phi ^{r+2}(M)+p\phi (M)$. Hence
\begin{equation*}
\begin{aligned}
\phi ^{r+1}(M)+pM=\phi ^{r+2}(M)+p\phi (M)+pM=\phi ^{r+2}(M)+pM.
\end{aligned}
\end{equation*}
We conclude that $\phi ^r(M)+pM=\phi ^j(M)+pM$, for any $j\geq r$. But $\phi $ is topologically nilpotent, hence $\phi ^j(M)\subset pM$ for large $j$. But this implies $\phi ^r(M)\subset pM$, which is absurd for $r\leq n-1$. 
\end{proof}

The lemma implies immediately  that $\XmubK{\t}{K}$ has only one element when $K$ is an arbitrary  parahoric.

\subsection{The exotic case}\label{sec:exotic}

For the setup, we follow~\cite{KRIII}, cf. also \cite{Ch}. The case of hyperspecial level structure (which in terms of the notation used below corresponds to the case $r=0$) was analyzed in detail by Vollaard~\cite{Vollaard}.

\subsubsection{The isocrystal}\label{subsec:isocrystal}
Let $\tilde F/F$ be the unramified quadratic extension contained in $\brF$.  We fix $n\ge 1$, and $1 \le s \le n-1$. We also fix the following data. 
\begin{enumerate}
\item
$N$ is a $\brF$-vector space of dimension $2n$ together with an alternating
$\brF$-bilinear pairing $\langle \,, \,\rangle\colon N\times N\to \brF$,
\item
there is a $\tilde F$-action on $N$ such that
\begin{equation}\label{J-Zp2}
\langle a\cdot x, y \rangle = \langle x, \sigma(a)\cdot y\rangle\quad\text{for all}\ x, y\in N, a\in\tilde F,
\end{equation}
\item
we have a $\s$-linear operator $\phi\colon N\to N$ which  commutes with the $\tilde F$-action and such that all slopes of $\phi$ are
equal to $\frac{1}{2}$, and which satisfies
\begin{equation}\label{J-FV}
\langle \phi(x), \phi(y)\rangle =\pi\cdot\s( \langle x, y\rangle)\quad\text{for all}\ x, y\in N ,
\end{equation}
where $\pi$ is a fixed uniformizer of $F$. 
\end{enumerate}

Via the $\tilde F$-action, $N$ is a module over
$\tilde F\otimes_{F}\brF = \brF\times \brF$, i.e., it decomposes as
$N = N^0 \oplus N^1$, where $\tilde F$ acts on $N^0$ via the inclusion
$\tilde F\subset \brF$, and on $N^1$ via $\sigma\colon \tilde F\to \brF$.
We then have $\phi(N^0) = N^1$, $\phi(N^1) = N^0$. 
The $\tilde F$-action on an element $x = (x^0, x^1)$ is given by $a(x^0, x^1)
= (ax^0, \s(a)x^1)$. By~\eqref{J-Zp2} (and using that the pairing is
alternating), we obtain that $N^0$ and $N^1$ are totally isotropic
subspaces.

We will consider $O_{\tilde F}$-invariant $O_{\brF}$-lattices $M$.  For them we obtain an analogous decomposition $M = M^0\oplus M^1$. We will impose 
 the \emph{signature condition} for $s$, i.e., $\pi M\subset \phi(M)\subset M$ with 
\begin{equation}\label{signs}
\pi M^0\subset^{n-s} \phi(M^1)\subset^{s} M^0 .
\end{equation}
Here the upper indices indicate the length as $O_{\brF}$-modules of the corresponding factor modules. 

For a lattice $M\subset N$, we denote by $M^\vee$ its dual with respect to the form $\langle\, , \, \rangle$, i.e., $M^\vee = \{ x\in N\mid \langle x, M\rangle \subseteq O_{\brF}\}$.

We will impose the following condition.
\begin{altitemize}
\item
{\it there exists a $O_{\tilde F}$-stable self-dual lattice $\BM\subset N$ such that $\pi\BM \subset \phi(\BM) \subset \BM$, and satisfying the signature condition for $s$. }
\end{altitemize}

In the setting of the following remark, this condition means that the above data
arise from a $p$-divisible group (with an $O_{\tilde F}$-action and
a $p$-principal polarization), as in~\cite{RZ96}. See
Remark~\ref{rmk:selfdual-lattice-gp-thy} for a discussion of this assumption in
terms of group theory.

\begin{remark} Let $F=\BQ_p$. Then the tuple $(N, \langle\, , \, \rangle, \phi)$ 
is the isocrystal of
a supersingular $p$-divisible group of height $2n$ over $\ov\BF_p$ with
$\BZ_{p^2}$-action which satisfies the determinant condition for signature $(s,
n-s)$, with a quasi-polarization compatible with the
$\BZ_{p^2}$-action, cf.~\cite[Def.~1.1]{Vollaard}. In loc.~cit., $p$-divisible groups are considered which admit a $p$-principal polarization. These correspond to self-dual lattices, i.e., $M^\vee=M$. Here we will consider more general parahoric level structures. In the case of maximal but non-hyperspecial level structure, the level structure can be seen as a (non-$p$-principal) polarization. 
\end{remark}

\subsubsection{The space of lattices}

Now let us fix an integer $r$, $0\le r\le n/2$. We will see below how this corresponds to a choice of maximal rational parahoric level structure.

Consider the following set of pairs of lattices in $N$.
\begin{equation}
\begin{aligned}
\CF^{\{2r\}} = \{ (\pi M_2 \subseteq M_1 \subseteq M_2)\mid \ & M_i\text{\ stable under $O_{\tilde F}$},\, M_1^0\subseteq^{2r} M_2^0, \, M_1^1\subseteq^{2r} M_2^1,\\
& M_2 = \pi^cM_1^\vee\text{\ for some}\ c\in\BZ\}.
\end{aligned}
\end{equation}
By mapping $(M_1\subseteq M_2)\in \CF^{\{2r\}}$ to $(M_1^0 \subseteq M_2^0, c)$, we obtain a bijection between $\CF^{\{2r\}}$ and the set
\begin{equation}
\CF^{\{2r\},0}:= \{ (\pi A\subseteq B\subseteq^{2r} A, c)\mid B, A\subset N^0\text{\ lattices}, c\in\BZ\}.
\end{equation}

This set of lattices will be identified below with the set of $\ov k$-points of the corresponding partial affine flag variety.

\subsubsection{The action of Frobenius}

The operator $\phi$ on $N$ induces an action on the set $\CF^{\{2r\}}$. In fact, for $(M_1\subseteq M_2)\in \CF^{\{2r\}}$ with $M_2 = \pi^cM_1^\vee$, we have $\phi(M_2) = \phi(\pi^c M_1^\vee) = \pi^{c+1} \phi(M_1)^\vee$. To describe this action in terms of the bijection $\CF^{\{2r\}} \isoarrow \CF^{\{2r\},0}$, we introduce the following notation.

Let $\boldtau=\pi^{-1}\phi^{2}$, a $\sigma^2$-linear automorphism of $N^0$ which has all slopes zero. Let $C=(N^0)^{\<\boldtau\>}$. Also, let
$$
h(x, y)=\delta^{-1}\pi^{-1}\langle x, \phi y\rangle,
$$
where $\delta\in O^\times_{\tilde F}$ is such that $\s(\delta)=-\delta$. Then the restriction of $h$ to $C$  is a hermitian form on $C$. On $N^0$, the hermitian nature of $h$ is given by
\begin{equation}\label{pseudohermit}
h(x, y)=\sigma\big(h(y, \boldtau^{-1} (x))\big).
\end{equation}

\begin{definition}
For a lattice $L\subset N^0$, we denote by
\[
L^\sharp = \{ x\in N^0\mid \pi^{-1}\langle x, \phi(L) \rangle \subseteq O_{\brF} \},
\]
the dual of $L$ with respect to the form $h$, which is again a lattice in $N^0$.
\end{definition}

Note that
\begin{equation}
(L^\sharp)^\sharp=\boldtau(L).
\end{equation}

\begin{lemma}\label{sharp-vs-F}
For $(M_1\subseteq M_2)\in \CF^{\{2r\}}$ corresponding to $(B\subseteq A, c) \in \CF^{\{2r\},0}$, the chain $(\phi(M_1)\subseteq \phi(M_2))$ corresponds to $((\pi^{-c}A)^\sharp \subseteq (\pi^{-c}B)^\sharp, c+1)$.
\end{lemma}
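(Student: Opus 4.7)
My plan is to unwind the bijection $\CF^{\{2r\}} \isoarrow \CF^{\{2r\},0}$ in terms of the decomposition $N = N^0 \oplus N^1$ and then translate the $\phi$-action. The key point is to relate the symplectic duality $M \mapsto M^\vee$ (which mixes $N^0$ and $N^1$) to the hermitian-type duality $L \mapsto L^\sharp$ on $N^0$, via the $\sigma$-linear isomorphism $\phi\colon N^1 \isoarrow N^0$.

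First, I would make the following observation. Since $N^0$ and $N^1$ are totally isotropic (as noted in Section~\ref{subsec:isocrystal}), for any $O_{\tilde F}$-stable lattice $M = M^0 \oplus M^1$ one has
\[
M^\vee = (M^1)^* \oplus (M^0)^*,
\]
where for $L \subset N^\epsilon$ we write $L^*$ for its $O_{\brF}$-dual inside $N^{1-\epsilon}$ with respect to $\langle\,,\,\rangle$. Applying this to $M_2 = \pi^c M_1^\vee$ yields $M_2^0 = \pi^c(M_1^1)^*$ and $M_2^1 = \pi^c(M_1^0)^*$; using $(L^*)^* = L$ and the definitions $A = M_2^0$, $B = M_1^0$, one derives
\[
M_1^1 = \pi^c A^*, \qquad M_2^1 = \pi^c B^*.
\]

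Next, I would establish the crucial identity
\[
\phi(L^*) = L^\sharp \quad \text{for every lattice } L \subset N^0.
\]
This follows directly from the unfolding: $x \in L^\sharp$ iff $\langle x, \phi(L)\rangle \subseteq \pi O_{\brF}$, and writing $x = \phi(y)$ with $y \in N^1$ (unique since $\phi\colon N^1 \to N^0$ is bijective), the relation $\langle \phi(y), \phi(a)\rangle = \pi\,\s(\langle y, a\rangle)$ from~\eqref{J-FV} shows that this is equivalent to $y \in L^*$. I would also record the elementary scaling rule $(\pi^k L)^\sharp = \pi^{-k} L^\sharp$, which is immediate from the definition of $\sharp$ since $\pi \in O_F$ is $\sigma$-fixed.

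Finally, combining these ingredients, I compute
\[
\phi(M_1)^0 = \phi(M_1^1) = \phi(\pi^c A^*) = \pi^c \phi(A^*) = \pi^c A^\sharp = (\pi^{-c} A)^\sharp,
\]
and similarly $\phi(M_2)^0 = (\pi^{-c} B)^\sharp$. Together with the already-observed relation $\phi(M_2) = \pi^{c+1}\phi(M_1)^\vee$, this shows that the chain $(\phi(M_1) \subseteq \phi(M_2))$ corresponds under the bijection to the triple $((\pi^{-c}A)^\sharp \subseteq (\pi^{-c}B)^\sharp,\ c+1)$, as claimed. The only subtle point — and therefore the one I would present most carefully — is the identification $\phi(L^*) = L^\sharp$, since everything else is bookkeeping with the $N = N^0 \oplus N^1$ decomposition.
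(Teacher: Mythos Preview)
Your proof is correct and follows essentially the same route as the paper's. The paper carries out the same computation more tersely, verifying directly that $\langle \phi(M_1^1), \pi^{-c-1}\phi(M_2^0)\rangle = O_{\brF}$ via~\eqref{J-FV} and the relation $\pi^{-c}M_2^0 = (M_1^\vee)^0$; your version simply factors this into the clean auxiliary identity $\phi(L^*) = L^\sharp$ for $L\subset N^0$, which makes the bookkeeping more transparent but is mathematically the same step.
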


\begin{proof}
We need to check $\phi(M_1)^0 = (\pi^{-c}M_2^0)^\sharp$ and $\phi(M_2)^0 = (\pi^{-c}M_1^0)^\sharp$. Now $\phi(M_1)^0 = \phi(M_1^1)$, and
\[
\< \phi(M_1^1), \pi^{-c-1} \phi(M_2^0) \> =\s (\< M_1^1, \pi^{-c} M_2^0\>) = \s(\< M_1^1, (M_1^\vee)^0\>)= O_{\brF}
\]
by~\eqref{J-FV}, so $\phi(M_1^1) = (\pi^{-c}M_2^0)^\sharp$. The computation for $\phi(M_2)^0$ is similar.
\end{proof}

\subsubsection{The parahoric RZ space}

The $\overline{k}$-valued points of the (\emph{relative}) RZ space which we want to describe correspond to those points in $\CF^{\{2r\}}$ (or equivalently in $\CF^{\{2r\},0}$) which are Dieudonn\'e modules of signature $(s, n-s)$:
\begin{equation}
\CN = \CN^{\{2r\}} = \{ (M_1\subseteq M_2)\in \CF^{\{2r\}} \mid \pi M_i \subseteq \phi(M_i)\subseteq M_i, i=1, 2\}.
\end{equation}
Here $\phi(M_i)^0\subseteq M_i^0$ has co-length $s$ and  $\phi(M_i)^1\subseteq M_i^1$ has co-length $n-s$. By Lemma~\ref{sharp-vs-F}, we can identify $\CN$ with a subset of $\CF^{\{2r\},0}$, as follows:
\begin{equation}
\CN = \{ (B\subseteq A, c)\in \CF^{\{2r\},0} \mid \pi B\subseteq \pi^c A^\sharp\subseteq^s B, \pi A\subseteq \pi^c B^\sharp\subseteq^s A\}.
\end{equation}

\subsubsection{Reduction to the case $c=0$}

We have
\[
\CN = \bigsqcup_{c\in\BZ} \CN_c,
\]
where, for $c\in \BZ$, we write
\[
\CN_c = \{ (B\subseteq A\subset N) \mid (B\subseteq A, c)\in \CF^{\{2r\},0},\  \pi B\subseteq \pi ^c A^\sharp\subseteq B, \pi A\subseteq \pi ^c B^\sharp\subseteq A\}.
\]

\begin{lemma}
\begin{enumerate}
\item
If $nc$ is odd, then $\CN_c = \emptyset$.
\item
If $nc$ is even, then there exists an automorphism $j$ of $N$ compatible with
$\phi$ and the pairing $\<\,,\,\>$ (and hence with the pairing $h$ and the
$-^\sharp$ construction) such that the map $(B\subseteq A) \mapsto (jB\subseteq
jA)$ is an isomorphism $\CN_c \cong \CN_0$.
\end{enumerate}
\end{lemma}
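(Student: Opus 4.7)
For (1), my plan is to extract a volume identity from the refined signature condition on $M_1$. Fix reference lattices $\BM^0, \BM^1$ with $\vol(\BM^0)=\vol(\BM^1)=0$. The first step is to establish
\[
    \vol(L)+\vol(L^\sharp) = -s
\]
for every $O_{\brF}$-lattice $L\subseteq N^0$, by combining the definition $L^\sharp = \pi(\phi L)^*$, the perfect-duality relation $\vol(L)+\vol(L^*) = 0$, and the formula $\vol(\phi L) = \vol(L) + (s-n)$ forced by the signature of $\BM$. Then for $(B\subseteq A)\in\CN_c$, the refined signature on $M_1$, namely $\pi B\subseteq^{n-s} \pi^c A^\sharp \subseteq^s B$, gives $\vol(\pi^c A^\sharp) = \vol(B) - s$; substituting $\vol(A^\sharp) = -s-\vol(A)$ yields $\vol(A)+\vol(B) = -cn$. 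Combined with $\vol(A)-\vol(B) = 2r$ this forces $\vol(A) = r - cn/2$, which is an integer only when $nc$ is even, proving (1).

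For (2), assuming $nc$ is even, I will exhibit $j\in{\bf J}_b(F)$, i.e., a $\brF$-linear automorphism of $N$ commuting with $\phi$ and the $\tilde F$-action, satisfying $\langle jx, jy\rangle = \pi^{-c}\langle x, y\rangle$. A direct computation shows $(jM)^\vee = \pi^c\cdot j(M^\vee)$, whence
$jM_2 = j(\pi^c M_1^\vee) = \pi^c j(M_1^\vee) = (jM_1)^\vee$. Thus $(jB\subseteq jA)$ lies in $\CN_0$, and $L\mapsto j^{-1}L$ supplies the inverse. The compatibility of $j$ with $\phi$ ensures that the Dieudonn\'e and signature conditions are preserved, so this genuinely defines an isomorphism $\CN_c \cong \CN_0$.

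The main obstacle is the construction of $j$ when $c$ is odd, which by (1) forces $n$ to be even. For $c$ even one simply takes the central element $j = \pi^{-c/2}\cdot 1_N$, whose similitude factor is $\pi^{-c}$. For $c$ odd and $n$ even, the existence of the self-dual lattice $\BM^0$ implies that the hermitian space $(C, h|_C)$, with $C = (N^0)^\boldtau$, is split, hence admits a Witt decomposition $C = C_1\oplus C_2$ into two totally isotropic $\tilde F$-Lagrangians of dimension $n/2$. Defining $j_0|_{C_1} = \id$, $j_0|_{C_2} = \pi\cdot\id$ and extending $\brF$-linearly to $N^0$ and then to $N^1$ via conjugation by $\phi$, one checks that $j_0\in{\bf J}_b(F)$ with $\chi(j_0) = \pi$; multiplying by $\pi^{-(c+1)/2}\cdot 1_N$ then yields the required $j$ with $\chi(j) = \pi^{-c}$. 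For $n$ odd no such $j_0$ can exist, since on ${\bf J}_b(F)$ the similitude character factors through the norm $N_{\tilde F/F}$ and has image $\pi^{2\BZ}O_F^\times$, which matches, and in fact is dictated by, the parity restriction of (1).
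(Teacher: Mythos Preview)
Your argument for (1) is correct and is exactly the index comparison the paper invokes (via~\cite[Lemma 1.7]{Vollaard}): the volume identity $\vol(L)+\vol(L^\sharp)=-s$ combined with the signature constraint on $M_1$ forces $2\vol(A)=2r-cn$, so $cn$ must be even.

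Your argument for (2) has a genuine gap in the case $c$ odd, $n$ even. You assert that the existence of the self-dual lattice $\BM$ (and hence of its component $\BM^0\subset N^0$) forces the hermitian space $(C,h|_C)$ to be split. This is false: one computes $(\BM^0)^\sharp=\phi(\BM^1)\subset^s\BM^0$ (cf.~Lemma~\ref{sharp-vs-F} with $c=0$), so $\BM^0$ is \emph{not} self-dual for $h$ unless $s=0$. In fact the paper states explicitly that $\inv(C)=(-1)^s$, so for $s$ odd---including the case $s=1$ which is the focus of the subsequent analysis---the space $C$ is non-split and your Lagrangian decomposition $C=C_1\oplus C_2$ does not exist.

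The fix is easy and does not require splitness. For $n$ even, the hermitian forms $h$ and $\pi h$ on $C$ have the same dimension and the same discriminant modulo norms (since $\det(\pi h)=\pi^n\det h$ and $\pi^n\in N_{\tilde F/F}(\tilde F^\times)$ for $n$ even). Hence $(C,h)\cong(C,\pi h)$, and any such isomorphism is an element $j_0\in\mathbf J_b(F)$ with similitude factor $\pi$. Composing with the central element $\pi^{-(c+1)/2}$ gives the required $j$. This is essentially what~\cite[Lemma 1.17]{Vollaard} does. Your obstruction for $n$ odd (the similitude factor $\lambda$ must satisfy $\lambda^n\in N_{\tilde F/F}(\tilde F^\times)$, forcing $v(\lambda)$ even) is correct.
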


\begin{proof}
Part (1) follows by a comparison of indices between $A$, $B$, $A^\sharp$,
$B^\sharp$ and $\BM$, similarly as in~\cite[Lemma 1.7]{Vollaard}. Part~(2) is
proved in~\cite[Lemma 1.17]{Vollaard}.
\end{proof}

From now on we assume $c=0$, so we consider the set
\begin{equation}
\CN_0 = \CN_0^{\{2r\}} = \{ \pi A\subseteq B\subseteq^{2r} A \subset N^0 \mid \pi B\subseteq A^\sharp\subseteq^s B, \pi A\subseteq B^\sharp\subseteq^s A\}.
\end{equation}
This is the description given in \cite{KRIII}, comp. \cite{Ch}\footnote{In loc.~cit. also pairs $M_1\subset M_2$ are considered where $M_1^0\subset M_2^0$ has \emph{odd} co-length.}. Note that the Hasse invariant of $C$ is given by $\inv(C)=(-1)^s$. 

\subsubsection{Non-maximal level structure}

Combining the above data for more than one $r$, we get analogous descriptions of the RZ spaces $\CN^{R}$, $\CN_0^R$ with more general parahoric level structure $R\subseteq \{0, \dots, [n/2]\}$. For instance, combining the cases $r=0$ and $r=1$, we obtain a non-maximal parahoric case, given as the set of diagrams
\begin{equation}
\begin{matrix}
B_1&\subset B_0&\subset&A_1\\
\cup&\,\cup&{}&\,\cup\\
\nass
A_1^\sharp&\subset B_0^\sharp&\subset &B_1^\sharp
\end{matrix}
\end{equation}
Here all horizontal inclusions have index $1$ and it is understood that $\pi A_1\subseteq B_1$.
The index of the vertical inclusions in the above diagram is equal to $s$.

\subsubsection{Description of fibers: ``forgetting $L_0$''}\label{subsec:forget-L0}

From now on we restrict to the case $s=1$, i.e., to signature $(1, n-1)$.  Let
us describe explicitly, in terms of lattices, the projection
\[
\CN^{R\cup \{0\}}_0 \to \CN^R_0
\]
for a level structure $R\subseteq\{ 1,\dots, [n/2]\}$ (i.e., $0\not\in R)$ such
that $1\in R$, between spaces with parahoric level structures which is given by
forgetting the lattice at position $0$. In terms of the group-theoretic description to be discussed below, this case corresponds to $K'\setminus K = \{ s_0\}$. In other words, we need to describe,
for a diagram of lattices in $N^0$ with all inclusions of index $1$ and
$\pi A_1\subseteq B_1$,
\begin{equation}\label{iwahori}
\begin{matrix}
B_1&\subset B_0&\subset&A_1\\
\cup&\,\cup&{}&\,\cup\\
\nass
A_1^\sharp&\subset B_0^\sharp&\subset &B_1^\sharp 
\end{matrix}
\end{equation}
how many choices there are for $B_0$ when $A_1$ and $B_1$ are fixed. (All the other positions which might be present in $R$ are irrelevant for determining the fiber.)

We distinguish cases, depending on whether $B_1\subseteq B_1^\sharp$, or not.

\textbf{First case: $B_1\not\subseteq B_1^\sharp$.}
In this case, we have $A_1^\sharp = B_1\cap B_1^\sharp \supseteq \pi A_1$. Thus $A_1/A_1^\sharp$ is an $\overline{k}$-vector space with a ``hermitian'' form, and $B_0^\sharp/A_1^\sharp \subset B_1^\sharp/A_1^\sharp$ is an isotropic line.

\smallskip

\emph{Claim: There are exactly $q+1$ such lines.}

\smallskip

\emph{Proof of claim.} 
By assumption, $A_1/A_1^\sharp = B_1/A_1^\sharp \oplus B_1^\sharp/A_1^\sharp$, and the restriction of the pairing to $B_1^\sharp/A_1^\sharp\times B_1^\sharp/A_1^\sharp$ is non-degenerate. The entirety of all non-trivial subspaces of $B_1^\sharp/A_1^\sharp$ is a projective
line. Mapping a line $L$ to $L^\sharp \subset B_1^\sharp/A_1^\sharp$ defines
a twisted Frobenius on this projective line over $\overline{k}$, i.e., a $k$-structure on this projective line (cf.~\cite[Lemma 2.12]{Vollaard}).
The isotropic lines correspond to the rational points with respect to this
$k$-structure. Over a finite field, every form of $\BP^1$ is $\BP^1$,
so there are $q+1$ points.

\textbf{Second case: $B_1\subseteq B_1^\sharp$.} In this case, the only possibilities for $B_0$ are $B_0=B_1^\sharp$ or $B_0=\boldtau^{-1}B_1^\sharp$ (which can equivalently be expressed as $B_0^\sharp = B_1$). In fact, if $B_0\ne B_1^\sharp$, then $B_0 + B_1^\sharp = A_1$, and similarly, if $B_1\ne B_0^\sharp$, then $B_1 + B_0^\sharp = B_0$, so from both inequalities together we obtain $B_1^\sharp = B_1 + B_0^\sharp + B_1^\sharp = A_1$, an obvious contradiction.

Depending on whether $B_1 = \boldtau(B_1)$, or not, we have one or two points in the fiber.

\subsubsection{Description of fibers: general case}

If $n$ is odd, then the case considered in the previous section is the only
possible case. If $n=2m$ is even, the case of forgetting $L_{m}$ is completely
analogous to the case of forgetting $L_0$.

Finally, if $n$ is even, there is the case of forgetting $L_0$ and $L_{m}$.
This case corresponds to the case $K'\setminus K = \{ s_0, s_{m}\}$.
Since forgetting $L_0$ and forgetting $L_{m}$ is independent of each other,
the fibers in this case are just products of fibers arising in the case of
forgetting one lattice of the chain. In particular, we see that the possible
cardinalities of fibers are $1$, $2$, $4$, $q+1$, $2(q+1)$, $(q+1)^2$.

\subsubsection{Connection with group theory}

For this subsection, the condition $s=1$ plays no role.
Let $V$ be an $n$-dimensional $\tilde F$-vector space with an alternating
bilinear form $\<\,, \,\>\colon V\times V\to F$ such that $\< av, w\> = \< v,
\s(a)w\>$ for all $a\in \tilde F$, $v, w, \in V$, and let $\bG$ be the
associated group of similitudes of this pairing,
cf.~\cite{Vollaard-Wedhorn}~2.1. As before, we write $\breve G = \bG(\brQp)$. Setting $N = V\otimes_{F}\brF$ and
extending the pairing, we obtain a $2n$-dimensional $\brF$-vector space $N$ with an action of $\tilde F$ 
and a pairing which satisfy properties (1), (2) in
Section~\ref{subsec:isocrystal}. Conversely, starting with $N$ and a pairing
satisfying (1), (2) and choosing a $\tilde F$-subvector space $V\subset N$ such
that $V\otimes_{F}\brF = N$ and such that the pairing restricted to
$V\times V$ takes values in $F$, we obtain data as above.

We assume that $V$ contains a self-dual $O_{\tilde F}$-lattice $L_0$, and we fix  a self-dual ``standard lattice chain'' of $O_{\tilde F}$-lattices in $V$ containing $L_0$. This gives us a standard Iwahori subgroup. As in the previous sections, we have the extended affine Weyl group $\tW$, the set $\tSS$ of simple affine reflections, etc.

By restricting to part of the standard lattice chain, we can identify each $\CF^{\{2r\}}$ as a quotient of $\breve G$ by the standard parahoric subgroup of type $K=K^{\{r\}}=\{ 0, \dots, n-1 \} \setminus \{ r, n-r \}$ if $r>0$, or $K=K^{\{0\}}=\{ 1, \dots, n-1 \}$ if $r=0$. We obtain analogous identifications for non-maximal parahoric level structure.

Now suppose that $N = V\otimes_{F}\brF$ comes equipped with an operator $\phi$, as in~\ref{subsec:isocrystal}~(3). We write $F = b\s$ where $b\in \GL(N)$ and $\s = \id\otimes \s$. Then~\eqref{J-FV} amounts to saying that $b\in \breve G$ with multiplier $c(b)=\pi$. The condition that  $\phi$ be isoclinic  is equivalent to requiring that $b$ is basic. Conversely, starting with a basic element $b\in \breve G$ with multiplier $\pi$, we can define $\phi = b\s$.

According to the choice of the integer $s$, $1\le s\le n-1$, which defines the signature condition, we define the cocharacter $\mu_+ = \omega_s^\vee$. We denote by $\mu$ its conjugacy class.

\begin{remark}\label{rmk:selfdual-lattice-gp-thy}
\begin{altenumerate}
\item
Given the vector space $V$ with the pairing $\<\,, \,\>$, the existence of
a selfdual lattice is equivalent to the existence of a \emph{hyperspecial}
parahoric subgroup in $\breve G$ defined over $F$. This in turn is equivalent to
$\bG$ being quasi-split (over $F$).
\item
We have $[b]\in \BGmu$ if and only if $\XmubK{b}{K} \ne \emptyset$ (for
any/every $K$), see~\cite{Wintenberger}, \cite{He-KR}.
Since there is a unique basic element in $\BGmu$, we see that the $\s$-conjugacy
class $[b]$ is uniquely determined by $s$ under the condition $\XmubK{b}{K} \ne
\emptyset$.
\item
The following proposition says that $\XmubK{b}{K} \ne \emptyset$ if and only if
there exists a self-dual Dieudonn\'e module satisfying the signature condition
corresponding to $\mu_+$. The latter condition is the condition which we imposed
in Section~\ref{subsec:isocrystal}.
\end{altenumerate}
\end{remark}

The map $g= (g^0, g^1) \mapsto (g^0, c(g))$ gives an isomorphism $\bG_{\breve F} \isoarrow \GL(N^0)\times\BG_{m,\breve F}$ of algebraic groups over $\breve F$.
Via this isomorphism, we can also view $\CF^{\{2r\}}$ as a partial affine flag variety for the group $\GL(N^0)\times\BG_{m,\breve F}$. This corresponds to the identification $\CF^{\{2r\}} = \CF^{\{2r\}, 0}$.

Consider the space $\CN\subset\CF$ as defined above, for level structure corresponding to $K\subset \tSS$.

\begin{proposition}
In the setting outlined above, 
\[
\CN^{\{2r\}} = X(\mu, b)_{K^{\{r\}}}
\]
as subsets of the corresponding partial affine flag variety $\CF$ over $\breve F$.\qed
\end{proposition}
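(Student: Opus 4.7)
The plan is to verify that the natural identification of $\breve G/\breve K^{\{r\}}$ with $\CF^{\{2r\}}$ carries the defining condition of $X(\mu,b)_{K^{\{r\}}}$ to the defining Dieudonn\'e–signature condition of $\CN^{\{2r\}}$. First I would set up the bijection explicitly. By definition the standard parahoric $\breve K^{\{r\}}$ is the stabilizer of a fixed pair $(L_1^{\{r\}}\subseteq L_2^{\{r\}})\in\CF^{\{2r\}}$ of $O_{\tilde F}$-stable lattices in $N$ of the required relative type; hence $g\breve K^{\{r\}}\mapsto (gL_1^{\{r\}}\subseteq gL_2^{\{r\}})=:(M_1\subseteq M_2)$ is a well-defined bijection onto $\CF^{\{2r\}}$. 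Because $\bG$ is a group of unitary similitudes, the constraint $M_2=\pi^cM_1^\vee$ is automatic, as is the $O_{\tilde F}$-stability.

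Second I would translate the admissibility condition. Since $\phi=b\s$, we have $g\i b\s(g)=g\i\phi(g)$, and the $\breve K^{\{r\}}$-double coset of $g\i\phi(g)$ is precisely the \emph{relative position} of the two chains $(M_1\subseteq M_2)$ and $(\phi(M_1)\subseteq\phi(M_2))$ inside $N$. Thus the condition $g\i b\s(g)\in \breve K^{\{r\}}\Admmu \breve K^{\{r\}}$ is equivalent to requiring that this relative position lies in the image of $\Admmu$ under the standard map from $\tilde W$ to orbits of $\breve K^{\{r\}}$ acting on lattice chains. The task is then to compute this image for $\mu_+=\omega_s^\vee$ and to identify it with the Dieudonn\'e condition
\[
\pi M_i\subseteq \phi(M_i)\subseteq M_i,\qquad i=1,2,
\]
together with the signature \eqref{signs}.

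Third I would carry out the computation using the decomposition $\bG_{\breve F}\cong \GL(N^0)\times\BG_{m,\breve F}$. Because $\mu_+=\omega_s^\vee$ is minuscule for the $\GL_n$-factor, the admissible set is a single Weyl-group orbit: $\Admmu=\{t^{w\omega_s^\vee}\mid w\in W_0\}$ (times an appropriate element in the $\BG_m$-factor recording the similitude multiplier, which is forced to be $\pi$ by~\eqref{J-FV}). By the standard dictionary between cocharacters and invariant factors of lattices, $g\i\phi(g)\in\breve K^{\{r\}} t^{w\omega_s^\vee}\breve K^{\{r\}}$ for some $w$ is equivalent to $\phi(M_i)^0\subseteq^s M_i^0$ and $\pi M_i^0\subseteq^{n-s}\phi(M_i^0)$ for $i=1,2$, which, after using the identity $\phi(M_i)^0=\phi(M_i^1)$, is precisely the signature condition~\eqref{signs}. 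The $\BG_m$-component and the duality $M_2=\pi^c M_1^\vee$ are automatically matched by Lemma~\ref{sharp-vs-F}.

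The main obstacle is bookkeeping rather than any deep input: one must simultaneously track the dominant-versus-antidominant convention of Remark~\ref{rmk-dominant-antidominant}, the swap $N^0\leftrightarrow N^1$ induced by the $\s$-linear map $\phi$, and the similitude factor $c(b)=\pi$. Once these are unwound, the equality $\CN^{\{2r\}}=X(\mu,b)_{K^{\{r\}}}$ follows from the fact that for the minuscule cocharacter $\omega_s^\vee$ of $\GL_n$ the admissible set coincides with a single $W_0$-orbit of translations which parametrizes exactly those relative positions of lattices permitted by the signature condition $(s,n-s)$. The same reasoning extends to non-maximal parahoric level by restricting simultaneously at every index in $R$, since the above identification is compatible with forgetful maps between lattice chains.
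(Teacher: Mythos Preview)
Your overall strategy---identify $\breve G/\breve K^{\{r\}}$ with $\CF^{\{2r\}}$ and compare relative-position conditions---is the right one, and it matches the paper's approach. But there is a genuine gap in the third step. The assertion ``the admissible set is a single Weyl-group orbit: $\Admmu=\{t^{w\omega_s^\vee}\mid w\in W_0\}$'' is false. The admissible set is by definition the full Bruhat interval below these translations, so it contains many non-translation elements (for instance $\tau$ itself). Consequently your reduction of the condition $g\i\phi(g)\in\breve K^{\{r\}}\Admmu\breve K^{\{r\}}$ to membership in one of the double cosets $\breve K^{\{r\}}t^{w\omega_s^\vee}\breve K^{\{r\}}$ is not justified, and the subsequent ``standard dictionary'' step does not directly apply.

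What the lattice conditions $\pi M_i\subseteq\phi(M_i)\subseteq^s M_i$ for $i=1,2$ actually encode is the \emph{$\mu$-permissibility} of the relative position, in the sense of Kottwitz--Rapoport \cite{kottwitz-rapoport}: it is a vertex-by-vertex condition on the chain. The definition of $X(\mu,b)_{K^{\{r\}}}$ imposes \emph{$\mu$-admissibility} instead. That these two notions coincide is a theorem (for $\GL_n$, proved in \cite{kottwitz-rapoport}); it is not automatic and fails for some groups. The paper's proof invokes precisely this result, using the isomorphism $\bG_{\breve F}\cong\GL(N^0)\times\BG_{m,\breve F}$ to reduce to the $\GL_n$ case. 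Your argument would be correct if you replaced the erroneous description of $\Admmu$ by an appeal to this admissible~$=$~permissible theorem.
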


\begin{proof}
Inside the partial flag variety, for both these sets, their definition can be expressed by imposing conditions on the relative position between the partial lattice chain and its image under Frobenius. For $\CN^{\{2r\}}$, the condition is that this relative position be $\mu$-permissible in the sense of~\cite{kottwitz-rapoport}. For $X(\mu, b)_{K^{\{r\}}}$, the condition is that it must be $\mu$-admissible. By loc.~cit., the two conditions coincide. (Note that because of the identification $\bG_{\breve F} \isoarrow \GL(N^0)\times\BG_{m,\breve F}$ it is enough to know this for $GL_n$.)
\end{proof}

In analogy with the decomposition $\CN^{\{2r\}} = \sqcup_c \CN^{\{2r\}}_c$, the
space $\XmubK{b}{K^{\{r\}}}$ decomposes as a union of spaces of the form $\XmubK{b}{K^{\{r\}}}$
for a unitary group, rather than a group of unitary similitudes.

The group $\BJ_b$, the $\s$-centralizer of $b$, in this context can be
identified with the unitary similitude group of the hermitian space $C$.

\subsubsection{Description of fibers and the EKOR stratification}

Let us discuss the case of ``forgetting $L_{0}$'' with the connection to group theory in mind. As before, we assume $s=1$.  (The other cases can be handled similarly.) As above, fix a level structure $R\subseteq\{ 1,\dots, [n/2]\}$ such that $1\in R$.

Recall our terminology of KR and EKOR strata, see Section~\ref{subsec:EKOR}.
In terms of lattices, the KR stratification on the Iwahori level space $\CN^{\rm Iw} \cong
\XmubK{b}{}$ is given by the relative position of $L_\bullet$,
$L_\bullet^\sharp$.  The EKOR stratification on $\XmubK{b}{K}$ likewise induces
a stratification on the corresponding $\CN$ space which we can describe as the
coarsest stratification such that the projection of every KR stratum is a union
of EKOR strata, cf.~\cite{HR}. For $w\in \KAdmmu$, the index set for the EKOR
stratification, the projection of the KR stratum for $w$ is equal to the EKOR
stratum for $w$, i.e., the partial lattice chains in the EKOR stratum for $w$
are precisely those chains which can be extended to a full lattice chain
$L_\bullet$ such that the relative position of $L_\bullet$ and
$L_\bullet^\sharp$ is equal to $w$.

As the standard lattice chain we choose
\[
\Lambda_\bullet = \cdots \subset \diag(p, 1, \dots, 1) \subset \diag(1, \dots, 1) \subset \diag(1, \dots, 1, p^{-1}) \subset \cdots
\]
where $\diag(\,)$ denotes a diagonal matrix and a matrix is understood as
a lattice by taking the lattice generated by its column vectors.

Let $\tau$ be the matrix
\[
\left(
\begin{array}{cccc}
& &  & p\\
1 & & & \\
& \ddots & & \\
& & 1 &
\end{array}
\right),
\]
so that $\tau \Lambda_i = \Lambda_{i+1}$. We can also view $\tau$ as a length $0$ element of the Iwahori-Weyl group of $\breve G$.

The simple reflections are given as follows:
\[
s_1 =
\left(
\begin{array}{ccccc}
& 1 & & &\\
1 & & & & \\
&  & 1 & & \\
& & & \ddots & \\
& & & & 1
\end{array}
\right), 
s_2 =
\left(
\begin{array}{cccccc}
1 &  & & & &  \\
& & 1 & & & \\
& 1 & & & &\\
& &  & 1 & & \\
& & & & \ddots & \\
& & & & & 1
\end{array}
\right),\dots,
s_0 =
\left(
\begin{array}{ccccc}
& &  & & p \\
& 1 & &&  \\
& & \ddots &&  \\
& & & 1& \\
p^{-1} & &  & &
\end{array}
\right).
\]

\begin{proposition}
With notation as in diagram~\eqref{iwahori}, each of the conditions
\begin{enumerate}
\item
$B_1\subseteq B_1^\sharp$, $B_1 = \boldtau(B_1)$,
\item
$B_1\subseteq B_1^\sharp$, $B_1 \ne \boldtau(B_1)$,
\item
$B_1\not\subseteq B_1^\sharp$
\end{enumerate}
describes a union of EKOR strata. The fibers of the projection $\pi_{K, K'}$ have cardinality $1$ in Case (1), cardinality $2$ in Case (2) and cardinality $q+1$ in Case (3).
\end{proposition}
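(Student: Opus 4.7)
\medskip

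The plan has two parts. First I will confirm the fiber cardinalities, which is essentially already done. Then I will use the comparison with the group-theoretic description to deduce that each of the three conditions is constant on EKOR strata.

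For the cardinalities: The three lattice conditions (1), (2), (3) are mutually exclusive and exhaustive on $\CN^R$. In Section~\ref{subsec:forget-L0} the fibers of the map ``forget $L_0$'' were computed case by case. In the case $B_1\not\subseteq B_1^\sharp$, the possible choices of $B_0$ are in bijection with isotropic lines in the two-dimensional ``hermitian'' space $A_1/A_1^\sharp$; these are the $k$-rational points of a twisted form of $\mathbb P^1_{\overline k}$, of which there are exactly $q+1$. In the case $B_1\subseteq B_1^\sharp$, the only candidates for $B_0$ are $B_1^\sharp$ and $\boldtau^{-1}B_1^\sharp$. Using that the double sharp equals $\boldtau$ (so $\sharp$ commutes with $\boldtau$), these two candidates coincide exactly when $B_1=\boldtau(B_1)$, yielding $1$ point, and are distinct otherwise, yielding $2$ points. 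Hence the stated cardinalities hold.

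For the claim that each condition describes a union of EKOR strata: under the identification $\CN^R\cong \XmubK{b}{K^{\{r\}}}$ and $\CN^{R\cup\{0\}}\cong \XmubK{b}{K^{\{r\}\cup\{0\}}}$ the forgetful map agrees with the projection $\pi_{K,K'}$ between partial affine flag varieties. By Proposition~\ref{prop-exotic-minimal-gp-thy} the cardinality of the fiber of $\pi_{K,K'}$ over a point $g\in X_{K',w'}(\t)$ depends only on the EKOR stratum index $w'\in{}^{K'}\!\!\Admmu_0$ and takes exactly the three values $q+1$, $2$, $1$ (on the three explicitly described subsets of ${}^{K'}\!\!\Admmu_0$). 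The fiber cardinality function $\CN^R\to\{1,2,q+1\}$ is a well-defined function which is both constant on EKOR strata (by the group-theoretic description) and exactly equal to the partition into the three lattice conditions (by Step 1). Since its three fibers are distinct, the two partitions coincide, and in particular each lattice condition is a union of EKOR strata.

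The only subtle point is the compatibility check between the two descriptions of $\pi_{K,K'}$: one has to verify that the ``forget $L_0$'' map on lattice chains matches the projection to the parahoric attached to $K^{\{r\}\cup\{0\}}$, i.e. that the parahoric level $K'\setminus K=\{s_0\}$ really corresponds to the lattice index $0$ in the chain $\Lambda_\bullet$ under the chosen identifications; this is the point at which the explicit normalization of $\t$, the standard lattice chain, and the simple reflections $s_i$ fixed at the end of Section~\ref{subsec:forget-L0} enter. Once this bookkeeping is in place, the comparison in the previous paragraph is immediate, and no further computation is required.
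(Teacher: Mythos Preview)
Your argument is correct, but it takes a genuinely different route from the paper's proof. The paper works directly with the alcove description of KR strata from \cite{kottwitz-rapoport}: it translates the condition $B_1\subseteq B_1^\sharp$ into a condition on the entry $\alc(w')_{-1}$ of the alcove expression, and (assuming $B_1\subseteq B_1^\sharp$) translates $B_1=\boldtau(B_1)$ into a condition on $\alc(w')_0$. Since these are visibly conditions that depend only on the partial lattice chain at indices $\pm 1$ (and $0$), they are constant on EKOR strata by definition of the EKOR stratification. The fiber cardinalities are then read off from the earlier lattice computation. In particular, the paper's proof is self-contained in the lattice language and does not invoke Proposition~\ref{prop-exotic-minimal-gp-thy}; rather, the paper remarks afterwards that this proposition is \emph{recovered} from the lattice argument.

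Your approach reverses this logic: you take Proposition~\ref{prop-exotic-minimal-gp-thy} as input, observe that the fiber-cardinality function $\CN^R\to\{1,2,q+1\}$ is constant on EKOR strata by that proposition, and then note that its level sets coincide with the three lattice loci (using that $1$, $2$, $q+1$ are pairwise distinct). This is shorter and avoids the explicit alcove bookkeeping, at the cost of depending on the group-theoretic analysis of Section~\ref{exotic-case-gp-thy}. Both arguments are valid; the paper's has the advantage of providing an independent lattice-theoretic verification.

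One small inaccuracy: in your recollection of the $q+1$ count, the relevant two-dimensional hermitian space is $B_1^\sharp/A_1^\sharp$, not $A_1/A_1^\sharp$ (the latter is three-dimensional when $s=1$, $r=1$). This does not affect your argument since you are only citing the conclusion of Section~\ref{subsec:forget-L0}.
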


As before, $q$ denotes the cardinality of the residue class field of $F$.

\begin{proof}
Via our choice of standard lattice chain, the alcove expression for the identity element of $\tW$ and of $\tau$ are
\[
\begin{array}{lcccccc}
{\alc}(\id)_\bullet: &\quad \dots, & (1^{(2)}, 0^{(n-2)}),& (1, 0^{(n-1)}),& (0^{(n)}),& (0^{(n-1)}, -1), & \dots \\
{\alc}(\tau)_\bullet: & \quad\dots, & (1^{(3)}, 0^{(n-3)}),& (1^{(2)}, 0^{(n-2)}),& (1, 0^{(n-1)}),& (0^{(n)}), & \dots
\end{array}
\]
respectively. Here we use the ``alcove notation'' of~\cite{kottwitz-rapoport}.
Similarly, any $w'\in \KAdmmu$ gives rise to such an alcove expression $({\alc}(w')_i)$ with each ${\alc}(w')_i\in\BZ^n$, and $w'$ is determined by this datum. The fact that $w'\in \Admmu$ translates to the condition $\alc(\id) \le \alc(w') \le \alc(\id) + (1^{(n)})$, where $\le$ means that for each index, the respective entries are $\le$. The condition $B_1\subset B_1^\sharp$ translates to $(1, 0^{(n-1)}) \ge \alc(w')_{-1}$, which together with the admissibility implies that $\alc(w')_{-1} = (0^{(n)})$ or $\alc(w')_{-1} = (1, 0^{(n-2)}, -1)$. The latter case is not possible because $w'\in {}^K\tW$. 

Now assume that  $B_1\not\subset B_1^\sharp$, then $\alc(w')_{-1}$ has the form $(0, 0^{(i)}, 1, 0^{(n-i-3)}, -1)$ for some $i\ge 0$. Since these conditions are constant on each KR stratum, and are phrased in terms of the indices $1$, $-1$ of the lattice chain only, they describe unions of EKOR strata.

Now assume that $B_1\subset B_1^\sharp$, so $\alc(w')_{-1} = (0^{(n)})$. Then $B_1^\sharp = B_0$, so the condition $B_1 = \boldtau(B_1)$ becomes $B_1 = B_0^\sharp$ which is equivalent to $\alc(w')_0 = (1, 0^{(n-1)})$. Again, this clearly describes a union of EKOR strata. (Note that at this point $B_1\subset B_1^\sharp$ implies $B_1^\sharp = B_0$, i.e., we do not see the possibility $B_0 = \boldtau\i B_1^\sharp$ in the second case of Section~\ref{subsec:forget-L0}. This is because we are not considering the full fiber here, but only the EKOR strata for $w'$, for level $K$ and $K'$.)
\end{proof}

We now recover the characterization of the loci of different fiber cardinalities
as unions of EKOR strata which we proved group-theoretically as
Proposition ~\ref{prop-exotic-minimal-gp-thy}. (But note that in the lattice context we did not reprove Theorem~\ref{pi'} because we did not separate the unions of EKOR strata where the fiber cardinality is constant into individual EKOR strata.)

\begin{proposition}\label{fibers-lattice-descr}
Fix a point in a parahoric RZ space $\CN_0$ given by a diagram
\begin{equation}
\begin{matrix}
\cdots\subset &B_1&\subset&A_1 & \subset\cdots\\
& \cup&{}&\,\cup &\\
\nass
\cdots\subset &A_1^\sharp&\subset &B_1^\sharp & \subset\cdots
\end{matrix}
\end{equation}
which lies in the EKOR stratum for $w'\in \KAdmmu$. Then 
\begin{itemize}
\item
$B_1\subseteq B_1^\sharp$, $B_1 = \boldtau(B_1)$
if and only if $w'\in W_0\t$ and $w's_0 > w'$, if and only if the fiber cardinality is $1$,
\item
$B_1\subseteq B_1^\sharp$, $B_1 \ne \boldtau(B_1)$
if and only if $w's_0 < w'$, if and only if the fiber cardinality is $2$,
\item
$B_1\not\subseteq B_1^\sharp$
if and only if $w'\not\in W_0\t$, if and only if the fiber cardinality is $q+1$.
\end{itemize}
\end{proposition}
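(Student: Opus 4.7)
The plan is to recognize that this proposition is essentially a repackaging of the two preceding statements: the lattice-theoretic proposition (whose proof computes fiber cardinalities as $1$, $2$, $q+1$ in the three cases), and Proposition~\ref{prop-exotic-minimal-gp-thy} (which expresses the same cardinalities $1$, $2$, $q+1$ in terms of the $w'$-conditions). Since the equivalences between each $w'$-condition and the corresponding fiber cardinality are already the content of Proposition~\ref{prop-exotic-minimal-gp-thy}, the only thing to check is that the three lattice conditions match with the three cardinalities (or, equivalently, with the three $w'$-conditions).

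For the cardinality count on each lattice locus, I would invoke the analysis of Section~\ref{subsec:forget-L0}: when $B_1 \not\subseteq B_1^\sharp$ the fiber is parameterised by the set of $k$-rational isotropic lines in the rank-$2$ hermitian space $A_1/A_1^\sharp$, hence a conic over $k$ with $q+1$ points; when $B_1 \subseteq B_1^\sharp$ the only possibilities are $B_0 = B_1^\sharp$ and $B_0 = \boldtau^{-1} B_1^\sharp$, which are distinct precisely when $B_1 \neq \boldtau(B_1)$, giving fiber cardinality $2$ or $1$ respectively. To conclude, I need that these three lattice loci are in fact unions of EKOR strata (so that ``fiber cardinality'' is a well-defined invariant on each locus, and one can match it with the EKOR-indexed description on the group side). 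This is exactly what the preceding proposition establishes, using alcove notation to translate $B_1 \subseteq B_1^\sharp$ into $\alc(w')_{-1} = (0^{(n)})$ and $B_1 = \boldtau(B_1)$ (given $B_1 \subseteq B_1^\sharp$) into $\alc(w')_{0} = (1, 0^{(n-1)})$.

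The hard part, or rather the only non-cosmetic step, will be verifying that the alcove conditions $\alc(w')_{-1}=(0^{(n)})$ (resp.\ combined with $\alc(w')_0=(1,0^{(n-1)})$) correspond precisely to the Bruhat conditions ``$w's_0 > w'$'' (resp.\ ``$w' \in W_0\tau$ and $w's_0 > w'$''). This I would extract from the explicit description of $\Adm(\mu) \cap {}^{K'}\tW$ used in the proof of Proposition~\ref{w-w}: the elements of $W_0\tau$ are exactly the alcoves adjacent to the base alcove on the $\tau$-translate side, which in alcove notation are the ones with $\alc(w')_i$ equal to $\alc(\tau)_i$ for $i \le -1$ and $i \ge 1$; imposing in addition $w's_0 > w'$ pins down the two relevant coordinates. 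Once this matching is made, the three chains of equivalences follow by transitivity from the two previously-proved propositions, since three pairwise-distinct cardinalities can only match up in one way.
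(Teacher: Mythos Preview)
Your transitivity argument (first two paragraphs) is correct and gives a valid proof: since the preceding proposition shows that the three lattice loci are unions of EKOR strata with fiber cardinality $1$, $2$, $q+1$ respectively, and Proposition~\ref{prop-exotic-minimal-gp-thy} shows that the three $w'$-conditions give the same three cardinalities, and since $q \ge 2$ makes these pairwise distinct, the matching is forced. This is a different route from the paper's proof, which does \emph{not} invoke Proposition~\ref{prop-exotic-minimal-gp-thy} at all but instead verifies the lattice--Bruhat equivalences directly: writing $w' = v\tau$ with $v \in W_0$ when $w' \in W_0\tau$, the paper checks that $B_1 = \boldtau(B_1)$ translates to $v(1)=1$ and hence to $\ell(vs_1) > \ell(v)$, i.e., $w's_0 > w'$; and separately that $w' \notin W_0\tau$ forces $w's_0 > w'$ via an alcove computation. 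The paper's approach gives an independent lattice-theoretic derivation (which is the stated purpose of the section, namely to ``recover'' the group-theoretic result); your approach is shorter but amounts to quoting both halves and matching.

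Your third paragraph, however, contains an error and is in any case superfluous. You claim that $\alc(w')_{-1} = (0^{(n)})$ corresponds to $w's_0 > w'$; this is false. The condition $\alc(w')_{-1} = (0^{(n)})$ is equivalent to $w' \in W_0\tau$ (cases (1) and (2) combined), whereas $w's_0 > w'$ is the union of cases (1) and (3), since $w' \notin W_0\tau$ always implies $w's_0 > w'$. Fortunately, if you commit to the transitivity argument you do not need this paragraph at all: the matching of lattice conditions to $w'$-conditions is already forced by the cardinalities, so there is no ``hard part'' left to do. You should simply delete the third paragraph (or, if you want to include the direct alcove argument as a second proof, correct the correspondence and carry out the permutation computation as the paper does).
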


\begin{proof}
First note that by the proof of the previous proposition, $B_1\subseteq
B_1^\sharp$ is equivalent to $\alc(w')_{-1} = (0^{(n)})$ or, in other words,
$w'\in W_0\tau$. This already proves the third statement. Now if $w'\in
W_0\tau$, then $\ell(w')$ is the number of inversions of the permutation $v:=
w'\t\i$. We have $B_1 = \boldtau(B_1)$ if and only if $\alc(w')_{0} = (1,
0^{(n-1)})$, if and only if $v(1) = 1$. In this case, $w's_0 = vs_1 \t$ has
length $\ell(w')+1$. On the other hand, if $v(1) \ne 1$, then by the
admissibility of $w'$, $v(1)=2$, so $w's_0 = vs_1 \t$ has length $\ell(w')-1$.

It remains to prove that $w'\not\in W_0\t$ implies $w's_0 > w'$. As we found
above, $w'\not\in W_0\t$ means that $\alc(w')_0 = (0, \dots, 0, 1, 0\dots, 0)$
with the $1$ in position $i>1$. We have $\alc(w')_i =\alc(w's_0)_i$ for all $i \ne 0$,
and hence $\alc(w')_1\le \alc(w's_0)_0 \le \alc(w')_{-1}$ and $\alc(w's_0)_0 \ne
\alc(w')_{-1}$. Thus the only possibility for $\alc(w's_0)_0$ is $(1, 0, \dots, 0, 1, 0, \dots, 0, -1)$
whence $w's_0\notin \Admmu$. This is only possible if $w's_0 > w'$.
\end{proof}

\subsubsection{The EKOR stratification in the case of signature $(1, 2)$}

In the case $n=3$, we can describe explicitly all the KR and EKOR strata, cf.~Example~\ref{exotic-GU3}. As a preparation, we write down explicitly the KR
strata in terms of lattices. In this section, we consider the full affine flag
variety for $\GL_3$ over $\brF$.  The set of $\overline{k}$-valued points is the set of
full periodic lattice chains $L_\bullet$. Since all lattice chains are
periodic, we usually only consider degrees $1$, $0$, $-1$.

\begin{lemma}\label{relpos}
Let $L_\bullet$, $L'_\bullet$ be lattice chains and denote by $\mathop{\rm inv}(L_\bullet, L'_\bullet)\in \tilde W$ their relative position.
\begin{enumerate}
\item
$\mathop{\rm inv}(L_\bullet, L'_\bullet) = \tau$ if and only if $L'_i = L_{i+1}$ for $i=1, 0, -1$ (equivalently: for all $i$),
\item
$\mathop{\rm inv}(L_\bullet, L'_\bullet) \in \{ s_0 \tau, \tau \}$ if and only if $L'_1 = L_{2} (=\pi L_{-1})$ and $L'_0 = L_1$,
\item
$\mathop{\rm inv}(L_\bullet, L'_\bullet) \in \{ s_1 \tau, \tau \}$ if and only if $L'_1 = L_{2} (=\pi L_{-1})$ and $L'_{-1} = L_0$,
\item
$\mathop{\rm inv}(L_\bullet, L'_\bullet) \in \{ s_2 \tau, \tau \}$ if and only if $L'_{-1} = L_{0}$ and $L'_{0} = L_1$,
\item
$\mathop{\rm inv}(L_\bullet, L'_\bullet) \in \{ s_1s_0 \tau, s_0\tau, s_1\tau, \tau \}$ if and only if $L'_{1} = L_{2} (=\pi L_{-1})$.
\end{enumerate}
\end{lemma}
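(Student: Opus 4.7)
My plan is to identify each of the listed subsets of $\tW$ with a parahoric--Iwahori double coset $\brK\tau\brI/\brI$ (intersected with $\Admmu$ for case (5)), and then to translate the parahoric-orbit condition into the stated lattice equalities. The entire argument rests on determining the stabiliser pattern of $s_i$ on the lattices $\Lambda_j$ of the standard chain.

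For (1), the equality $\mathop{\rm inv}(L_\bullet, L'_\bullet) = \tau$ amounts to $L'_\bullet\in \brI\tau\brI\cdot L_\bullet = \tau L_\bullet$ since $\brI$ is precisely the stabiliser of $L_\bullet$; as $\tau L_i = L_{i+1}$ this gives $L'_i = L_{i+1}$ for all $i$.

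For (2)--(4), I would first check by a direct matrix calculation, using the explicit forms of $s_0, s_1, s_2$ recalled in the excerpt, that $s_i$ (and hence $\brK_{s_i}$) stabilises $\Lambda_j$ precisely for $j\not\equiv i\pmod{3}$. Since $\brK_{s_i} = \brI\sqcup \brI s_i\brI$, we have $\brK_{s_i}\tau\brI = \brI\tau\brI\sqcup \brI s_i\tau\brI$, so $\mathop{\rm inv}(L_\bullet, L'_\bullet)\in\{s_i\tau, \tau\}$ is equivalent to $L'_\bullet$ and $\tau L_\bullet$ having the same image in $\breve G/\brK_{s_i}$. Using the stabiliser pattern, this translates to $L'_j = L_{j+1}$ for every $j$ with $j+1\not\equiv i\pmod{3}$, i.e.\ $j\not\equiv i-1\pmod{3}$. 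Restricted to one period $\{-1, 0, 1\}$, the unique free index is $i-1\pmod{3}$, which for $i = 0, 1, 2$ equals $-1, 0, 1$ respectively; this reproduces exactly the pairs of equalities stated in (2), (3), (4).

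For (5), the analogous computation shows that $\brK_{\{s_0, s_1\}}$ stabilises $\Lambda_j$ exactly when $j\equiv -1\pmod{3}$, so the parahoric orbit $\brK_{\{s_0, s_1\}}\tau\brI/\brI$ consists of those chains with $L'_j = L_{j+1}$ for all $j\equiv 1\pmod{3}$, which in one period $\{-1, 0, 1\}$ is just $L'_1 = L_2$. Now $\brK_{\{s_0, s_1\}}\tau\brI$ is a union of six Iwahori--Iwahori double cosets indexed by $W_{\{s_0, s_1\}}\cong S_3$, but among them only the four elements $\tau, s_0\tau, s_1\tau, s_1s_0\tau$ lie in $\Admmu$, as enumerated in Example~\ref{exotic-GU3}. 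Since the chains of interest in this section arise from points of the RZ space and so have admissible relative position, this yields the claimed equivalence.

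The main obstacle is a careful identification of the lattice-stabiliser pattern of each $\brK_{s_i}$ in the specific conventions of the excerpt --- in particular, keeping in mind that here $\tau\Lambda_i = \Lambda_{i+1}$ corresponds to passing to a \emph{smaller} lattice, since $L_2 = \pi L_{-1}$. Once this bookkeeping is settled, each case becomes a straightforward consequence of the general principle that the parahoric orbit $\brK w\brI/\brI$ in $\breve G/\brI$ equals $\bigsqcup_{w'\in W_K w}\brI w'\brI/\brI$ whenever $\brK$ is a standard parahoric with finite Weyl group $W_K$.
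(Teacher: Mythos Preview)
The paper omits the proof of this lemma (``We omit the easy proof''), so there is nothing to compare to; your approach via parahoric double cosets is the natural one and is essentially correct.

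There is one indexing slip in your argument for (2)--(4). You claim that $\mathop{\rm inv}(L_\bullet, L'_\bullet)\in\{s_i\tau,\tau\}$ is equivalent to $L'_\bullet$ and $\tau L_\bullet$ having the same image in $\breve G/\brK_{s_i}$. In fact, $g^{-1}h\in\brK_{s_i}\tau\brI=\brK_{s_i}\tau=\tau\brK_{s_{i-1}}$ (using $\tau s_{i-1}\tau^{-1}=s_i$), so the correct statement is that $L'_\bullet$ and $\tau L_\bullet$ have the same image in $\breve G/\brK_{s_{i-1}}$, or equivalently that $L_\bullet$ and $\tau^{-1}L'_\bullet$ have the same image in $\breve G/\brK_{s_i}$. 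Either way one arrives at $L'_j=L_{j+1}$ for $j\not\equiv i-1$, which is what you (correctly) conclude; your two off-by-one slips happen to cancel.

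Your remark about (5) is well taken: the condition $L'_1=L_2$ is equivalent to $g^{-1}h\in\brK_{\{s_0,s_1\}}\tau$, which is a union of six Iwahori double cosets, and indeed $s_0s_1\tau$ and $s_0s_1s_0\tau$ also satisfy $L'_1=L_2$ but are not among the four listed. So the ``only if'' direction of (5) does require the admissibility assumption, which is implicit in the context (the lemma is only applied to points of the RZ space) but not stated in the lemma itself.
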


The lemma describes all KR strata for $w\in \Admmu_0$. We omit the easy proof. As a consequence, we obtain the following description of the EKOR strata in $\CN_0^{\{2\}}$. (It is possible to characterize the EKOR strata by other conditions, in the style of the original definition of the EO stratification in the Siegel case, see for instance~\cite{Oort}, \cite{GY}; we have made a choice which is close to the criteria we have found above for the cardinality of the fibers of the projection from the Iwahori space.)

\begin{proposition}
A point in $\CN_0^{\{2\}}$, given by a diagram
\begin{equation}
\begin{matrix}
B_1&\subset&A_1\\
\cup&{}&\,\cup\\
\nass
A_1^\sharp&\subset &B_1^\sharp
\end{matrix}
\end{equation}
lies in the EKOR stratum attached to
\begin{enumerate}
\item
$\tau$ if and only if $pA_1 = A_1^\sharp$, $B_1\subseteq B_1^\sharp$, $B_1 = \boldtau(B_1)$,
\item
$s_1\tau$ if and only if $B_1\subseteq B_1^\sharp$, $B_1 = \boldtau(B_1)$, (and on this stratum $\pi A_1 = A_1^\sharp$),
\item
$s_2\tau$ if and only if $\pi A_1 \ne A_1^\sharp$ (and on this stratum  $B_1\subseteq B_1^\sharp$, $B_1 = \boldtau(B_1)$)
\item
$s_1s_0\tau$ if and only if $B_1\not\subseteq B_1^\sharp$ (and on this stratum  $\pi A_1 = A_1^\sharp$). \qed
\end{enumerate}
\end{proposition}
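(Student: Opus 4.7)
The plan is to deduce the proposition from Lemma \ref{relpos} combined with Proposition \ref{fibers-lattice-descr}, applied to the projection from the Iwahori-level space to $\CN_0^{\{2\}}$ (which corresponds to the parahoric level $K'=\{s_0\}$, as in Example \ref{exotic-GU3}). By Theorem \ref{fine-dec}, we have the disjoint decomposition
\[
\CN_0^{\{2\}} \;=\; \XmubK{\t}{K'} \;=\; \bigsqcup_{w\in {}^{K'}\!\!\Admmu_0} X_{K',w}(\t),
\]
and the index set is $\{\t, s_1\t, s_2\t, s_1 s_0 \t\}$, so it suffices to identify the lattice-theoretic locus of each stratum.

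My first step would be to fix a point of $\CN_0^{\{2\}}$, given by a partial chain $(B_1 \subset A_1, A_1^\sharp \subset B_1^\sharp)$, and to lift it to an Iwahori-level point by choosing an intermediate lattice $B_0$ with $B_1 \subset B_0 \subset A_1$ and (self-duality) $B_1 \subset B_0^\sharp \subset A_1$, as in Section \ref{subsec:forget-L0}. The KR stratum at Iwahori level containing the lift is determined by the relative position $w=\mathop{\rm inv}(L_\bullet,L_\bullet^\sharp)$, and Lemma \ref{relpos} translates each $w\in \Admmu_0$ into explicit coincidences $L_i^\sharp=L_j$. The EKOR stratum for $w\in {}^{K'}\!\!\Admmu_0$ is then the projection of $X_w(\t)$, i.e., the locus of partial chains admitting at least one such lift.

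The element $w=s_1 s_0 \t \notin W_0\t$ is separated from the other three by Proposition \ref{fibers-lattice-descr}: it is characterized by the failure of $B_1 \subseteq B_1^\sharp$, and on this locus the weaker condition from Lemma \ref{relpos}(5) allows the liftable $B_0$ to vary freely, forcing $\pi A_1 = A_1^\sharp$ as a direct consequence. Conversely, $\t, s_1\t, s_2\t \in W_0\t$ all satisfy $w s_0 > w$, so by Proposition \ref{fibers-lattice-descr} they fall into the common class $B_1 \subseteq B_1^\sharp,\ B_1 = \boldtau(B_1)$. Distinguishing among these three requires the finer analysis via Lemma \ref{relpos}: the stratum for $s_2\t$ (Lemma \ref{relpos}(4)) imposes only the coincidences $L'_{-1}=L_0, L'_0=L_1$ that do not involve $A_1$, and is precisely where $\pi A_1 \neq A_1^\sharp$; the strata for $s_1\t$ (Lemma \ref{relpos}(3)) and $\t$ (Lemma \ref{relpos}(1)) both impose $L'_1=L_2$, which translates to $\pi A_1 = A_1^\sharp$.

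The last and most delicate step is the separation of $\t$ from $s_1\t$, both living on the locus $\pi A_1=A_1^\sharp,\ B_1\subseteq B_1^\sharp,\ B_1=\boldtau(B_1)$; here the extra rigidity for $\t$ comes from the fact that Lemma \ref{relpos}(1) requires the coincidences $L'_i=L_{i+1}$ at \emph{every} $i$, which amounts to an additional self-duality of the chain at the $0$-th position (the defining condition $pA_1=A_1^\sharp$ in (1) being the version of this coincidence at the level of $A_1$, over and above what one gets from the partial chain). I expect this last distinction to be the main obstacle, since it requires tracing the extension $B_0$ of the partial chain carefully and ensuring no lift to $s_1\t$ exists under the extra condition while some lift to $\t$ does. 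Once these identifications are verified, the four conditions are automatically disjoint and exhaustive thanks to Theorem \ref{fine-dec}, completing the proof.
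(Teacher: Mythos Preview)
Your overall strategy---combining Lemma~\ref{relpos} with the fiber-cardinality analysis of Proposition~\ref{fibers-lattice-descr} and Example~\ref{exotic-GU3}---is exactly what the paper intends (the proposition carries a \qed and is introduced by ``As a consequence, we obtain\ldots''). However, there is a computational slip that scrambles your identification of which pair of strata needs extra work. In type $\tilde A_2$ with $\t=\t_1$ we have $\t s_0\t^{-1}=s_1$, hence
\[
(s_1\t)s_0 = s_1(\t s_0\t^{-1})\t = s_1 s_1 \t = \t,
\]
so $w s_0 < w$ for $w=s_1\t$, not $w s_0>w$ as you claim. By Proposition~\ref{fibers-lattice-descr} this places $s_1\t$ in the fiber-cardinality-$2$ class, i.e., $B_1\subseteq B_1^\sharp$ and $B_1\ne\boldtau(B_1)$, in agreement with Example~\ref{exotic-GU3}. (This also reveals that item~(2) of the proposition as printed contains a typo: the condition should read $B_1\ne\boldtau(B_1)$, else (1) and (2) are not disjoint.)

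Consequently $s_1\t$ is already separated from $\t$ and $s_2\t$ by Proposition~\ref{fibers-lattice-descr} alone; no ``delicate'' lifting argument is needed there. The pair that genuinely requires the finer input from Lemma~\ref{relpos} is $\t$ versus $s_2\t$: both satisfy $w s_0>w$ (check $(s_2\t)s_0=s_2 s_1\t$ and $\t s_0=s_1\t$) and hence both lie in the common class $B_1\subseteq B_1^\sharp$, $B_1=\boldtau(B_1)$. They are then distinguished by the condition at the $A_1$-position: Lemma~\ref{relpos}(1) and~(3) impose $L'_1=L_2$, which translates to $\pi A_1=A_1^\sharp$, whereas Lemma~\ref{relpos}(4) for $s_2\t$ imposes only the coincidences at indices $0$ and $-1$, and one checks that on this stratum $\pi A_1\ne A_1^\sharp$. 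Once you correct this bookkeeping, your plan goes through without the obstacle you anticipated.
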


\section{Proof of Theorems \ref{Main1} and \ref{Main2}}\label{finalpart2}

In this section, we deduce Theorems \ref{Main1}, resp. \ref{Main2} from Theorems \ref{thm-dimension-0}, resp. \ref{thm-fixed-s}. Let $(\bG, \mu)$ be such that $\bG$ is quasi-simple and $\mu$ non-central. Write $\bG=\Res_{\tilde F/F}\tilde\bG$, for a finite field extension $\tilde F$ and an absolutely quasi-simple group $\tilde\bG$ over $\tilde F$. We also write $\mu=(\mu_\varphi)$, where $\mu_\varphi$ are cocharacters of $\tilde\bG$. Here $\varphi$ runs over $\Hom_F(\tilde F, \ov F)$. Let $F_d$ be the maximal unramified subextension of $\tilde F$, $d=[F_d:F]$,  and fix an embedding of $F_d$ into $\ov F$. Let $\bG_d=\Res_{\tilde F/F_d}\tilde\bG$. Then $\bG=\Res_{F_d/F}(\bG_d)$, and the Tits datum over $F$ of $(\bG, \mu)$ is equal to $(\Res_{F_d/F}(\tilde \Delta_{\bG_d},\sigma_d), (\umu_{d, i})_i)$, where  $\tilde \Delta_{\bG_d}$ is the absolute Dynkin diagram of $\bG_d\otimes_{F_d}\brF$ with its action $\sigma_d$ of the Frobenius over $F_d$, and where, for $i=0, \ldots, d-1$,  we denote by $\umu_{d, i}$  the element in the translation lattice corresponding to $\mu_{d,i}=(\mu_\varphi)_\varphi$. Here $\varphi$ runs over those elements of $\Hom_{F}(\tilde F, \ov F)$ whose restriction to $F_d$ is equal to $\sigma^i$. Note that $\tilde\Delta_{\bG_d}$ coincides with the absolute local Dynkin diagram $\tilde\Delta_{\tilde\bG}$ of $\tilde\bG\otimes_{\tilde F} \breve{\tilde F}$, 
where $\breve{\tilde F}=\tilde F\otimes_{F_d}\brF$ is the completion of the maximal unramified extension of $\tilde F$, cf. \cite[1.13]{Tits:Corvallis}. 

Let now $(\bG, \mu)$ satisfy the conclusions of Theorems \ref{thm-dimension-0}, resp. \ref{thm-fixed-s}. In the case of Theorem \ref{thm-dimension-0}, it follows that  $(\tilde \Delta_{\bG_d},\sigma_d)=(\tilde A_{n-1}, \id)$. Furthermore, by changing the embedding of $F_d$ into $\ov F$, we deduce   from  
$\umu_d=(\o_1^\vee, 0,\ldots,0)$ that, for $i\neq 0$, $\umu_{d, i}$ is central and then that $\mu_{d, i}$ is central, cf.~Lemma~\ref{lemma-mu-central}. From $\umu_{d, 0}=\o_1^\vee$, we similarly deduce that there exists a unique $\varphi_0\in\Hom_{F_d}(\tilde F, \ov F)$ such that $\mu_{\varphi_0}=\o_1^\vee$ and such that $\mu_\varphi$ is central for all $\varphi\in\Hom_{F_d}(\tilde F, \ov F)\setminus\{\varphi_0\}$, comp. Lemma \ref{lemma-mu-minuscule} and the table right before Lemma 5.4 in \cite{HPR}.
It also follows that $\tilde\bG_\ad=\PGL_n$, and Theorem \ref{Main1} follows. 

In the case of Theorem \ref{thm-fixed-s}, and excluding the case treated in Theorem \ref{thm-dimension-0}, it follows that $(\tilde \Delta_{\bG_d},\sigma_d)=(\tilde A_{n-1}, \varsigma_0)$. Analogously to the case treated before, we obtain that there exists a unique $\varphi_0\in\Hom_F(\tilde F, \ov F)$ such that $\mu_{\varphi_0}=\o_1^\vee$ and such that $\mu_\varphi$ is central for all $\varphi\neq\varphi_0$, comp. Lemma \ref{lemma-mu-minuscule}. It follows that $\tilde\bG_\ad$ is an outer twist of $\PGL_n$ by an unramified quadratic extension $\tilde F'$ of $\tilde F$.  Hence  $\tilde\bG_\ad=\U(V)_\ad$, for a  $\tilde F'/\tilde F$-hermitian vector space $V$. The condition on $(K, K')$ in Theorem \ref{Main2} follows directly from Theorem \ref{thm-dimension-0}, and implies that the hermitian space $V$ is split (existence of a lattice which is self-dual or self-dual up to a scalar). Theorem \ref{Main2} is proved.

\part{Maximal dimension}
In this part, we consider the  problem opposite to the one of the last part: when is $\XmubK{b}{K}$  of maximal dimension? 

\section{Dimension of ADLV}\label{s:dimADLV}
\subsection{Admissible sets}
In this subsection, we introduce a dimension notion for certain subsets of $\breve G$. We follow \cite[\S2.5]{He-CDM}.
We view $\breve G$ as the set of $\overline k$-valued points of the loop group of $\bf G$ and equip it with the ind-topology. Then the closure $\overline{\brI x\brI}$ is equal to the (perfect) scheme $\bigcup_{x'\le x} \brI x'\brI$, and a subset $V$ is closed if and only if its intersection with $\overline{\brI x\brI}$ is closed for the Zariski topology, for all $x\in \tW$.

A subset $V$ of $\breve G$ is called {\it admissible}\footnote{This notion of admissibility is not related to the $\mu$-admissible set.} if for any $w \in \tW$, the set $V \cap \brI  w \brI$ is stable under the right action of an open compact subgroup $\brK_w$ which contains a congruence  subgroup $\brI_n$ of $\breve G$. This is equivalent to asking that for any $w \in \tW$,  the set $V \cap \overline{\brI  w \brI}$ is stable under the right action of an open compact subgroup $\brK_w$ which contains a congruence  subgroup $\brI_n$ of $\breve G$.  We say that $V$ is \emph{bounded} if $V \cap \breve \CI  w \breve \CI=\emptyset$ for all but finitely many $w \in W$.

For any compact open subgroup $\brK$ of $\breve G$, we define
\[
\dim_{\brK} V=\sup_w \dim((V \cap \overline{\breve \CI  w \breve \CI})/\brK_w)-\dim(\brK/\brK_w),
\]
where $\brK_w$ is chosen as above and such that $\brK_w\subseteq \brK$.

The previous definition is applicable in our case because of the following fact. 
\begin{theorem}\cite[Thm. A.1]{He-KR}
	Any $\s$-conjugacy class in $\breve G$ is an admissible subset. \qed
\end{theorem}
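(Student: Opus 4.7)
The plan is to show that for every $w \in \tW$, the intersection $[b] \cap \brI w \brI$ is stable under right multiplication by an open compact subgroup $\brK_w$ containing some congruence subgroup $\brI_n$ of $\breve G$. First, since $\brI w \brI \cdot \brI = \brI w \brI$, the Bruhat cell is automatically right-$\brI$-stable, so it suffices to produce $\brK_w \subseteq \brI$ (containing a congruence subgroup) with the property that for every $x \in [b] \cap \brI w \brI$ and every $k \in \brK_w$, the product $xk$ again lies in $[b]$.

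For a single point $x = g\i b \s(g) \in [b]$, I would analyse the map $\psi_x \colon \breve G \to \breve G$, $h \mapsto h\i x \s(h)$. Its image is exactly $[b]$, and its fibres are left cosets of the $\s$-centraliser ${\bf J}_x$ of $x$. The linearisation at $h = 1$ is $v \mapsto \s(v) - \Ad(x\i)(v)$ on $\Lie(\breve G)$, with cokernel $\Lie({\bf J}_x)$. A suitable infinitesimal-to-local upgrade (a formal implicit function theorem in equal characteristic, and in mixed characteristic the analogous statement in the perfect schemes setting of \cite{BS} and \cite{Zhu}) then yields, for integers $m,m'$ depending on $x$, an inclusion $\psi_x(\brI_m) \supseteq x \cdot \brI_{m'}$, and hence an $x$-dependent open compact subgroup $\brK_x := \brI_{m'}$ with $x \brK_x \subseteq [b]$.

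The remaining task is to make this per-point bound uniform in $x$. Here I would exploit the boundedness of the Bruhat cell: writing each $x \in \brI w \brI$ as $i_1 w i_2$ with $i_1, i_2 \in \brI$, there exists $N = N(w)$ such that $\Ad(x)(\brI_m) \subseteq \brI$ for all $x \in \brI w \brI$ and $m \ge N$. Combined with the per-point statement, this should give a single congruence level $n = n(w)$ for which $\brK_w := \brI_n$ works simultaneously for all $x \in [b] \cap \brI w \brI$. I expect the main obstacle to be the combination of these two steps: on the one hand, the local smoothness underlying the per-point assertion is delicate, especially in the perfect-scheme formalism; on the other hand, extracting uniformity in $x$ requires careful tracking of constants through the Bruhat decomposition.

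An alternative route, likely closer to the original argument in Appendix~A of \cite{He-KR}, would bypass the infinitesimal analysis and instead exploit the local structure of $[b] \cap \brI w \brI$ directly as a finite-dimensional locally closed subscheme of the ind-scheme $\brI w \brI$, whose existence and finite-dimensionality are ultimately guaranteed by the results of Rapoport--Zink \cite{RZ:indag} on affine Deligne--Lusztig varieties. In this formulation, the right-$\brK_w$-stability becomes a direct consequence of the fact that any finite-dimensional closed subscheme of $\brI w \brI$ is preserved, set-theoretically, by the action of a sufficiently deep congruence subgroup, the latter acting trivially on each of its bounded subschemes in the sense of the ind-topology recalled at the start of the section.
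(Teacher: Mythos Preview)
The paper does not prove this theorem: it is quoted verbatim from \cite[Thm.~A.1]{He-KR} and closed with a \qed, so there is no ``paper's own proof'' to compare against. Your proposal is therefore not competing with anything in this text.

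That said, a brief comment on the content of your sketch. Your second route is closer in spirit to how the result is actually established in the reference: the key input is that, after fixing $w$, the locus $[b]\cap \brI w\brI$ is controlled by the affine Deligne--Lusztig variety $X_w(b)$, which is a locally closed, finite-dimensional subscheme of the affine flag variety (cf.~\cite{RZ:indag}). The right-stability under a deep congruence subgroup then follows from this finite-dimensionality. Your first route (per-point implicit-function bounds plus a uniformisation step) is plausible heuristically but, as you yourself flag, the uniformity argument is where the difficulty lies; the actual proof in \cite{He-KR} avoids this by working globally with the scheme structure rather than pointwise.
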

We also recall the following fact. Note that in \cite{He-CDM} the notation $X_{K, w}(b)$ has a different meaning than here.
\begin{theorem}\cite[Thm. 2.23]{He-CDM}\label{dim-1-1}
Let $[b] \in B(\bG)$.
Then for every $w\in \Admmu$,
\[
\dim_{\brI}(\brI w \brI \cap [b])=\dim X_w(b) + \<\nu_b, 2 \rho\>.
\]
Furthermore, for  a $\s$-stable parahoric subgroup $\brK$ of $\breve G$,
\[
\dim_{\brK}(\brK \Admmu \brK \cap [b])=\dim \XmubK{b}{K}+\<\nu_b, 2 \rho\>.
\]
\qed
\end{theorem}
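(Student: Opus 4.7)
The plan is to prove both formulas by an interpolation argument, passing between $X_w(b)$ and $\brI w \brI \cap [b]$ through an auxiliary admissible subset. I will sketch the Iwahori case in detail; the parahoric case follows by essentially the same argument.

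Introduce
\[
Y_w(b) := \{g \in \breve G \mid g^{-1} b \sigma(g) \in \brI w \brI\}\subset \breve G,
\]
a kind of $\sigma$-Steinberg variety. It sits in two natural fibrations. First, the right-translation action of $\brI$ on $\breve G$ preserves $Y_w(b)$, since $(gi)^{-1}b\sigma(gi) = i^{-1}(g^{-1}b\sigma(g))\sigma(i) \in \brI w \brI$, and the quotient is exactly $X_w(b)\subset \breve G/\brI$; thus $Y_w(b)\to X_w(b)$ is a right $\brI$-torsor in the admissible sense, giving $\dim_{\brI} Y_w(b) = \dim X_w(b)$. Second, the $\sigma$-twisted conjugation map $\phi_b\colon g\mapsto g^{-1}b\sigma(g)$ sends $Y_w(b)$ onto $\brI w \brI\cap [b]$, and its fibers are precisely the left cosets $J_b(F)\cdot g_0$ (since $\phi_b(g) = \phi_b(g_0)$ iff $gg_0^{-1}\in J_b(F)$), realizing $Y_w(b)\to \brI w \brI\cap [b]$ as a principal $J_b(F)$-bundle.

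Equating the two computations of $\dim_{\brI} Y_w(b)$ will yield
\[
\dim X_w(b) = \dim_{\brI}(\brI w \brI \cap [b]) + \dim_{\brI}(J_b(F)\cdot g_0)
\]
for any $g_0 \in Y_w(b)$, so the theorem reduces to establishing
\[
\dim_{\brI}(J_b(F)\cdot g_0) \;=\; -\<\nu_b, 2\rho\>.
\]
For this I plan to exploit that $J_b$ is an inner form of the Levi $M_b := Z_{\bG}(\nu_b)$ of $\bG\otimes_F\breve F$. The quantitative input is that the admissible $\brI$-codimension of $M_b(\breve F)$-orbits in $\breve G$ equals $\<\nu_b, 2\rho\>$, reflecting in loop-group language the standard Newton-stratum dimension formula (the gap in dimension between a Newton stratum with Newton point $\nu_b$ and the basic stratum). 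Passing from $M_b(\breve F)$ to the $F$-form $J_b(F)$ preserves the admissible dimension via a Lang-Steinberg argument on the pro-reductive quotients of $\brI$, since the new $\sigma$-action on $M_b(\breve F)$ defining $J_b(F)$ differs from the old one by an inner automorphism and has connected centralizer.

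The main obstacle will be making this last dimension comparison rigorous: while $X_w(b)$ and $\brI w \brI \cap [b]$ are finite-dimensional admissible objects with well-behaved dimension theory (the second by admissibility of $[b]$, see \cite[Thm.~A.1]{He-KR}), the orbit $J_b(F)\cdot g_0$ is ind-$F$-analytic in nature, and controlling its admissible dimension requires compatible congruence filtrations on $\brI$, on $J_b(F)$, and on $\breve G$, reducing to a finite-dimensional dimension count on the associated graded pieces. Once the Iwahori formula is established, the parahoric version will follow by replacing $\brI$ with $\brK$ and $\brI w \brI$ with $\brK \Admmu \brK$ throughout: the $\brK$-analogue $Y_\mu(b)_K := \phi_b^{-1}(\brK \Admmu \brK)$ inherits a right $\brK$-torsor structure over $\XmubK{b}{K}$ and a principal $J_b(F)$-bundle structure over $\brK \Admmu \brK \cap [b]$, and the key identity $\dim_{\brK}(J_b(F)\cdot g_0) = -\<\nu_b, 2\rho\>$ is independent of the level structure.
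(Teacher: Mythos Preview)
The paper does not prove this statement; it is quoted from \cite[Thm.~2.23]{He-CDM} and closed with \qed, so the comparison is with the argument in \cite{He-CDM} (building on \cite{GHKR1}). Your double fibration of $Y_w(b)$ is the same skeleton used there, and the first leg $\dim_{\brI}Y_w(b)=\dim X_w(b)$ is correct.

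The gap is in the second leg, and your proposed resolution does not close it. First, $\bJ_b(F)\cdot g_0$ is \emph{not} admissible in the sense of \S\ref{s:dimADLV}: admissibility requires each intersection $V\cap\brI w\brI$ to be right-stable under some congruence subgroup $\brI_n\subset\brI$, but $\bJ_b(F)$ is the fixed-point set of a Frobenius on $\bJ_b(\breve F)$, hence profinite as a subset of the loop group over $\overline k$, and no coset of it contains a right $\brI_n$-coset. So the symbol $\dim_{\brI}(\bJ_b(F)\cdot g_0)$ is undefined in this framework, and you cannot decompose $\dim_{\brI}Y_w(b)$ as base plus fiber along the Lang map. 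Second, even as a heuristic the $M_b(\breve F)$-orbit step gives the wrong number: the codimension of an $M_b$-orbit in $G$ is $\dim G-\dim M_b$, i.e.\ the number of roots $\alpha$ with $\<\nu_b,\alpha\>\ne 0$, whereas $\<\nu_b,2\rho\>=\sum_{\alpha>0}\<\nu_b,\alpha\>$; these differ whenever some $\<\nu_b,\alpha\>$ exceeds $1$, and a Lang--Steinberg passage from $M_b(\breve F)$ to $\bJ_b(F)$ is dimension-preserving, so it cannot repair the discrepancy. The actual proofs in \cite{GHKR1} and \cite{He-CDM} avoid $\bJ_b(F)$-orbit dimensions: they pass to finite-level quotients by a deep congruence subgroup $\brI_n$, where Lang's theorem for the pro-unipotent $\brI_n$ lets one compare the two sides directly as finite-type $\overline k$-varieties, and the term $\<\nu_b,2\rho\>$ emerges from how $b\sigma$ shifts the congruence filtration rather than from any centralizer codimension.
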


\subsection{Closure relations of fine affine Deligne-Lusztig varieties}
We recall from \cite[\S 4]{He-Min} the partial order on ${}^K \tilde W$. 
Let $w, w' \in {}^K \tilde W$. Then $w' \preceq_{K, \sigma} w$ if there exists $x \in W_K$ such that $x w' \sigma(x)^{-1} \le w$.
The relation to the closure relation is given by the following fact.

\begin{theorem}[{\cite[Prop. 2.5]{He-11}, \cite[Thm.~2.11]{He-CDM}}]\label{He-CDM-2.11}
For $w \in {}^K \tW$, the closure of $\brK \cdot_\s \brI w\brI$ is given as follows:
\[
\overline{\brK \cdot_\s \brI w\brI}=\bigsqcup_{\{w' \in {}^K \tW\mid w' \preceq_{K, \s} w\}} \brK \cdot_\s \brI w'\brI.
\]
\qed
\end{theorem}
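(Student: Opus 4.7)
The plan is to prove the two inclusions of the asserted equality separately. The preliminary observation is that $\brK \cdot_\s \brI w \brI$ is stable under $\s$-conjugation by $\brK$ and under left and right multiplication by $\brI$; consequently, its closure inherits these stability properties, and in particular is a union of Iwahori double cosets $\brI y \brI$ with $y \in \tW$.

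For the inclusion $\supseteq$, suppose $w' \in {}^K \tW$ satisfies $w' \preceq_{K,\s} w$, with witness $x \in W_K$ such that $x w' \s(x) \i \le w$. The classical Bruhat-closure identity $\overline{\brI w \brI} = \bigsqcup_{u \le w} \brI u \brI$ places $x w' \s(x) \i \in \overline{\brI w \brI} \subseteq \overline{\brK \cdot_\s \brI w \brI}$. Since this closure is stable under $\s$-conjugation by $x \i \in \brK$, and since $x \i \cdot_\s (x w' \s(x) \i) = w'$, we obtain $w' \in \overline{\brK \cdot_\s \brI w \brI}$. Iwahori bi-invariance lifts this to $\brI w' \brI$, and taking the $\brK \cdot_\s$-orbit yields $\brK \cdot_\s \brI w' \brI \subseteq \overline{\brK \cdot_\s \brI w \brI}$.

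For the inclusion $\subseteq$, one uses the Bruhat decomposition $\brK = \bigsqcup_{x \in W_K} \brI x \brI$ to write
\[
\brK \cdot_\s \brI w \brI = \bigcup_{x \in W_K} \brI x \brI \cdot \brI w \brI \cdot \s(\brI x \brI) \i.
\]
Each product on the right is a union of Iwahori double cosets whose labels, after taking closures, are bounded above in the Bruhat order by the Demazure product $x * w * \s(x) \i$; hence any $\brI y \brI$ appearing in $\overline{\brK \cdot_\s \brI w \brI}$ satisfies such an inequality for some $x \in W_K$. To sharpen this bound to the inequality $y \le x w \s(x) \i$ built into the definition of $\preceq_{K,\s}$, and to pass from an arbitrary $y \in \tW$ to the unique $w' \in {}^K \tW$ with $\brI y \brI \subseteq \brK \cdot_\s \brI w' \brI$, one applies the length-reduction lemmas of partial $\s$-conjugation from \cite{He-Min}: these produce, within the $W_K$-orbit of $y$ under partial $\s$-conjugation, a length-minimal element in ${}^K \tW$ still satisfying the required Bruhat inequality. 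Disjointness of the pieces $\brK \cdot_\s \brI w' \brI$ for distinct $w' \in {}^K \tW$ then follows from the uniqueness of this length-minimal representative.

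The main obstacle is the $\subseteq$ direction: refining the a priori Demazure-product bound to the sharp bound $y \le x w \s(x) \i$ underlying the order $\preceq_{K,\s}$, while simultaneously reducing $y$ to the canonical length-minimal representative in ${}^K \tW$. This is the combinatorial heart of the argument, relying on careful interplay between the Bruhat order on $\tW$ and length-reducing partial $\s$-conjugation by elements of $W_K$, as developed in \cite{He-Min}.
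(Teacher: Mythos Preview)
The paper does not give its own proof of this statement: it is quoted verbatim from \cite[Prop.~2.5]{He-11} and \cite[Thm.~2.11]{He-CDM} and closed with a \qed, so there is nothing in the present paper to compare your argument against.

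As for the content of your sketch: the $\supseteq$ direction is fine and is essentially the easy half. For $\subseteq$ you correctly identify the difficulty, but what you have written is not yet a proof. The passage from the Demazure-product bound $y \le x * w * \s(x)\i$ to the sharp bound $y \le x w \s(x)\i$ for some (possibly different) $x \in W_K$ is precisely the nontrivial combinatorial input, and invoking ``length-reduction lemmas from \cite{He-Min}'' without saying which ones or how they apply leaves the argument incomplete. The actual proof in \cite{He-11} proceeds by an induction on $\ell(w)$ using the elementary moves $w \xrightarrow{s}_\s w'$ and the case analysis $sw < w$ versus $sw > w$ for $s \in K$; if you want to reconstruct it, that is the structure to aim for rather than a global Demazure-product estimate.
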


We also need the following fact. 
\begin{theorem}[{\cite[Thm. 2.5]{He-CDM}}]\label{He-CDM-2.5}
There is the disjoint sum decomposition into locally closed subsets, 
\[
\brK \Admmu \brK=\bigsqcup_{x \in \KAdmmu} \brK \cdot_\s \brI x \brI .
\]
Furthermore, $\dim_{\brK} (\brK \cdot_\s \brI x \brI)=\ell(x)$, for any $x \in \KAdmmu$.\qed
\end{theorem}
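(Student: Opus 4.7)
My plan is to prove the two assertions of the theorem --- the disjoint decomposition and the dimension formula --- separately, using as main inputs Theorem~6.1 of \cite{He-KR} (invoked in the excerpt via the identity $\KAdmmu = W_K\Admmu W_K \cap {}^K\tW$) and the theory of partial $\s$-conjugation developed in \cite{He-11, He-Min}.

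For the decomposition, the starting point is the set-theoretic equality $\brK\Admmu\brK = \bigcup_{w \in W_K\Admmu W_K} \brI w\brI$, immediate from the Bruhat-type decomposition $\brK y \brK = \bigsqcup_{w\in W_K y W_K} \brI w\brI$ of each parahoric double coset. I would then construct a map $\brK\Admmu\brK \to \KAdmmu$ by sending $g\in \brI w \brI$ to the unique minimal-length element $x\in {}^K\tW$ in the partial $(W_K, \s)$-conjugation orbit of $w$. That such a minimal element exists and belongs to $\KAdmmu$ follows from a Deligne--Lusztig reduction using the moves $w \xrightarrow{s}_\s s w \s(s)$ for $s \in K$, combined with Theorem~6.1 of \cite{He-KR}; uniqueness follows from \cite[Cor.~2.5]{He-Min}. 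The fiber of this map over $x$ is exactly $\brK \cdot_\s \brI x\brI$: the inclusion $\supseteq$ is clear from $\sigma(\brK) = \brK$, while the inclusion $\subseteq$ is proved by lifting the combinatorial chain of $\xrightarrow{s}_\s$ reductions to genuine $\s$-twisted conjugations by elements of the rank-one parahorics $\brK_s$. Local closedness of each stratum then follows from Theorem~\ref{He-CDM-2.11}.

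For the dimension formula, the description above presents $\brK\cdot_\s \brI x\brI$ as a (disjoint) union of Iwahori double cosets $\brI y\brI$ as $y$ ranges over the partial $(W_K,\s)$-conjugation orbit of $x$. Using the definition of $\dim_\brK$ from Section~\ref{s:dimADLV} applied to $\brI y \brI$ with $\brK_w = \brI$, one has $\dim_\brK(\brI y\brI) = \ell(y) - \dim(\brK/\brI) = \ell(y) - \ell(w_K)$, where $w_K$ is the longest element of $W_K$. Hence the task reduces to identifying $\max_y \ell(y)$ over the orbit. For $x \in {}^K\tW$, I would show that this maximum equals $\ell(x) + \ell(w_K)$, realized by a length-increasing lift $y^*$ of $x$ obtained by $\s$-twisted conjugation by $w_K$ (using $\ell(w_K x) = \ell(w_K) + \ell(x)$, which holds precisely because $x \in {}^K\tW$). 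Substituting yields $\dim_\brK(\brK\cdot_\s \brI x\brI) = \ell(x)$.

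The main technical obstacle will be the faithful translation of the partial $\s$-conjugation combinatorics in $\tW$ into the decomposition of $\brK\cdot_\s\brI x\brI$ as a union of Iwahori double cosets with the prescribed indexing set. The difficulty is not the combinatorics itself, which is standard, but rather the careful Iwahori-Hecke bookkeeping: one must check that products of the form $(\brI v\brI)\cdot(\brI x\brI)\cdot\sigma(\brI v\brI)^{-1}$ for $v \in W_K$, together with degeneration at non-generic points, produce precisely the double cosets indexed by the orbit --- no more and no less. Once this is in place, the rest (uniqueness via \cite[Cor.~2.5]{He-Min}, the length computation, and the dimension count) is essentially formal.
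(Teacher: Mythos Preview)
Your central premise --- that $\brK \cdot_\s \brI x \brI$ is a union of full Iwahori double cosets $\brI y \brI$ indexed by the $(W_K,\s)$-conjugation orbit of $x$ in $\tW$ --- is false, and both parts of your argument rest on it. Take any $K\ne\emptyset$ and $x=\t$ with $\Ad(\t)\circ\s$ acting trivially on $K$. The Weyl-group orbit $\{v\t\s(v)^{-1}:v\in W_K\}$ is the singleton $\{\t\}$, but Lang's theorem for the reductive quotient of $\brK$ (with twisted Frobenius $\Ad(\t)\circ\s$) gives $\brK\cdot_\s\brI\t\brI=\brK\t=\bigsqcup_{v\in W_K}\brI v\t\brI$, a union of $|W_K|$ Iwahori double cosets. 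Group-level $\s$-conjugation by $\brK$ merges Weyl-group orbits that your map would separate. Worse, for other $x$ the set $\brK\cdot_\s\brI x\brI$ is not a union of full Iwahori double cosets at all: for $\bG=\SL_2$, $K=\{s_1\}$, $x=s_0$, one checks that $\brK\cdot_\s\brI s_0\brI\subseteq\brI s_0\brI\cup\brI s_1s_0s_1\brI$, yet the theorem gives $\dim_{\brK}=1$, forcing the intersection with $\brI s_1s_0s_1\brI$ (which has $\dim_{\brI}=3$) to be a proper closed subset.

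Your length formula is also wrong on its own terms: the $\s$-conjugate $w_Kx\s(w_K)^{-1}=w_Kxw_K$ does not have length $\ell(x)+\ell(w_K)$; you have conflated $w_Kx$ (which does, for $x\in{}^K\tW$) with $w_Kxw_K$. In the $\SL_2$ example the Weyl-group orbit of $s_0$ is $\{s_0,s_1s_0s_1\}$ with maximal length $3$, so your recipe would give $\dim_{\brK}=3-1=2$ rather than $\ell(s_0)=1$. The argument in \cite{He-CDM} proceeds instead via the surjection $\brK\times^{\brI}\brI x\brI\to\brK\cdot_\s\brI x\brI$, $(k,g)\mapsto kg\s(k)^{-1}$, where $\brI$ acts by $i\cdot(k,g)=(ki^{-1},ig\s(i)^{-1})$; the source is a bundle over $\brK/\brI$ with fibre $\brI x\brI$, and the hypothesis $x\in{}^K\tW$ is exactly what controls the fibres of this map. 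The decomposition and its disjointness likewise require group-level Deligne--Lusztig reduction rather than the purely combinatorial orbit picture you sketch.
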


From these facts we can now deduce the following statement. 
\begin{proposition}\label{Irr-KAdmK}
The admissible set $\brK \Admmu \brK$ is equi-dimensional with $$\dim_{\brK}(\brK \Admmu \brK )=\langle\mu, 2 \rho\rangle.$$ The irreducible components of $\brK \Admmu \brK$ are the $\overline{\brK t^\l \brK}=\overline{\brK \cdot_\s \brI t^\l \brI}$ for $\lambda\in W_0(\umu)$ with $t^\lambda\in{}^K\tW$. 
\end{proposition}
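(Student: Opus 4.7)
The plan is to combine the stratification of $\brK\Admmu\brK$ from Theorem~\ref{He-CDM-2.5} with the closure relations from Theorem~\ref{He-CDM-2.11}. The former gives
\[
\brK\Admmu\brK \;=\; \bigsqcup_{x\in\KAdmmu} \brK\cdot_\s \brI x\brI,\qquad \dim_\brK(\brK\cdot_\s \brI x\brI)=\ell(x),
\]
so everything reduces to understanding lengths in $\KAdmmu$ and the partial order $\preceq_{K,\s}$.

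First I would locate the top-length strata. By the very definition of $\Admmu$, every $x\in\Admmu$ is bounded above by a translation $t^\lambda$ with $\lambda\in W_0(\umu)$, and all such translations have length $\langle\umu,2\rho\rangle=\langle\mu,2\rho\rangle$. Hence $\ell(x)\le\langle\mu,2\rho\rangle$, with equality forcing $x=t^\lambda$. Consequently the length-$\langle\mu,2\rho\rangle$ elements of $\KAdmmu$ are exactly those $t^\lambda$, $\lambda\in W_0(\umu)$, lying in ${}^K\tW$. The corresponding stratum $\brK\cdot_\s \brI t^\lambda\brI$ has dimension $\langle\mu,2\rho\rangle$, and by Theorem~\ref{He-CDM-2.11} its closure
\[
\overline{\brK\cdot_\s\brI t^\lambda\brI}\;=\;\bigsqcup_{x\preceq_{K,\s} t^\lambda}\brK\cdot_\s\brI x\brI
\]
is irreducible of that dimension. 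Non-emptiness of the indexing set $\{\lambda\in W_0(\umu)\mid t^\lambda\in{}^K\tW\}$ I would verify by choosing $\lambda$ so that $t^\lambda$ is minimal in its $W_K$-coset, e.g., by taking a $W_K$-antidominant representative of the orbit.

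The identification $\overline{\brK\cdot_\s\brI t^\lambda\brI}=\overline{\brK t^\lambda\brK}$ uses that both sides contain the open stratum $\brI t^\lambda\brI$ of $\brK t^\lambda\brK$ (since $t^\lambda\in{}^K\tW$ is its unique length-maximal element), while the other $\brI y\brI\subset\brK t^\lambda\brK$ have strictly smaller length and hence already appear in the closure.

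The final and principal point---which I expect to be the main obstacle---is equi-dimensionality: I need to show that every $x\in\KAdmmu$ satisfies $x\preceq_{K,\s}t^\lambda$ for some $\lambda\in W_0(\umu)$ with $t^\lambda\in{}^K\tW$. Since $x\in\Admmu$, there is some $\lambda_0\in W_0(\umu)$ with $x\le t^{\lambda_0}$, hence $x\preceq_{K,\s}t^{\lambda_0}$ trivially by taking the conjugator in the definition of $\preceq_{K,\s}$ to be the identity. If $t^{\lambda_0}\in{}^K\tW$ we are done; otherwise I would attempt to replace $t^{\lambda_0}$ by a $\s$-twisted $W_K$-conjugate $w_K t^{\lambda_0}\s(w_K)^{-1}$ which is again a translation $t^\lambda$ lying in ${}^K\tW$. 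Choosing $w_K$ in the $\s$-fixed subgroup $W_K^\s$ reduces this to ordinary conjugation $\lambda_0\mapsto w_K\lambda_0$, and the task becomes to show that the $W_K^\s$-orbit of $\lambda_0$ inside $W_0(\umu)$ meets the set of $\lambda$ with $t^\lambda\in{}^K\tW$. Granting this, every stratum lies in the closure of a top-dimensional stratum indexed by some $t^\lambda$, which yields simultaneously the equi-dimensionality of $\brK\Admmu\brK$ and the stated classification of its irreducible components.
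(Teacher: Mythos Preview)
Your outline is sound up to the equi-dimensionality step, but the device you propose there---passing from $t^{\lambda_0}$ to a $\sigma$-twisted $W_K$-conjugate that is again a translation---forces the conjugator to lie in $W_K^\s$, and the resulting claim that every $W_K^\s$-orbit on $W_0(\umu)$ meets $\{\lambda\mid t^\lambda\in{}^K\tW\}$ is generally false. For instance, in type $\tilde A_2$ with $\s=\varsigma_0$ and $K=\{s_1,s_2\}$, one has $W_K^\s=\{1,w_0\}$ while the unique $\lambda$ with $t^\lambda\in{}^K\tW$ is $\umu$ itself; the orbit $\{s_1\umu\}$ is a singleton not containing $\umu$. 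So this route does not close.

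The paper sidesteps $\preceq_{K,\s}$ entirely for this step. It uses the ordinary decomposition $\brK\Admmu\brK=\bigcup_{\lambda\in W_0(\umu)}\overline{\brK t^\lambda\brK}$ and the trivial observation that $\brK t^{\lambda_0}\brK=\brK t^{w\lambda_0}\brK$ for any $w\in W_K$ (no $\s$-invariance needed). The only thing to check is that the full $W_K$-orbit of each $\lambda_0$ contains some $\lambda'$ with $t^{\lambda'}\in{}^K\tW$, and this is a short root-system computation: take $w\in W_K$ with $wt^{\lambda_0}\in{}^K\tW$; then for each simple root $\alpha$ of $K$ one has $\langle w\lambda_0,\alpha\rangle<0$, whence $t^{w\lambda_0}\in{}^K\tW$. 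Combined with your identification $\overline{\brK t^\lambda\brK}=\overline{\brK\cdot_\s\brI t^\lambda\brI}$ (which is correct, though note the maximal element of $W_Kt^\lambda W_K$ is $w_Kt^\lambda$, not $t^\lambda$; the equality of closures follows from irreducibility of $\overline{\brK t^\lambda\brK}$ and equality of $\brK$-dimensions), this gives equi-dimensionality and the list of components without touching $\preceq_{K,\s}$.
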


\begin{proof}
If $t^\l \in {}^K \tW$, then the maximal element in $W_K t^\l W_K$ is $w_K t^\l$, where $w_K$ is the longest element in $W_K$. In this case, $\overline{\brK t^\l \brK}=\overline{\brI w_K t^\l \brI}$ and $\ell(w_K t^\l)=\ell(w_K)+\ell(t^\l)=\ell(w_K)+\ell(t^{\umu})$. Hence $\dim_{\brK}(\brK t^\l \brK)=\ell(t^{\umu})=\langle\mu, 2 \rho\rangle$. Moreover, $\brK \cdot_\s \brI t^\l \brI \subset \brK t^\l \brK$ and $\dim_{\brK}(\brK \cdot_\s \brI t^\l \brI)=\ell(t^\l)=\ell(t^{\umu})$. Thus $\overline{\brK t^\l \brK}=\overline{\brK \cdot_\s \brI t^\l \brI}$. 

We have $\brK \Admmu\brK = \cup_{\l \in W_0(\umu)} \overline{\brK t^\l \brK}$, and each $\overline{\brK t^\l \brK}$ is irreducible. If $\l' \in W_K(\l)$, then $\brK t^\l \brK=\brK t^{\l'} \brK$. It remains to show that for any $\l$, there exists $\l' \in W_K(\l)$ with $t^{\l'} \in {}^K \tW$. 

Let $w \in W_K$ such that $w t^\l \in {}^K \tW$. Then by definition, for any simple root $\a$ in $K$, we have that $(w t^\l) \i(\a)$ is a negative root in the affine root system. Hence $\<\l, w \i(\a)\><0$. This is equivalent to saying that $\<w(\l), \a\><0$. Hence $(t^{w(\l)}) \i(\a)$ is a negative root. Thus $t^{w(\l)} \in {}^K \tW$. This finishes the proof. 
\end{proof}

\begin{corollary}\label{cor:bound}
The dimension of $X(\mu, b)_K$ is bounded as 
$$\dim X(\mu, b)_K\leq \langle\mu, 2 \rho\rangle.$$
 If equality holds, then $b$ is basic. 
\end{corollary}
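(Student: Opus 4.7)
The plan is to obtain the bound by comparing the admissible set $\brK\Admmu\brK$ with its intersection with the $\sigma$-conjugacy class $[b]$, using the two dimension formulas already established above.

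First I would note the trivial inclusion $\brK\Admmu\brK\cap[b]\subseteq\brK\Admmu\brK$ and apply the dimension function $\dim_{\brK}$, which is monotone with respect to inclusions of admissible subsets. By Proposition~\ref{Irr-KAdmK}, $\dim_{\brK}(\brK\Admmu\brK)=\langle\mu,2\rho\rangle$, so
\[
\dim_{\brK}\bigl(\brK\Admmu\brK\cap[b]\bigr)\;\le\;\langle\mu,2\rho\rangle.
\]
Next I would invoke Theorem~\ref{dim-1-1}, which converts the left-hand side:
\[
\dim_{\brK}\bigl(\brK\Admmu\brK\cap[b]\bigr)=\dim X(\mu,b)_K+\langle\nu_b,2\rho\rangle.
\]
Combining the two displays gives $\dim X(\mu,b)_K\le\langle\mu,2\rho\rangle-\langle\nu_b,2\rho\rangle$, and since $\nu_b$ is dominant we have $\langle\nu_b,2\rho\rangle\ge 0$, yielding the asserted bound.

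For the equality case, I would argue that $\dim X(\mu,b)_K=\langle\mu,2\rho\rangle$ forces $\langle\nu_b,2\rho\rangle=0$. Writing $2\rho$ as the sum of all positive roots and using dominance $\langle\nu_b,\alpha\rangle\ge 0$ for every positive root $\alpha$, the vanishing of the total sum forces $\langle\nu_b,\alpha\rangle=0$ for each positive root, so $\nu_b$ is central; this is precisely the condition that $[b]$ be basic.

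No serious obstacle is expected here, since both key ingredients are quoted from the preceding sections; the only small subtlety is to make sure the passage from $\langle\nu_b,2\rho\rangle=0$ to the centrality of $\nu_b$ is spelled out properly and that one tacitly assumes the neutral acceptable condition $[b]\in\BGmu$ (otherwise $X(\mu,b)_K$ is empty and the bound is vacuous by Theorem~\ref{KR-conj}).
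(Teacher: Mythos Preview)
Your proof is correct and follows essentially the same route as the paper: both combine the inclusion $\brK\Admmu\brK\cap[b]\subseteq\brK\Admmu\brK$ with Proposition~\ref{Irr-KAdmK} and Theorem~\ref{dim-1-1} to obtain $\dim X(\mu,b)_K\le\langle\mu,2\rho\rangle-\langle\nu_b,2\rho\rangle$, and then use $\langle\nu_b,2\rho\rangle=0$ to conclude basicness. You have merely added a line spelling out why $\langle\nu_b,2\rho\rangle=0$ forces $\nu_b$ to be central, which the paper leaves implicit.
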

\begin{proof}
 By Theorem \ref{dim-1-1}, we have 
\begin{align*} \dim \XmubK{b}{K} & = \dim_{\brK}(\brK \Admmu \brK \cap [b])-\<\nu_b, 2 \rho\> \\ & \le \dim_{\brK}(\brK \Admmu \brK)-\<\nu_b, 2 \rho\> \\ &=\<\mu, 2 \rho\>-\<\nu_b, 2 \rho\>,\end{align*} 
where we used Proposition \ref{Irr-KAdmK} in the last line. If $\dim \XmubK{b}{K}=\<\mu, 2 \rho\>$, we have $\<\nu_b, 2 \rho\>=0$ and thus $[b]$ is the unique basic $\s$-conjugacy class in $\BGmu$.
\end{proof}
\begin{remark}
Whereas $\brK \Admmu \brK$ is equi-dimensional, the corresponding statement is not true for $ \XmubK{b}{K}$. 
\end{remark}
\section{Statement of results}\label{s:resmax}

\subsection{Criterion for maximal dimension}\label{subsec:criterion-maximal}
We introduce
\begin{equation}\label{Wfin}
\begin{aligned}
W(\umu)_{K,\fin}&=\{\lambda\in W_0(\umu)\mid t^\lambda\in{}^K\tilde W, W_{\supp_\sigma(t^\lambda)}\text{ is finite}\} \\
&=\{\lambda\in W_0(\umu)\mid t^\lambda\in\KAdmmu_0\} , 
\end{aligned}
\end{equation}
where we use  the notation of \eqref{def-sigma-support} in the first line, and of \eqref{adm0} in the last line. We simply write $W(\umu)_{\fin}$ for $W(\umu)_{\emptyset, \fin}$.  Note that  since $t^\l$ is an element of $\Admmu$ of maximal length, it is a maximal element of $\KAdmmu_0$ with respect to the partial order $\preceq_{K, \s}$.
The following theorem gives a classification of those cases when equality holds in the inequality in Corollary \ref{cor:bound}.

\begin{theorem}\label{max-dim}
    Let $\brK$ be a $\s$-stable parahoric subgroup of $\breve G$ of type $K$,
    and $[b] \in \BGmu$. If $\dim \XmubK{b}{K}=\<\mu, 2 \rho\>$, then
    $[b]=[\t]$ is basic,  ${\bf J}_\t$ is quasi-split and $\umu$ is minuscule.
    When $\brK$ is an Iwahori subgroup, then the converse holds. 

For general $\brK$, $\dim \XmubK{b}{K}=\<\mu, 2 \rho\>$ if and only if   $[b]$ is basic and $W(\umu)_{K,\fin}\neq \emptyset$.  In this case, the irreducible components of $\XmubK{b}{K}$ of dimension $\<\mu, 2 \rho\>$ are the irreducible components of $\overline{X_{K, t^\l}(b)}$, where $\l \in W(\umu)_{K,\fin}$.
\end{theorem}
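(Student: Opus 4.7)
The plan is to reduce the dimension equality, via Theorem~\ref{dim-1-1} and Proposition~\ref{Irr-KAdmK}, to determining which irreducible components of $\brK\Admmu\brK$ meet the basic $\s$-conjugacy class $[\t]$ in the full dimension $\<\mu, 2\rho\>$, and then to translate the resulting combinatorial criterion into the geometric statements about $\bJ_\t$ and $\umu$. First, if $\dim\XmubK{b}{K} = \<\mu, 2\rho\>$, Corollary~\ref{cor:bound} forces $[b] = [\t]$ basic; since $\<\nu_\t, 2\rho\> = 0$, Theorem~\ref{dim-1-1} identifies $\dim\XmubK{\t}{K}$ with $\dim_{\brK}(\brK\Admmu\brK \cap [\t])$. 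By Proposition~\ref{Irr-KAdmK}, the ambient set is equi-dimensional of dimension $\<\mu, 2\rho\>$ with irreducible components $\overline{\brK t^\l \brK}$ indexed by $\l \in W_0(\umu)$ such that $t^\l \in {}^K\tW$; these translations are precisely the length-maximal elements of $\KAdmmu$.

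Next, I would use the locally closed stratification of Theorem~\ref{He-CDM-2.5}, with $\dim_{\brK}(\brK\cdot_\s \brI x \brI) = \ell(x) \le \<\mu, 2\rho\>$, so that the equality $\dim_{\brK}(\brK\Admmu\brK \cap [\t]) = \<\mu, 2\rho\>$ forces some length-maximal $x \in \KAdmmu$ (necessarily of the form $t^\l$ with $\l\in W_0(\umu)$ and $t^\l\in{}^K\tW$) to satisfy $\dim_{\brK}(\brK\cdot_\s \brI t^\l \brI \cap [\t]) = \ell(t^\l)$. The key step, which I expect to be the main obstacle, is the equivalence
\[
\dim_{\brK}\bigl(\brK\cdot_\s \brI t^\l \brI \cap [\t]\bigr) = \ell(t^\l) \iff \brK\cdot_\s \brI t^\l \brI \subseteq [\t].
\]
One direction is immediate. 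For the nontrivial direction I would argue that when $W_{\supp_\s(t^\l)}$ is infinite (so containment fails by Proposition~\ref{GHN-4.6}), the generic Newton point of $\brI t^\l \brI$ strictly dominates the basic one, forcing $\dim_\brI(\brI t^\l \brI \cap [\t]) = \dim X_{t^\l}(\t) < \ell(t^\l)$ by the dimension formula of Theorem~\ref{dim-1-1}; since $[\t]$ is invariant under $\s$-conjugation, this codimension survives after forming the $\brK$-$\s$-orbit. Combined with the automatic identity $\k(t^\l) = \k(\mu) = \k(\t)$, Proposition~\ref{GHN-4.6} identifies the containment condition with $\l \in W(\umu)_{K,\fin}$, which yields the general parahoric criterion. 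The description of the top-dimensional irreducible components then follows: by the same bound $\dim X_{K, w'}(\t) \le \ell(w')$, every EKOR stratum $X_{K, w'}(\t)$ in $\overline{X_{K, t^\l}(\t)}$ with $w' \preceq_{K,\s} t^\l$ and $w' \neq t^\l$ has dimension strictly less than $\ell(t^\l)$, so the maximal-dimensional irreducible components of $\overline{X_{K, t^\l}(\t)}$ are those of $X_{K, t^\l}(\t)$ itself.

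Finally, to deduce the first two assertions of the theorem I would prove the equivalence
\[
W(\umu)_{\emptyset,\fin} \neq \emptyset \iff \bJ_\t \text{ is quasi-split and } \umu \text{ is minuscule}.
\]
For the forward direction, Proposition~\ref{GHN-4.6}(3) reinterprets the defining condition as the existence of $\l \in W_0(\umu)$ such that $\Ad(t^\l)\circ\s$ fixes a point in the closure of the base alcove; via the standard identification of the building of $\bJ_\t$ with the $\Ad(\t)\circ\s$-fixed locus in the building of $\bG$, this produces a $\s$-stable alcove (hence a rational Borel) in $\bJ_\t$, and a direct analysis of $\s$-supports of translations in the $W_0$-orbit of $\umu$ shows that only minuscule $\umu$ can admit such an $\l$. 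The converse is an explicit construction of $\l$ from a $\s$-stable alcove in $\bJ_\t$, using that for minuscule $\umu$ every element of $W_0(\umu)$ can be realized with sufficiently small $\s$-support. Since $W(\umu)_{K,\fin} \subseteq W(\umu)_{\emptyset,\fin}$ for every $K$, this Iwahori-level equivalence combined with the general parahoric criterion of the previous paragraph yields both the ``quasi-split plus minuscule'' conclusion for arbitrary $K$ and its converse at Iwahori level.
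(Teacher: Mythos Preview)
Your overall strategy coincides with the paper's: reduce via Theorem~\ref{dim-1-1} to $\dim_{\brK}(\brK\Admmu\brK\cap[\t])$, use the stratification of Theorem~\ref{He-CDM-2.5} to isolate the length-maximal strata $t^\l$, and then translate the condition $\brK\cdot_\s\brI t^\l\brI\subseteq[\t]$ into $\l\in W(\umu)_{K,\fin}$ via Proposition~\ref{GHN-4.6}. Your key equivalence ``full dimension iff containment'' is exactly what the paper uses, though the paper phrases it as the chain $\dim X_{K,x}(\t)\le\dim X_x(\t)=\dim_\brI(\brI x\brI\cap[\t])\le\ell(x)$; the strict inequality when containment fails is simply that $[\t]$ is closed and $\brI t^\l\brI$ is irreducible, so your appeal to the generic Newton point is valid but heavier than needed.

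Two points where the paper is sharper than your sketch. First, for the statement about irreducible components you only argue that the top-dimensional components of $\overline{X_{K,t^\l}(\t)}$ come from $X_{K,t^\l}(\t)$; the theorem asserts that \emph{all} components of $\overline{X_{K,t^\l}(\t)}$ are top-dimensional, and for this the paper invokes that $X_{K,t^\l}(\t)$ is a disjoint union of copies of a single classical Deligne--Lusztig variety (\cite[Prop.~5.7]{GHN}, \cite[Thm.~4.1.1--2]{GH}), hence equi-dimensional. Second, your sketch of ``$W(\umu)_{\fin}\ne\emptyset\iff\bJ_\t$ quasi-split and $\umu$ minuscule'' contains a genuine imprecision: an $\Ad(\t)\circ\s$-stable \emph{alcove} always exists (it gives a rational Iwahori, which every group has) and does not witness quasi-splitness. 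The correct criterion, used in the paper's Propositions~\ref{W-fin} and~\ref{J-quasisplit}, is the existence of an $\Ad(\t)\circ\s$-stable \emph{special vertex} (one per connected component of the affine Dynkin diagram). The paper obtains this, and simultaneously the minusculeness of $\umu$, from the critical-index analysis of Proposition~\ref{l-supp}: a translation $t^\l$ with $W_{\supp(t^\l\t^{-1})}$ finite has a unique critical index, which is automatically special, and this forces $\l$ to be a length-preserving image of a minuscule coweight. Your ``direct analysis of $\s$-supports'' would have to reproduce this vertex computation.
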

The proof is given in Section \ref{s:max}. 

\subsection{Classification of maximal equi-dimensional cases}
The following theorem gives a classification of all   cases when $\XmubK{\t}{K}$ is equi-dimensional of maximal dimension. 
\begin{theorem}\label{thmmaxequi}
    Assume that $\bG$ is quasi-simple over $F$ and that $\mu$ is not central.
    Write the Tits datum of $(\bG, \mu)$ as $(\Res_{F_d/F}(\tilde{\Delta}, \s_d), (\mu_1,
    \dots, \mu_d))$.

    Then $\XmubK{\t}{K}$ is equi-dimensional of dimension equal to $\<\mu,
    2 \rho\>$ if and only if we are in one of the following cases:
	\begin{enumerate}
		\item
the tuple $(\tilde\Delta, \s_d)$ is 
$(\tilde A_{n-1},  \varrho_{n-1})$ (where $\varrho_{n-1}$ denotes rotation by $n-1$ steps), and precisely one $\mu_i$ is non-central (say $\mu_1$ is non-central), and $\mu_1 = \o^\vee_1$. Furthermore,  $K = \emptyset$,
		\item
the tuple $(\tilde\Delta, \s_d)$ is
            $(\tilde A_3, \varrho_2, \emptyset)$ (where $\varrho_2$ denotes rotation by $2$ steps)
and precisely one $\mu_i$ is non-central (say $\mu_1$ is non-central), and $\mu_1 = \o^\vee_2$. Furthermore,  $K = \emptyset$,
		\item
the tuple $(\tilde\Delta, \s_d)$ is
$(\tilde A_{n-1}, \id)$, there exist $i\ne i'$ such that $\mu_j$ is central for all $j\ne i, i'$, and $(\mu_i, \mu_{i'}) = (\o^\vee_1, \o^\vee_{n-1})$. Furthermore, $K= \emptyset$.
\end{enumerate}
\end{theorem}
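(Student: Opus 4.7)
The plan is to combine Theorem~\ref{max-dim} with the closure relations on EKOR strata and a case-by-case analysis using the classification of fully Hodge-Newton decomposable pairs. By Theorem~\ref{max-dim}, if $\dim \XmubK{\t}{K} = \langle\mu, 2\rho\rangle$ then $W(\umu)_{K, \fin} \neq \emptyset$, and the top-dimensional irreducible components of $\XmubK{\t}{K}$ are precisely the closures $\overline{X_{K, t^\l}(\t)}$ for $\l \in W(\umu)_{K, \fin}$. Combined with Theorem~\ref{He-CDM-2.11}, which describes closures of EKOR strata in terms of the partial order $\preceq_{K, \s}$, this translates the equi-dimensionality condition into the purely combinatorial requirement that every nonempty EKOR stratum $X_{K, w}(\t) \subseteq \XmubK{\t}{K}$ satisfies $w \preceq_{K, \s} t^\l$ for some $\l \in W(\umu)_{K, \fin}$. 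I would carry out the argument in this reformulated setting.

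The next step is to reduce to the fully Hodge-Newton decomposable case. Suppose $(\bG, \mu)$ is not fully HN decomposable; then Theorem~\ref{GHN-2.3} supplies an element $w \in \KAdmmu$ with $X_{K, w}(\t) \ne \emptyset$ and $W_{\supp_\s(w)}$ infinite. Any translation $t^\l$ with $\l \in W(\umu)_{K, \fin}$ has finite $\s$-support by definition, and the order $\preceq_{K, \s}$ is compatible with the finiteness of $\s$-support (this uses Proposition~\ref{GHN-4.6} and the fact that the $\preceq_{K, \s}$-reductions act within a $\s$-stable parahoric), so such $w$ is not $\preceq_{K, \s}$-below any admissible translation. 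Equi-dimensionality therefore fails, and we are reduced to the list of Theorem~\ref{f-HN-2}. Via the reduction of Section~\ref{subsec:ghn-2.4-construction}, the restriction-of-scalars tensor structure allows us to focus on the quasi-simple over $\brF$ cases of Theorem~\ref{f-HN}, with the understanding that each fully HN decomposable pair over $F$ is a restriction of scalars of such a pair (or is of the Hilbert-Blumenthal type).

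For the implication $(1)$,$(2)$,$(3) \Rightarrow$ equi-dimensionality, I would check the three listed cases directly. In case $(1)$ (extended Drinfeld), the pair is of Coxeter type and $\Admmu_0$ is a totally $\preceq_\s$-ordered chain, every element of which lies below the unique translation $t^{\o_1^\vee}$; this also follows from the lattice-theoretic description sketched in Example~\ref{ex-drinfeld} and Section~\ref{s:lattice-interpretation-max}. Case $(2)$, of type $\tilde A_3$ with $\mu=\o_2^\vee$, admits a finite explicit enumeration of $\Admmu_0$ together with a direct check that every element is $\preceq_\s$-below $t^{w_0(\o_2^\vee)}$ for a suitable $w_0$. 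Case $(3)$, the Hilbert-Blumenthal situation, reduces by Section~\ref{subsec:ghn-2.4-construction} to an analysis of $\bG' = \GL_n$ with $\mu' = \o_1^\vee + \o_{n-1}^\vee$, where $\Admmu_0$ has a product-like description controlled by translations in $W_0(\o_1^\vee+\o_{n-1}^\vee) = W_0(\o_1^\vee) + W_0(\o_{n-1}^\vee)$.

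For the converse, I would traverse the remaining HN decomposable pairs and the remaining parahoric levels, exhibiting in each case an explicit element $w \in \KAdmmu_0$ which is not $\preceq_{K, \s}$-below any translation in $W(\umu)_{K, \fin}$. For instance, in the Lubin-Tate case $(\tilde A_{n-1}, \id, \o_1^\vee)$ one has $\Admmu_0 = \{\t\}$ so $\dim \XmubK{\t}{K} = 0 \neq \langle\mu, 2\rho\rangle$ for $n\ge 2$; in the unitary cases $(\tilde A_{n-1}, \varsigma_0, \o_1^\vee)$, the $\varsigma_0$-orbits on $\tSS$ exclude certain $\preceq_\s$-maximal translations from being dominant for all of $\Admmu_0$; similar explicit obstructions can be produced in types $\tilde B$, $\tilde C$, $\tilde D$ and in the accidentally HN-decomposable case $(\tilde A_{n-1}, \id, \o_1^\vee+\o_{n-1}^\vee)$ over $F$ itself, by exploiting that for these absolutely quasi-simple groups the admissible set contains length-deficient reductions not absorbed by any $t^\l$. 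Finally, for $K \ne \emptyset$ in the three extremal cases, the coarser EKOR stratification produces small strata $X_{K, w}(\t)$ coming from positive-dimensional classical Deligne-Lusztig varieties that are not $\preceq_{K, \s}$-below any translation, which forces $K = \emptyset$. The main obstacle is this careful case analysis: each type requires its own explicit computation of reduced expressions and supports in the affine Weyl group, and the bookkeeping of the $\preceq_{K, \s}$ order is delicate, especially when it interacts nontrivially with $K$ and with the Frobenius twist $\s$.
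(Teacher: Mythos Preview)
Your overall strategy matches the paper's: reformulate equi-dimensionality as the combinatorial condition that the $\preceq_{K,\s}$-maximal elements of $\KAdmmu_0$ are exactly the translations $t^\l$ with $\l\in W(\umu)_{K,\fin}$ (this is the paper's condition $(\star)$), reduce to the fully Hodge--Newton decomposable case via Theorem~\ref{GHN-2.3} and Theorem~\ref{He-CDM-2.11}, and then do a case analysis over Theorem~\ref{f-HN-2}.

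However, several execution details are off, and one crucial simplification is missing. First, you skip the paper's key pre-filter: once $W(\umu)_{K,\fin}\neq\emptyset$, Proposition~\ref{W-fin} forces $\umu$ minuscule \emph{and} $\Ad(\t)\circ\s$ to fix a special vertex. This immediately discards $(\tilde A_{n-1},\id,\o_1^\vee)$, $(\tilde A_{n-1},\id,\o_1^\vee+\o_{n-1}^\vee)$, and many of the type $\tilde B,\tilde C,\tilde D$ cases, leaving only seven irreducible candidates to check by hand. Without this filter, your case list is incomplete and your obstructions are too vague to be convincing. Second, your description of the Drinfeld case is incorrect: $\Admmu_0$ is not a chain and there are $n$ translations, not one; the paper instead uses that $\BGmu=\{[\t]\}$, whence $\Admmu_0=\Admmu$ and $W(\umu)_{\fin}=W_0(\umu)$ since $\Ad(\t)\circ\s=\id$. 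Third, the isomorphism in Section~\ref{subsec:ghn-2.4-construction} that you invoke for the Hilbert--Blumenthal case requires all but one $\mu_i$ central, which fails here; the paper treats this case directly in the product $\tilde A_{n-1}\times\tilde A_{n-1}$ with $\s$ swapping the factors. Finally, your mechanism for ruling out $K\neq\emptyset$ is not right: in case~(1) there is only one parahoric, and in cases~(2) and~(3) the paper's obstruction is an explicit non-translation maximal element of $\KAdmmu_0$, not a dimension count of classical Deligne--Lusztig varieties.
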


The proof is given in  Section \ref{s:equimax}. 
\begin{example}
Here we consider the example of Stamm in \cite[Thm. 3]{St}. The corresponding Tits datum is $(\tilde\Delta, \{\l\})$, where $\tilde\Delta$ is of type $\tilde A_1 \times \tilde A_1$, $\tilde \BS=\{s_0, s_1, s_{0'}, s_{1'}\}$, $\l=((1, 0), (1, 0))$ and we consider the Iwahori level structure $ K=\emptyset$. The Frobenius morphism $\s$ induces a bijective map on $\tilde \BS$, which permutes $s_0$ with $s_{0'}$, and permutes $s_1$ with $s_{1'}$. Let $\t$ be the length $0$ element in $\tW$ with $\k(\t)=\k(\l)$. Then the action of $\Ad(\t)$ on $\tilde \BS$ permutes $s_0$ with $s_1$, and permutes $s_{0'}$ with $s_{1'}$. Therefore the action of $\Ad(\t) \circ \s$ permutes $s_0$ with $s_{1'}$, and permutes $s_1$ with $s_{0'}$. We have $$\Admmu=\{\t, s_0 \t, s_1 \t, s_{0'} \t, s_{1'} \t, s_0 s_{0'} \t, s_0 s_{1'} \t, s_1 s_{0'} \t, s_1 s_{1'} \t\}.$$
	
In this case, $ \brI \Admmu\brI \cap[\t] =\overline{\brI s_0 s_{1'} \t \brI} \cup \overline{\brI s_1 s_{0'} \t \brI}$ and $\overline{\brI s_0 s_{1'} \t \brI} \cap \overline{\brI s_1 s_{0'} \t \brI}=\brI \t \brI$. Hence $\XmubK{\t}{}$ has two irreducible components, both of dimension $2$ and their intersection is of dimension $0$. 
	
On the other hand, if $K=\{s_0, s_{0'}\}$, then $$\KAdmmu=\{\t, s_1 \t, s_{1'} \t, s_1 s_{1'} \t\}.$$ In this case, $ \brK \Admmu\brK \cap[\t]=\overline{\brK \cdot_\s \brI s_1 \t \brI} \cup \overline{\brK \cdot_\s \brI s_{1'} \t \brI}$ and $\overline{\brK \cdot_\s \brI s_1 \t \brI} \cap \overline{\brK \cdot_\s \brI s_{1'} \t \brI}=\brK \cdot_\s \brI \t \brI$. Hence $\XmubK{\t}{K}$ has two irreducible components, both of dimension $1$ and their intersection is of dimension $0$.
\end{example}

\begin{example}
Here we consider the case $(\tilde A_{n-1} \times \tilde A_{n-1} {}^1 \varsigma_0,, (\o^\vee_1, \o^\vee_{n-1}), \emptyset)$ for $n \ge 3$, where ${}^1 \varsigma_0$ is the automorphism of $\tilde A_{n-1} \times \tilde A_{n-1}$ which exchanges the two factors. By Theorem \ref{max-dim}, if $\tSS\setminus K$ contains $\{s_i, s_{i+1}, s_{i'}, s_{(i+1)'}\}$ for some $i$, then $\XmubK{\t}{K}$ has dimension $\<\mu, 2 \rho\>$. But only when $K=\emptyset$ is $\XmubK{\t}{K}$  equi-dimensional of dimension $\<\mu, 2 \rho\>$. 
\end{example}

\section{Critical index set}\label{s:critind}

\subsection{Critical index set} Recall that $\fka$ denotes the base alcove. For any $x \in \tW$, we define the \emph{critical index set} for $x$ by 
\begin{equation}
\text{Crit}(x)=\{v\mid \text{$v$ is a common vertex of } \fka \text{ and } x(\fka)\}.
\end{equation}

Note that if $x=w \t$ for $w \in W_a$ and $\t \in \Omega$, $\text{Crit}(x)=\text{Crit}(w)$ and this is a nonempty set if and only if $W_{\supp(w)}$ is finite. 

\subsection{Quasi-rigid set}
Let $\t \in \Omega$, i.e., a length-zero element in $\tW$. We introduce the \emph{quasi-rigid} set for $\t$ as follows, 
\begin{equation}
\text{Q-Rig}(\t)=\{w \t \text{ with } w \in W_a\mid  W_{\supp(w)} \text{ is finite}\}.
\end{equation}

In other words, $\text{Q-Rig}(\t)=\text{Q-Rig}(1) \t$ consists of all elements $x$ in $W_a \t$ such that the critical index set for $x$ is nonempty. 

\begin{figure}[h]
	\begin{tikzpicture}[line join = round, line cap = round]
	\useasboundingbox[clip] (-2.5, -2.5) rectangle (3.5, 3.5);
	
	\draw[fill=black!20] (-1, -1) -- (2, -1) -- (2, 2) -- (1, 1) -- (0, 2) -- (-1, 1) -- (0, 0) -- (-1, -1);
	
	\begin{scope}[shift={(0, 0)}]
	\foreach \i in {-3, ..., 3} {
		\draw[color=gray] (\i, -10) -- (\i, 10);
		\draw[color=gray] (-10, \i) -- (10, \i);
	}
	\end{scope}
	
	\begin{scope}[rotate=45, shift={(0, 0)}]
	\foreach \i in {-3, ..., 3} {
		\draw[color=gray] (\sqrttwo * \i, -10) -- (\sqrttwo * \i, 10);
		\draw[color=gray] (-10, \sqrttwo * \i) -- (10, \sqrttwo * \i);
	}
	\end{scope}
	
	\draw[fill=black] (0, 0) circle[radius=0.08];
	
	\draw[ultra thick] (0, 0) -- (1, 0) -- (1, 1) -- (0, 0) node[shift={(.58, .25)}] {\fka};
	
	\draw[thick] (-1, -1) -- (1, -1) -- (2, 0) -- (2, 2) -- (0, 2) -- (0, 1) -- (-1, 1) -- (-1, -1);
	
	\end{tikzpicture}
\caption{\it Admissible set (shaded gray) for $\tilde B_2$, $\mu = \omega_1^\vee$  and quasi-rigid set for $\t=\t(t^{\umu})$ (inside the thick lines).}
\end{figure}
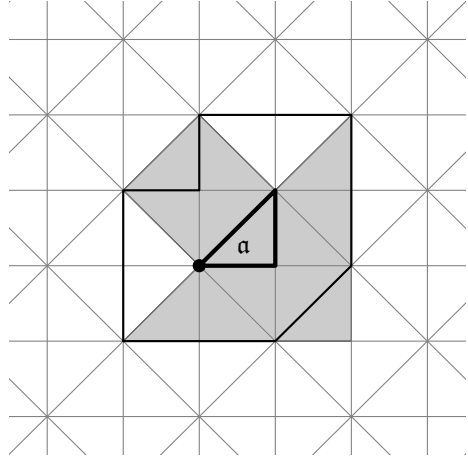

For any length preserving automorphism $\th$ of $\tW$, we  introduce the $\theta$-\emph{rigid set} for $\t$,  
 \begin{equation}\text{Rig}(\t, \th)=\{x \in W_a \t\mid W_{\supp_{\th}(x)} \text{ is finite}\},
 \end{equation}
 cf.~\cite{CH}.
Note that 
\begin{equation*}
\begin{aligned}
\supp(w) &\subset \supp_{\th}(w \t)=\cup_{i \in \BZ} (\Ad(\t) \circ \th)^i \supp(w), \\
\supp(w)&=\supp_{\Ad(\t) \i} (w \t) .
\end{aligned}
\end{equation*}
 Hence 
\begin{enumerate}
	\item For any length preserving automorphism $\th$ of $\tW$, $\text{Q-Rig}(\t) \supset \text{Rig}(\t, \th)$;
	
	\item $\text{Q-Rig}(\t)=\text{Rig}(\t, \Ad(\t) \i)$. 
\end{enumerate}

The following theorem compares $\KAdmmu$ and $\text{Q-Rig}(\t)$. 

\begin{theorem}\label{critadm}
	Assume that $\tW$ is irreducible. Let $K \subset \tSS$ with $W_K$ finite, i.e., $K\neq \tilde\BS$. Then $\KAdmmu \subset \text{Q-Rig}(\t)$ if and only if $(\tilde\Delta, \s,\mu)=(\tilde A_{n-1},\varsigma_1, \o^\vee_1)$ (up to isomorphism), in which case   $\KAdmmu = \text{Q-Rig}(\t) \cap {}^K \tW$. 
\end{theorem}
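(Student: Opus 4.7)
My plan is to dispose of the two directions separately, exploiting the explicit structure of the admissible set in type $A$. For the implication ``$(\tilde\Delta,\sigma,\mu)=(\tilde A_{n-1},\varsigma_1,\omega_1^\vee)$ $\Rightarrow$ equality,'' I would first establish the inclusion $\KAdmmu\subset \text{Q-Rig}(\t)$. Because $\omega_1^\vee$ is minuscule, $\Admmu$ equals the permissible set, so every element is bounded above (in Bruhat order) by a translation $t^{x\underline\mu}$ with $x\in W_0$; each such maximal element has length $\langle\mu,2\rho\rangle=n-1$, hence its support in any reduced expression is a proper subset of $\tilde{\mathbb S}$ of size $n-1$, omitting precisely one simple affine reflection. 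In type $\tilde A_{n-1}$ any proper subset of $\tilde{\mathbb S}$ generates a finite parabolic, so $W_{\supp(w\t^{-1})}$ is finite for all $w\in\Admmu$. Since the Bruhat order preserves support-containment, the inclusion follows and passes to $\KAdmmu\subset\text{Q-Rig}(\t)$.

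For the reverse inclusion $\text{Q-Rig}(\t)\cap{}^K\tilde W\subset \KAdmmu$ in the Drinfeld case, I would use the critical-index interpretation from the start of Section~\ref{s:critind}: $x\in \text{Q-Rig}(\t)$ means the alcove $x(\fka)$ shares at least one vertex with $\fka$. For the specific Frobenius $\varsigma_1$ (which makes $\Ad(\t)\circ\s$ act as the identity on $\tilde{\mathbb S}$), this ``shared-vertex'' condition is exactly the alcove-theoretic characterization of the $\omega_1^\vee$-permissible set among length-bounded elements of ${}^K\tilde W$; write $x=w\t$ with $\supp(w)$ omitting some $s_i$, then $x$ fixes the special vertex corresponding to $i$, which forces $x\le t^{y\omega_1^\vee}$ for a suitable $y\in W_0$ by direct combinatorial verification (induction on $\ell(w)$, using the adjacency of simple affine reflections).

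For the converse implication, I would exclude the other cases one by one. If $(\tilde\Delta,\mu)\ne(\tilde A_{n-1},\omega_1^\vee)$ up to isomorphism, Proposition~\ref{supp-tss} supplies $w\in\Admmu$ with $\supp(w\t^{-1})=\tilde{\mathbb S}$, hence $w\notin\text{Q-Rig}(\t)$. To upgrade this to an element of $\KAdmmu$ violating the inclusion, I would invoke the He--Kottwitz--Rapoport identity $\KAdmmu=W_K\Admmu W_K\cap{}^K\tilde W$ (Theorem~\ref{KR-conj} combined with~\cite[Thm.~6.1]{He-KR}) and choose the minimal-length element $w'\in{}^K\tilde W$ in $W_K w W_K$. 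Since $W_K$ is a \emph{finite} standard parabolic and $W_{\supp(w)}$ is infinite, the double-coset partial conjugation cannot shrink $\supp(w\t^{-1})$ below the complement of $K$; a length-counting argument and the explicit shape of $K\subsetneq\tilde{\mathbb S}$ then show $\supp(w'\t^{-1})$ still generates an infinite Weyl group. This gives $w'\in\KAdmmu\setminus\text{Q-Rig}(\t)$.

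The remaining case is $(\tilde A_{n-1},\omega_1^\vee)$ with $\s\ne\varsigma_1$ (e.g.\ $\s=\mathrm{id}$ or $\s=\varsigma_0$). Here the inclusion $\Admmu\subset\text{Q-Rig}(\t)$ from the previous paragraph still holds, but the equality $\KAdmmu=\text{Q-Rig}(\t)\cap{}^K\tilde W$ should fail because $\text{Q-Rig}(\t)\cap{}^K\tilde W$ contains length-$(n-1)$ elements $w\t$ with $\supp(w)$ equal to any proper size-$(n-1)$ subset, whereas $\Admmu$ only hits those proper subsets obtained by deleting one of $n$ special vertices in a controlled way depending on $\mu_+$'s orientation. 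I would identify an explicit length-$(n-1)$ element $w\t\in\text{Q-Rig}(\t)\cap{}^K\tilde W$ that is not $\le t^{x\underline\mu}$ for any $x$; the existence of such an element forces the equality to fail, distinguishing the Drinfeld Frobenius $\varsigma_1$. The main obstacle throughout is the combinatorial bookkeeping in the ``only if'' direction: passing between elements of $\Admmu$ with large support and elements of $\KAdmmu$ (via minimal length representatives modulo $W_K\times W_K$) requires a careful interplay between Bruhat order, support, and the $\sigma$-twisted conjugation by $W_K$, which one must carry out in each Tits-datum separately.
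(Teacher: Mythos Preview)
Your ``only if'' argument for general $K$ has a real gap. You take $w\in\Admmu$ with $\supp(w\t^{-1})=\tSS$ (via Proposition~\ref{supp-tss}) and pass to the minimal $w'\in{}^K\tW$ in its $W_K$-coset; then you assert that $W_{\supp(w'\t^{-1})}$ remains infinite. But from $w=uw'$ with $\ell$ additive you only get $\supp(w'\t^{-1})\supseteq\tSS\setminus K$, and in most affine types a proper subset such as $\tSS\setminus K$ generates a finite group (e.g.\ $\tilde B_n$ with $K=\{s_n\}$). So the ``length-counting argument'' you allude to cannot close this; indeed the Remark immediately after Theorem~\ref{critadm} says exactly that the combinatorial method of Proposition~\ref{supp-tss} handles only $K=\emptyset$ and that ``more advanced techniques'' are needed for general $K$. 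Your final paragraph is also confused: both $\KAdmmu$ and $\text{Q-Rig}(\t)$ are determined by $(\tilde\Delta,\mu,K)$ and are independent of the Frobenius $\sigma$, so there is nothing to distinguish by varying $\sigma$ in $(\tilde A_{n-1},\o_1^\vee)$.

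The paper's proof avoids all of this combinatorics by a completely different device. One chooses an auxiliary group $\bf H$ over $F$ whose Frobenius acts on $\tW$ as $\Ad(\t)^{-1}$; then $\text{Q-Rig}(\t)=\text{Rig}(\t,\s)$ for this $\s$. The hypothesis $\KAdmmu\subset\text{Q-Rig}(\t)$ becomes: $W_{\supp_\s(x)}$ is finite for every $x\in\KAdmmu$. Proposition~\ref{GHN-4.6} then gives $\brK\cdot_\s\brI x\brI\subset[\t]$ for all such $x$, so by the decomposition~\eqref{decXmu} we have $X(\mu,b)_K=\emptyset$ for every non-basic $b$. Theorem~\ref{KR-conj} forces $\BGmu=\{[\t]\}$, and Kottwitz's classification of uniform pairs \cite[\S 6]{Ko2} then pins down $(\tilde\Delta,\mu)=(\tilde A_{n-1},\o_1^\vee)$. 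This trades a delicate support-tracking argument (which you were unable to complete, and which appears genuinely hard) for the non-emptiness theorem and Kottwitz's result, at the cost of invoking heavier machinery. Your forward direction (the inclusion $\Admmu\subset\text{Q-Rig}(\t)$ in type $\tilde A_{n-1}$, via $\ell(t^{\umu})=n-1$ and the fact that every proper subset of $\tSS$ is spherical there) is correct and is indeed the easy half.
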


\begin{remark}
The case where $K=\emptyset$ is Proposition~\ref{supp-tss}. The proof of that proposition does not show the general case since there are less elements in $\KAdmmu$ as $K$ becomes larger. Therefore we have to use more advanced techniques here.
\end{remark}

\begin{proof}
	Let $\bf H$ be a connected reductive group over $F$ with Iwahori-Weyl group over $\brF$ isomorphic to $\tW$ and where the induced action of the Frobenius on $\tW$ equals  $\Ad(\t) \i$. 
	Then we have by (2) above $\text{Q-Rig}(\t)=\text{Rig}(\t, \s)$. Hence, by assumption,  for any $x \in \KAdmmu$, $W_{\supp_\s(x)}$ is finite. Hence, by Proposition \ref{GHN-4.6}, $\brK \cdot_\s \brI x \brI \subset [\t]$. By \eqref{decXmu}, we see that $X(\mu, b)_K=\emptyset$ if $b$ is not basic. By Theorem \ref{KR-conj}, $\BGmu=\{[\t]\}$ is then  a singleton. Then by \cite[\S 6]{Ko2}, $(\tilde\Delta, \mu)=(\tilde A_{n-1},\varsigma_1, \o^\vee_1)$ (up to isomorphism). 
\end{proof}
\begin{remark}
The  concept of critical index is due to Drinfeld \cite{Dr}. The fact that in the Drinfeld case $(\tilde A_{n-1},\varsigma_1, \o^\vee_1)$ any element of $\KAdmmu$ has a critical index is crucial in his proof of $p$-adic uniformization of the Drinfeld RZ-space. The proof in loc.~cit. is by linear algebra. Note that Theorem \ref{critadm} answers the question raised in \cite[\S 3]{RZ:indag}. 
\end{remark}

Note that the study of $\text{Q-Rig}(\t)$ can be reduced to the case where $G$ is adjoint and $\tW$ is irreducible. The following result describes the translation elements in $\text{Q-Rig}(\t)$ in the case where $\tW$ is irreducible.

\begin{proposition}\label{l-supp}
Suppose that $\tW$ is irreducible. Let $t^\l$ be a translation element in $\tW$,
and let $\t \in \Omega$ with $t^\l \in W_a \t$. Then $t^\l \in \text{Q-Rig}(\t)$
if and only if there exists a length preserving automorphism $\th$ of $\tW$ such
that $\th(\l)$ is a dominant minuscule coweight.

Furthermore, if $t^\l$ is non-central, then $t^\l$ has exactly one critical index, and the critical index corresponds to a special vertex.
\end{proposition}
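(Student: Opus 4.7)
The plan is to establish the ``furthermore'' clause first, since it produces the special-vertex data needed for the main equivalence. Write $v_0 = 0, v_1, \dots, v_n$ for the vertices of $\fka$ indexed so that $v_i$ is opposite the wall of $s_i$; then $v_i = \o_i^\vee / n_i$ for $i \ge 1$, and $v_i$ is special precisely when the mark $n_i$ equals $1$.

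First I would show $|\text{Crit}(t^\l)| \le 1$ for $\l \ne 0$ by a parallelogram argument: if distinct $u_1, u_2 \in \text{Crit}(t^\l)$ existed, then $u_1, u_2, u_1 - \l, u_2 - \l$ would all be vertices of $\fka$ and would satisfy $u_1 + (u_2-\l) = u_2 + (u_1-\l)$, forming an affinely dependent parallelogram unless some coincide. Under $\l \ne 0$ and $u_1 \ne u_2$, the only surviving possibility is (after relabelling) $\l = u_2 - u_1$, whereupon $u_1 - \l = 2u_1 - u_2$ is a third vertex $u_3$ and $u_1 = \tfrac{1}{2}(u_2 + u_3)$ exhibits $u_1$ as the midpoint of two other vertices of the simplex $\fka$, a contradiction. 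Next I would show that the critical vertex is special: for $i \ne j$, pairing with the finite simple root $\a_i$ gives $\langle v_i - v_j, \a_i\rangle = 1/n_i$, which must be an integer whenever $v_i - v_j \in X_* \subseteq P^\vee$. Hence $n_i = 1$, and symmetrically $n_j = 1$. Applied to the unique critical $v$ and $v - \l$, the critical index corresponds to a special vertex.

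For the main equivalence, I would first handle $(\Rightarrow)$. The case $\l = 0$ is immediate. For $\l \ne 0$, the argument above gives $\l = v - v'$ with $v, v'$ special vertices of $\fka$. I would invoke the fact that the group $\Gamma$ of diagram automorphisms of $\tSS$ acts transitively on the set of special vertices of $\fka$ in each irreducible affine type, and pick $\gamma \in \Gamma$ with $\gamma(v') = v_0$. Realizing $\gamma$ as an affine isometry of the apartment that stabilizes $\fka$, the induced conjugation is a length-preserving automorphism $\theta$ of $\tW$, and an affine computation yields $\theta(\l) = \gamma(v) - \gamma(v') = \gamma(v)$, a nonzero special vertex of $\fka$ and hence a dominant minuscule coweight of $X_*$. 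The converse $(\Leftarrow)$ is easier: any length-preserving automorphism $\theta$ of $\tW$ permutes $\tSS$ and so preserves the ``finite-support'' condition defining $\text{Q-Rig}$; and for a dominant minuscule $\mu$, both $\mu$ and $0$ are vertices of $\fka$, so $\mu \in \text{Crit}(t^\mu)$ and $t^\mu \in \text{Q-Rig}(\t_\mu)$.

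The main obstacle is the transitivity of $\Gamma$ on the special vertices of $\fka$. Since $X_*$ need not equal $P^\vee$, the subgroup $\Omega \subseteq \Gamma$ can fail to act transitively, and one must appeal to the full diagram automorphism group of $\tSS$, possibly including outer automorphisms beyond $\Omega$. This reduces to a case-by-case inspection against the standard classification of affine Dynkin diagrams $\tilde A$--$\tilde G$, but in each irreducible type $\Gamma$ is seen to act transitively on the set of special vertices of $\fka$.
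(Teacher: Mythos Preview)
Your approach is essentially the same as the paper's: both characterize the critical index condition via common vertices of $\fka$ and $\l + \fka$, both use the pairing with simple roots to show that any such shared vertex must be special, and both invoke transitivity on special vertices to produce the required length-preserving automorphism. Your parallelogram argument for uniqueness of the critical index is a clean explicit addition; the paper leaves uniqueness implicit (it follows once one has reduced to $\l$ dominant minuscule, since then the only vertex $v$ of $\fka$ with $v-\l$ again a vertex is $v=\l$ itself).

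There is, however, a subtlety in your transitivity step that you correctly sense but do not fully resolve. You observe that for $X_* \subsetneq P^\vee$ the group $\Omega$ of length-$0$ elements may fail to act transitively on special vertices, and you propose to remedy this by using the full diagram automorphism group $\Gamma$ of $\tSS$. The difficulty is that an arbitrary $\gamma \in \Gamma$ induces an automorphism of $W_a$, but it extends to a length-preserving automorphism of $\tW = X_* \rtimes W_0$ only if its linear part preserves the sublattice $X_*$, and this can fail for intermediate lattices $Q^\vee \subsetneq X_* \subsetneq P^\vee$. So passing to the full $\Gamma$ does not by itself close the gap; your case-by-case check establishes transitivity of $\Gamma$ on special vertices but not that the relevant $\gamma$ yields an automorphism of $\tW$.

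The paper sidesteps this by reducing to the adjoint case (the sentence immediately preceding the proposition): there $X_* = P^\vee$ and $\Omega \cong P^\vee/Q^\vee$ already acts simply transitively on the special vertices of $\fka$, so conjugation by length-$0$ elements suffices with no appeal to outer automorphisms. The remark following the proof records exactly this. If you wish to avoid the adjoint reduction, you would need to verify, type by type, that for each $\l = v - v' \in X_*$ with $v, v'$ special there is some $\gamma \in \Gamma$ sending $v'$ to $0$ whose linear part stabilizes $X_*$; that verification is not in your proposal.
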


As the proof will show, if $\bG$ is adjoint and $\theta$ exists, then it can be chosen as conjugation by a length $0$ element of $\tW$.

\begin{proof}
Let $\fka'=t^\l(\fka)$ be the alcove obtained from the base alcove $\fka$ by translation. Then $t^\l \in \text{Q-Rig}(\t)$ if and only if $\fka$ and $\fka'$ have a common vertex, say $v$. 
	
	Note that the vertices of $\fka$ are $\frac{\o^\vee_i}{\<\o^\vee_i, \b\>}$ for $i \in \BS$ and $0$. Here $\b$ is the highest root and $\o^\vee_i$ is the fundamental coweight associated to $i$. Thus the vertices of $\fka'$ are $\frac{\o^\vee_j}{\<\o^\vee_j, \b\>}+\l$ for $j \in \BS$ and $\l$. Then we have 
	\begin{enumerate}
		\item Either $v=\frac{\o^\vee_i}{\<\o^\vee_i, \b\>}$ and $\l=\frac{\o^\vee_i}{\<\o^\vee_i, \b\>}-\frac{\o^\vee_j}{\<\o^\vee_j, \b\>}$ for some $i \neq j \in \BS$;
		
		\item or $v=\l=\frac{\o^\vee_i}{\<\o^\vee_i, \b\>}$; 
		
		\item or $v=0$ and $\l=-\frac{\o^\vee_j}{\<\o^\vee_j, \b\>}$; 
		
		\item or $v=\l=0$. 
	\end{enumerate}
	
	In case (1), we have $\frac{1}{\<\o^\vee_i, \b\>}=\<\l, \a_i\> \in \BZ$ and $\frac{1}{\<\o^\vee_j, \b\>}=-\langle\l, \a_j\rangle \in \BZ$, where $\a_i$ is the simple root associated to the simple reflection $s_i$. Thus both $\o^\vee_i$ and $\o^\vee_j$ are minuscule coweights. Hence both $v$ and $v-\l$ are special vertices in the base alcove. In cases (2)-(4), one may show by a similar (but easier) argument that $v$ and $v-\l$ are still special vertices in the base alcove. 
	
    The group of length $0$ elements acts transitively on the set of special vertices of $\mathfrak a$, so after applying the length preserving automorphism of $\tW$ induced by such an element, we may assume that $v-\l$ is the origin in the base alcove. In other words, $v=\l$ is a special vertex in the base alcove and hence $\l$ is a minuscule coweight (recall that we excluded the possibility that $\l$ is central in our assumptions).
\end{proof}
\begin{corollary}\label{8-4}
Assume that $\tW$ is irreducible and $t^\l$ is non-central. If $K \subset \tSS$ with $K\supsetneq \supp(t^\l\tau^{-1})$, then $K=\tSS$. 
\end{corollary}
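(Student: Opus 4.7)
The plan is to apply Proposition~\ref{l-supp} to establish $|\tSS \setminus \supp(w)| \le 1$, where $w := t^\l \t\i$ with $\t = \tau(t^\l) \in \Omega$. The corollary then follows at once: any $K \subseteq \tSS$ strictly containing $\supp(w)$ must equal $\tSS$.

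I would split into two cases. First, suppose $t^\l \in \text{Q-Rig}(\t)$, so that $W_{\supp(w)}$ is finite. By Proposition~\ref{l-supp}, since $t^\l$ is non-central, the critical index set $\text{Crit}(t^\l)$ consists of a single special vertex $v$ of $\fka$. Writing $v = V_J$ for the unique subset $J \subset \tSS$ of size $|\tSS|-1$ whose parabolic $W_J$ fixes $v$ (so $J = \tSS \setminus \{s_\l\}$ for some $s_\l \in \tSS$), the subgroup $W_J$ is precisely the stabilizer of $v$ in $W_a$, and is finite because $v$ is special. Since $\fka$ and $w\fka = t^\l\fka$ both contain $v$ and $W_a$ acts simply transitively on alcoves, I obtain $w \in W_J$, and hence $\supp(w) \subseteq J$. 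The key step is to show $\supp(w) = J$: if instead $\supp(w) \subseteq J \setminus \{s\}$ for some $s \in J$, then $w$ would fix pointwise the edge of $\fka$ joining $v = V_J$ to the adjacent vertex $V_{\tSS \setminus \{s\}}$, and so $V_{\tSS \setminus \{s\}}$ would be a second element of $\text{Crit}(t^\l)$, contradicting the uniqueness asserted in Proposition~\ref{l-supp}.

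In the remaining case $t^\l \notin \text{Q-Rig}(\t)$, the group $W_{\supp(w)}$ is infinite. Since $\tW$ is irreducible, every proper subset of $\tSS$ generates a finite parabolic subgroup (any proper sub-diagram of an irreducible affine Dynkin diagram is of finite type), and therefore $\supp(w) = \tSS$. In either case $|\tSS \setminus \supp(w)| \le 1$, and the corollary follows. The main technical step is the geometric argument in Case~1, which converts the uniqueness of the critical vertex into a lower bound on $\supp(w)$.
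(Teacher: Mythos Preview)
Your proof is correct and follows essentially the same approach as the paper. Both arguments reduce to showing that $|\tSS\setminus\supp(t^\l\t\i)|\le 1$ via Proposition~\ref{l-supp}: the paper simply cites that proposition to conclude $\supp(t^\l\t\i)=\tSS$ or $\tSS\setminus\{s\}$, while you spell out the underlying bijection between $\tSS\setminus\supp(w)$ and the critical vertices of $w$ (using that the stabilizer of a vertex acts simply transitively on the alcoves containing it) and then invoke the uniqueness of the critical index.
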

\begin{proof}
By Proposition \ref{l-supp}, $\supp(t^\l \t \i)=\tSS$ or $\tSS\setminus\{s\}$ for some simple reflection $s$, corresponding to a special vertex. Thus if $K \supsetneq \supp(t^\l\tau^{-1})$, then $\supp(t^\l \t \i)=\tSS\setminus\{s\}$ and $K=\tSS$.  
\end{proof}
\section{Maximal dimension}\label{s:max}
In this section, we prove Theorem \ref{max-dim}. 
\subsection{Preparations} 
The following result gives an explicit description of the set $W(\umu)_{\fin}$ introduced in Section~\ref{subsec:criterion-maximal}.

\begin{proposition}\label{W-fin}
Suppose that $G$ is quasi-simple over $F$, i.e., $\s$ acts transitively on the set of irreducible components of $\tW$. Suppose that $\umu$ is non-central in $G$, i.e., the restriction of $\umu$ to some irreducible component of $\tW$ is non-central. Then 
\[
    W(\umu)_{\rm fin} = \{ \l \in W_0(\umu) \mid t^\l \text{ has an $\Ad(\t)\circ\s$-stable critical index}\}.
\]

In particular, for any $\l \in W(\umu)_{\rm fin}$, $\lambda$ is minuscule, $t^\l$ has a unique $\Ad(\t)\circ\s$-stable critical index, and the corresponding vertex is special. 
\end{proposition}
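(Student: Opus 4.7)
\emph{Proof proposal.}
The plan is to translate the finiteness of $W_{\supps(t^\l)}$ into a statement about vertices of the base alcove $\fka$, and then to exploit the quasi-simple structure of $\bG$ over $F$ to pin down the $\Ad(\t)\s$-stable critical index. Writing $t^\l = w\t$ with $w\in W_a$, one has $\text{Crit}(t^\l)=\text{Crit}(w)$, and for a vertex $v$ of $\fka$ with stabilizer $W_{\tSS_v}\subset W_a$, the condition $v\in\text{Crit}(w)$ amounts to $\supp(w)\subseteq\tSS_v$; moreover $v$ is $\Ad(\t)\s$-fixed if and only if $\tSS_v$ is $\Ad(\t)\s$-stable as a subset of $\tSS$. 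With this dictionary, the direction $(\Leftarrow)$ is immediate: given an $\Ad(\t)\s$-stable critical index $v$, the set $\supps(t^\l)$ is contained in the $\Ad(\t)\s$-stable set $\tSS_v\subsetneq\tSS$, hence $W_{\supps(t^\l)}$ is finite.

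For the converse, decompose $\tW = \tW_1\times\cdots\times\tW_d$ according to the quasi-simple structure of $\bG$ over $\brF$; the crucial input is that $\s$, and hence $\Ad(\t)\s$, cyclically permutes the $d$ components. Consequently every $\Ad(\t)\s$-orbit in $\tSS$ meets each component $\tSS_i$ in the same number $k\ge 1$ of elements. Assume that $W_{\supps(t^\l)}$ is finite. Then $C:=\tSS\setminus\supps(t^\l)$ is $\Ad(\t)\s$-stable and intersects every $\tSS_i$ non-trivially, for otherwise an entire component would lie in $\supps(t^\l)$ and $W_{\supps(t^\l)}$ would be infinite.

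For each index $i$ with $\l_i$ non-central, the inclusion $\supp(w_i)\subseteq\supps(t^\l)\cap\tSS_i\subsetneq\tSS_i$ allows us to apply Proposition~\ref{l-supp} inside the irreducible factor $\tW_i$: $\l_i$ is minuscule, $t^{\l_i}$ has a unique critical vertex $v_i$, which is special, and by the uniqueness of $v_i$ one reads off that $\tSS_i\setminus\supp(w_i) = \{s_i^{v_i}\}$ consists of a single reflection. Hence $|C\cap\tSS_i| = 1$ for every non-central index $i$. The orbit-equidistribution then forces $C$ to be a single $\Ad(\t)\s$-orbit of size exactly $d$, whose elements are $(s_i^{v_i})_{i=1}^d$, where the $v_j$ for the central indices $j$ are determined by the orbit. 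This data defines an $\Ad(\t)\s$-stable vertex $v = (v_1,\ldots,v_d)$ of $\fka$ with $\tSS_v\supseteq\supp(w)$, and hence a critical index. Uniqueness of $v$ follows from the uniqueness of this orbit, while the remaining claims that $\l$ is minuscule and $v$ corresponds to a special vertex are inherited componentwise from Proposition~\ref{l-supp}.

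The main technical hurdle will be the orbit-counting step: one has to combine the cyclic permutation of components by $\s$ with the uniqueness clause of Proposition~\ref{l-supp} to rule out the possibility $k>1$, thereby showing that $C$ really is a single $\Ad(\t)\s$-orbit of exact size $d$, rather than a larger $\Ad(\t)\s$-stable subset of $\tSS$ that fails to define a single vertex of $\fka$.
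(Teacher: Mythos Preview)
Your argument is correct and is essentially the same as the paper's; the only difference is organizational. Where you work with the complement $C=\tSS\setminus\supps(t^\l)$ and use the equidistribution of $\Ad(\t)\s$-orbits over the components, the paper picks a single non-central component, say $\tW_1$, extracts the unique missing reflection $s_1$ from Proposition~\ref{l-supp}, and then \emph{constructs} the orbit by setting $s_i=(\Ad(\t)\s)^{i-1}(s_1)$. From $\tSS_1\setminus\{s_1\}\subseteq\supps(t^\l)$ and the $\Ad(\t)\s$-stability of $\supps(t^\l)$ one gets $\tSS\setminus\{s_1,\dots,s_d\}\subseteq\supps(t^\l)$, and finiteness of $W_{\supps(t^\l)}$ forces equality. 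This is exactly your $C=\{s_1,\dots,s_d\}$.

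The ``technical hurdle'' you flag in the last paragraph is already handled by your own argument: each $\Ad(\t)\s$-orbit in $C$ meets the non-central component $\tSS_1$ in at least one element, while $|C\cap\tSS_1|=1$; hence $C$ is a single orbit with $k=1$. No further work is needed. One small point: your sentence ``the claims that $\l$ is minuscule and $v$ corresponds to a special vertex are inherited componentwise from Proposition~\ref{l-supp}'' only directly covers the non-central components; for a central component $j$ you need the observation that $v_j$ is the $\Ad(\t)\s$-translate of a special vertex $v_i$, and that $\Ad(\t)\s$ (being a length-preserving automorphism) sends special vertices to special vertices. Finally, your $(\Leftarrow)$ direction is in fact cleaner than the paper's, which invokes uniqueness of the critical index at a point where uniqueness need not hold (namely when some $\l_j$ are central); your containment $\supps(t^\l)\subseteq\tSS_v$ avoids this entirely.
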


\begin{proof}
Without loss of generality, we may assume that $G$ is adjoint. In this case $\tW=\tW_1 \times \tW_2 \times \cdots \times \tW_d$ and $\tSS=\tSS_1 \times \tSS_2 \times \cdots \tSS_d$, where $\tW_1 \cong \tW_2 \cong \ldots  \cong \tW_d$ are irreducible. We have $\umu=(\mu_1, \mu_2, \ldots, \mu_d)$. We may assume that $\mu_1$ is non-central in $\tW_1$. Let $\t=(\t_1, \t_2, \ldots, \t_d)$. 

For any subset $K\subseteq \tSS$, $W_K$ is finite if and only if in each component of the Dynkin diagram, there is at least one vertex not contained in $K$. Hence, as we have remarked before, $\l$ has a critical index if and only if $W_{\supp(t^\l\t^{-1})}$ is finite.
In case the critical index is unique, we have that $\supp(t^\l) = \supp_\s(t^\l)$ if and only if the critical index is $\Ad(\t) \circ \s$-stable.

Since $\umu$ is non-central, elements of $W_0(\umu)$ have at most one critical index, and we obtain that the right hand side is a subset of $W(\umu)_{\fin}$.

Conversely, let $\l=(\l_1, \l_2, \ldots, \l_d) \in W_0(\umu)$ be an element in $W(\umu)_{\rm fin}$. By Proposition~\ref{l-supp}, $\mu_1$ is minuscule, $\l_1$ is of the form $\th_1(\mu_1)$ and $t^{\l_1}$ has a unique critical index. Note that $\supp(t^{\l_1} \t_1 \i)=\tSS_1\setminus\{s_1\}$ for some simple reflection $s_1$ that corresponds to the critical index of $t^{\l_1}$. For $1 \le i \le d$, let $s_i=(\Ad(\t) \circ \s)^{i-1}(s_1) \in \tW_i$. Then $\tSS\setminus \{s_1, s_2, \ldots, s_d\} \subset \supp_{\s}(t^\l)$. Note that for any $K \supsetneq \tSS\setminus\{s_1, s_2, \ldots, s_d\}$, $W_K$ is an infinite group. Thus we have $\tSS\setminus\{s_1, s_2, \ldots, s_d\}=\supp_{\s}(t^\l)$. In particular, $(\Ad(\t) \circ \s)^d(s_1)=s_1$. And for each $1 \le i \le d$, either $\l_i$ is central or $\l_i$ is minuscule non-central and $s_i$ is the simple reflection corresponding to the critical index of $t^{\l_i}$. Hence $t^\l$ has a critical index which corresponds to $s_1 s_2 \cdots s_d$. Moreover, by  construction, this is the unique $\Ad(\t)\circ \s$-stable critical index.

The final part follows from Proposition~\ref{l-supp}, or from the equality of the two sets, since all elements of the right hand side have these properties.
\end{proof}

\begin{proposition}\label{J-quasisplit}
    The set $W(\umu)_{\fin}$ is nonempty if and only if $\bJ_\t$ is quasi-split
    and $\umu$ minuscule.
\end{proposition}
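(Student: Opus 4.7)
The plan is to combine Proposition~\ref{W-fin} and Proposition~\ref{l-supp} with the Bruhat--Tits criterion that $\bJ_\t$ is quasi-split over $F$ if and only if the twisted Frobenius $\Ad(\t)\s$ fixes a special vertex in the closure of the base alcove $\fka$. Note that a vertex $v$ of $\fka$ is fixed by $\Ad(\t)\s$ exactly when the subset of $\tSS$ consisting of simple affine reflections stabilizing $v$ is preserved by $\Ad(\t)\s$, i.e., precisely when the critical index corresponding to $v$ is $\Ad(\t)\s$-stable (as a subset of $\tSS$).

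For $(\Rightarrow)$: given $\l\in W(\umu)_{\fin}$, the last assertion of Proposition~\ref{W-fin} gives that $\l$, and hence $\umu$ (being $W_0$-conjugate to $\l$), is minuscule, and that $t^\l$ has an $\Ad(\t)\s$-stable critical index. Proposition~\ref{l-supp} identifies this critical index with a special vertex $v$ of $\fka$, which by the observation above is $\Ad(\t)\s$-fixed; hence $\bJ_\t$ is quasi-split.

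For $(\Leftarrow)$: suppose $\umu$ is minuscule and $\bJ_\t$ is quasi-split, and pick an $\Ad(\t)\s$-fixed special vertex $v$ in the closure of $\fka$. It suffices to produce $\l\in W_0(\umu)$ such that $v-\l$ is also a vertex of $\fka$, for then $v$ is an $\Ad(\t)\s$-stable critical index of $t^\l$, and Proposition~\ref{W-fin} yields $\l\in W(\umu)_{\fin}$. Since $v$ is a special vertex, the stabilizer of $v$ in $W_a$ is isomorphic to $W_0$, and $\fka$ lies in a unique Weyl chamber $C_v$ at $v$; moreover the differences $v-v'$ as $v'$ ranges over vertices of $\fka$ consist of the origin together with the fundamental coweights of the opposite chamber $-C_v$ at $v$, each rescaled by its pairing with the appropriate highest root. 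Among these rescaled coweights, the integral ones are precisely the minuscule fundamental coweights of $-C_v$. Since $\umu$ is minuscule, its $W_0$-orbit contains a unique representative in the closure of $-C_v$, and this representative is integral (it lies in $X_*(T)_{\G_0}$) and minuscule, hence equals $v-v_i$ for some vertex $v_i$ of $\fka$. Taking $\l$ to be this representative finishes the argument.

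The main obstacle is the matching in the previous paragraph of the representative of $W_0(\umu)$ in the closure of $-C_v$ with a vertex of $\fka$ reflected through $v$. This rests on the defining property of a special vertex, namely that the affine root hyperplanes through $v$ are all translates of root hyperplanes through the origin, together with the characterization of minuscule fundamental coweights of a Weyl chamber as those integral vertices of the fundamental alcove at the apex of the chamber whose pairing with the highest root equals $1$; both are standard consequences of the structure of Bruhat--Tits buildings. For $\bG$ quasi-simple over $F$ but not over $\brF$, one first reduces to the $\brF$-simple case via the construction of Section~\ref{subsec:ghn-2.4-construction}, which is compatible with both the non-emptiness of $W(\umu)_{\fin}$ and the quasi-split property of $\bJ_\t$.
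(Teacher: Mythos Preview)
Your proof is correct and follows essentially the same strategy as the paper's. Both directions invoke Proposition~\ref{W-fin}; for the converse, both construct $\l\in W_0(\umu)$ whose critical index is a prescribed $\Ad(\t)\s$-fixed special vertex. The paper does this in one line by passing to the adjoint group and using that the length-$0$ elements act transitively on the special vertices of $\fka$, transporting $t^{\umu}$ to the desired $t^\l$. You instead identify $\l$ explicitly as the $(-C_v)$-dominant representative of $W_0(\umu)$ and match it with a difference of vertices of $\fka$. These are two phrasings of the same construction; the paper's is shorter, yours makes the element $\l$ concrete.

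One small remark: your final reduction to the $\brF$-simple case via Section~\ref{subsec:ghn-2.4-construction} is not quite the right reference---that construction concerns isomorphisms of affine Deligne--Lusztig varieties under the extra hypothesis that all but one $\mu_i$ are central. What you actually need is more elementary: both the condition $W(\umu)_{\fin}\ne\emptyset$ and the condition ``$\bJ_\t$ quasi-split and $\umu$ minuscule'' decompose over the connected components of the affine Dynkin diagram (with the $\s$-action permuting them). The paper avoids the reduction altogether by working directly with a collection $\Pi\subset\tSS$ of special vertices, one in each component, and invoking transitivity componentwise.
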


\begin{proof}
Since $[\t]$ is basic, $\bJ_\t$ is an inner form of $\bG$. It is quasi-split if
and only if there exists a collection $\Pi\subset\tSS$ of special vertices, one
in each connected component of the affine Dynkin diagram, such that
$\Ad(\t)\circ\s(\Pi) = \Pi$, i.e., the subset is fixed by the twisted Frobenius
corresponding to $\bJ_\t$. If $W(\umu)_{\fin}$ is nonempty then $\umu$ is minuscule and
Proposition~\ref{W-fin} implies that $\bJ_\t$ is quasi-split.

Conversely, suppose that $\bJ_\t$ is quasi-split and that $\umu$ is minuscule, so that $t^{\umu}$ has a critical index. Applying
Proposition~\ref{W-fin}, it is enough to show that
with $\Pi\subset\tSS$ as above, there exists a length preserving automorphism
$\th$ of $\tW$ and $\l \in W_0(\umu)$ such that $\theta(\l)=\umu$ and $\Pi = \tSS
\setminus \supp(t^\l)$. We may assume that $\bG$ is adjoint.  Then the subgroup of
length $0$ elements of $\tW$ acts transitively on the set of special vertices of
the base alcove. Let $\th$ be induced by a length $0$ element and such that $\l
:= \th^{-1}(\umu)$ satisfies $\Pi = \tSS \setminus \supp(t^\l)$. Then $\l \in
W_0(\umu)$ and hence $\l\in W(\umu)_{\fin}$.
\end{proof}

\subsection{Proof of Theorem~\ref{max-dim}}
First assume that $b$ is basic and $W(\umu)_{K, \fin}\ne
\emptyset$. By Proposition~\ref{GHN-4.6}, $\brK \cdot_\s \brI t^\l \brI \subset [\t]$ for $\l\in W(\umu)_{K, \fin}$.  By Theorems \ref{dim-1-1} and  \ref{He-CDM-2.5}, we see that  $\dim \XmubK{b}{K} = \<\mu, 2\rho\>$.
For $K=\emptyset$, if $\bJ_\t$ is quasi-split and $\umu$ is minuscule, then
Proposition ~\ref{J-quasisplit} shows $W(\umu)_{\fin}\ne \emptyset$ and hence $\dim
\XmubK{b}{K} = \<\mu, 2\rho\>$.

Now suppose that  $\dim \XmubK{b}{K} = \<\mu, 2\rho\>$. By Corollary~\ref{cor:bound},  $[b] = [\t]$ is basic. 
We next claim that the irreducible components of $\XmubK{\t}{K}$ of dimension $\<\mu, 2\rho\>$ are the irreducible components of the $\overline{X_{K, t^\l}(\t)}$ of dimension $\<\mu, 2\rho\>$, where $\l\in W(\umu)_{K, \fin}$.
Indeed,  by~\eqref{decXmu},
\[
\XmubK{\t}{K}=\bigsqcup_{x \in \KAdmmu} X_{K, x}(\t).
\]
Now for $x \in \KAdmmu$, $\dim X_{K, x}(\t) \le \dim X_x(\t) = \dim_{\brI}(\brI x \brI \cap [\t]) \le \dim_{\brI}(\brI x \brI)=\ell(x)$, using Theorem~\ref{dim-1-1} for the first and Theorem~\ref{He-CDM-2.5} for the final equality, which proves the claim. In particular, $W(\umu)_{K, \fin}\ne\emptyset$.
On the other hand, $\overline{X_{K, t^\l}(\t)}$ is equi-dimensional. In fact, $X_{K, t^\l}(\t)$ is a disjoint union of copies of a classical Deligne-Lusztig variety by~\cite[Prop.~5.7]{GHN}, \cite[Thm.~4.1.1, Thm.~4.1.2]{GH}.

Finally, the map $X(\mu,b)\to X(\mu,b)_{K}$ is surjective, cf.~\cite[Thm. 1.1]{He-KR}. Hence we deduce from $\dim \XmubK{b}{K} = \<\mu, 2\rho\>$ that $\dim X(\mu, b) = \<\mu, 2\rho\>$. The previous reasoning applied to $K=\emptyset$ implies  $W(\umu)_{\fin}\neq\emptyset$, and hence we deduce from Proposition~\ref{J-quasisplit} that $\bJ_\t$ is quasi-split and $\umu$ minuscule. Theorem~\ref{max-dim} is proved.\qed

\begin{remark}
For any $(\bG, \mu)$ such that $\umu$ is minuscule, there exists an inner form $\bf H$ of $\bG$ such that $\dim X^{\bf H}(\mu,
\t)=\<\mu, 2 \rho\>$, namely the one with Frobenius $\Ad(\t)\circ \s$. In
particular, this applies when $\bG$ splits over $\breve F$, because then $\umu = \mu$. 
\end{remark}
\section{Maximal equi-dimension}\label{s:equimax}
In this section, we prove Theorem \ref{thmmaxequi}. 
\subsection{Reduction to the fully Hodge-Newton decomposable case}\label{10.1(a)}
Suppose that $\XmubK{b}{K}$ is equi-dimensional of dimension equal to $\<\mu, 2 \rho\>$. By Theorem \ref{max-dim}, $[b]=[\t]$ is basic and 
\[
\XmubK{\t}{K}=\bigcup_{\l \in W(\umu)_{K,\fin}} \overline{X_{K, t^\l}(\t)}.
\]

We claim that $({\bf G}, \mu)$ is of fully Hodge-Newton decomposable type. In fact, by~Theorem~\ref{GHN-2.3} it is enough to show that whenever $w\in \KAdmmu$ satisfies $X_{K, w}(\t)\ne \emptyset$, then $W_{\supp_\s(w)}$ is finite.  But then $X_{K, w}(\t)\subseteq \XmubK{\t}{K}$ and the above gives $X_{K, w}(\t) \subseteq \overline{X_{K, t^\l}(\t)}$ for some $\l\in W(\umu)_{K, \fin}$.
Now Theorem~\ref{He-CDM-2.11} shows that
\[
\overline{\brK \cdot_\s \brI t^\l \brI}=\bigsqcup_{\{x \in {}^K \tW\mid x \preceq_{K, \s} t^\l\}} \brK \cdot_\s \brI x\brI,
\]
and this implies that
\[
\overline{X_{K, t^\l}(\t)} \subseteq \bigsqcup_{\{x \in {}^K \tW\mid x \preceq_{K, \s} t^\l\}} X_{K, x}(b).
\]
We obtain that $w \preceq_{K, \s} t^\l$, for some $\l\in W(\umu)_{K, \fin}$. This implies $\supp_\s(w)\subseteq \supp_\s(t^\l)$, so $W_{\supp_\s(w)}$ is finite.

Hence by Theorem~\ref{fine-dec},
\[
\XmubK{\t}{K}=\bigsqcup_{x \in \KAdmmu_0} X_{K,x}(b).
\]
In particular, we have that $\XmubK{b}{K}$ is equi-dimensional of dimension equal to $\<\mu, 2 \rho\>$ if and only if the following condition is satisfied. 
\smallskip

\emph{$(\star)$ The set of maximal elements of $\KAdmmu_0$ with respect to the partial order $\preceq_{K, \s}$ is equal to $\{t^\l\mid  \l \in W(\umu)_{K, \fin}\}$.}

We first check which cases satisfy $(\star)$ under the additional assumption that $\mu$ is non-central in every irreducible component: In Sections~\ref{subsec:star-irred}, \ref{subsec:star-irred-2} we go through the irreducible cases, and in Section~\ref{ss:reduc} we check the remaining case, the Hilbert-Blumenthal case. Finally, in Section~\ref{subsec:star-general} we explain how to deduce the general case where $\mu$ is allowed to have central components.

\subsection{Candidates for the irreducible cases}\label{subsec:star-irred}
We first consider the case where $\tW$ is irreducible. Since $\XmubK{\t}{K}$ has dimension $\<\mu, 2\rho\>$,  we have $W(\umu)_{K, \fin} \neq \emptyset$. By Proposition \ref{W-fin}, $\Ad(\t) \circ \s$ fixes a special vertex in the affine Dynkin diagram of $\tW$. The fully Hodge-Newton decomposable cases with $\tW$ irreducible and  where $\Ad(\t) \circ \s$ fixes a special vertex can be extracted from the table in Theorem \ref{f-HN}, and are as follows (see the explanation after Theorem~\ref{f-HN-2} for the notation):
	
	\begin{enumerate}[label=(\roman*)]
		\item $(\tilde A_{n-1}, \varrho_{n-1}, \o^\vee_1)$ for $n \ge 2$;
		
		\item $(\tilde A_{2m}, \varsigma_0, \o^\vee_1)$ for $m \ge 1$;
		
		\item $(\tilde A_3, \varsigma_0, \o^\vee_2)$;
		
		\item $(\tilde A_3, \varrho_2, \o^\vee_2)$;
		
		\item $(\tilde B_n, \Ad(\t_1), \o^\vee_1)$ for $n \ge 3$;
		
		\item $(\tilde C_2, \Ad(\t_2), \o^\vee_2)$;
		
		\item $(\tilde D_n, \varsigma_0, \o^\vee_1)$ for $n \ge 4$.
	\end{enumerate}

\smallskip

Next we check when the condition \S \ref{10.1(a)} $(\star)$  is satisfied. 	
	
\smallskip

\subsection{Case-by-case analysis}\label{subsec:star-irred-2}
\subsubsection{$(\tilde A_{n-1}, \Ad(\t_{n-1}), \o^\vee_1)$ for $n \ge 2$} Here the only possible $K$ is $\emptyset$ and $\brK=\brI$. This is the Drinfeld case and $\BGmu$ consists of a single element, namely, $[\t]$. In this case $\brI \Admmu \brI \subset [\t]$ and $\XmubK{\t}{}$ is equi-dimensional of dimension equal to $\<\mu, 2 \rho\>$. 

\subsubsection{$(\tilde A_{2m}, \varsigma_0, \o^\vee_1)$ for $m \ge 1$} In this case, $\tSS^{\Ad(\t) \circ \s}=\{s_{m+1}\}$. Thus the only translation element in $\Admmu_0$ is $t^\l$, where $\l=\Ad(\t_n)(\o^\vee_1) \in {}^{\tSS\setminus \{s_m\}} \tW$ and $\supp(t^\l \t \i)=\tSS\setminus \{s_{m+1}\}$. Therefore if $\l \in W(\umu)_{K, \fin}$, then $K \subset \tSS\setminus \{s_m\}$. Since $K=\s(K)$, we have $K \subset \tSS\setminus \{s_m, s_{m+1}\}$. In this case, $s_{m+1} \t \in \KAdmmu_0$ and $s_{m+1} \t \npreceq_{K, \s} t^\l$. This contradicts \S \ref{10.1(a)} $(\star)$ .
	
\subsubsection{$(\tilde A_3, \varsigma_0, \o^\vee_2)$} In this case, $\tSS^{\Ad(\t) \circ \s}=\{s_1, s_3\}$. Thus the only translation elements in $\Admmu_0$ are $s_1 s_2 s_0 s_1 \t$ and $s_3 s_2 s_0 s_3 \t$. Therefore if $W(\umu)_{K, \fin} \neq \emptyset$, then $s_1 \notin K$ or $s_3 \notin K$. Since $K=\s(K)$, both $s_1$ and $s_3$ are not in $K$. In this case, $s_1 s_3 \t \in \KAdmmu_0$ and $s_1 s_3 \t \npreceq_{K, \s} s_1 s_2 s_0 s_1 \t$ and $s_1 s_3 \t \npreceq_{K, \s} s_3 s_2 s_0 s_3 \t$. This contradicts \S \ref{10.1(a)} $(\star)$ .
	
\subsubsection{$(\tilde A_3, \Ad(\t_2), \o^\vee_2)$} We first consider the case where $K=\emptyset$. In this case, the maximal elements in $\KAdmmu_0$ are $s_2 s_1 s_3 s_2 \t$, $s_3 s_2 s_0 s_3 \t$, $s_0 s_1 s_3 s_0 \t$ and $s_1 s_2 s_0 s_1 \t$ and the condition \S \ref{10.1(a)} $(\star)$  is satisfied.
    
If $K=\{s_0, s_2\}$, then the maximal elements in $\KAdmmu_0$ are $s_3 s_2 s_0 s_3 \t$, $s_1 s_2 s_0 s_1 \t$, $s_1 s_3 s_0 \t$ and $s_1 s_3 s_2 \t$. This contradicts \S \ref{10.1(a)} $(\star)$ .
	
\subsubsection{$(\tilde B_n, \Ad(\t_1), \o^\vee_1)$ for $n \ge 3$} By Proposition \ref{W-fin}, $W(\umu)_{\fin}=\{\o^\vee_1, \Ad(\t_1)(\o^\vee_1)\}$. Note that \begin{gather*} 
		\text{$t^{\o^\vee_1} \in {}^{\BS} \tW$ and $\supp(t^{\o^\vee_1} \t \i)=\tSS\setminus \{s_1\}$}; \\
		\text{$t^{\Ad(\t_1)(\o^\vee_1)} \in {}^{\tSS\setminus \{s_1\}} \tW$ and $\supp(t^{\Ad(\t_1)(\o^\vee_1)} \t \i)=\BS$}.
	\end{gather*} 
		
	Thus if $\KAdmmu_0$ contains some of these translations elements and $K=\s(K)$, then $K \subset \tSS\setminus \{s_0, s_1\}$. In this case, $s_0 s_1 \t \in \KAdmmu_0$ and $s_0 s_1 \t \npreceq_{K, \s} t^{\o^\vee_1}$ and $s_0 s_1 \t \npreceq_{K, \s} t^{\Ad(\t_1)(\o^\vee_1)}$. This contradicts \S \ref{10.1(a)} $(\star)$ .
	
\subsubsection{$(\tilde C_2, \Ad(\t_2), \o^\vee_2)$} In this case, $\tSS^{\Ad(\t) \circ \s}=\{s_0, s_1, s_2\}$. The only translation elements in $\Admmu_0$ are $s_0 s_1 s_0 \t$ and $s_2 s_1 s_2 \t$. Therefore if $W(\umu)_{K, \fin} \neq \emptyset$, then $s_0 \notin K$ or $s_2 \notin K$. Since $K=\s(K)$, both $s_0$ and $s_2$ are not in $K$. In this case, $s_0 s_2 \t \in \KAdmmu_0$ and $s_0 s_2 \t \npreceq_{K, \s} s_0 s_1 s_0 \t$ and $s_0 s_2 \t \npreceq_{K, \s} s_2 s_1 s_2 \t$. This contradicts \S \ref{10.1(a)} $(\star)$ .
	
\subsubsection{$(\tilde D_n, \varsigma_0, \o^\vee_1)$ for $n \ge 4$} In this case, the special vertices that are fixed by $\Ad(\t) \circ \s$ are $n-1$ and $n$. By Proposition~\ref{l-supp} and Proposition \ref{W-fin}, the elements of $W(\umu)_{\fin}$ are of the form $\th(\umu)$, where $\th$ runs over length preserving automorphism such that $\th \circ \Ad(\t)(\BS)$ is $\Ad(\t) \circ \s$-stable. In this case, $\th$ sends the vertices $\{0, 1\}$ to the vertices $\{n-1, n\}$. We have that $K \subset \tSS\setminus \{s_{n-1}\}$ or $K \subset \tSS\setminus \{s_n\}$. Since $K=\s(K)$, we have $K \subset \tSS\setminus \{s_{n-1}, s_n\}$. Then we have $s_{n-1} s_n \t \in \KAdmmu_0$. On the other hand, we have $\supp(t^{\th(\umu)} \t \i) \subset \tSS\setminus \{s_{n-1}\}$ or $\supp(t^{\th(\umu)} \t \i) \subset \tSS\setminus \{s_n\}$. Thus $s_{n-1} s_n \t \npreceq_{K, \s} t^{\th(\umu)}$. This contradicts \S \ref{10.1(a)} $(\star)$ .
	
\subsection{Reducible case}\label{ss:reduc} We consider the case where $\tW$ is reducible, cf. Theorem \ref{f-HN-2}. Let us first assume that $\mu$ is non-central in each factor, so it is of type $(\tilde A_{n-1} \times \tilde A_{n-1}, {}^1 \varsigma_0, (\o^\vee_1, \o^\vee_{n-1}))$. There are two copies of the affine Dynkin diagram of type $\tilde A_{n-1}$, and we label the vertices by $i$ and $i'$ respectively, where $i, i' \in \BZ/n \BZ$. The Frobenius $\s$ acts by ${}^1 \varsigma_0$, which exchanges the vertex $i$ with $i'$ for any $i$. The $\Ad(\t) \circ\sigma$-orbits on $\tSS$ are $\{s_i, s_{(i-1)'}\}$ for $i \in \BZ/n \BZ$. If $K=\emptyset$, then the maximal elements in $\KAdmmu_0$ are $(s_i s_{i-1} \cdots s_{i-n+2}) (s_{(i-n+1)'} \cdots s_{(i-2)'} s_{(i-1)'}) \t$ for $i \in \BZ/n \BZ$. They are all translation elements. Hence the condition \S \ref{10.1(a)} $(\star)$  is satisfied.
	
	Now suppose that $K \neq \emptyset$. Without loss of generality, we may assume that $\{s_0, s_{0'}\} \subset K$. Then $(s_{n-1} s_{n-2} \cdots s_2) (s_{1'} s_{2'} \cdots s_{(n-1)'}) \t$ is a maximal element in $\KAdmmu_0$. This contradicts \S \ref{10.1(a)} $(\star)$ .

\subsection{The general case}\label{subsec:star-general}

Finally, let us reduce the general case to the case where $\umu$ is non-central in each component. Given $(\bG, \mu)$, we may assume that $\bG$ is adjoint, and we construct $(\bG', \mu')$ as in Section~\ref{subsec:ghn-2.4-construction}. Since we have already shown that $(\bG, \mu)$ is fully Hodge-Newton decomposable, $\mu$ is minute. This implies that $\mu'$ is minute, and hence we see that the Dynkin type of $(\bG', \mu')$ is one of the types in Theorem~\ref{f-HN}. The only possibilities for $(\bG, \mu)$ then are
\begin{itemize}
    \item 
        All $\mu_\varphi$, except for one, are central, and the component where $\mu$ is non-central is as in Theorem~\ref{f-HN}, or
    \item
        All $\mu_\varphi$, except for two, are central, and the two components where $\mu$ is non-central give rise to the Hilbert-Blumenthal case $(\tilde A_{n-1} \times \tilde A_{n-1}, (\o^\vee_1, \o^\vee_{n-1}))$.
\end{itemize}

The components where $\mu$ is central do not contribute to the set $\KAdmmu_0$, so that the analysis whether condition $(\star)$ is satisfied is exactly the same as in the previous sections.

\section{Lattice interpretation of the maximal equi-dimensional cases}\label{s:lattice-interpretation-max}
In this section, we go through the list of Theorem \ref{thmmaxequi} under the assumption that $\umu$ is non-central in each factor of $\tilde W$ and give lattice interpretations of $\XmubK{\t}{K}$ in each case. 
\subsection{The Drinfeld case}\label{ss:Drinfeld}
Let $(N, \phi )$ be a $\breve F$-vector space of dimension $n$, equipped with  a $\sigma$-linear automorphism isoclinic of slope $0$. Then we have
\begin{equation}\label{DrlatticeinN}
\XmubK{\t}{K}= \bigsqcup_{v\in\BZ} \{ M_\bullet\mid M_{i+1}\supset \phi( M_i),\forall i, \vol(M_0)=v\}.
\end{equation}

Here $M_\bullet$ is a periodic $O_{\breve F}$-lattice chain with period $n$. The decomposition indexed by $v$ corresponds to the decomposition of the affine flag variety into connected components. 

In this case, we obtain a $\pi$-adic formal scheme, as follows. We fix the following \emph{relative} rational RZ-data $\CD$ of EL-type. Let $B$ be a central division algebra over $F$ with invariant $1/n$. Let $V$ be a free $B$-module of rank one. Let $\breve V=V\otimes_F\breve F$. Then $b\in\GL_B(\breve V)$ is such that the relative isocrystal $(\breve V, b (\id\otimes \sigma))$ is isoclinic of slope $1/n$. The conjugacy class $\mu$ is given by $(1,0,\ldots,0)$ in an identification of $\GL_B(V)$ with $\GL_n$ after extension of scalars to $\ov F$.  The relative integral RZ-data $\CD_{O_F}$ are given by the maximal order $O_B$ of $B$ and the periodic lattice chain $\CL=\{\Pi^iO_B\mid i\in\BZ\}$. Here $\Pi$ denotes a uniformizer in $O_B$. 

In this case, there is a unique \emph{special formal $O_B$-module} of $F$-height $n^2$ over any algebraically closed extension of the residue field $k$ of $F$, cf. \cite[Lem. 3.60]{RZ96}. Taking any one of these as a \emph{framing object}  over $\ov k$, we obtain an RZ-space $\CN_{\CD_{O_F}}$ over $\Spf O_{\breve F}$ which parametrizes special formal $O_B$-modules together with a quasi-isogeny framing. It is a $\pi$-adic formal scheme \cite[Prop. 3.62]{RZ96}, flat over $O_{\breve F}$ \cite[3.69, Thm. 3.72]{RZ96}. Setting $\bG=\GL_B(V)$, we obtain a tuple $(\bG, \mu, b, K)$, where $K$ is the parahoric subgroup stabilizing the lattice chain $\CL$ (note that $\bG$ is the algebraic group over $F$ associated to $B^\times$). To identify the connected component $\CN^o_{\CD_{O_F}}(\ov k)$ of height zero elements with  \eqref{DrlatticeinN} for $v=0$, let $\tilde F/F$ be an unramified subfield of $B$ of degree $n$, with a fixed embedding $\tilde F\hookrightarrow \breve F$, and assume that $\Pi$ satisfies $\Pi^n=\pi$ and that $\Pi$ normalizes $\tilde F$ and induces on $\tilde F$ the Frobenius generator of the Galois group $\Gal(\tilde F/F)$. Let 
$$
\breve V=\bigoplus_{k\in\BZ/n}\breve V_k
$$
be the eigenspace decomposition under $\tilde F$. Then $\sigma$ is an endomorphism of degree $1$, and so is $\Pi$. Then set $N=\breve V_0$, $\phi=\Pi^{-1} \big(b(\id\otimes\sigma)\big)$. Similarly, the decomposition $O_{\tilde F}\otimes_{O_F}O_{\breve F}=\oplus_{k\in\BZ/n}O_{\breve F}$ induces for each $i\in\BZ$ a decomposition of $\breve \CL_i=\CL_i\otimes_{O_F} O_{\breve F}$, 
$$
\breve\CL_i=\bigoplus_{k\in\BZ/n} \breve\CL_{i, k} .
$$ 
Then the lattice chain $M_\bullet$ in \eqref{DrlatticeinN} is given as $M_i=\breve \CL_{i, 0}$. 
 
\subsection{The $D_{2/4}$-case}
Let $(N, \phi )$ be an isocrystal of dimension $4$, where $\phi $ is a $\sigma$-linear automorphism isoclinic of slope $0$. Then we have
\begin{equation}
\XmubK{\t}{K}= \bigsqcup_{v\in\BZ} \{ M_\bullet\mid M_{i+2}\supset \phi (M_i),\forall i, \vol(M_0)=v\}.
\end{equation}
Here $M_\bullet$ is a periodic lattice chain with period $4$. The decomposition indexed by $v$ corresponds to the decomposition of the affine flag variety into connected components.

\subsection{The Hilbert-Blumenthal case}
Let $(N, \phi )$ be a $\sigma^{2}$-isocrystal of dimension $n$, where $\phi $ is a $\sigma^{2}$-linear automorphism isoclinic of slope $0$.  Then we have
\begin{equation}
\XmubK{\t}{K}= \bigsqcup_{v\in\BZ} \{ (M_\bullet\ M'_\bullet) \mid \pi\phi( M_i)\subset M'_{i}\subset^1 M_i,\forall i, \vol(M_0)=v\}.
\end{equation}
Here $M_\bullet$ and $M'_\bullet$  are maximal  periodic lattice chains in $N$.  The decomposition indexed by $v$ corresponds to the decomposition of the affine flag variety into connected components.

\section{Application to $p$-adic uniformization}\label{s:padic}
As explained in Subsection \ref{ss:Drinfeld}, the RZ-space corresponding to the case (1) of Theorem \ref{thmmaxequi} is $\pi$-adic. In this section we explain various criteria which show that in the cases (2) and (3) of Theorem \ref{thmmaxequi}, the corresponding RZ-spaces are not $\pi$-adic formal schemes. Here, we implicitly appeal to the uniqueness result 
\cite[Prop. 4.4]{HPR} that the RZ-space (which a priori depends on \emph{ integral {\rm RZ}-data} $\CD_{\BZ_p}$, cf.~loc.~cit) only depends on the tuple $(\bG, \mu, b, K)$. To apply this result, we assume that $\bG$ splits over a tamely ramified extension of $F$. 

\subsection{Via change of parahoric}\label{ss:change} 
We note the following consequence of Theorem \ref{thmmaxequi}.
\begin{corollary}\label{charDrin}
    Assume that $\bG$ is quasi-simple over $F$ and that $\mu$ is non-central. Then $\XmubK{\t}{K}$ is equi-dimensional of dimension equal to $\<\mu, 2 \rho\>$ for \emph{every} parahoric subgroup $K$ if and only if the pair $(\tilde\Delta, \s)$ is isomorphic to $\Res_{F_d/F}(\tilde A_{n-1}, \varrho_{n-1})$ (where as before $\varrho_{n-1}$ denotes rotation by $n-1$ steps, and $F_d/F$ is unramified of degree $d$). Writing $\mu = (\mu_1, \dots, \mu_d)$, there is a unique $i$ such that $\mu_i$ is non-central, and $\mu_i = \o^\vee_1$. In this case $K=\emptyset$ corresponds to the unique parahoric subgroup. \qed
\end{corollary}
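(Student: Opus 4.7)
The plan is to deduce the corollary directly from Theorem \ref{thmmaxequi} by a short case analysis. First, I would apply Theorem \ref{thmmaxequi} to $K = \emptyset$: since by hypothesis $\XmubK{\t}{\emptyset}$ is equi-dimensional of dimension $\<\mu, 2\rho\>$, the theorem places $(\bG_\ad, \mu_\ad)$ into one of three types. The strategy is then to exclude types (2) and (3) by exhibiting, in each, a proper $\s$-stable parahoric $K \ne \emptyset$: since Theorem \ref{thmmaxequi} allows equi-dimensionality of maximal dimension only when $K = \emptyset$, any such $K$ yields a direct contradiction to the hypothesis of the corollary.

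In case (2), with Tits datum $\big(\Res_{F_d/F}(\tilde A_3, \varrho_2), \mu\big)$, I would observe that the rotation $\varrho_2$ preserves the pair $\{s_0, s_2\} \subset \tilde A_3$; taking $K$ to consist of $\{s_0, s_2\}$ in each of the $d$ Frobenius-cycled copies therefore gives a proper $\s$-stable subset of $\tSS$, with $W_K \cong (\BZ/2 \times \BZ/2)^d$ finite. In case (3), with Tits datum $\big(\Res_{F_d/F}(\tilde A_{n-1}, \id), \mu\big)$ and $d \ge 2$ (since two non-central components of $\mu$ are required), the $\s$-orbit of a single simple reflection in the first copy consists of $d$ simple reflections (one in each copy), which is proper in $\tSS$ because $dn > d$, and the associated $W_K \cong (\BZ/2)^d$ is clearly finite. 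Either construction provides the desired contradiction.

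For the ``if'' direction in case (1), with Tits datum $\big(\Res_{F_d/F}(\tilde A_{n-1}, \varrho_{n-1}), (\omega_1^\vee, 0, \ldots, 0)\big)$, I would verify that $\s^d$ acts on each copy of $\tilde A_{n-1}$ by the rotation $\varrho_{n-1}$, which has a single orbit on the set of simple reflections. Combined with the cycling of copies by $\s$, the unique $\s$-orbit on $\tSS$ is $\tSS$ itself, so $K = \emptyset$ is the only proper $\s$-stable subset. Hence the ``for every parahoric'' hypothesis reduces to the single case $K = \emptyset$, which is handled by Theorem \ref{thmmaxequi}. The entire argument is routine given the preceding theorem; the only substantive step is the elementary combinatorial analysis of $\s$-orbits on the affine Dynkin diagrams, and I do not foresee any significant obstacle.
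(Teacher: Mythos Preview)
Your proposal is correct and follows the same approach as the paper, which treats the corollary as immediate from Theorem~\ref{thmmaxequi} (the statement ends with \qed, and the only remark in Section~\ref{finalpart3} is that ``in cases (2) and (3) there are $F$-rational parahoric level structures other than the Iwahori level''). You supply exactly the details the paper leaves implicit: explicit nontrivial $\s$-stable $K$ in cases (2) and (3), and the observation that $\varrho_{n-1}$ acts transitively on the vertices of $\tilde A_{n-1}$ so that $K=\emptyset$ is the only parahoric in case (1).
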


The significance of this corollary  is given by the following fact. Let $E$ be the reflex field of $(\bG, \mu)$, i.e., the field of definition of $\mu$. Let $\mathfrak X$  be a formal scheme over $\Spf O_{\breve E}$ with underlying reduced scheme $\XmubK{\t}{K}$. We assume that $\mathfrak X$ is  flat over $\Spf O_{\breve E}$, and that its generic fiber, i.e., the associated rigid space $\mathfrak X^{\rm rig}$,  is smooth of dimension $\<\mu, 2 \rho\>$. Let $\pi$ be a uniformizer of $O_{\breve E}$. Assume that the formal scheme $\mathfrak X$ is $\pi$-adic, i.e., $\pi$ generates an ideal of definition of $\mathfrak X$. Equivalently, the ideal $\CJ$ of  $\XmubK{\t}{K}$ satisfies $\CJ= \rad(\pi\CO_{\mathfrak X})$ (radical ideal). Then $\XmubK{\t}{K}$ is equi-dimensional of dimension $\<\mu, 2 \rho\>$. Indeed, then $\XmubK{\t}{K}$ coincides  with the special fiber of $\mathfrak X$,  which is equi-dimensional of the same dimension as its generic fiber.  

Let $K\subset K'$. Let $\mathfrak X$ and $\mathfrak X'$ be two normal  flat formal schemes over $\Spf O_{\breve E}$ with underlying reduced scheme $\XmubK{\t}{K}$, resp. $\XmubK{\t}{K'}$,  and let $f\colon \mathfrak X\to \mathfrak X'$ be a proper morphism inducing the natural map $\XmubK{\t}{K}\to \XmubK{\t}{K'}$ and such that $f$ is a finite morphism in the generic fibers.    Let $\CJ$, resp. $\CJ'$, be the ideals of definitions of $\mathfrak X$, resp. $\mathfrak X'$.
\begin{lemma}
The equality $\CJ= \rad(\pi \CO_{\mathfrak X})$ holds if and only if $\CJ'=\rad(\pi \CO_{\mathfrak X'})$.
\end{lemma}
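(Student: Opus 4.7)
The plan is to rephrase the condition $\CJ=\rad(\pi\CO_{\mathfrak X})$ in the equivalent algebraic form ``$\CJ^m\subseteq \pi\CO_{\mathfrak X}$ for some $m$'' (valid since $\pi\in\CJ$ and everything in sight is locally Noetherian) and to transport this inclusion across $f$ in both directions: by a purely topological observation in one direction, and by a descent argument combining coherent proper pushforward with normality in the other.

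For $(\Leftarrow)$, I would start from $(\CJ')^m\subseteq\pi\CO_{\mathfrak X'}$ and pull back to obtain $(f^{*}\CJ'\cdot\CO_{\mathfrak X})^m\subseteq \pi\CO_{\mathfrak X}$. Because $f$ induces the (surjective) natural projection $\XmubK{\t}{K}\to \XmubK{\t}{K'}$ on underlying reduced schemes, and these reduced schemes are the entire topological spaces of $\mathfrak X$ and $\mathfrak X'$, the topological vanishing locus of $f^* \CJ'\cdot \CO_{\mathfrak X}$ is all of $|\mathfrak X|=V(\CJ)$, so $\rad(f^*\CJ'\cdot\CO_{\mathfrak X})=\CJ$. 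Taking radicals in the above inclusion then gives $\CJ\subseteq \rad(\pi\CO_{\mathfrak X})$, and the obvious reverse inclusion yields the $\pi$-adicity of $\mathfrak X$.

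For $(\Rightarrow)$ the crucial intermediate step I plan to prove is the identity
\[
\pi\cdot f_*\CO_{\mathfrak X}\ \cap\ \CO_{\mathfrak X'}\ =\ \pi\CO_{\mathfrak X'}
\]
(taken inside $f_*\CO_{\mathfrak X}$). Granting it, any local section $g'$ of $\CJ'$ pulls back to $f^* g'\in\CJ$ (since $f$ sends the reduced subscheme into the reduced subscheme), so $(f^{*}g')^n\in\pi\CO_{\mathfrak X}$ for some $n$ by the hypothesis on $\mathfrak X$; thus $(g')^n$, viewed inside $f_*\CO_{\mathfrak X}$, lies in $\pi f_*\CO_{\mathfrak X}\cap \CO_{\mathfrak X'}=\pi\CO_{\mathfrak X'}$, and a standard Noetherian argument supplies a uniform $N$ with $(\CJ')^N\subseteq\pi\CO_{\mathfrak X'}$.

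To establish the displayed identity I first need $\CO_{\mathfrak X'}\hookrightarrow f_*\CO_{\mathfrak X}$ to be an injective, \emph{finite} ring extension. Injectivity follows from surjectivity of $f$ together with reducedness of $\mathfrak X'$ (consequence of normality). Finiteness of $f_*\CO_{\mathfrak X}$ as an $\CO_{\mathfrak X'}$-module follows from Grothendieck's coherence theorem for proper morphisms of locally Noetherian formal schemes (cf.\ EGA III, 3.4) applied to the coherent sheaf $\CO_{\mathfrak X}$. Granting finiteness, every section of $f_*\CO_{\mathfrak X}$ is integral over $\CO_{\mathfrak X'}$: given $a=\pi b$ with $a\in\CO_{\mathfrak X'}$ and $b\in f_*\CO_{\mathfrak X}$, the element $b=a/\pi$ sits in $\CO_{\mathfrak X'}[\pi^{-1}]$ and hence in the total ring of fractions of $\CO_{\mathfrak X'}$; being integral over $\CO_{\mathfrak X'}$, normality forces $b\in\CO_{\mathfrak X'}$, so $a\in\pi\CO_{\mathfrak X'}$. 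The main technical input is thus Grothendieck coherence combined with the normality of $\mathfrak X'$, while the hypothesis that $f$ is finite on generic fibers enters only through the surjectivity of the natural projection between affine Deligne-Lusztig varieties (which is known independently and is built into the ``natural map'' hypothesis).
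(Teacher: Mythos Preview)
Your argument is correct and is essentially the same as the paper's. The paper phrases the $(\Rightarrow)$ direction via the Stein factorization $\mathfrak X\xrightarrow{\tilde f}\tilde{\mathfrak X}'\to\mathfrak X'$, using $\tilde f_*\CO_{\mathfrak X}=\CO_{\tilde{\mathfrak X}'}$ and then the normality of $\mathfrak X'$ to obtain $\CO_{\mathfrak X'}\cap\pi\CO_{\tilde{\mathfrak X}'}=\pi\CO_{\mathfrak X'}$; but since $\tilde{\mathfrak X}'=\underline{\Spec}_{\mathfrak X'}f_*\CO_{\mathfrak X}$, this is exactly your identity $\pi f_*\CO_{\mathfrak X}\cap\CO_{\mathfrak X'}=\pi\CO_{\mathfrak X'}$, proved by the same coherence-plus-integrality-plus-normality mechanism. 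The $(\Leftarrow)$ direction is likewise the same in both: pull back the ideal of definition and use that $f$ is adic (your topological observation that $\rad(f^*\CJ'\cdot\CO_{\mathfrak X})=\CJ$ is precisely this).
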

In other words, $\mathfrak X$ is a $\pi$-adic formal scheme  if and only if $\mathfrak X'$ is.
\begin{proof}  Assume $\CJ'=\rad(\pi\CO_{\frak X'})$. 
The morphism $f$ is adic, hence $f^*(\CJ')$ is an ideal of definition of $\mathfrak X$ which is contained in $\CJ$, as the latter is a maximal ideal of definition.  Hence $\CJ= \rad(\pi \CO_{\mathfrak X})$  is clear. For the other direction, let $\tilde f\colon \mathfrak X\to\tilde{ \mathfrak X}'$ be the Stein factorization of $f$. Then   the normality of $\tilde{\mathfrak X'}$ implies $\tilde f_*(\CO_{\mathfrak X})=\CO_{\tilde{\mathfrak X'}}$. On the other hand, for the maximal ideal of definition $\tilde\CJ'$ of $\tilde{\mathfrak X'}$, we have $\tilde \CJ'\subset \tilde f_*(\tilde f^*(\tilde\CJ'))\subset \tilde f_*(\CJ)$. Hence $\tilde\CJ'\subset \tilde f_*(\CJ)=\tilde f_*(\rad(\pi\CO_{\mathfrak X}))=\rad( \pi\tilde f_*(\CO_{\mathfrak X}))=\rad(\pi\CO_{\tilde{\mathfrak X'}})$, hence $\tilde {\mathfrak X'}$ is $\pi$-adic. But the normality of $\mathfrak X'$ implies that $\CO_{\mathfrak X'}\cap \pi\CO_{\tilde{\mathfrak X'}}=\pi\CO_{\tilde{\mathfrak X'}}$. Hence, since $\tilde{\mathfrak X'}$ is a $\pi$-adic formal scheme, so is $\mathfrak X'$. 
\end{proof}
\subsection{Via formal branches}\label{ss:formbranch}
In this subsection, we argue via the local structure of RZ-spaces.  Let $(\bG, \mu, K)$ be the corresponding local model triple over $F$, and $\BM^\loc(\bG, \mu)_K$ the local model over $O_E$,  in the sense of \cite{HPR}. Then the special fiber $\ov{\BM}^\loc(\bG, \mu)_K$ is a closed subset of the loop group partial affine flag variety $L\bG'/L^+\brK'$, 
\begin{equation}
\CA(\mu, \tau)_{K}=\{g \brK \in \breve G'/\brK'\mid g  \in \brK'\Admmu\brK'\}.
\end{equation}
By the \emph{local model diagram},  the singularities of the RZ-space $\CM(\bG, \mu, b)_K$ corresponding to $(\bG, \mu, b, K)$ are modeled by $\BM^\loc(\bG, \mu)_K$. More precisely, for any  $x\in\CM(\bG, \mu, b)_K(\ov k)$, there exists $y\in\BM^\loc(\bG, \mu)_K(\ov k)$ such that the strict henselizations at $x$ and at $y$ are isomorphic. Furthermore, for $b=\tau$, under the identification $\CM(\bG, \mu, \t)_K(\ov k)=\XmubK{\t}{K}$, the point $x_0=e\brK$ is realized by the point $y_0=\tau\in \CA(\mu, \tau)_{K}$.  Hence we have an identification
\begin{equation}
\begin{aligned}
\{\text{formal branches}&\text{ of the special fiber of $\CM(\bG, \mu, \t)_K$ through $x_0$}\}=\\
&\{\text{extreme elements of $\KAdmmu$}\}
\end{aligned}
\end{equation}
On the other hand, the extreme elements of $\KAdmmu$ can be identified with 
\begin{equation}
\KAdmmu^o:=\{ \lambda\in W_0(\umu)\mid t^\lambda\in {}^K\tilde W\}.
\end{equation}
Therefore, we deduce from Theorem \ref{max-dim} the following criterion.
\begin{theorem}
The RZ-space $\CM(\bG, \mu, \t)_K$ is $\pi$-adic if and only if  the inclusion $W(\umu)_{K,\fin}\subset \KAdmmu^o$ is an equality. 
\end{theorem}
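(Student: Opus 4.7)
The plan is to reduce the $\pi$-adicity of $\CM$ to a local reducedness question at $x_0$, analyzed via the formal branch decomposition supplied by the local model diagram. Recall that $\CM$ is flat over $\Spf O_{\breve E}$ with smooth generic fibre of dimension $d:=\langle\mu,2\rho\rangle$, so its scheme-theoretic special fibre has pure dimension $d$; being $\pi$-adic amounts to this special fibre being reduced. Via the local model diagram the question reduces to the scheme-theoretic special fibre of $\BM^\loc(\bG,\mu)_K$ at $\t$: since $\t$ is the length-zero element with $\Admmu\subset W_a\t$, it lies in the closure of every irreducible component of $\CA(\mu,\t)_K$, so reducedness at $\t$ propagates to all of $\ov\BM^\loc$, hence via the local model diagram to $\CM$ itself.

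By the identification at the end of Section~\ref{ss:formbranch}, the formal branches of the scheme-theoretic special fibre of $\CM$ at $x_0$ are parametrized by $\KAdmmu^o$. Each such branch is flat over $O_{\breve E}$ with smooth $d$-dimensional generic fibre, hence has scheme-theoretic special fibre of pure dimension $d$; it is reduced if and only if its underlying reduced subscheme also attains dimension $d$. Since the reduced subscheme of $\CM$ is $\XmubK{\t}{K}$, the reduced part of the branch at $\lambda\in\KAdmmu^o$ coincides with (a formal neighbourhood of $x_0$ in) the closure of $X_{K,t^\lambda}(\t)$ inside $\XmubK{\t}{K}$. Using $\langle\nu_\t,2\rho\rangle=0$ (which holds since $\t$ is basic, so $\nu_\t$ is central) together with Theorems~\ref{dim-1-1} and~\ref{He-CDM-2.5}, this dimension equals $d=\ell(t^\lambda)$ if and only if $\brK\cdot_\s\brI t^\lambda\brI\subset[\t]$; by Proposition~\ref{GHN-4.6} the latter is in turn equivalent to $\lambda\in W(\umu)_{K,\fin}$. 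Hence every formal branch at $x_0$ is reduced if and only if $\KAdmmu^o=W(\umu)_{K,\fin}$, which proves the theorem.

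The main obstacle is the scheme-theoretic matching in the second paragraph: one must use the local model isomorphism carefully to identify the reduced formal branch at $\lambda$ with the closure of $X_{K,t^\lambda}(\t)$, rather than some larger or smaller subset. This rests on the compatibility of the local model diagram with the Kottwitz--Rapoport stratification, together with the key point that the cell $\brK\cdot_\s\brI w\brI$ lies entirely in the basic stratum $[\t]\subset\CM$ exactly when $W_{\supp_\s(w)}$ is finite; the complementary cells then contribute formal branches whose reduction drops below dimension $d$, forcing non-reducedness of the corresponding scheme-theoretic branch.
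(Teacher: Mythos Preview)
Your overall strategy---identify the formal branches of the special fibre at $x_0$ with $\KAdmmu^o$ via the local model diagram, then use Theorem~\ref{max-dim} and Proposition~\ref{GHN-4.6} to decide which of them are accounted for by $\XmubK{\t}{K}$---is exactly the paper's. But two formulations in your write-up are genuinely off and would, if taken literally, make the argument collapse.

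First, ``being $\pi$-adic amounts to the special fibre being reduced'' is not the right reduction. Under the local model isomorphism $A:=\widehat\CO_{\CM,x_0}\cong B:=\widehat\CO_{\BM^\loc,\t}$, the ring $A/\pi A\cong B/\pi B=\widehat\CO_{\ov\BM^\loc,\t}$ is \emph{always} reduced (reducedness of the special fibre of local models). So your criterion would say $\CM$ is always $\pi$-adic, contradicting the Lubin--Tate case. The correct statement is: $\CM$ is $\pi$-adic iff the maximal ideal of definition $\rad(I)$ (whose quotient $A/\rad(I)$ is $\widehat\CO_{\XmubK{\t}{K},x_0}$) equals $\pi A$; equivalently, iff the closed immersion $\Spec A/\rad(I)\hookrightarrow\Spec A/\pi A$ is an equality; equivalently (since $A/\pi A$ is reduced and equi-dimensional of dimension $d$ by Proposition~\ref{Irr-KAdmK}), iff every formal branch $V(\mathfrak p_\lambda)\subset\Spec A/\pi A$ is contained in the germ of $\XmubK{\t}{K}$ at $x_0$.

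Second, ``each such branch is flat over $O_{\breve E}$ with smooth $d$-dimensional generic fibre'' does not make sense: the branches are irreducible components of $\Spec A/\pi A$ and live entirely over $\ov k$. What you need here is simply that each branch has dimension $d$, which follows from Proposition~\ref{Irr-KAdmK}. With the reformulation above, your final step goes through: using the compatibility of the local model diagram with the KR stratification (the point you correctly flag), the germ of $\overline{X_{K,t^\lambda}(\t)}$ at $x_0$ sits inside the branch at $\lambda$, and the two agree (i.e., the branch lies in $\Spec A/\rad(I)$) precisely when that germ has dimension $d$, which by Theorems~\ref{dim-1-1},~\ref{He-CDM-2.5} and Proposition~\ref{GHN-4.6} happens iff $\lambda\in W(\umu)_{K,\fin}$.
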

This theorem again excludes the cases (2) and (3) of Theorem \ref{thmmaxequi}. Indeed, in these cases $K=\emptyset$ and the following elements are in $ \Admmu^o\setminus W(\umu)_{\fin}$:

\smallskip 

\noindent Case (2): $s_1 s_3 s_2 s_0 \t$.

\smallskip

\noindent Case (3): $s_0s_{n-1} \cdots s_2 s_{0'} s_{(n-1)'} \cdot s_{2'} \t$.

Here, in the last line, we use the notation from Subsection \ref{ss:reduc}. 

\subsection{Via non-archimedean uniformization}\label{ss:unif}
To put the above results into context, let us explain how to derive the above
statement using global methods, i.e., the theory of Shimura varieties. This
allows us to ``see'' all Newton strata at once, which is not possible within
one fixed RZ space. In this subsection, to simplify notations, we assume
$F=\BQ_p$. 

In each case of Theorem \ref{thmmaxequi} one can construct a Shimura pair $(\BG, \{h_{\BG}\})$ of PEL-type which yields after localization at $p$ the pair $(\bG, \mu)$. Let $\BK=\BK^p \BK_p\subset \BG(\BA_f)=\BG(\BA_f^p)\times\BG(\BQ_p)$, with $\BK_p=K$. Let $\BE=\BE(\BG, \{h_{\BG}\})$ be the global Shimura field and fix an embedding $\ov\BQ\subset\ov\BQ_p$ which determines a $p$-adic place $\nu$ of $\BE$ with $E=\BE_\nu$. 

Let $\CS_\BK=\CS(\BG, \{h_{\BG}\})_\BK$ be the Pappas-Zhu model of the Shimura variety $S(\BG, \{h_{\BG}\})_\BK$ over $O_E$. Then the \emph{Newton map}
\begin{equation}
\delta_\BK\colon \CS_\BK(\ov\BF_p)\to \BGmu
\end{equation}
is surjective, cf. \cite[\S 9]{HeZhou}.  In case (1) of  Theorem \ref{thmmaxequi}, the set $ \BGmu$ consists only of the unique basic element $[\t]$ of  $ \BGmu$, cf.~\cite{Ko2}; in cases (2) and (3), there are additional elements besides $[\t]$ (in case (2), one additional element). It follows that in cases (2) and (3), the closed subset $\CS_{\BK, {\rm basic}}$ with  $\CS_{\BK, {\rm basic}}(\ov\BF_p)=\delta_\BK^{-1}([\t])$ is a proper closed subset of the special fiber $\ov\CS_\BK$ of $\CS_\BK$.  Hence, in cases (2) and (3),  the formal completion $\CS^\wedge_{\BK}/_{\ov\CS_{\BK,{\rm basic}}}$ is a formal scheme over $\Spf O_E$ that is not $\pi$-adic. However, by non-archimedean uniformization \cite[Ch.~6]{RZ96}, there is an isomorphism of formal schemes over $\Spf O_{\breve E}$,
$$
\CS^\wedge_{\BK}/_{\ov\CS_{\BK,{\rm basic}}}\times_{\Spf O_E}\Spf O_{\breve E}\simeq \BG(\BQ)\backslash \big[\CM(\bG, \mu, \t)_K\times\BG(\BA_f^p)/\BK^p\big] . 
$$
It follows in cases (2) and (3) that the formal scheme $\CM(\bG, \mu, \t)_K$ is not $\pi$-adic.

\section{Proof of Theorems \ref{Main3} and \ref{Main4}}\label{finalpart3}
For Theorem \ref{Main3}, all that remains to be shown after Theorem \ref{max-dim} is the assertion that $W(\umu)_{K,\fin}$ parametrizes the orbits of ${\bf J}_b(F)$ on the set of irreducible components of dimension $\langle\mu, 2\rho\rangle$ of $X(\mu, b)_K$.

By Theorem \ref{max-dim}, the union of the irreducible components of maximal dimension is equal to $\cup_{\l \in W(\umu)_{K,\fin}} \overline{X_{K, t^\l}(b)}$. Note that each $\overline{X_{K, t^\l}(b)}$ is stable under the action of ${\bf J}_b(F)$. Moreover, the natural map from the set of irreducible components of $X_{K, t^\l}(b)$ to the set of irreducible components of $\overline{X_{K, t^\l}(b)}$ is bijective and ${\bf J}_b(F)$-equivariant. It remains to show that for any $\l \in W(\umu)_{K,\fin}$, ${\bf J}_b(F)$ acts transitively on the set of irreducible components of $X_{K, t^\l}(b)$. 
	
The natural projection map $\breve G/\brI \to \breve G/\brK$ induces the surjection $X_{t^\l}(b) \to X_{K, t^\l}(b)$ and this map is ${\bf J}_b(F)$-equivariant. Moreover, since $\l \in W(\umu)_{K,\fin}$, $W_{\supp_{\s}(t^\l)}$ is finite. By \cite[Prop.~2.2.1]{GH}, we have $X_{t^\l}(b) \cong {\bf J}_b(F) \times^{{\bf J}_b(F) \cap \brK} Y(w)$, where $\brK$ is the parahoric subgroup associated to $\supp_\s(t^\l)$ and $Y(w)$ is the classical Deligne-Lusztig variety associated to $w$ in the finite dimensional flag variety $\brK/\brI$. By~\cite[Ex.~3.10 d)]{Lusztig} (comp.~also~\cite[Cor.~1.2]{Go})
$Y(w)$ is irreducible. Hence ${\bf J}_b(F)$ acts transitively on the set of irreducible components of $X_{t^\l}(b)$, and hence transitively on the set of irreducible components of $X_{K, t^\l}(b)$.

Theorem~\ref{Main4} is deduced from Theorem~\ref{thmmaxequi} just as Theorems~\ref{Main1} and~\ref{Main2} are deduced from Theorems~\ref{thm-dimension-0} and~\ref{thm-fixed-s}. Corollary~\ref{MainvaryingK} follows from Theorem~\ref{Main4} by the observation that in cases (2) and (3) there are $F$-rational parahoric level structures other than the Iwahori level, comp.~Corollary~\ref{charDrin}.

Theorem~\ref{charpadicunif} follows from the fact that the integral RZ-data $\CD_{\BZ_p}$ are of extended Drinfeld type if $(\bG, \mu, K)$ is of type (1) in Theorem~\ref{Main3} (here the key is the fact that we assume that the first entry of a rational RZ-datum is a field extension of $\BQ_p$, so that the \emph{fake unitary group}  case is excluded).

\end{document}